\title{Embedded contact homology of the unit cotangent bundle of the Klein bottle}
\author{Marcelo Miranda and Vinicius G. B. Ramos}
\date{}
\numberwithin{equation}{section}
\newtheorem{theorem}{Theorem}[section]
\newtheorem{proposition}[theorem]{Proposition}
\newtheorem{corollary}[theorem]{Corollary}
\newtheorem{lemma}[theorem]{Lemma}
\newtheorem{lemma-definition}[theorem]{Lemma-Definition}
\theoremstyle{definition}
\newtheorem{definition}[theorem]{Definition}
\newtheorem{remark}[theorem]{Remark}
\newtheorem{example}[theorem]{Example}
\newcommand{\floor}[1]{\left\lfloor #1 \right\rfloor}
\newcommand{\ceil}[1]{\left\lceil #1 \right\rceil}
\newcommand{\bpm}{\begin{pmatrix}}
\newcommand{\epm}{\end{pmatrix}}
\renewcommand{\epsilon}{\varepsilon}
\begin{document}
\maketitle

\begin{abstract}
We give a combinatorial description of the embedded contact homology chain complex of the unit cotangent bundle of the Klein bottle with the standard flat Riemannian metric. Using pseudoholomorphic curves coming from the associated differential, we find an obstruction theorem for symplectic embeddings of toric domains $X_{\Omega}\subset \mathbb{C}^2$ into the unit disk cotangent bundle $D^*K$. As an application we compute the Gromov width of $D^*K$.
\end{abstract}

\setcounter{tocdepth}{2}

\section{Introduction}

\subsection{Overview of ECH in the toric setting}

Let $Y$ be a closed, connected and oriented three-manifold. The embedded contact homology (ECH), introduced by Hutchings \cite{Hutchings3} is a powerful invariant that has been computed for various manifolds. It is a homology theory associated with each nondegenerate contact form $\lambda$ on $Y$. Most of computations of ECH were carried out in the presence of toric symmetries as in \cite{Hutchings4}, see also  \cite{Hutchings3, Keon}.

In this paper, we consider $Y=U^*(K)$, the unit cotangent bundle of the flat Klein bottle. We first observe that this is topologically a $T^2$-bundle over the circle; however, these tori are not invariant by the standard contact form $\lambda_{std}$ in $D^*(K)$ (see Remark \ref{remark1}). We provide an ``almost toric" description of $(U^*(K),\lambda_{std})$ with invariant tori $T^2$. Based on that, we give a combinatorial description of the ECH generators, differential and capacities.

An important example is the torus $T^3$ with its canonical contact form was considered in \cite{Hutchings4}. In this setting, a combinatorial description of the ECH chain complex was given using ``lattice paths" as generators and a ``rounding the corners" operation as the differential. The main idea was to use the fact that $T^3$ is a $T^2$-fibration with invariant tori. Then using Morse-Bott pertubations as in \cite{Bourgeois}, up to a fixed action $L>0$, a finite number of tori foliated by Reeb orbits splits in only two Reeb orbits: one elliptic and other positive hyperbolic. For each orbit set $\alpha$ with zero homology in $H_{1}(T^3,\mathbb{Z})$ one can associate a lattice path defined as the concatenation of vectors in $\mathbb{Z}^2$ corresponding to each Reeb orbit that appearing in $\alpha$.

For the unit cotangent bundle of Klein bottle $U^*(K)$ with its canonical contact form, the situation is similar, but has a significant difference from the setting in $T^3$. Instead, the structure is ``almost toric". As we will see in Section \ref{almosttoric}, there exists an open subset in $D^*(K)$ that is strictly contactomorphic to
$$
((-\pi/2,\pi/2)\times T^2, \cos(\theta)dx+\sin(\theta)dy),
$$
whose complement consists of two disjoint Klein bottles. The key difference from the toric case is that, after perturbing the contact form near these Klein bottles, two negative hyperbolic Reeb orbits emerge for each Klein bottle.

\subsection{Convex K-lattice path}\label{intro: K-lattice}

To describe the ECH generators, we start with a definition. A convex K-lattice path $\Lambda$ is a piecewise linear path in $\mathbb{R}^2$ satisfying:

\begin{itemize}
    \item The initial point is $(0,0)$ and the final point is $(x(\Lambda),0)$.
    \item $\Lambda$ is a graph of a piecewise linear convex function $f:[0,x(\Lambda)]\rightarrow [0,+\infty)\times (-\infty,0]$, such that the non-smooth points are in $\mathbb{Z}^2$, possibly together vertical lines after the starting point or before the end point.
    \item The edges are labeled by ``e" (elliptic), ``h" (positive hyperbolic) or ``h$^-$" (negative hyperbolic).
    \item Vertical edges cannot be labeled by ``h".
\end{itemize}

On top of those, we ask for $\Lambda$ to satisfy one extra condition. Depending on which one of them it satisfies, we say that the K-convex lattice path is of one of the following four types:

\begin{itemize}
    \item \textbf{Type I}. (Toric generator). In this case $\Lambda$ is a lattice path in the plane as above, but the end point $(x(\Lambda),0)$ satisfies $x(\Lambda)\equiv 0\pmod{2}$

    \begin{center}
    \begin{tikzpicture}
    \foreach \x in {0,1,2} {
    \foreach \y in {0,-1,-2} {
        \fill (\x,\y) circle (1pt);
    }
    }
    \foreach \x/\y in {0/0, 0/-2, 2/-1, 2/0} {
    \fill (\x,\y) circle (1pt);
    }
    \draw[->] (0,0) -- node[left]{$e$} (0,-1);
    \draw[->](0,-1) -- node[left]{$e$} (0,-2);
    \draw[->] (0,-2) -- node[pos=0.6, below]{$h$} (2,-1);
    \draw[->] (2,-1) -- node[right]{$e$} (2,0);
    \end{tikzpicture}
    \end{center}
    
    \item \textbf{Type II}. In this case $\Lambda$ starts with two vertical down-arrows of lenght $1/2$, each labeled by $h^-$. We require that the end point $(x(\Lambda),0)$ satisfies $x(\Lambda)\equiv 1 \pmod{2}$.

    \begin{center}
    \begin{tikzpicture}
    \foreach \x in {0,1,2} {
    \foreach \y in {0,-1,-2} {
        \fill (\x,\y) circle (1pt);
    }
    }
    \foreach \x/\y in {0/0, 0/-2, 2/-1, 2/0} {
    \fill (\x,\y) circle (1pt);
    }
    \draw[->] (0,0) -- node[left]{$h^-$} (0,-1/2);
    \draw[->] (0,-1/2) -- node[left]{$h^-$} (0,-1);
    \draw[->](0,-1) -- node[left]{$e$} (0,-2);
    \draw[->] (0,-2) -- node[right]{$h$} (1,0);
    \end{tikzpicture}
    \end{center}

    \item \textbf{Type III}. In this case $\Lambda$ ends with two vertical up-arrows of lenght $1/2$, each labeled by $h^-$. We require that the end point $(x(\Lambda),0)$ satisfies $x(\Lambda)\equiv 1 \pmod{2}$.

    \begin{center}
    \begin{tikzpicture}
    \foreach \x in {0,1,2,3} {
    \foreach \y in {0,-1,-2} {
        \fill (\x,\y) circle (1pt);
    }
    }
    \foreach \x/\y in {0/0, 0/-2, 2/-1, 2/0} {
    \fill (\x,\y) circle (1pt);
    }
    \draw[->] (0,0) -- node[left]{$e$} (0,-1);
    \draw[->](0,-1) -- node[left]{$e$} (0,-2);
    \draw[->] (0,-2) -- node[pos=0.55, below]{$h$} (3,-1);
    \draw[->] (3,-1) -- node[right]{$h^-$} (3,-1/2);
    \draw[->] (3,-1/2) -- node[right]{$h^-$} (3,0);
    \end{tikzpicture}
    \end{center}

    \item \textbf{Type IV}. In this case $\Lambda$ start and finish with vertical arrows of lenght $1/2$, but we require that the end point $(x(\Lambda),0)$ satisfies $x(\Lambda)\equiv 0 \pmod{2}$.

    \begin{center}
    \begin{tikzpicture}
    \foreach \x in {0,1,2,3} {
    \foreach \y in {0,-1,-2} {
        \fill (\x,\y) circle (1pt);
    }
    }
    \foreach \x/\y in {0/0, 0/-2, 2/-1, 2/0} {
    \fill (\x,\y) circle (1pt);
    }
    \draw[->] (0,0) -- node[left]{$h^-$} (0,-1/2);
    \draw[->] (0,-1/2) -- node[left]{$h^-$} (0,-1);
    \draw[->](0,-1) -- node[left]{$e$} (0,-2);
    \draw[->] (0,-2) -- node[pos=0.6, below]{$h$} (2,-1);
    \draw[->] (2,-1) -- node[right]{$h^-$} (2,-1/2);
    \draw[->] (2,-1/2) -- node[right]{$h^-$} (2,0);
    \end{tikzpicture}
    \end{center}
    
\end{itemize}

\begin{remark}
   As expect, the ``half arrows'' with label $h^-$, correspond to negative hyperbolic Reeb orbits. Sometimes we say "toric part" of $P$ as the lattice path considering only the edges which are not labeled by $h^-$. The conditions of parity in the $x(\Lambda)$ coordinate is important to guarantee that the correspondent ECH generator has zero homology in $H_1(U^*(K),\mathbb{Z})$. We also abbreviate Convex K-lattice path to K-lattice path for simplicity.
\end{remark}

\begin{remark}
    We say ``toric arrow" by any arrow labeled with ``e" or ``h".
\end{remark}

\subsection{The combinatorial grading}

Let $P$ a K-lattice path, its grading $I(P)$ is defined as:
$$
I(P)=2\textrm{Area}(P)+m(P)-h(P),
$$
where the Area($P$) is the area enclosed by $P$ and the $x$-axis, $m(P)$ denotes the number of toric edges of $P$ (counted with multiplicities) and $h(P)$ denotes the number of edges labeled by ``h" of $P$.

\begin{remark}
    It is easy see that $I(P)$ is always nonnegative, and the only index zero generators are points and the unique K-lattice path with no toric part of \textbf{Type IV}.
\end{remark}

\subsection{Generators and differential}\label{descrip: differential}

Consider $C_k$ the $\mathbb{Z}_2$ vector space generated by $K$-lattice path of index $k\in \mathbb{Z}$. We can define a differential $\delta:C_{*}\rightarrow C_{*-1}$ using three operations, which we describe as follows.

\begin{itemize}
    \item \textbf{Interior rounding the corners}. Let $\Lambda$ an $K$-lattice path, then $R_{int}(P)$ is defined as
    $$
    R_{int}(P)=\sum_{Q}Q
    $$
    where the sum is taken over all $Q$ which is obtain from $P$ by doing rounding at some corners which is not on the $x$-axis, or if $P$ is not of \textbf{Type I}, we do not do rouning at a vertex of $P$ which touch arrows with label $h^-$. We label the new edges in the following rule, if both edges around the vertex are labeled by $``e"$, then all new edges are labeled by $``e"$. Otherwise we consider all ways to labeled a new edge with exactly one $``h"$ and the others labeled by $``e"$.

    \item \textbf{C operation}. Suppose that $P$ is a $K$-lattice path such that its first arrow is toric and labeled by $h$, that is, $P$ is not of \textbf{Type II}, such that the slope of its first arrow is in the interval $(-\infty,-1]$. Then the $C$ operation correspond to doing rounding in the start point of $P$, but the first arrow is two vertical down-arrows labeled by $h^-$, all other new edges are labeled with $``e"$. Similarly if $P$ has last edges labeled by $h$ with slope in $[1,\infty)$, then the $C$ operations correspond to doing rounding in the end point of $P$ but, the last arrow now is two vertical up-arrows labeled by $h^-$ with all other new edges labeled with $``e"$.

    Suppose now that the slope of the first arrow of $P$ is labeled by $``h"$, and its the slope lies in the interval $(-1,0]$, the $C$ operation correspond to take the $K$-lattice path obtained from $P$ exluding the starting point $(0,0)$ and $(1,0)$ in the $x$-axes and then taking the convex hull of the remaining points. The created new edges are labeled by $``e"$. Similarly, if the last edge of $P$ is labeled by $``h"$ and its slope is in the interval $[0,1)$, then the $C$ operation correspond to take the $K$-lattice path obtained from $P$ by excluded the last point of $P$ and $(x(P)-1,0)$ and take the convex hull, all new edges are lebeled by $``e"$.

    \item \textbf{D operation}. Suppose that $P$ is a $K$-lattice path of \textbf{Type II} and its first toric edge is labeled by $``h"$. The $D$ operation correspond to forget the vertex in the down-arrows labeled by $h^-$ and labeled all new edges by $``e"$. Similarly, if $P$ is of \textbf{Type III} and its last toric edge is labeled by $``h"$, then the $D$ operation is forget the vertex in the up-arrows labeled by $h^-$ and labeled all new edges by "e". For a generator of \textbf{Type IV} its differential follows the same rules.
\end{itemize}

\begin{definition}
    The combinatorial $ECH$ differential, denote by $\delta$, is defined on $K$-lattice path using the interior rounding and the C, D operations, that is,
    $$
    \delta(\Lambda)=C(\Lambda)+D(\Lambda)+R_{\textrm{int}}(\Lambda).
    $$

    We observe that the $C(\Lambda)$ is obtained by doing $C$ operation in the $K$-lattice path $\Lambda$ in the first edge and in the last edge, that is, $C(\Lambda)$ can be a sum of two generators or one, the same to the $D$ operation.
\end{definition}

\begin{example}
    We compute below, the differential of a generator which has granding equal to two.

\begin{center}
\begin{tikzpicture}
    \begin{scope}
        \foreach \x in {0,1,2} {
            \foreach \y in {0,-1} {
                \fill (\x,\y) circle (1pt);
            }
        }
        \foreach \x/\y in {0/0, 2/-1, 2/0} {
            \fill (\x,\y) circle (1pt);
        }
        \draw[->] (0,0) -- node[left]{$h$} (1,-1);
        \draw[->] (1,-1) -- node[right]{$h$} (2,0);
        \node[left] at (-0.5,-0.5) {\large $\delta:$};
        \node[right] at (2.5,-0.5) {\large $\mapsto$};
    \end{scope}

    \begin{scope}[shift={(4,0)}]
        \foreach \x in {0,1} {
            \foreach \y in {0,-1} {
                \fill (\x,\y) circle (1pt);
            }
        }
        \foreach \x/\y in {0/0, 1/-1} {
            \fill (\x,\y) circle (1pt);
        }
        \draw[->] (0,0) -- node[left]{$h^-$} (0,-1/2);
        \draw[->] (0,-1/2) -- node[left]{$h^-$} (0,-1);
        \draw[->] (0,-1) -- node[right]{$h$} (1,0);
        \node[right] at (1.6,-0.5) {\large $+$};
    \end{scope}

    \begin{scope}[shift={(7,0)}]
        \foreach \x in {0,1} {
            \foreach \y in {0,-1} {
                \fill (\x,\y) circle (1pt);
            }
        }
        \foreach \x/\y in {0/0, 1/-1} {
            \fill (\x,\y) circle (1pt);
        }
        \draw[->] (0,0) -- node[left]{$h$} (1,-1);
        \draw[->] (1,-1) -- node[right]{$h^-$} (1,-1/2);
        \draw[->] (1,-1/2) -- node[right]{$h^-$} (1,0);
        \node[right] at (1.8,-0.5) {\large $+$};
    \end{scope}

    \begin{scope}[shift={(10,0)}]
        \foreach \x in {0,1,2} {
            \fill (\x,-0.5) circle (1pt);
        }
        \draw[->] (0,-0.5) -- node[midway, above]{$e$} (1,-0.5);
        \draw[->] (1,-0.5) -- node[midway, above]{$h$} (2,-0.5);
    \end{scope}
\end{tikzpicture}
\end{center}

where the second generator was obtained by apply the $C$ operation in the first arrow and the third by apply $C$ operation in the second arrow, the last generator was obtained by doing interior rounding in the vertex $(1,-1)$.
\end{example}

Using Morse-Bott pertubation techniques from \cite{Bourgeois}, we can perturb the standard contact form of $U^*(K)$ as explained in the section \ref{pertubation}. Let $P$ be a K-lattice path, we defined its action, denoted by $A(P)$ to be sum of the euclidean lenghts of its edges, and extend by linearity in $(C,\delta)$. It is not hard to see that the differential $\delta$ decreases action, that is, if $\langle \delta(P),Q\rangle=1$, then $A(P)\geq A(Q)$. Therefore we can consider the filtred subcomplex
$$
(C_{*}^L,\delta),
$$
where we consider only generators with action less than $L$. The Stokes theorem also implies that the ECH differential descreases action, therefore we also have a filtred subcomplex on ECH. Our main result is the following.

\begin{theorem} 
    For every $L>0$, the perturbed contact form $\lambda$ and almost complex structure $J$ can be chosen such that $(ECC_{*}^L(U^*(K),\lambda,0,J),\partial)$ is isomorphic to $(C_{*}^L,\delta)$.
\end{theorem}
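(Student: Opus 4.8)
The plan is to adapt the strategy of Hutchings for $T^3$ in \cite{Hutchings4} to the almost toric geometry of $U^*(K)$ from Section~\ref{almosttoric}, the one genuinely new ingredient being the analysis of pseudoholomorphic curves in a neighborhood of the two Klein bottles in the complement of the toric region. Fix $L>0$. On the toric piece $((-\pi/2,\pi/2)\times T^2,\cos\theta\,dx+\sin\theta\,dy)$ the Reeb vector field is tangent to the $T^2$ factor, and the torus $\{\theta=\theta_0\}$ with $(\cos\theta_0,\sin\theta_0)$ proportional to a primitive integer vector $(a,b)$ is foliated by a circle of Reeb orbits of action proportional to $|(a,b)|$; only finitely many of these have action at most $L$. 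Applying the Morse--Bott perturbation scheme of Bourgeois \cite{Bourgeois} with a perfect Morse function on each such circle of orbits, up to action $L$ each of these tori contributes one elliptic orbit $e_{(a,b)}$ (with all multiplicities) and one positive hyperbolic orbit $h_{(a,b)}$ (with multiplicity one), while each of the two Klein bottles contributes, after perturbation, the pair of negative hyperbolic orbits described before the statement. One then checks, exactly as in the toric case, that an orbit set $\alpha$ with $[\alpha]=0\in H_1(U^*(K);\mathbb{Z})$ and action less than $L$ is precisely the data of a convex $K$-lattice path $\Lambda$ with $A(\Lambda)<L$: the toric orbits assemble into the toric part of $\Lambda$ via the usual correspondence (convexity of $\Lambda$ corresponding to $\alpha$ being an admissible ECH generator, in particular each positive hyperbolic orbit occurring at most once, and the endpoint $x(\Lambda)$ recording the homology of the toric part), while the presence or absence of each of the two $h^-$ pairs selects one of the four types, the parity conditions on $x(\Lambda)$ being equivalent to $[\alpha]=0$. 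This gives a bijection between the generators of $C_*^L$ and of $ECC_*^L(U^*(K),\lambda,0,J)$.

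Next I would check that the ECH index of the orbit set attached to $\Lambda$ equals the combinatorial grading $I(\Lambda)=2\,\mathrm{Area}(\Lambda)+m(\Lambda)-h(\Lambda)$. Choosing a trivialization $\tau$ of $\xi=\ker\lambda$ compatible with the product structure on the toric region, the computation of the ECH index (relative Chern number, relative self-intersection $Q_\tau$, and the Conley--Zehnder indices of the iterated orbits) reduces, by the same lattice-point count as in \cite{Hutchings4}, to $2\,\mathrm{Area}(\Lambda)+m(\Lambda)-h(\Lambda)$ on the toric part, and one checks that with a suitable choice of Morse data near the Klein bottles the two $h^-$ pairs contribute $0$ in this trivialization. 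In particular the unique $K$-lattice path of Type IV with no toric part, corresponding to the orbit set consisting of the four negative hyperbolic orbits, has index $0$, consistently with the remark following the definition of $I$.

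The core of the proof is the identification of the ECH differential $\partial$ with $\delta=C+D+R_{\mathrm{int}}$, done by counting, modulo $\mathbb{R}$-translation, the ECH index $1$ $J$-holomorphic currents in $\mathbb{R}\times U^*(K)$ for a generic admissible $J$. The currents contained in the toric region $\mathbb{R}\times(-\pi/2,\pi/2)\times T^2$ are classified exactly as in \cite{Hutchings4}: near each Morse--Bott torus a holomorphic curve is governed by a $T^2$-translation and the gradient flow of the chosen Morse function, and an index $1$ cascade corresponds to rounding a single corner of $\Lambda$ off the $x$-axis (and, when $\Lambda$ is not of Type I, not a corner adjacent to an $h^-$ edge), with the stated labeling rule; this produces $R_{\mathrm{int}}$. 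The genuinely new currents are those entering a neighborhood of one of the Klein bottles, which I would analyze by passing to the double cover $T^3$ and running a reflection-equivariant version of the toric analysis, or through a direct local model. The expected outcome is that the index $1$ currents meeting the Klein bottle locus are exactly those realizing the $C$ and $D$ operations: a current that converts the first (resp. last) $h$-edge of slope in $(-\infty,-1]$ (resp. $[1,\infty)$) into a pair of vertical $h^-$ half-edges followed by interior rounding; a current that performs the reverse move for a path of Type II, III or IV whose extreme toric edge is an $h$; and the variants with slope in $(-1,0]$ (resp. $[0,1)$) that instead delete the extreme unit segment. The asymptotics at the emergent negative hyperbolic orbits, computed via their Conley--Zehnder indices near the Klein bottle, are what single out these slope ranges. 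One must also exclude all other index $1$ currents: multiply covered currents do not contribute for generic $J$, and a current interacting with both Klein bottles simultaneously has ECH index at least $2$. Granting this, $\partial=\delta$ on the nose; $\delta^2=0$ then follows from $\partial^2=0$ (and can be re-derived combinatorially as a check), the isomorphism is one of chain complexes over $\mathbb{Z}_2$, and it is filtered because the symplectic action of the orbit set equals $A(\Lambda)$ and both differentials are action non-increasing. Since $L$ was arbitrary and the perturbation data is chosen for each $L$ separately, this proves the theorem.

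The step I expect to be the main obstacle is the holomorphic curve count near the Klein bottles: showing that the ECH index $1$ moduli spaces meeting the non-toric region are precisely those producing the $C$ and $D$ operations, with transversality for generic $J$ and the correct counts modulo $2$. This is the phenomenon with no analogue in the toric $T^3$ picture, and it requires a careful local analysis of curves asymptotic to the emergent negative hyperbolic orbits together with a gluing argument matching them to the toric pieces. A secondary difficulty is arranging the index bookkeeping for the $h^-$ orbits to be consistent with the clean formula for $I(\Lambda)$, which constrains the trivialization and the Morse data near the Klein bottles.
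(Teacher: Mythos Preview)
Your outline for the bijection of generators and the ECH index computation is essentially what the paper does (Sections~\ref{bijectioncomb} and the ECH index computations following it), though the index bookkeeping near the Klein bottles is more delicate than you suggest: the paper computes $Q_\tau$ by passing to the doubled orbit set $2\alpha$, using the quadratic behaviour of $Q_\tau$ and an explicit linking number computation for braids around the negative hyperbolic orbits (via a M\"obius band picture), and the relative Chern class contributes $n_\alpha/2$ rather than zero. These contributions conspire to give the same clean formula, but this is a computation, not a choice of Morse data.

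Where your proposal diverges substantially is in the identification of the differential. You propose to \emph{directly count} the index~$1$ curves meeting the Klein bottle locus, via a double-cover or local-model argument, to produce the $C$ and $D$ operations. The paper does not do this, and for good reason: no such direct count is carried out anywhere in the argument. Instead the paper proceeds indirectly. First it proves (Proposition~\ref{prop: geometric--->combinatorial}) that if $\langle\partial\alpha,\beta\rangle=1$ then $\langle\delta P_\alpha,P_\beta\rangle=1$, by a case analysis on the Fredholm index formula~\eqref{eq: fredind}, combined with the local energy inequality (Lemma~\ref{lemma: localenergy}), the ``paths cannot cross'' constraint (Proposition~\ref{prop: pathcantcross}), positivity of intersections with trivial cylinders, and the exclusion of double rounding for generic $J$. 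This already confines any possible contribution to one of the three combinatorial operations. For the reverse direction, interior rounding comes from Taubes' existence results in the toric region (Proposition~\ref{prop: introunding}), but the $C$ and $D$ operations are established by a completely different mechanism: the paper uses the \emph{known} ECH homology (Theorem~\ref{Homologythem}, via the Taubes isomorphism and Lebow's computation), an $R$-symmetry of moduli spaces (equation~\eqref{bijmoduli}) stabilised by automatic transversality (Proposition~\ref{Prop: stabcurve}), and then an elaborate induction on $|p|$ exploiting $\partial^2=0$ (Proposition~\ref{Prop: CoperationJ} and the subsequent arguments). Most strikingly, the base case for the $D$ operation is pinned down not by any curve count but by the \emph{Weyl law} for the ECH spectrum (Corollary~\ref{asymptotic}): if the relevant moduli space were empty, the ECH capacities would grow linearly and violate the volume asymptotics.

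So the gap in your proposal is precisely the step you flag as the main obstacle: you have no mechanism for the curve count near the Klein bottles, and the paper's authors evidently did not find one either. Your equivariant-double-cover idea is plausible in spirit but would require establishing transversality and mod~$2$ counts for curves asymptotic to the negative hyperbolic orbits in a setting where the $R$-symmetry prevents genericity; the paper sidesteps this entirely by importing global homological and spectral information. If you want to pursue your direct route, you would need to replace the Weyl-law argument and the $\partial^2=0$ induction by an honest local gluing analysis, which is not in the existing literature for this geometry.
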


\subsection{ECH spectrum and applications}

We compute the ECH spectrum of $(U^*(K),\lambda_{st})$ in terms of $K$-lattice paths.

\begin{theorem}\label{combcapacity}
    The ECH spectrum of $U^*(K)$ with the standard contact form is given by
    \begin{equation}\label{formulaspecECH}
    c_{k}(U^*(K),\lambda_{st})=\min \{A(P)\mid |P|=2k\},
    \end{equation}
    where the minimum is taken over $K$-lattice path.
\end{theorem}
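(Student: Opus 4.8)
The plan is to combine the chain-level isomorphism of our main theorem with the standard variational characterization of the ECH spectrum, and then to solve the resulting combinatorial minimization problem. Recall that for a connected closed contact three-manifold one has $c_0(Y,\lambda)=0$ and, for $k\ge 1$, $c_k(Y,\lambda)$ is the infimum of those $L>0$ for which the ECH contact invariant $c(\xi)\in ECH_0(Y,\xi,0)$ lies in the image of
$$
U^{k}\colon ECH^{L}_{2k}(Y,\lambda,0)\longrightarrow ECH_0(Y,\lambda,0).
$$
Here $c(\xi)\neq 0$ because $(U^{*}(K),\lambda_{st})$ is the boundary of the Liouville domain $D^{*}(K)$, and under the combinatorial model it is represented by the point at the origin, the unique index-$0$ cycle coming from the empty orbit set. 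Since $\lambda_{st}$ is Morse--Bott, I would first use the $C^{0}$-continuity of ECH capacities to reduce to the nondegenerate perturbations $\lambda_{\delta}$ of Section~\ref{pertubation}: as $\delta\to 0$ the actions of the two Reeb orbits created on each perturbed Morse--Bott torus converge to the common Euclidean length of the corresponding edge, so $c_k(U^{*}(K),\lambda_{st})=\lim_{\delta\to 0}c_k(U^{*}(K),\lambda_{\delta})$, while by our main theorem $(ECC^{L}_{*}(U^{*}(K),\lambda_{\delta},0,J),\partial)\cong(C^{L}_{*},\delta)$ with the action filtration induced by $A(\cdot)$, for $\delta$ small depending on $L$. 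Thus the computation reduces to showing
$$
\inf\{L : [\mathrm{pt}]\in\mathrm{Im}(\mc U^{k}\colon H_{2k}(C^{L}_{*},\delta)\to H_{0}(C_{*},\delta))\}=\min\{A(P)\mid |P|=2k\},
$$
where $\mc U$ is the combinatorial avatar of the $U$-map.

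The lower bound is immediate on the chain level. Suppose $Z\in C^{L}_{2k}$ is a cycle whose class satisfies $\mc U^{k}[Z]=[\mathrm{pt}]\neq 0$. Then $[Z]\neq 0$, so $Z$ is a non-zero $\mathbb{Z}_{2}$-chain; being homogeneous of grading $2k$, its support consists of $K$-lattice paths $P$ with $|P|=2k$ and $A(P)<L$. Consequently $L>\min\{A(P)\mid |P|=2k\}$, and passing to the infimum gives $c_k(U^{*}(K),\lambda_{st})\ge\min\{A(P)\mid |P|=2k\}$.

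For the upper bound I would, for each $k\ge 1$, produce an explicit cycle $Z_{k}\in C_{2k}$ whose support consists only of action-minimal generators --- so that $A(Z_{k})=\min\{A(P)\mid |P|=2k\}$ --- and which satisfies $\mc U[Z_{k}]=[Z_{k-1}]$, with $Z_{0}$ the point; the induction $\mc U^{k}[Z_{k}]=c(\xi)$ then forces $c_k(U^{*}(K),\lambda_{st})\le\min\{A(P)\mid|P|=2k\}$, and together with the lower bound this proves \eqref{formulaspecECH}. Concretely, the action of a $K$-lattice path depends only on its underlying convex polygon and not on the labels, while $|P|=2k$ pins down, for each admissible polygon, the number of edges carrying the label ``h''; the minimizing polygons are those of least Euclidean perimeter among convex lattice polygons lying below the $x$-axis and enclosing the area forced by $|P|=2k$, and $Z_{k}$ is built from such a polygon by summing over the admissible choices of ``h''-labeled edges, corrected by the $C$- and $D$-operations whenever the extremal edges have slope in $(-\infty,-1]$ or $[1,\infty)$, so that the interior roundings cancel in pairs modulo $2$. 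One then checks that $\mc U$ carries this cycle to the corresponding one at level $k-1$.

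The main difficulty is exactly this last point: reconciling the isoperimetric optimization over lattice polygons --- a purely geometric statement relating perimeter and enclosed area --- with the algebra of the differential $\delta$. In contrast to the toric situations of $T^{3}$ or of a strictly convex toric domain, where the contact form can be arranged so that $\partial=0$ and every generator is already a cycle, here the negative hyperbolic orbits produced near the two Klein bottles force the operations $C$ and $D$ to be non-trivial, so an action-minimal configuration need not be a cycle on the nose; one must identify which $\mathbb{Z}_{2}$-combination of action-minimal generators represents the distinguished class $\sigma_{k}$ with $\mc U^{k}\sigma_{k}=c(\xi)$, and verify that this representative has the minimal action and that $\mc U$ behaves as claimed. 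The combinatorial $U$-map itself has to be read off from the holomorphic curves underlying the differential described in Section~\ref{descrip: differential}; alternatively one could pin down the $U$-module structure on $H_{*}(C_{*},\delta)$ by matching it, through the isomorphism of Colin--Ghiggini--Honda, with the Heegaard Floer homology of the $T^{2}$-bundle $U^{*}(K)$.
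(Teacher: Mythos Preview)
Your lower bound is fine, but you have misdiagnosed the difficulty in the upper bound, and as a result the construction you propose is far more complicated than what is actually needed. The key observation you are missing is that all three pieces of the combinatorial differential $\delta=R_{\mathrm{int}}+C+D$ require the presence of at least one edge labeled ``$h$'': the $C$ operation applies only when the extremal toric edge is labeled ``$h$'', the $D$ operation applies only when the toric edge adjacent to the pair of $h^{-}$ half-arrows is labeled ``$h$'', and the interior rounding in the differential always removes exactly one ``$h$'' label (otherwise it would drop the grading by $2$, not $1$). Consequently every $K$-lattice path with no ``$h$'' labels is automatically a cycle in $(C_{*},\delta)$. Your worry that ``the negative hyperbolic orbits \dots force the operations $C$ and $D$ to be non-trivial, so an action-minimal configuration need not be a cycle on the nose'' is therefore unfounded: the $h^{-}$ labels alone never trigger the differential.

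The paper exploits this as follows. First, one shows (Lemma~\ref{lemma: eECHhomologous}) that any two generators of even grading with no ``$h$'' labels are $\delta$-homologous, so each one represents the unique nonzero class in $ECH_{2k}$; since $U$ is an isomorphism in every positive even degree (Corollary~\ref{U-map-Isomorphism}, coming from Proposition~\ref{U-map-ee}), any such class satisfies $U^{k}[\alpha]=[\emptyset]$. Second, one shows that if a $K$-lattice path $P$ of index $2k$ has any ``$h$'' edge, then one can round at an adjacent vertex and relabel so as to strictly decrease $A(P)$ while preserving $I(P)$; hence the minimum on the right-hand side of \eqref{formulaspecECH} is attained by a path with no ``$h$'' edges. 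These two facts immediately give both inequalities: the minimizer is itself a cycle realizing the nonzero class (upper bound), and any cycle of grading $2k$ is supported on paths of index $2k$ (lower bound). There is no need to build $\mathbb{Z}_{2}$-combinations, to add $C$/$D$ correction terms, or to track the $U$-map on specific action-minimal cycles.
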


We also obtain an ``almost toric" description of $D^*(K)$. Recall the definition of toric domain. Let $\Omega$ be a domain in the first quadrant of the plane, define the toric domain
$$
X_{\Omega}=\{(z_1,z_2)\in \mathbb{C}^2\mid \pi(|z_1|^2,|z_2|^2)\in \Omega\},
$$
equipped with the standar symplectic form from $\mathbb{C}^2$. For example if $\Omega$ is a triangle with vertices $(0,0)$, $(a,0)$ and $(0,b)$, then $X_{\Omega}$ is the ellipsoid $E(a,b)$, in particular we consider the ball $B^4(a)=E(a,a)$. If $\Omega$ is the retangle with vertices $(0,0), (a,0),(0,b)$ and $(a,b)$, then $P(a,b):=X_{\Omega}$ is the polydisk. 

\begin{proposition}\label{semidisk}
    There is a symplectic embedding from the toric semidisk $D^2_{+}\times T^2$ into $D^*K$, where
    $$
    D^2_{+}=\{(I_1,I_2)\in \mathbb{R}^2\mid I_1^2+I_2^2<1\textrm{ and }I_1>0\},
    $$
    with its standard symplectic form $\omega_0=dx\wedge dI_1+dy\wedge dI_2 $
\end{proposition}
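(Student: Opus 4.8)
The plan is to exhibit $D^2_+\times T^2$ explicitly as an open subset of $D^*K$, using the ``almost toric'' picture of Section \ref{almosttoric}. Recall that the flat Klein bottle $K=\mathbb{R}^2/\Gamma$ is orientation-double-covered by the flat torus $T^2=\mathbb{R}^2/\Gamma^+$, and that the cotangent lift of the generating deck transformation $\bar\tau(x,y)=(-x,y+1)$ is the free symplectic involution
$$
\tilde\tau(x,y,p_x,p_y)=(-x,\,y+1,\,-p_x,\,p_y)
$$
of $D^*T^2=T^2\times\{p_x^2+p_y^2\le 1\}$ with its canonical form $d(p_x\,dx+p_y\,dy)$. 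Consequently $D^*K\cong D^*T^2/\mathbb{Z}_2$ as symplectic manifolds, and the quotient map $\pi:D^*T^2\to D^*K$ is a double cover and a local symplectomorphism. I would take this description as the starting point (it is essentially what Section \ref{almosttoric} sets up: the open dense chart $((-\pi/2,\pi/2)\times T^2,\cos\theta\,dx+\sin\theta\,dy)\subset U^*K$ is the restriction of $\pi$ to $\{p_x>0,\ p_x^2+p_y^2=1\}$ written in polar coordinates $p_x=\cos\theta,\ p_y=\sin\theta$).

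Next I would single out $V:=\{p_x>0,\ p_x^2+p_y^2<1\}\subset D^*T^2$ and observe that $\tilde\tau(V)=\{p_x<0,\ p_x^2+p_y^2<1\}$ is disjoint from $V$; hence $\pi$ is injective on $V$, and being an open local symplectomorphism, $\pi|_V:V\hookrightarrow D^*K$ is a symplectic embedding onto an open subset. It remains to identify $(V,d(p_x\,dx+p_y\,dy))$ with $(D^2_+\times T^2,\omega_0)$: after renaming $(I_1,I_2):=(p_x,p_y)$ the set $\{p_x>0,\ p_x^2+p_y^2<1\}$ is literally $D^2_+$, and precomposing with the map $(x,y)\mapsto(-x,-y)$ on the torus factor turns $d(I_1\,dx+I_2\,dy)=dI_1\wedge dx+dI_2\wedge dy$ into $dx\wedge dI_1+dy\wedge dI_2=\omega_0$. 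Composing this identification with $\pi|_V$ gives the desired symplectic embedding.

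I do not expect a genuine obstacle, since Proposition \ref{semidisk} is soft; the two points requiring care are small. The first is injectivity of $\pi|_V$: this works precisely because $\tilde\tau$ lifts an \emph{orientation-reversing} deck transformation, so it reverses the sign of $p_x$ and therefore interchanges $\{p_x>0\}$ and $\{p_x<0\}$ — if one instead tried to use a half of $D^*T^2$ that $\tilde\tau$ preserved, the quotient map would not be injective there. The second is bookkeeping of orientation and sign conventions, so that the symplectic form induced on $D^*K$ is the same one used elsewhere in the paper and $\omega_0$ carries the sign stated above; this is settled by the elementary linear coordinate changes just described (equivalently, one can run the whole argument intrinsically by flowing the almost toric boundary chart of Section \ref{almosttoric} inward along the Liouville vector field of $D^*K$ and passing from polar to Cartesian coordinates on the half-disk).
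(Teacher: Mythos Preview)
Your proposal is correct and follows essentially the same route as the paper: both single out the open half $\{p_x>0\}$ of the unit disk cotangent bundle of the double-covering torus and observe that the quotient map to $D^*K$ is a symplectic embedding there, then identify this half with $D^2_+\times T^2$ via $(I_1,I_2)=(p_x,p_y)$. The paper writes the map in polar fiber coordinates $(r,\theta)$ first (its map $F$) and then passes to $(I_1,I_2)=(r\cos\theta,r\sin\theta)$, whereas you skip straight to Cartesian coordinates and phrase things via the $\mathbb{Z}_2$-quotient $D^*T^2\to D^*K$; your explicit check that $\tilde\tau$ interchanges $\{p_x>0\}$ and $\{p_x<0\}$ is the injectivity argument the paper leaves implicit.
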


\begin{proof}
    Consider the following open set of $U^*(K)$
    $$
    A=\{(p,\eta)\in D^*(K)\mid x(\eta)\neq 0\},
    $$
    where $x(\eta)$ is the $x$-coordinate in $\mathbb{R}^2$ of any lift of the covector $\eta$ under the derivative of the natural projection $\pi:\mathbb{R}^2\rightarrow K$, is true that this is not a well defined number, but the condition that this is zero is well defined, $A$ is the complement of this set. Now consider the following map $F:(0,1)\times (-\pi/2,\pi/2)\times T^2\rightarrow D^*(K)$, given by
    $$
    F(r,\theta,[x,y])=(\pi(x,y),rd\pi_{(x,y)}(\cos(\theta),\sin(\theta))).
    $$
    \textbf{Claim.} The map $F$ is an symplectic embedding.

    Indeed, we will prove that $F^*(\omega_{st})=\omega_0$. In a almost global coordinates for $D^*(K)$ we can write
    $$
    \omega_{st}=dx\wedge dz+dy\wedge dw
    $$
    where $(x,y)$ are local coordinates in $K$ (but in an open dense set) from the projection $\pi$ and $(z,w)$ are induced coordinates in the fiber. Consider the following ``action" coordinates
    $$
    I_1=r\cos(\theta)\quad and\quad I_2=r\sin(\theta),
    $$
    since $r\in (0,1)$ and $\theta\in (\pi/2,\pi/2)$, this retangle is transformed in $I_1^2+I_2^2<1$ with $I_1>0$, which is exactly the toric semidisk $D²_{+}$.
    Moreover, we have in this coordinates $F^*(\omega_{st})=dx\wedge dI_1+dy\wedge dI_2$.
\end{proof}

\begin{remark}\label{remark; Gromovwidth}
    This embedding together with the ``Traynor Trick" show that we have a symplectic embedding of toric domains into $D^*(K)$,
    $$
    \textrm{int}(B^4(1-\epsilon))\hookrightarrow D^*(K)\quad\textrm{and} \quad \textrm{int}(E(1-\epsilon,2-\epsilon))\hookrightarrow D^*(K);\hspace{0.2cm}\forall \epsilon>0.
    $$ 
\end{remark}

In particular we have that the Gromov width of $D^*K$ satisfies $c_{Gr}(D^*K,\omega_{std})\geq 1$. The ECH capacities give not enough obstruction to compute the Gromov width of $(D^*(K),\omega_{std})$. We prove a combinatorial obstruction theorem (see Theorem \ref{thm: combbeyond}) for symplectic embeddings 
$$
X_{\Omega}\rightarrow D^*K,
$$
which can be applied to the ball to obtain the following theorem.

\begin{theorem}\label{thm: GROMOVWDITH}
    The Gromov width of $(D^*K,\omega_{std})$ is 
    $$
    c_{Gr}(D^*K,\omega_{std})=1.
    $$
\end{theorem}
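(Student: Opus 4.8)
The plan is to prove the Gromov width of $(D^*K,\omega_{std})$ equals $1$ by establishing the two inequalities separately. The lower bound $c_{Gr}(D^*K,\omega_{std})\geq 1$ is already in hand: by Remark \ref{remark; Gromovwidth}, for every $\epsilon>0$ there is a symplectic embedding $\mathrm{int}(B^4(1-\epsilon))\hookrightarrow D^*K$, so $c_{Gr}\geq 1-\epsilon$ for all $\epsilon$, hence $c_{Gr}\geq 1$. The entire content of the theorem is therefore the upper bound: I must show that for every $\epsilon>0$ there is \emph{no} symplectic embedding $\mathrm{int}(B^4(1+\epsilon))\hookrightarrow D^*K$.

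For the upper bound I would invoke the combinatorial obstruction theorem (Theorem \ref{thm: combbeyond}) for symplectic embeddings $X_\Omega\hookrightarrow D^*K$, applied to $\Omega$ the triangle with vertices $(0,0),(a,0),(0,a)$, so that $X_\Omega=B^4(a)$. The obstruction should take the form: if $B^4(a)$ symplectically embeds into $D^*K$, then $a$ is bounded above by some quantity read off from the $K$-lattice path combinatorics — concretely, one expects a bound of the shape $a^2/2 \le A(P)$ forced for a suitable family of $K$-lattice paths $P$ (the $a^2/2$ being the symplectic area of $B^4(a)$, which must be no larger than the area available after passing through the pseudoholomorphic curves coming from the ECH differential, exactly as in the toric obstruction arguments of Hutchings). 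The key computational step is then to identify, among all $K$-lattice paths, the one whose normalized action is minimized, i.e. to compute $\sup\{a : a^2/2 \le A(P) \text{ for the relevant } P\}$, and to check that this supremum is exactly $1$. I expect the extremal path to be closely related to the index-zero \textbf{Type IV} generator with no toric part, or a short path built from it, reflecting the geometry that $D^*K$ ``looks like'' a semidisk $D^2_+\times T^2$ of radius $1$ rather than a full disk bundle; the factor-of-$2$ discrepancy between the ball bound and the ellipsoid bound $E(1,2)$ in Remark \ref{remark; Gromovwidth} should reappear here as the combinatorial reason the ball cannot be enlarged past radius $1$.

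More precisely, the argument I have in mind runs: suppose $\mathrm{int}(B^4(a))\hookrightarrow D^*K$ symplectically. Theorem \ref{thm: combbeyond} produces, from a holomorphic curve asymptotic to Reeb orbits on $\partial D^*K$ and to the orbits bounding the embedded ball, a constraint relating $a$ to the action $A(P)$ and index $I(P)$ of some $K$-lattice path $P$. One then runs the standard ECH embedding obstruction inequality: for the ball to embed one needs $c_k(\mathrm{int}\,B^4(a)) \le c_k(D^*K,\lambda_{st})$ for all $k$, where $c_k(B^4(a)) = a\cdot d(k)$ for the standard ball capacities $d(k)$ (with $d(k)\sim \sqrt{2k}$), and $c_k(D^*K,\lambda_{st}) = \min\{A(P) : |P|=2k\}$ by Theorem \ref{combcapacity}. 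The crux is then the asymptotic comparison $\limsup_k c_k(D^*K)/d(k) = 1$, together with — and this is the part the ECH capacities alone cannot see, hence the need for the finer Theorem \ref{thm: combbeyond} — ruling out embeddings of balls of radius slightly above $1$ whose ECH-capacity obstruction is not sharp; here the extra holomorphic-curve input closes the gap.

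\textbf{The main obstacle} will be the last point: as the text explicitly flags, ``the ECH capacities give not enough obstruction to compute the Gromov width,'' so the proof cannot be a clean application of $c_k(B^4(a))\le c_k(D^*K)$. The real work is in Theorem \ref{thm: combbeyond}, whose proof must extract from the combinatorial differential $\delta$ (the $C$, $D$, and interior-rounding operations) an actual pseudoholomorphic curve in the completed symplectic cobordism, and then apply an intersection-positivity / relative-adjunction argument à la Hutchings to convert the existence of that curve into the area bound $a^2/2 \le A(P)$ for the optimal $P$. Verifying that the optimal $P$ gives exactly the bound $1$, and that the $h^-$-labeled half-edges (the negative hyperbolic orbits over the two Klein bottles, which are the novel feature compared to $T^3$) do not provide a ``shortcut'' allowing a larger ball, is the delicate combinatorial heart of the argument.
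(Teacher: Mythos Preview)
Your lower-bound argument is fine and matches the paper. The upper bound, however, has a genuine gap: you correctly identify Theorem~\ref{thm: combbeyond} as the tool, but you never actually use its content, and the concrete guesses you make are off.

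First, the key input from Theorem~\ref{thm: combbeyond} is not an area bound of the shape $a^2/2 \le A(P)$, nor is it the ECH-capacity asymptotics $c_k(B^4(a))\le c_k(D^*K)$ (which, as you yourself note, are insufficient). What makes the obstruction work is the \emph{third} inequality in the definition of $\Lambda \le_{X_\Omega,D^*K} \Lambda'$, namely
\[
x(\Lambda)+y(\Lambda)-h(\Lambda)/2 \;\ge\; n_{\Lambda'}/2 + m(\Lambda')-1,
\]
which comes from the $J_0$-index, together with the no-common-elliptic-orbit clause on factorizations. The paper's proof chooses the specific Type~II $K$-lattice path
\[
\Lambda' = h_{(0,-1)}^1 h_{(0,-1)}^2\, e_{(0,-1)}^{k}\, e_{(1,0)}\, e_{(0,1)}^{k+1},
\]
precisely because any nontrivial factorization of $\Lambda'$ into null-homologous pieces must repeat the elliptic orbit $e_{(0,1)}$, so the factorization clause forces $n=1$. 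One then has $I(\Lambda)=4(k+1)$ and, crucially, $x(\Lambda)+y(\Lambda)\ge 2(k+1)$. A short lattice-point count shows these constraints pin $\Lambda$ down to one of $e_{1,0}^{2(k+1)}$, $e_{0,1}^{2(k+1)}$, $e_{2(k+1),1}$, $e_{1,2(k+1)}$, each of which has $A_{B^4(r)}(\Lambda)=2(k+1)r$, while $A_{D^*K}(\Lambda')=2(k+1)+1$. This yields $r\le (2k+3)/(2k+2)$, and letting $k\to\infty$ gives $r\le 1$.

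Second, your guess that the extremal object is ``closely related to the index-zero Type~IV generator'' is wrong: the Type~IV generator with no toric part has index zero and carries no useful action information; the paths that drive the argument are the Type~II family above, whose negative-hyperbolic half-arrows are exactly what prevents a factorization and forces the $x+y$ constraint to be large. Without identifying this specific family and the $J_0$-type inequality, your outline never produces a bound sharper than the ECH-capacity one, and so cannot close the gap between the capacity bound and the Gromov width.
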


\subsection{Outline of the proof}

If we combine the Taubes isomorphism \ref{Taubesiso} with computation results of the thesis of Eli. B. Lebow \cite{Eli}, we can compute the ECH homology in the classe zero.

\begin{theorem}\label{Homologythem}
    Let $\xi_{std}$ be the contact structure defined by the canonical contact form in $U^*(K)$, then
    $$
ECH(U^*(K),\xi_{st},0) = \left\{
\begin{array}{rcl}
\mathbb{Z}_2, & *=0,1,2,...\\
0, & \textrm{otherwise}
\end{array}
\right.
$$
\end{theorem}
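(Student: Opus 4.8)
The plan is to compute $ECH(U^*(K),\xi_{st},0)$ indirectly, by transporting the question to Seiberg--Witten theory. By the Taubes isomorphism (Theorem~\ref{Taubesiso}), for every $\Gamma\in H_1(U^*(K);\mathbb{Z})$ there is a canonical isomorphism
$$
ECH_*(U^*(K),\xi_{st},\Gamma)\;\cong\;\widehat{HM}^{-*}\!\big(U^*(K),\,\mathfrak{s}_{\xi_{st}}+\mathrm{PD}(\Gamma)\big),
$$
identifying the class-$\Gamma$ summand of embedded contact homology with the monopole Floer cohomology (in the flavour dual to the ``to'' version) of the spin$^c$ structure $\mathfrak{s}_{\xi_{st}}+\mathrm{PD}(\Gamma)$, up to an overall grading shift. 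For $\Gamma=0$ the relevant spin$^c$ structure is the canonical contact one $\mathfrak{s}_{\xi_{st}}$. Since the statement asserts an honest $\mathbb{Z}$-grading by the degrees $0,1,2,\dots$, one first checks that $c_1(\xi_{st})$ is torsion in $H^2(U^*(K);\mathbb{Z})$, so that $\mathfrak{s}_{\xi_{st}}$ is a torsion spin$^c$ structure and both sides carry absolute $\mathbb{Q}$-gradings; this reduces the theorem to a single explicit monopole Floer computation.

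For that computation I would invoke Lebow's thesis \cite{Eli}. The manifold $U^*(K)$ is a closed oriented $T^2$-bundle over $S^1$, in fact one of the flat three-manifolds, and \cite{Eli} computes its monopole Floer homology in every spin$^c$ structure. In the torsion structure $\mathfrak{s}_{\xi_{st}}$ the answer has the shape of a single half-infinite tower: with $\mathbb{Z}_2$-coefficients it is $\mathbb{Z}_2$ in each grading of a half-line and vanishes in the complementary gradings, which is the shape forced by the contribution of the reducible Seiberg--Witten solution of a torsion spin$^c$ structure. Translating this back through the Taubes isomorphism, $ECH_*(U^*(K),\xi_{st},0)$ is $\mathbb{Z}_2$ in each grading above some threshold and $0$ below it. I would quote the precise statement from \cite{Eli} for the spin$^c$ structure matching $\mathfrak{s}_{\xi_{st}}$, using $\mathbb{Z}_2$-coefficients throughout to match the coefficient ring used for ECH in this paper.

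It then remains to pin down the overall grading shift, which for a fixed $(Y,\xi)$ is a single universal constant. I would fix it by matching one distinguished element on each side: the bottom class of the tower on the monopole side should correspond to the unique index-zero ECH generator in class $0$, which by the Remark following the definition of $I(P)$ is the empty orbit set (equivalently, the Type~IV $K$-lattice path with no toric part). This forces the tower to begin exactly in degree $0$, and gives
$$
ECH_*(U^*(K),\xi_{st},0)=\begin{cases}\mathbb{Z}_2,& *=0,1,2,\dots,\\[2pt] 0,& \text{otherwise},\end{cases}
$$
as claimed.

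I expect the only genuine difficulty to be bookkeeping rather than geometry. Concretely, one must (i) verify that $c_1(\xi_{st})$ is torsion and identify $\mathfrak{s}_{\xi_{st}}$ precisely among the spin$^c$ structures on $U^*(K)$ in Lebow's enumeration, which requires understanding $H^2(U^*(K);\mathbb{Z})$ and locating the ``almost toric'' contact structure within it; and (ii) reconcile the three grading normalizations involved --- the ECH index $I$, the absolute grading on $\widehat{HM}$ used in the Taubes isomorphism, and Lebow's convention --- so that the universal shift places the tower in degrees $\ge 0$. No new analysis is needed beyond \ref{Taubesiso} and \cite{Eli}, so once these identifications are made the theorem follows at once.
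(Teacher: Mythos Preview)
Your approach is essentially the paper's: Taubes isomorphism plus Lebow's thesis, with the grading pinned down by $[\emptyset]$. Two points of friction are worth flagging.

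First, Lebow \cite{Eli} does not compute $\widehat{HM}$ directly; he computes $ECH_*(U^*(K),\xi_0,\Gamma)$ for the \emph{toric} contact structure $\xi_0$ coming from the $T^2$-bundle description, in each $\Gamma\in H_1$. So your step (i) is not merely locating $\mathfrak{s}_{\xi_{st}}$ in a table of spin$^c$ structures: one must compare $\mathfrak{s}_{\xi_{st}}$ to $\mathfrak{s}_{\xi_0}$. The paper does exactly this, showing $c_1(\xi_{st})=0$ (Proposition~\ref{chernclass}) and $c_1(\xi_0)=0$ (Lemma~\ref{torictrivial}), hence $\mathfrak{s}_{\xi_{st}}-\mathfrak{s}_{\xi_0}=\mathrm{PD}(\Gamma)$ is torsion in $H^2(U^*(K);\mathbb{Z})\cong\mathbb{Z}\oplus\mathbb{Z}_2\oplus\mathbb{Z}_2$; the Taubes isomorphism then gives $ECH_*(U^*(K),\xi_{st},0)\cong ECH_*(U^*(K),\xi_0,\Gamma)$, and the right-hand side is what Lebow provides.

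Second, your grading argument has a slip: there is \emph{not} a unique index-zero generator in class $0$; besides $\emptyset$ there is the Type~IV generator $h_{(0,-1)}^1h_{(0,-1)}^2h_{(0,1)}^1h_{(0,1)}^2$ (these are not the same object). The paper instead argues that $(D^*K,\omega_{st})$ is an exact symplectic filling, so $[\emptyset]$ is a nonzero class in degree $0$, and since no ECH generator has grading $<0$ the tower must start at $0$. This is the clean way to fix the shift.
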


\begin{proof}
    We compute the difference of the first Chern class of both contact structures $\xi_{st}$ and $\xi_{0}$ and obtain:
    $$
    c_1(\xi_{st})-c_1(\xi_0)=2(\mathfrak{s}(\xi_{st})-\mathfrak{s}(\xi_0)),
    $$
    where the contact structure $\xi_0$ was considered in \cite{Eli} and $\mathfrak{s}(\xi)$ is the spin-c structure associated to the oriented $2$-plane bundle $\xi$. Because $U^*(K)$ also can be seen as a $T^2$-torus bundle over the circle with monodromy matrix $A=-Id_{2\times 2}$. Follows from the proposition \ref{chernclass} that $c_1(\xi_{st})=0$ and cleary $c_{1}(\xi_0)=0$ (see lemma \ref{torictrivial}). Therefore, $\mathfrak{s}(\xi_{st})-\mathfrak{s}(\xi_0)$ is a torsion element in $H^2(U^*(K),\mathbb{Z})\simeq \mathbb{Z}\oplus \mathbb{Z}_2\oplus \mathbb{Z}_2$. Write
    $$
    \mathfrak{s}(\xi_{st})=\mathfrak{s}(\xi_0)+PD(\Gamma),
    $$
    where $\Gamma$ is a torsion element in $H_1(U^*(K),\mathbb{Z})$. We have the commutative diagram 
\[
\begin{tikzcd}
ECH_{*}(U^*(K),\xi_{st},0) \arrow[r, ""] \arrow[d, "\simeq"'] & ECH_{*}(U^*(K),\xi_0,\Gamma) \arrow[d, "\simeq"] \\
\widehat{HM}^
{-*}(U^*(K),\mathfrak{s}(\xi_{st}))\arrow[r, "\simeq"]                  & \widehat{HM}^{-*}(U^*(K),\mathfrak{s}(\xi_0)+PD(\Gamma))
\end{tikzcd}
\]
    where the vertical maps comming from the Taubes isomorphism between ECH and the Seiberg-Witten cohomology (see Theorem \ref{Taubesiso}) and the horizontal bellow arrow is only the identity. Therefore we have an isomorphism
    $$
    ECH_{*}(U^*(K),\xi_{st},0)\simeq ECH_{*}(U^*(K),\xi_0,\Gamma).
    $$
    Using that $\Gamma$ is torsion we have a well defined relative $\mathbb{Z}$-grading for this homology which can be refined to a absolute $\mathbb{Z}$-grading with the fact that this isomorphism sends $[\emptyset]$ to $i_0$ as in \cite{Eli}, because $(D^*(K),\omega_{st})$ is a exact symplectic filling of $(U^*(K),\lambda_{st})$ so that the ECH-class of the empty set is a non-trivial ECH generator in degree zero, moreover there is no ECH generator with grading strictly less than zero, the unique option is that the class of the empty set is send to $i_0$, this implies the Theorem \ref{Homologythem}.
\end{proof}

However, the ECH homology is a topological invariant of the three-manifold, meaning that it alone is insufficient for computing capacities. To address this, we combine homological information with existence results for pseudoholomorphic curves from \cite{Taubes} to compute the ECH differential. Throughout the proof, we also use the Weyl law of the ECH spectrum (see \ref{asymptotic}) to show that the D operation plays a role in the ECH differential.

To compute the Gromov width of $(D^*K,\omega_{std})$, we prove a combinatorial obstruction theorem for symplectic embeddings of toric domains $X_{\Omega}\subset \mathbb{C}^2\hookrightarrow D^*K$ similar to the Theorem 1.19 in \cite{Hutchings0}, and using that we concluded the computation.

\begin{flushleft}
\textbf{Acknowledgments.} The first author would like to thank Brayan Ferreira  for helpful discussions and hospitality during his visit in UFES.
\end{flushleft}

\section{Embedded contact homology theory}\label{ECHtheory}

Let $Y$ be a closed oriented three-manifold endowed of a contact form $\lambda$ on $Y$, which is a $1$-form such that $\lambda\wedge d\lambda>0$ with respect the orientation of $Y$. The contact form $\lambda$ define the contact structure $\xi=\ker\lambda$, which is an oriented two-plane distribution, and the Reeb vector field $R$ is characterized by $d\lambda(R,\cdot)=0$ and $\lambda(R)=1$.

A Reeb orbit is a closed orbit of $R$, i.e. a curve $\gamma:\mathbb{R}/T\mathbb{Z}\rightarrow Y$ for some $T>0$, module reparametrization, such that $\gamma'(t)=R(\gamma(t))$. A Reeb orbit can be embedded in $Y$, or a $m$-fold cover of an embedded Reeb orbit for some integer $m>1$.

We assume that $\lambda$ is nondegenerate, i.e. given a Reeb orbit $\gamma$, the linearized return map is the symplectic automorphism $P_{\gamma}$ on the symplectic vector space $(\xi_{\gamma(0)},d\lambda)$, which is defined as the derivative of the time $T$ flow of $R$. The Reeb orbit $\gamma$ is nondegenerate if $1$ is not an eigenvalue of $P_{\gamma}$, the contact form $\lambda$ is nondegenerate if all Reeb orbits are nondegenerate.

Since $P_{\gamma}$ is a linear symplectic automorphism in dimension two, the Reeb vector field $R$ admits only three types of Reeb orbits.
\begin{itemize}
\item Elliptic: The eigenvalues of $P_{\gamma}$ are on the unit circle.
\item Positive hyperbolic: The eigenvalues of $P_{\gamma}$ are real and positive.
\item Positive hyperbolic: The eigenvalues of $P_{\gamma}$ are real and negative.
\end{itemize}

An orbit set is a finite set of pairs $\alpha=\{(\alpha_i,m_i)\}$, where $\alpha_i$ are distinct embedded Reeb orbits and $m_i$ are positive integers. We said that $\alpha$ is admissible if $m_i=1$, whenever $\alpha_i$ is hyperbolic.

For a fixed homology class $\Gamma\in H_1(Y)$, the embedded contact homology $ECH_{*}(Y,\xi,\Gamma)$ is the homology of a chain complex $ECC(Y,\lambda,\Gamma,J)$. The chain complex is freely generated over $\mathbb{Z}/2$ by admissible orbit sets whose the total homology class is $\Gamma$
$$
[\alpha]=\sum_im_i[\alpha_i]=\Gamma.
$$

The chain complex differential is defined roughly as follows. An almost complex structure $J$ on the symplectization $\mathbb{R}\times Y$ is admissible if $J$ is $\mathbb{R}$-invariant, $J(\partial_s)=R$ where $s$ denotes the $\mathbb{R}$ coordinate, and $J$ restricts to an endomorphism on the contact structure $\xi$, satisfying $d\lambda(v,Jv)>0$ for all $v\in \xi\setminus \{0\}$.

If $\alpha=\{(\alpha_i,m_i)\}$ and $\beta=\{(\beta_j,n_j)\}$ are chain complex generators, then the differential $\langle \partial \alpha,\beta\rangle\in \mathbb{Z}/2$ is a mod $2$ count of $J$-holomorphic currents of "ECH index" equal to $1$. The definition of the ECH index is the key nontrivial part of the definition of ECH.

\subsection{The ECH index}

Now we will introduce each piece to define the ECH index.

\subsubsection{The Conley-Zehnder index}

If $\gamma:\mathbb{R}/T\rightarrow Y$ is a Reeb orbit, let $\mathcal{T}(\gamma)$ denote the set of homotopy classes of symplectic trivializations of the $2$-plane bundle $\gamma^*(\xi)$ over $S^1=\mathbb{R}/T$. This is an affine space over the integers. Our sign convention is that if $\tau_1,\tau_2:\gamma^*(\xi)\rightarrow S^1\times \mathbb{R}^2$ are two trivializations, then
$$
\tau_1-\tau_2=\textrm{deg}(\tau_2\circ \tau_1^{-1}\longrightarrow Sp(2,\mathbb{R})\simeq S^1).
$$

Let $\gamma:\mathbb{R}/T\rightarrow Y$ be a Reeb orbit and let $\tau$ be a trivialization of $\gamma^*(\xi)$. Given any $t\in \mathbb{R}$, the linearized Reeb flow along $\gamma$ from time $0$ to time $t$, defines a symplectic map $\xi_{\gamma(0)}\rightarrow \xi_{\gamma(t)}$, which with respect to the trivialization $\tau$ is a symplectic matrix $\psi(t)$. In particular, $\psi(0)$ is the identity and $\psi(T)$ is the linearized return map. Since $\gamma$ is assumed to be nondegenerate, the path of symplectic matrices $\{\psi(t) \mid 0\leq t\leq T\}$ has a well-know \textbf{Conley-Zehnder index}, which we denote by
$$
CZ_{\tau}(\gamma)\in \mathbb{Z}.
$$
In the three-dimensional case, this can described explicitly as follows.
\begin{itemize}
    \item If $\gamma$ is hyperbolic, then there is an integer $n$ such that the linearized Reeb flow along $\gamma$ rotates the eigenspace of the linearized return map by angle $n\pi$ with respect to $\tau$. In this case
    \begin{equation}\label{eqn:CZhyp}
    CZ_{\tau}(\gamma^k)=kn.
    \end{equation}
    
    The integer $n$ is even when $\gamma$ is positive hyperbolic and odd when $\gamma$ is negative hyperbolic.
    \item When $\gamma$ is elliptic, the trivialization $\tau$ is homotopic to a trivialization in which the linearized Reeb flow along $\gamma$ rotates by angle $2\pi \theta$. The number $\theta$ is called \textbf{monodromy angle}, is necessarily irrational because $\gamma$ and all of its iterated areassumed nondegenerate. In this case
    \begin{equation}\label{eqn:CZelip}
    CZ_{\tau}(\gamma^k)=2\floor{k\theta}+1.
    \end{equation}
\end{itemize}

The Conley-Zehnder index depends only on the Reeb orbit $\gamma$ and the homotopy class of the trivialization $\tau$. If $\tau'\in \mathcal{T}(\gamma)$ is another trivializations, then the Conley-Zehnder changes in the following way
$$
CZ_{\tau}(\gamma^k)-CZ_{\tau'}(\gamma^k)=2k(\tau'-\tau).
$$

\subsubsection{The relative first Chern class}

Let $\alpha=\{(\alpha_i,m_i)\}$ and $\beta=\{(\beta_j,n_j)\}$ be two orbit sets with $[\alpha]=[\beta]\in H_1(Y)$. Fix trivializations $\tau_i^+\in \mathcal{T}(\alpha_i)$ for each $i$ and $\tau_j^-\in \mathcal{T}(\beta_j)$ for each $j$, and denote this set of trivializations choices by $\tau$. Let $Z\in H_2(Y,\alpha,\beta)$.

\begin{definition}
    Define the \textbf{relative first Chern class}
    $$
    c_{\tau}(Z):=c_1(\xi|_{Z},\tau)\in \mathbb{Z},
    $$
    as follows. Take a representative of $Z$ as a smooth map $f:S\rightarrow Y$, where $S$ is a compact oriented surface with boundary. Choose a section $\psi$ of $f^*(\xi)$ over $S$ such that $\psi$ is transverse to the zero section, and over each boundary component of $S$, the section $\psi$ is nonvanishing and has winding number zero with respect to $\tau$. Define
    $$
    c_{\tau}(Z):=\#\psi^{-1}(0),
    $$
    where '\#' denotes the signed count with respect the orientations.
\end{definition}

If $Z'$ is another relative homology class in $H_2(Y,\alpha,\beta)$. Then
\begin{equation}\label{eq: difchern}
c_{\tau}(Z)-c_{\tau}(Z')=\langle c_1(\xi),Z-Z'\rangle,
\end{equation}
where $c_1(\xi)\in H^2(Y,\mathbb{Z})$ is the ordinary first Chern class. Moreover, if $\tau'$ is another collections of trivializations choices, then
    $$
    c_{\tau}(Z)-c_{\tau'}(Z)=\sum_{i}m_i(\tau_i^+-\tau_i^{+'})-\sum_{j}n_j(\tau_{j}^{-}-\tau_j^{-'}).
    $$

\subsection{Braids around Reeb orbits}

Let $\gamma$ be an embedded Reeb orbit and let $m$ be a positive integer.
\begin{definition}
    A braid around $\gamma$ with $m$ strands is an oriented link $\zeta$ contained in a tubular neighborhood $N$ of $\gamma$ such that the tubular neighborhood projection $\zeta\rightarrow \gamma$ is an orientation-preserving degree $m$ submersion.
\end{definition}

Choose a trivialization $\tau$ of $\gamma^*(\xi)$. Extend the trivialization to identify the tubular neighborhood $N$ with $S^1\times D^2$ such that the projection of $\zeta$ to $S^1$ is a submersion. Identify $S^1\times D^2$ with a solid torus in $\mathbb{R}^3$ via the orientation-preserving diffeomorphism
$$
(\theta,(x,y))\longmapsto (1+x/2)(\cos\theta,\sin\theta,0)-(0,0,t/2).
$$
We have now an embedding $\phi_{\tau}:N\rightarrow \mathbb{R}^3$. Now $\phi_{\tau}(\zeta)$ is an oriented link in $\mathbb{R}^3$ with no vertical tangents.
\begin{definition}
    The \textbf{writhe} of the braid $\zeta$, denote by $w_{\tau}(\zeta)$, is the signed count of crossings in the projection $\mathbb{R}^2\times \{0\}$ of $\phi_{\tau}(\zeta)$.
\end{definition}

This number depends only on the isotopy class of $\zeta$ and the homotopy class of $\tau$ in $\mathcal{T}(\gamma)$. If $\tau'\in \mathcal{T}(\gamma)$ is another trivializations, and if $\zeta$ has $m$ strands, then
$$
w_{\tau}(\zeta)-w_{\tau'}(\zeta)=m(m-1)(\tau'-\tau).
$$

\begin{definition}
    If $\zeta_1$ and $\zeta_2$ are disjoint braids around $\gamma$, define the \textbf{linking number}
    $$
    lk_{\tau}(\zeta_1,\zeta_2)\in \mathbb{Z},
    $$
    to be one half the signed count of crossings of a strand of $\phi_{\tau}(\zeta_1)$ with a strand of $\phi_{\tau}(\zeta_2)$ in the projection to $\mathbb{R}^2\times \{0\}$.
\end{definition}

If $\zeta_i$ has $m_i$ strands, for $i=1,2$, then
$$
lk_{\tau}(\zeta_1,\zeta_2)-lk_{\tau'}(\zeta_1,\zeta_2)=m_1m_2(\tau'-\tau).
$$

\begin{remark}
    The sign convetion of crossings is that counterclockwise twists contribute positively.
\end{remark}

\subsubsection{The relative intersection pairing}

\begin{definition}
    Let $\alpha=\{(\alpha_i,m_i)\}$ and $\beta=\{(\beta_j,n_j)\}$ be orbit sets with $[\alpha]=[\beta]\in H_1(Y,\mathbb{Z})$, and let $Z\in H_2(U,\alpha,\beta)$. An \textbf{admissible representative} of $Z$ is a smooth map $f:S\rightarrow [0,1]\times Y$, where $S$ is a compact oriented surface with boundary, such that:
    \begin{itemize}
        \item The restriction of $f$ to $\partial S$ are positively oriented covers of $\{1\}\times \alpha_i$ with total multiplicity $m_i$ and negatively oriented covers of $\{-1\}\beta_j$ with total multiplicty $n_j$.
        \item The composition of $f$ with the projection $[0,1]\times Y\rightarrow Y$ represents the class $Z$.
        \item The restriction of $f$ to the interior of $S$ is an embeddings, and $f$ is transverse to $\{0,1\}\times Y$.
    \end{itemize}
\end{definition}

If $S$ is an admissible representative of $Z$, let $\epsilon>0$ be sufficiently small, then $S\cap (\{1-\epsilon\}\times Y)$ consists of braids $\zeta_i^+$ with $m_i$ strands in disjoint tubular neighborhoods of the Reeb orbits $\alpha_i$, which are well defined up to isotopy. Likewise $S\cap(\{\epsilon\}\times Y)$ consists of disjoint braids $\zeta_j^-$ with $n_j$ strands in disjoint tubular neighborhoods of the Reeb orbits $\beta_j$. Give $S'$ an admissible representative of $Z'\in H_2(Y,\alpha',\beta')$, such that the interior of $S'$ does not intersect the interior of $S$ near the boundary, with braids $\zeta_i^{+'}$ and $\zeta_{j}^{-'}$. Define the linking number
$$
lk_{\tau}(S,S'):=\sum_ilk_{\tau}(\zeta_i^+,\zeta_i^{+'})-\sum_jlk_{\tau}(\zeta_j^-,\zeta_{j}^{-'}).
$$

\begin{definition}
    If $Z\in H_2(Y,\alpha,\beta)$ and $Z'\in H_2(Y,\alpha',\beta')$, define the \textbf{relative intersection number}
    $$
    Q_{\tau}(Z,Z')\in \mathbb{Z},
    $$
    as follows. Choose admissible representatives $S$ of $Z$ and $S'$ of $Z'$ whose interiors $\dot{S}$ and $\dot{S'}$ are transverse and do not intersect each other near the boundary. Define
    $$
    Q_{\tau}(Z,Z'):=\#(\dot{S}\cap \dot{S'})-lk_{\tau}(S,S').
    $$
\end{definition}

For $\tau'$ another collection of trivializations choices we have
$$
Q_{\tau}(Z,Z')-Q_{\tau'}(Z,Z')=\sum_im_im_i'(\tau_i^{+}-\tau_{i}^{+'})-\sum_j n_jn'_{j}(\tau_j^{-}-\tau_{j}^{-'}).
$$

\begin{remark}\label{rem: quadratic}
    Suppose that $Z\in H_2(Y,\alpha,\beta)$ and $Z'\in H_2(Y,\alpha',\beta')$, then $Z+Z'\in H_2(Y,\alpha\alpha',\beta\beta')$, where the product of two orbit sets is defined by adding the multiplicities of the Reeb orbits if there are some repetition of Reeb orbit, and
    \begin{equation}
    Q_{\tau}(Z+Z')=Q_{\tau}(Z)+2Q_{\tau}(Z,Z')+Q_{\tau}(Z').       
    \end{equation}
\end{remark}

\subsection{The Fredholm index in symplectizations}

We start with the following proposition that give the smooth structure on the moduli of $J$-holomorphic curve for a generic almost complex structure $J$.

\begin{proposition}
    Suppose that $J$ is a generic admissible almost complex structure on the symplectizations $\mathbb{R}\times Y$. Then every somewhere injective $J$-holomorphic curve $C$ in $\mathbb{R}\times Y$ is regular, so that the moduli space of $J$-holomorphic curve near $C$ is a manifold. Its dimension is given by the Fredholm index below.
\end{proposition}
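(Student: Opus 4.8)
The plan is to realise the moduli space near $C$ as the zero set of a Fredholm section of a Banach bundle and then to establish separately the index formula and the transversality statement. First I would write $C$ as a $J$-holomorphic map $u\colon(\dot\Sigma,j)\to\mathbb{R}\times Y$, where $\dot\Sigma$ is a compact Riemann surface with a finite set of punctures and $u$ is asymptotic at each puncture to a trivial cylinder $\mathbb{R}\times\gamma$ over a (possibly multiply covered) Reeb orbit; since $C$ is somewhere injective and not a union of trivial cylinders, $u$ may be taken somewhere injective. Fixing asymptotically cylindrical coordinates near the punctures, the space $\mathcal{B}$ of maps of Sobolev class $W^{k,p,\delta}$ with small exponential weights $\delta>0$ at the punctures is a Banach manifold, and $\bar\partial_J$ is a smooth section of a Banach bundle $\mathcal{E}\to\mathcal{B}$ whose zero set, after quotienting by reparametrisations, is a neighbourhood of $C$ in the moduli space.

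Next I would check that the linearisation $D_u$ of $\bar\partial_J$ at $u$ is Fredholm and compute its index. Fredholmness holds because the Reeb orbits and their relevant iterates are nondegenerate, so the asymptotic operators at the punctures are nondegenerate, and this is precisely the hypothesis of the Riemann–Roch theorem for punctured Riemann surfaces. Applying that theorem relative to a choice of symplectic trivialisations $\tau$ of $\xi$ along the asymptotic orbits gives
$$
\operatorname{ind}(D_u)=-\chi(\dot\Sigma)+2\,c_\tau(u^*\xi)+\sum_{\text{positive ends}}CZ_\tau(\gamma_k)-\sum_{\text{negative ends}}CZ_\tau(\gamma_k),
$$
which is independent of $\tau$ by the change-of-trivialisation formulas for $c_\tau$ and $CZ_\tau$ recorded above, and which equals the expression stated after the proposition. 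One must also keep track of the reparametrisation group and of the domain-complex-structure deformations; when $u$ is immersed this is standard, and in the generic situation produced below $C$ is immersed away from finitely many points, so the normal $\bar\partial$-operator formalism applies without change.

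For transversality I would form the universal moduli space $\mathcal{M}^{*}=\{(u,J):\bar\partial_J u=0,\ u\text{ somewhere injective}\}$ over a Banach manifold of admissible almost complex structures (an $\mathbb{R}$-invariant, $\xi$-compatible family of Floer $C^{\varepsilon}$-type), and show that the universal linearised operator, namely $D_u$ together with the derivative in the $J$ direction, is surjective. The crucial input is the existence of a point $z_0\in\dot\Sigma$ at which $u$ is injective and immersed, $u(z_0)$ lies on no Reeb orbit, and the whole $\mathbb{R}$-orbit of $u(z_0)$ in $Y$ meets the image of $u$ only at $z_0$; this follows from somewhere injectivity together with $u$ not being a trivial cylinder. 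Near such a point one can choose an $\mathbb{R}$-invariant variation of $J$ pairing nontrivially with any prescribed nonzero element of $\operatorname{coker}D_u$, using elliptic regularity and unique continuation to rule out cokernel elements that vanish on an open set. Sard–Smale then yields a residual set of admissible $J$ for which every somewhere injective curve is regular, a Taubes-style argument passes from $C^{\varepsilon}$ to $C^{\infty}$, and, once $D_u$ is surjective, the implicit function theorem presents the moduli space near $C$ as a manifold of dimension $\operatorname{ind}(D_u)$.

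The main obstacle is this transversality step, specifically the conflict between the required $\mathbb{R}$-invariance of $J$ and the essentially pointwise perturbations one wants to make: an $\mathbb{R}$-invariant perturbation of $J$ acts along an entire $\mathbb{R}$-orbit, so one genuinely needs somewhere injectivity to isolate a point of the domain whose image avoids the Reeb orbits and meets no other branch or $\mathbb{R}$-translate of the curve. This is Dragnev's refinement of the Floer–Hofer–Salamon transversality scheme adapted to symplectizations, and I would follow that argument; by comparison the Banach-manifold setup and the Riemann–Roch index computation are routine.
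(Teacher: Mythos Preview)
The paper does not prove this proposition at all; it is stated as a known background result from the general theory and then used without further comment. Your proposal outlines exactly the standard argument behind it (Banach-manifold setup with exponential weights, the punctured Riemann--Roch index formula, and the Dragnev-type transversality scheme for $\mathbb{R}$-invariant admissible $J$), and this is correct.
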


If $C$ has $k$ positive ends at Reeb orbits $\gamma_1^+,...,\gamma_k^+$ and $l$ negative ends at Reeb orbits $\gamma_1^-,...,\gamma_l^-$. The \textbf{Fredholm index} of $C$ is defined by
$$
\textrm{ind}(C)=-\chi(C)+2c_{\tau}(C)+\sum_{i=1}^k CZ_{\tau}(\gamma_i^+)-\sum_{i=1}^lCZ(\gamma_{i}^-).
$$

\subsection{The ECH index}

The key nontrivial part of the definition of ECH is define the relative index.

\begin{definition}
    For any $Z\in H_2(Y,\alpha,\beta)$, the ECH index is given by
    $$
    I(\alpha,\beta,Z)=c_{\tau}(Z)+Q_{\tau}(Z)+CZ_{\tau}^{I}(\alpha,\beta),
    $$
    where $CZ_{\tau}^{I}$ is the Conley-Zehnder term defined as
    $$
    CZ_{\tau}^{I}(\alpha,\beta)=\sum_{i}\sum_{k=1}^{m_i}CZ_{\tau}(\alpha_i^{k})-\sum_{j}\sum_{k=1}^{n_j}CZ_{\tau}(\beta_i^{k}).
    $$
    If $C\in \mathcal{M}(\alpha,\beta)$, define $I(C)=I(\alpha,\beta,[C])$.
\end{definition}

The following proposition give us some basic properties of the ECH index.

\begin{proposition}\label{prop: ECHPRO}{(See \cite{Hutchings3})}
    Let $\alpha$ and $\beta$ be orbit sets in the homology class of $\Gamma$ and $Z\in H_2(Y,\alpha,\beta)$. The ECH index has the following properties.
    \begin{itemize}
        \item The ECH index $I(Z)$ is well defined, i.e, does not depend on the choice of trivialization $\tau$.
        \item The ECH index is additive. Let $\delta$ another orbit set in the homology class $\gamma$, and if $W\in H_2(Y,\beta,\delta)$, then $Z+W\in H_2(Y,\alpha,\delta)$ is defined and
        $$
        I(Z+W)=I(Z)+I(W).
        $$
        \item The parity of $I(Z)$ is equal to the parity of the number os positive hyperbolic orbits in $\alpha$ and $\beta$.

        \item If $Z'\in H_2(Y,\alpha,\beta)$ is another relative homology class, then
        $$
        I(Z)-I(Z')=\langle Z-Z',c_1(\xi)+2\textrm{PD}(\Gamma)\rangle.
        $$
    \end{itemize}
\end{proposition}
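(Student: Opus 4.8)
The plan is to read off all four statements directly from the definition $I(\alpha,\beta,Z)=c_{\tau}(Z)+Q_{\tau}(Z)+CZ_{\tau}^{I}(\alpha,\beta)$, using the ``change of data'' formulas already recorded for $c_{\tau}$, $Q_{\tau}$ and $CZ_{\tau}$, the quadratic identity of Remark \ref{rem: quadratic}, and the behaviour of these three terms under gluing of admissible representatives. For independence of the trivialization, let $\tau'$ be a second collection and set $d_{i}=\tau_{i}^{+}-\tau_{i}^{+'}$, $e_{j}=\tau_{j}^{-}-\tau_{j}^{-'}$. The displayed formulas give $c_{\tau}(Z)-c_{\tau'}(Z)=\sum_{i}m_{i}d_{i}-\sum_{j}n_{j}e_{j}$ and $Q_{\tau}(Z)-Q_{\tau'}(Z)=\sum_{i}m_{i}^{2}d_{i}-\sum_{j}n_{j}^{2}e_{j}$, while summing $CZ_{\tau}(\gamma^{k})-CZ_{\tau'}(\gamma^{k})=2k(\tau'-\tau)$ over $1\le k\le m_{i}$ (resp. $n_{j}$) yields $CZ_{\tau}^{I}-CZ_{\tau'}^{I}=-\sum_{i}m_{i}(m_{i}+1)d_{i}+\sum_{j}n_{j}(n_{j}+1)e_{j}$. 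Adding the three, the coefficient of each $d_{i}$ is $m_{i}+m_{i}^{2}-m_{i}(m_{i}+1)=0$, and likewise for each $e_{j}$; hence $I$ does not depend on $\tau$, and from now on we fix a single trivialization for all orbit sets appearing.

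For additivity, choose admissible representatives $S$ of $Z$ and $S'$ of $W$ and stack them: after an isotopy, put $S\subset[\tfrac12,1]\times Y$ with its negative ends on $\{\tfrac12\}\times\beta$ and $S'\subset[0,\tfrac12]\times Y$ with its positive ends on $\{\tfrac12\}\times\beta$, so that $S\cup S'$ is an admissible representative of $Z+W$. In the Conley--Zehnder term the orbits of $\beta$ enter $CZ^{I}(\alpha,\beta)$ with a minus sign and $CZ^{I}(\beta,\delta)$ with a plus sign, so they cancel and $CZ^{I}(\alpha,\beta)+CZ^{I}(\beta,\delta)=CZ^{I}(\alpha,\delta)$. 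A section of $(S\cup S')^{*}\xi$ transverse to zero and with winding number zero along the ends is obtained by gluing such sections over $S$ and over $S'$ — they agree along $\beta$ precisely because each has winding number zero there — and its signed zero count is the sum, so $c(Z+W)=c(Z)+c(W)$. By Remark \ref{rem: quadratic}, additivity of $Q$ is equivalent to $Q(Z,W)=0$; for the stacked representatives a push-off of $S$ and a push-off of $S'$ lie on opposite sides of $\{\tfrac12\}\times Y$ away from $\beta$, while near $\beta$ the interior intersection count is exactly cancelled by $-lk_{\tau}$ because the relevant braids have winding number zero with respect to $\tau$. Hence $I(Z+W)=I(Z)+I(W)$.

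For the parity statement, reduce $I(Z)=c(Z)+Q(Z)+CZ^{I}(\alpha,\beta)$ modulo $2$, taking $\alpha,\beta$ admissible (the case relevant for ECH). For an orbit set $\gamma$ write $\ell(\gamma)$ for the total multiplicity of its elliptic orbits and $h_{+}(\gamma)$, $h_{-}(\gamma)$ for the numbers of its positive and negative hyperbolic orbits. By \eqref{eqn:CZelip} the Conley--Zehnder index of any iterate of an elliptic orbit is odd, and by \eqref{eqn:CZhyp} that of a positive (resp. negative) hyperbolic orbit is even (resp. odd); since hyperbolic orbits have multiplicity one, $CZ^{I}(\alpha,\beta)\equiv\ell(\alpha)+h_{-}(\alpha)+\ell(\beta)+h_{-}(\beta)\pmod{2}$. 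Combining this with the standard mod-$2$ identity $c(Z)+Q(Z)\equiv\sum_{i}m_{i}+\sum_{j}n_{j}\pmod{2}$ — the arithmetic residue of the relative adjunction formula relating $c_{\tau}$, $Q_{\tau}$ and the writhes of the ends, which can also be verified directly on an explicit admissible representative — and the identities $\sum_{i}m_{i}=\ell(\alpha)+h_{+}(\alpha)+h_{-}(\alpha)$ and its analogue for $\beta$, all elliptic and negative hyperbolic contributions cancel mod $2$, leaving $I(Z)\equiv h_{+}(\alpha)+h_{+}(\beta)\pmod{2}$.

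For the dependence on $Z$, given another class $Z'\in H_{2}(Y,\alpha,\beta)$ set $A=Z-Z'\in H_{2}(Y)$; then $CZ^{I}(\alpha,\beta)$ is unchanged, and by \eqref{eq: difchern} $c(Z)-c(Z')=\langle c_{1}(\xi),A\rangle$. Writing $Z=Z'+A$ in Remark \ref{rem: quadratic} gives $Q(Z)-Q(Z')=2Q(Z',A)+Q(A)$, where $Q(A)=0$ because classes of $H_{2}(Y)$ have disjoint representatives in distinct slices of $\mathbb{R}\times Y$, and $Q(Z',A)=\langle\mathrm{PD}(\Gamma),A\rangle$ because a generic interior slice of a representative of $Z'$ is a $1$-cycle in the class $\Gamma$ while the linking term vanishes as $A$ has a closed representative; hence $Q(Z)-Q(Z')=2\langle\mathrm{PD}(\Gamma),A\rangle$ and, summing, $I(Z)-I(Z')=\langle Z-Z',\,c_{1}(\xi)+2\,\mathrm{PD}(\Gamma)\rangle$. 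The genuinely geometric inputs above are the vanishing $Q(Z,W)=0$ for the stacked representatives and the mod-$2$ identity $c(Z)+Q(Z)\equiv\sum_{i}m_{i}+\sum_{j}n_{j}$ used in the parity statement; both rely on the winding-number-zero normalization of the braids along the shared orbit set and on the writhe-and-linking bookkeeping of \cite{Hutchings3}, and I expect those two points — rather than the purely formal manipulations of the change-of-data formulas — to be where the real work lies.
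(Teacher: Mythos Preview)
The paper does not prove this proposition; it simply cites \cite{Hutchings3}. Your sketch follows the standard route from that reference and is largely sound, but two points deserve tightening.

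First, in the additivity argument your appeal to Remark~\ref{rem: quadratic} is misplaced. That remark concerns the ``sum'' $Z+Z'\in H_2(Y,\alpha\alpha',\beta\beta')$ (a product of orbit sets), whereas here $Z\in H_2(Y,\alpha,\beta)$ and $W\in H_2(Y,\beta,\delta)$ are being \emph{composed} to give a class in $H_2(Y,\alpha,\delta)$; these are different operations and the bilinear expansion does not translate directly. The correct argument is the direct one you essentially indicate: for stacked representatives $S\cup S'$, push off and check that the self-intersection of the glued surface equals the sum of the two self-intersections, with the linking contributions at $\alpha$ and $\delta$ matching and those at the internal $\beta$-level cancelling because $\beta$ is no longer a boundary of the glued surface. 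This is what is done in \cite{Hutchings3}, without passing through a cross-term $Q(Z,W)$.

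Second, your parity argument hinges on the identity $c_\tau(Z)+Q_\tau(Z)\equiv\sum_i m_i+\sum_j n_j\pmod 2$, which you assert but do not prove. It does hold, and the cleanest justification is the relative adjunction formula $c_\tau(Z)=\chi(S)+Q_\tau(Z)+w_\tau(S)$ applied to an embedded admissible representative $S$ whose ends are \emph{trivial} braids (so $w_\tau(S)=0$ and the number of boundary components equals $\sum_i m_i+\sum_j n_j$); then $c_\tau+Q_\tau\equiv\chi(S)\equiv\sum_i m_i+\sum_j n_j\pmod 2$. You flag this step as ``where the real work lies,'' which is fair, but since it is the crux of the parity claim it should be written out rather than deferred.
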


The following proposition give us the general structure of a $J$-holomorphic current of low ECH index.

\begin{proposition}{(See \cite{Hutchings3})}
    Suppose that $J$ is generic. Let $\alpha$ and $\beta$ be orbit sets and $C\in \mathcal{M}(\alpha,\beta)$ be any $J$-holomorphic current in $\mathbb{R}\times Y$. Then:
    \begin{enumerate}
        \item $I(C)\geq 0$, with equality if and only if $C$ is a union of trivial cylinders with multiplicities.
        \item If $I(C)=1$, then $C=C_0\sqcup C_1$, where $I(C_0)=0$, and $C_1$ is embedded and has ind$(C_1)=I(C_1)=1$.
        \item If $I(C)=2$, and if $\alpha$ and $\beta$ are generators of the chain complex $ECC_{*}(Y,\lambda,\Gamma,J)$, then $C=C_0\sqcup C_2$, where $I(C_0)=0$, and $C_2$ is embedded and has ind$(C_2)=I(C_2)=2$.
    \end{enumerate}
\end{proposition}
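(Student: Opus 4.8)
This is Hutchings' structural theorem for $J$-holomorphic currents of low ECH index \cite{Hutchings3}, and the plan is to follow its proof. The engine is the reduction to somewhere injective curves together with the relative adjunction formula and the writhe bounds. First I would write the current as $C=\sum_a d_a C_a$, with the $C_a$ distinct irreducible somewhere injective $J$-holomorphic curves and $d_a\in\mathbb Z_{>0}$, so that everything is controlled by the individual $C_a$ and the pairwise interactions between distinct ones. For a somewhere injective $C_a$ with relative homology class $Z_a$, the relative adjunction formula rewrites $c_\tau(Z_a)$ in terms of $\chi(C_a)$, $Q_\tau(Z_a)$, the writhe $w_\tau$ of the braids $C_a$ cuts out near its ends, and the nonnegative algebraic count $\delta(C_a)$ of its singular points. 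Substituting this into the formulas for $\mathrm{ind}(C_a)$ and $I(C_a)$, and bounding the writhe of each end of a $J$-holomorphic curve by the relevant Conley--Zehnder data, produces the ECH index inequality
$$
I(C_a)\ \ge\ \mathrm{ind}(C_a)+2\delta(C_a),
$$
with a sharper form in which the gap is a sum of nonnegative ``partition defects'', one per end, vanishing exactly when the multiplicities of the ends of $C_a$ over a given embedded orbit realize the ECH partition determined by that orbit's rotation number. Since $J$ is generic, every $C_a$ is regular, hence $\mathrm{ind}(C_a)\ge 0$; and in a symplectization an index-zero somewhere injective curve must be $\mathbb R$-invariant, hence a trivial cylinder, since otherwise its $\mathbb R$-translates would form a positive-dimensional family of regular index-zero curves.

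I would then assemble the component estimates. The class $c_\tau$ is linear and $Q_\tau$ is quadratic (Remark~\ref{rem: quadratic}); distinct somewhere injective $J$-holomorphic curves contribute a nonnegative amount to the cross terms (positivity of intersections, together with the writhe bounds at any shared ends); and the Conley--Zehnder term $CZ^I_\tau$ of the total current dominates the naive sum of the per-component terms by yet another sum of nonnegative partition defects. Combining, one gets
$$
I(C)\ \ge\ \textstyle\sum_a d_a\,\mathrm{ind}(C_a)\ +\ (\text{nonnegative }\delta\text{-, crossing- and partition-terms})\ \ge\ 0,
$$
which is the inequality in (1). In the equality case every term on the right vanishes: each $C_a$ is embedded of Fredholm index $0$, the $C_a$ are pairwise disjoint, and every partition is the ECH partition; by the previous paragraph each $C_a$ is then a trivial cylinder, proving (1).

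For (2) and (3) I would run the same estimate with total index $1$, respectively $2$. Each summand on the right is a nonnegative integer, so the index is concentrated. If $I(C)=1$ there is exactly one nontrivial component $C_1$; it has multiplicity one (multiplicity $\ge 2$ on a non-cylindrical curve makes its self-contribution exceed $1$), is embedded with $\mathrm{ind}(C_1)=I(C_1)=1$, and has vanishing partition defects, while the remaining components are trivial cylinders forming $C_0$. If $I(C)=2$, the admissibility of $\alpha$ and $\beta$ is what rules out the surviving degenerate configurations: a double cover of an index-one curve is impossible, since if that curve has an end over a hyperbolic orbit the double cover violates admissibility of $\alpha$ or $\beta$, and if all its ends are elliptic the ECH-partition constraint forces a strictly positive defect; and two distinct index-one components must be disjoint, since otherwise a positive crossing term breaks the budget, so their union $C_2$ is embedded with $\mathrm{ind}(C_2)=I(C_2)=2$. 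In every case the leftover components have total $I=0$, hence by (1) form a union of trivial cylinders $C_0$; the parity statement in Proposition~\ref{prop: ECHPRO} is a useful consistency check throughout.

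The step I expect to be the main obstacle is the equality analysis behind the two displayed inequalities — precisely, showing that the self-contribution of a $d$-fold cover of a somewhere injective curve is nonnegative and strictly positive unless the cover is trivial. This is where the writhe bounds and the combinatorics of the ECH partitions (minimal versus maximal partitions of the multiplicities at elliptic and at hyperbolic ends) have to be used with care, and it is also what forces the admissibility hypothesis in item (3).
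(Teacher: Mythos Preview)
The paper does not supply its own proof of this proposition; it is stated with the tag ``(See \cite{Hutchings3})'' and no \texttt{proof} environment follows, so there is nothing to compare against beyond the cited reference. Your sketch is precisely the standard argument from \cite{Hutchings3}: decompose the current into somewhere injective components, apply the relative adjunction formula and writhe bounds to obtain the ECH index inequality $I(C_a)\ge \mathrm{ind}(C_a)+2\delta(C_a)$, use genericity to get $\mathrm{ind}(C_a)\ge 0$ with equality forcing trivial cylinders, and then assemble via the quadratic behaviour of $Q_\tau$ and positivity of intersections to deduce the structural statements in (1)--(3), with admissibility in (3) ruling out multiply covered index-one pieces. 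This is correct and is exactly what the citation points to.
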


\subsection{The ECH differential}

We define now the ECH differential $\partial$ on the chain complex $ECC_*(Y,\lambda,\Gamma,J)$. If $\alpha$ and $\beta$ are orbit sets and $k$ is an integer, we define
$$
\mathcal{M}_k^J(\alpha,\beta)=\{\mathcal{C}\in \mathcal{M}^J(\alpha,\beta)\mid I(\mathcal{C})=k\}.
$$
If $\alpha$ is a chain complex generator, define
$$
\partial \alpha=\sum_{\beta}\#(\mathcal{M}_1(\alpha,\beta)/\mathbb{R})\beta,
$$
where the sum is taken over chain complex generators $\beta$, and $``\#"$ denotes the mod $2$ count. By the $\mathbb{R}$-invariance of our almost complex structure, we have a natural action on $J$-holomorphic curves by translation on the $\mathbb{R}$-coordinate. As explained in \cite{Hutchings3} the quotient $\mathcal{M}_1(\alpha,\beta)$ is a finite set, moreover there are only finitely many $\beta$ with $\mathcal{M}(\alpha,\beta)$ nonempty, so that $\partial \alpha$ is well defined. Moreover we have $\partial^2=0$ (See \cite{Hutchings3}). Therefore, we have a well defined homology theory.

\subsection{The grading of the complex}

We now define the a relative grading in the chain complex $ECC_*(Y,\lambda,\Gamma,J)$, and hence in its homology. Let $d$ the divisibility of $c_1(\xi)+2\textrm{PD}(\Gamma)$ in $H^2(Y,\mathbb{Z})$ mod torsion. If $\alpha$ and $\beta$ are two chain complex generators, we define their ''index difference'' by choosing an arbitrary $Z\in H_2(Y,\alpha,\beta)$ and put
$$
I(\alpha,\beta)=[I(\alpha,\beta,Z)]\in \mathbb{Z}/d.
$$
This defines a relatively $\mathbb{Z}/d$ grading. This is well defined by the index-ambiguity formula. When $\Gamma=0$ we can further define an absolute $\mathbb{Z}/d$ grading declaring the empty set $\emptyset$ to be a zero grading generator, so that the grading of any other generator $\alpha$ with zero homology class is
$$
|\alpha|=I(\alpha,\emptyset).
$$
By the Additivity property of the ECH index, the differential decreases this absolute grading by $1$.

\subsection{The isomorphism with Seiberg-Witten cohomology}

The definition of ECH a priori depends on the choice of a admissible almost complex structure $J$ in the symplectisation of $Y$, the way to proof that the homology does not depend of $J$ follows from a Theorem of Taubes \cite{Taubes1} asserting that if $Y$ is connected, there is a canonical isomorphism between embedded contact homology and the Seiberg-Witten cohomology defined by Kronheimer-Mrowka in \cite{Mrowka}.

\begin{theorem}\label{Taubesiso}
    Let $(Y,\lambda)$ a connected contact three manifold. Then there is a canonical isomorphism of relatively graded $\mathbb{Z}_2$ vector spaces
    $$
    ECH_{*}(Y,\lambda, \Gamma,J)\simeq \widehat{HM}^{-*}(Y,\mathfrak{s}_{\xi}+\textrm{PD}(\Gamma)),
    $$
    where $\mathfrak{s}_{\xi}$ denotes a spin-c structure determined by the oriented $2$-plane field $\xi$, see \cite{Mrowka} and $\widehat{HM}^{*}$ denotes the "from" version of Seiberg-Witten Floer cohomology.
\end{theorem}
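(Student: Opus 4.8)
The isomorphism stated here is not proved in this paper: it is the main result of Taubes's series of papers \cite{Taubes1}, built on the Kronheimer--Mrowka construction of monopole Floer cohomology \cite{Mrowka}. Nonetheless, let me outline the strategy I would follow. The plan is to compare \emph{both} sides to a single auxiliary object: a family of deformed Seiberg--Witten equations on $Y$ indexed by a large real parameter $r>0$, whose perturbation term is manufactured out of the contact form $\lambda$ and the admissible almost complex structure $J$. The starting observation is that the oriented $2$-plane field $\xi$ carries a canonical spin-c structure $\mathfrak s_\xi$ whose spinor bundle splits as $\mathbb{C}\oplus\xi$; tensoring with the line bundle $E$ with $c_1(E)=\textrm{PD}(\Gamma)$ produces the spin-c structure $\mathfrak s_\xi+\textrm{PD}(\Gamma)$ on the right-hand side, and one checks $c_1(\mathfrak s_\xi+\textrm{PD}(\Gamma))=c_1(\xi)+2\,\textrm{PD}(\Gamma)$, so the divisibility governing the relative grading already matches that of the ECH index via Proposition \ref{prop: ECHPRO}.

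First I would analyze the critical points of the deformed functional. Here Taubes shows that for generic $J$ and all sufficiently large $r$, the irreducible solutions in the class $\mathfrak s_\xi+\textrm{PD}(\Gamma)$ are in bijection with admissible orbit sets representing $\Gamma$: as $r\to\infty$ the zero set of the $\mathbb{C}$-component of the spinor concentrates, with multiplicities, along short Reeb orbits, the elliptic / positive-hyperbolic / negative-hyperbolic trichotomy is read off from the linearized equations, and admissibility (multiplicity one at hyperbolic orbits) is forced by nondegeneracy. One must also keep careful track of the unique reducible solution; the ``from'' version $\widehat{HM}^{-*}$ is precisely the variant in which the reducible enters in the way compatible with $ECH$, which is why it, rather than some cousin, appears in the statement.

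Next I would match the differentials. A solution of the four-dimensional $r$-deformed equations on $\mathbb{R}\times Y$ asymptotic to two critical points is shown, via Taubes's ``$SW\Rightarrow Gr$'' analysis transplanted to the symplectization, to converge as $r\to\infty$ to a (possibly broken) $J$-holomorphic current; conversely, each $J$-holomorphic current of ECH index $1$ glues to a unique such solution for $r$ large. The key point making this a \emph{chain} map is the identity between the ECH index $I$ and the spectral flow of the deformed Dirac operator along the path, together with the change-of-relative-grading formula $I(Z)-I(Z')=\langle Z-Z',c_1(\xi)+2\,\textrm{PD}(\Gamma)\rangle$ of Proposition \ref{prop: ECHPRO}, which corresponds term by term to the grading formula on the Seiberg--Witten side. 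Passing to homology then yields the stated isomorphism of relatively graded $\mathbb{Z}_2$ vector spaces, and canonicity (independence of $J$, of the metric, and of the auxiliary abstract perturbations) follows from naturality of the construction under continuation maps.

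I expect the main obstacle to be entirely analytic, and it is exactly the content of \cite{Taubes1}: the uniform-in-$r$ a priori estimates on Seiberg--Witten solutions (pointwise bounds on the spinor, concentration of the curvature, exponential decay transverse to the orbits), the transversality and gluing theory for the deformed four-dimensional equations near moduli spaces of $J$-holomorphic currents that may contain multiply covered components, and the delicate accounting of reducible solutions required to pin down precisely the ``from'' flavour of monopole Floer cohomology. Since all of this is established in Taubes's work, in the present paper we invoke the theorem as a black box.
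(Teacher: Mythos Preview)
Your assessment is correct: the paper does not prove this theorem but simply states it and cites Taubes \cite{Taubes1}, exactly as you do. Your outline of Taubes's strategy is a reasonable summary of the argument, but for the purposes of this paper the theorem is indeed invoked as a black box.
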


\subsection{The $U$ map and ECH capacities}    

The $U$ map is a degree $-2$ map in ECH induced by a map in the chain complex defined by the count $J$-holomorphic currents of ECH index $2$ passing through a specified point, more precisely, fixe a $z\in Y$ which is not in any Reeb orbit. Let $\alpha$ and $\beta$ be generators of $ECC_{*}(Y,\lambda,\Gamma,J)$, the set of ECH index $2$ passing through $(0,z)$
$$
\mathcal{M}_{2,z}(\alpha,\beta)=\{\mathcal{C}\in \mathcal{M}_2(\alpha,\beta)\mid (0,z)\in \mathcal{C}\},
$$
is a finite set. Define $U_z:ECC_{*}(Y,\lambda,\Gamma,J)\rightarrow ECC_{*-2}(Y,\lambda,\Gamma,J)$ the chain map complex by
$$
U_z(\alpha)=\sum_{\beta}\#_2\mathcal{M}_{2,z}(\alpha,\beta) \beta
$$
The $U_{z}$ map has the property that it commutes with the ECH differential, that is, $U\partial=\partial U$, therefore induces a well defined map in homology which does not depend of the choice of the base point $z$, see \cite{Hutchings3}.

Let $Y$ be a connected three manifold with a nondegenerate contact form $\lambda$. The ECH spectrum is a sequence of numbers
$$
0=c_{0}(Y,\lambda)<c_1(Y,\lambda)\leq c_{2}(Y,\lambda)\leq \cdots \leq \infty.
$$
It is defined by the following formula
$$
c_{k}(Y,\lambda)=\inf \{A(\alpha)\mid U^k[\alpha]=[\emptyset] \}, \hspace{0.5cm} \forall k\geq 1,
$$
where the infimum is taken over all ECH generators $\alpha$ with zero homology in $H_1(Y,\lambda)$.

If $\lambda$ is nondegenerate, one defines
$$
c_{k}(Y,\lambda)=\lim_{n\rightarrow +\infty} c_{k}(Y,f_n\lambda),
$$
where $f_n$ are smooth positive functions such that $f_n\lambda$ is a nondegenerate contact form and $f_n\rightarrow 1$ in the $C^0$-topology. Now suppose that $(W,\omega)$ is a symplectic filling of $(Y,\lambda)$, that is, $\partial W=Y$ and $\omega|_{Y}=d\lambda$. Using cobordism map in ECH it is possible to show that $[\emptyset]$ is a nonzero element in the ECH. In this case we define the ECH capacities of $(W,\omega)$ as
$$
c_{k}(W,\omega):=c_{k}(Y.\lambda)\in [0,+\infty],
$$
it was shown in \cite{Hutchings3}, that theses are all symplectic capacities. 
For each $\sigma$ a non-zero element in $ECH(Y,\xi, \Gamma)$ we define
$$
c_{\sigma}(Y,\lambda)=\inf\{L>0\mid \sigma \in \textrm{im}(i:ECH^{L}(Y,\lambda,\Gamma)\rightarrow ECH(Y,\xi,\Gamma)) \}.
$$
These sequences of numbers satisfies a "Weyl law" proved in \cite{Vinicius}.
\begin{theorem}
    Let $Y$ be a closed connected contact three-manifold with a contact form $\lambda$ and let $\Gamma\in H_1(Y,\mathbb{Z})$. Suppose that $c_1(\xi)+2\textrm{PD}(\Gamma)$ is torsion in $H^2(Y,\mathbb{Z})$ and let $I$ be an absolute $\mathbb{Z}$-grading of $ECH(Y,\xi,\Gamma)$. Let $(\sigma_k)_{k\geq 1}$ a sequence of nonzero $I$-homogeneous elements in $ECH(Y,\xi,\Gamma)$ with $\lim_{k\rightarrow +\infty} I(\sigma_k)=+\infty$. Then
    $$
    \lim_{k\rightarrow +\infty} \frac{c_{\sigma_k}(Y,\lambda)}{I(\sigma_k)}=\textrm{vol}(Y,\lambda).
    $$
\end{theorem}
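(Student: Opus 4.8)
The plan is to deduce this ``Weyl law'' from a one-sided isoperimetric bound on the ECH index (which yields the lower estimate) together with a sharp construction of low-action ECH cycles (which yields the upper estimate), after reducing to a nondegenerate contact form. First I would note that $c_{\sigma}(Y,\lambda)$ and $\mathrm{vol}(Y,\lambda)=\int_{Y}\lambda\wedge d\lambda$ are both continuous under $C^{0}$-small perturbations of the contact form, so one may assume $\lambda$ nondegenerate and hence that $ECH_{\ast}(Y,\xi,\Gamma)$ and its absolute $\mathbb{Z}$-grading $I$ (with $d=1$, since $c_{1}(\xi)+2\mathrm{PD}(\Gamma)$ is torsion) are defined in the usual way. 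Using Theorem~\ref{Taubesiso} and the structure of $\widehat{HM}$ for a torsion spin-c structure, $ECH_{\ast}(Y,\xi,\Gamma)$ is, as a $\mathbb{Z}_{2}[U]$-module, a finite sum of ``towers'' plus a finitely generated $U$-torsion part concentrated in a bounded range of gradings; so for $N$ large, $ECH_{N}$ is spanned by finitely many classes, each a fixed negative power of $U$ applied to a bottom-of-tower class. Throughout I record the asymptotics in the standard form $c_{\sigma_{k}}^{2}/I(\sigma_{k})\to\mathrm{vol}(Y,\lambda)$.

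For the lower estimate, the key input is an isoperimetric-type inequality for the ECH index: every admissible orbit set $\alpha$ with $[\alpha]=\Gamma$ satisfies
\[ I(\alpha,\emptyset)\ \le\ A(\alpha)^{2}/\mathrm{vol}(Y,\lambda)+o(A(\alpha)^{2}), \]
which I would obtain from Hutchings' ECH index inequality---controlling $I$ by areas of filling data for a $J$-holomorphic current, via the writhe and the relative adjunction formula---together with an asymptotic estimate of those areas by the action. Since any cycle representing a nonzero $\sigma\in ECH_{N}$ is a sum of chain-complex generators all of grading $N$, each such generator has action at least $\sqrt{N\,\mathrm{vol}(Y,\lambda)}\,(1-o(1))$, so $c_{\sigma}\ge\sqrt{N\,\mathrm{vol}(Y,\lambda)}\,(1-o(1))$; taking $N=I(\sigma_{k})\to\infty$ gives $\liminf_{k}c_{\sigma_{k}}^{2}/I(\sigma_{k})\ge\mathrm{vol}(Y,\lambda)$.

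For the upper estimate I would show that for each large $N$ every class in $ECH_{N}$ is represented by a cycle of action at most $\sqrt{N\,\mathrm{vol}(Y,\lambda)}\,(1+o(1))$; as $\sum_{j}\alpha_{j}$ is a cycle whenever each $\alpha_{j}$ is, it is enough to do this on a spanning set, and by the module structure above this reduces to finitely many instances of a single model construction---one for each bottom-of-tower class. That construction (producing, from explicit local pieces that together nearly exhaust the contact volume of $Y$ and whose Reeb dynamics and ECH are known in closed form, an ECH cycle with the prescribed grading, the prescribed behaviour under $U$, and action matching the isoperimetric bound to leading order) is where all the difficulty sits: the reductions above are formal, but the sharp \emph{existence} of such low-action cycles---equivalently, the matching leading-order asymptotics on the Seiberg--Witten side under Theorem~\ref{Taubesiso}---is the hard part. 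Granting it, one obtains $\limsup_{k}c_{\sigma_{k}}^{2}/I(\sigma_{k})\le\mathrm{vol}(Y,\lambda)$, which with the lower estimate completes the proof.
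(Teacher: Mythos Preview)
This theorem is not proved in the paper at all: it is stated as background and attributed to \cite{Vinicius} (Cristofaro-Gardiner--Hutchings--Ramos), so there is no in-paper proof to compare against. Your outline also does not match the argument in \cite{Vinicius}, and it contains a genuine gap.

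The gap is in your lower estimate. You assert a pointwise inequality $I(\alpha,\emptyset)\le A(\alpha)^{2}/\mathrm{vol}(Y,\lambda)+o(A(\alpha)^{2})$ valid for \emph{every} admissible orbit set $\alpha$. No such inequality holds in general: for a single elliptic orbit $\gamma$ the index $I(\gamma^{m},\emptyset)$ grows quadratically in $m$ with leading coefficient governed by the self-linking term $Q_{\tau}$, while $A(\gamma^{m})^{2}=m^{2}A(\gamma)^{2}$; there is no reason the ratio of these coefficients is bounded by $1/\mathrm{vol}(Y,\lambda)$ uniformly over all Reeb orbits (think of an orbit sitting in a small-volume region of a manifold with large total volume). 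The actual lower bound in \cite{Vinicius} is not orbit-set by orbit-set; it passes through $J$-holomorphic curves contributing to high iterates of $U$, uses the $J_{0}$ index to control their topological complexity, and relies on a key lemma bounding how many embedded Reeb orbits can have action small compared with their linking with the rest of the orbit set. You have also inverted the difficulty: in \cite{Vinicius} the \emph{upper} bound is the easy half, obtained formally from monotonicity of the spectral numbers under symplectic embeddings by packing $(Y,\lambda)$ with many small balls and using the known asymptotics for balls, with no bespoke local-model construction needed; the lower bound is where the substantive work lies, and that is precisely the step your outline does not supply.
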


As corollary on ECH capacities we obtain.
\begin{corollary}\label{asymptotic}
    Let $(Y,\lambda)$ be a closed connected contact three-manifold. Then
    $$
    \lim_{k\rightarrow +\infty} \frac{c_{k}(Y,\lambda)^2}{k}=2\textrm{vol}(Y,\lambda)=\int_{Y}\lambda \wedge d\lambda.
    $$
\end{corollary}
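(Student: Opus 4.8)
The plan is to deduce Corollary \ref{asymptotic} from the Weyl law of the preceding theorem: this is the ECH ``volume identity'', and once the Weyl law for the spectral invariants $c_\sigma$ is in hand the only remaining content is to match the ECH spectrum $(c_k)$ against those invariants.

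First I would reduce to the nondegenerate case. The right-hand side $\int_Y\lambda\wedge d\lambda$ is continuous under $C^0$-small rescalings of the contact form, since $\int_Y(f\lambda)\wedge d(f\lambda)=\int_Y f^2\,\lambda\wedge d\lambda$; and for degenerate $\lambda$ the left-hand side is by definition $\lim_n c_k(Y,f_n\lambda)$, so together with the monotonicity $c_k(Y,f\lambda)\le c_k(Y,g\lambda)$ for $f\le g$ and the homogeneity $c_k(Y,c\lambda)=c\,c_k(Y,\lambda)$ one reduces to $\lambda$ nondegenerate. Here $\Gamma=0$, and (as the Weyl law requires, and as holds for $U^*(K)$ by Proposition \ref{chernclass}) $c_1(\xi)$ is torsion, so $ECH(Y,\xi,0)$ carries an absolute $\mathbb Z$-grading $I$ with $I([\emptyset])=0$ and with $U$ lowering $I$ by $2$.

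The heart of the matter is the identity
$$
c_k(Y,\lambda)=\min\bigl\{\,c_\sigma(Y,\lambda)\ :\ \sigma\in ECH(Y,\xi,0)\setminus\{0\},\ I(\sigma)=2k,\ U^k\sigma=[\emptyset]\,\bigr\}.
$$
For ``$\le$'': given such a $\sigma$, for every $L>c_\sigma(Y,\lambda)$ there is a cycle $\alpha$ with $A(\alpha)<L$ and $[\alpha]=\sigma$, hence $U^k[\alpha]=[\emptyset]$ and $c_k(Y,\lambda)\le A(\alpha)<L$. For ``$\ge$'': given any $\alpha$ with $[\alpha]=0$ in $H_1$ and $U^k[\alpha]=[\emptyset]$, decompose $[\alpha]$ into $I$-homogeneous components; since $[\emptyset]$ is homogeneous of degree $0$ and $U$ drops degree by $2$, the degree-$2k$ component $\sigma$ is nonzero, satisfies $U^k\sigma=[\emptyset]$, and has $c_\sigma(Y,\lambda)\le A(\alpha)$ because the ECH action filtration is graded. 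The set on the right is moreover nonempty for every $k$: in the torsion case, under the Taubes isomorphism (Theorem \ref{Taubesiso}) the class $[\emptyset]$ corresponds to an element at the bottom of a $U$-tower in $\widehat{HM}$ which is nonzero in all sufficiently negative degrees, so $U^k\sigma=[\emptyset]$ is always solvable. Choosing $\sigma_k$ to realize the minimum gives a sequence of nonzero $I$-homogeneous classes with $I(\sigma_k)=2k\to+\infty$; applying the Weyl law to $(\sigma_k)$ and using $I(\sigma_k)=2k$ then gives
$$
\lim_{k\to\infty}\frac{c_k(Y,\lambda)^2}{k}=2\,\mathrm{vol}(Y,\lambda)=\int_Y\lambda\wedge d\lambda .
$$

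I expect the main obstacle to be not a single hard step but rather the finiteness input $c_k(Y,\lambda)<\infty$ for all $k$ — equivalently, solvability of $U^k\sigma=[\emptyset]$ — which is what makes the minimum above range over a nonempty set and is precisely where the torsion hypothesis on $c_1(\xi)$ enters, via the $U$-tower structure of $\widehat{HM}$ coming from Theorem \ref{Taubesiso}. The rest (the cycle/class dictionary at a fixed action level, gradedness of the filtration, and the elementary limit manipulation absorbing $I(\sigma_k)=2k$) is routine.
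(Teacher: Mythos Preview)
The paper gives no proof of this corollary at all: it is simply stated as an immediate consequence of the preceding theorem, which in turn is quoted from \cite{Vinicius}. Your derivation is the standard one and is essentially correct.

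Two small remarks. First, in the paper's definition of $c_k$ the infimum is over single ECH generators $\alpha$, which are automatically $I$-homogeneous; so in your ``$\geq$'' step the decomposition of $[\alpha]$ into homogeneous pieces is unnecessary --- the equation $U^k[\alpha]=[\emptyset]$ already forces $I(\alpha)=2k$. (Conversely, in your ``$\leq$'' step the cycle representing $\sigma$ need not be a single generator, so one should phrase $c_k$ via the filtered inclusion $ECH^L\to ECH$ rather than via single generators; this is the usual definition and your argument goes through verbatim with it.) Second, you are right that the deduction from the preceding theorem requires $c_1(\xi)$ torsion so that the absolute $\mathbb{Z}$-grading exists; the corollary as printed in the paper omits this hypothesis, but since the only application in the paper is to $Y=U^*(K)$, where $c_1(\xi)=0$ by Proposition \ref{chernclass}, this causes no problem.
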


\subsection{Partition conditions}

Let $\alpha=\{(\alpha_i,m_i)\}$ and $\beta={(\beta_j,n_j)}$ be orbit sets. Let $C\in \mathcal{M}^{J}(\alpha,\beta)$ be a smowhere injective $J$-holomorphic curve. For each $i$, the curve $C$ has ends at covers of $\alpha_i$ whose sum covering multiplicities is $m_i$, in particular these multiplicities covers form a partition of the positive integer $m_i$, which we denote by $p_{i}^+(C)$, similarly the covering multiplicities of the negative ends of $C$ at covers of $\beta_j$ form a partition of $n_j$, which we denote by $p_{j}^{-}(C)$.

For each Reeb orbit $\gamma$ and each positive integer $m$ we define the ''positive partition'' $p_{\gamma}^+(m)$ and the ''negative partition'' $p_{\gamma}^{-}(m)$ as follows.
\begin{itemize}
    \item If $\gamma$ is positive hyperbolic, then
    $$
    p_{\gamma}^+(m)=p_{\gamma}^-(m)=(1,....,1).
    $$
    \item If $\gamma$ is negative hyperbolic, then
    $$
    p_{\gamma}^+(m)=p_{\gamma}^-(m)= \left\{
    \begin{array}{rcl}
    (2,...,2),& \mbox{se} & m\textrm{ is even},\\
    (2,...,2,1), & \mbox{se} & m\textrm{ is odd}.
    \end{array}
    \right.
    $$
    \item If $\gamma$ is an elliptic orbit with monodromy angle $\theta$ with respect to some trivialization $\tau$ of the contact structure over $\gamma$ . Then $p_{\gamma}^{\pm}$ is defined as folows.

    Let $\Lambda_{\theta}^+$ be the maximal concave polygon path in the plane with vertices at lattice points, starting at the origin and ends at $(m\theta, \floor{m\theta})$, and lies below the line $y=\theta x$. Then $p_{\gamma}^{\pm}$ is given by the horizontal displacements of the segments of $\Lambda_{\theta}^+$ connecting consecutive points.

    To define $p_{\gamma}^{-}$, let $\Lambda_{\theta}^-$ be the minimal convex lattice path starting in the origin with end point at $(m\theta,\ceil{m\theta})$, which lies above the line $y=\theta x$.
\end{itemize}
\begin{remark}
    Both partitions depends only on the class of $\theta\in \mathbb{R}/\mathbb{Z}$.
\end{remark}

The following theorem, proved in \cite{Hutchings3} show how the parition conditions of a $J$-holomorphic curve is determined by the orbit and its multiplicity.

\begin{theorem}{(See \cite{Hutchings3})}
    Suppose that $C$ is the nontrivial component of a $J$-holomorphic current that contributes to the ECH differential or the $U$ map. Then $p_{i}^+(C)=p_{\alpha_i}^+(m)$ and $p_{j}^-(C)=p_{\beta_j}^{-}(n_j)$.
 \end{theorem}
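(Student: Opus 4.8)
The plan is to deduce the partition conditions purely from the ECH index inequality, in a refined form that isolates a nonnegative ``partition defect'' at each Reeb orbit. First I would invoke the structure theorem for $J$-holomorphic currents of low ECH index: a current contributing to the differential has $I=1$ and splits as $C_0\sqcup C$ with $C_0$ a union of trivial cylinders and $C$ embedded, somewhere injective, with $\mathrm{ind}(C)=I(C)=1$; a current contributing to the $U$ map has $I=2$ and its nontrivial component $C$ is embedded, somewhere injective, with $\mathrm{ind}(C)=I(C)=2$. So it is enough to prove: if $C$ is an embedded, somewhere injective $J$-holomorphic curve with $\mathrm{ind}(C)=I(C)$, then the covering multiplicities of its positive ends at $\alpha_i$ (resp. negative ends at $\beta_j$) form the partition $p^+_{\alpha_i}(m_i)$ (resp. $p^-_{\beta_j}(n_j)$), where $m_i,n_j$ denote the multiplicities with which $C$ covers those orbits.

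Next I would assemble the two analytic inputs. The first is the relative adjunction formula: taking slices of $C$ near its ends, giving braids $\zeta_i^+$ around $\alpha_i$ and $\zeta_j^-$ around $\beta_j$, one has
$$
c_\tau([C])=\chi(C)+Q_\tau([C])+w_\tau(C)-2\delta(C),
$$
with $w_\tau(C)=\sum_i w_\tau(\zeta_i^+)-\sum_j w_\tau(\zeta_j^-)$ and $\delta(C)\ge 0$ the count of singularities of $C$ with multiplicity, which vanishes because $C$ is embedded. The second input is the writhe bound coming from the asymptotic analysis of pseudoholomorphic ends (Hofer--Wysocki--Zehnder, Siefring): near an elliptic orbit $\alpha_i$ with monodromy angle $\theta_i$, the end covering with multiplicity $q$ is asymptotically governed by an eigenfunction of the asymptotic operator whose winding number is squeezed between $\floor{q\theta_i}$ and $\ceil{q\theta_i}$, and the winding numbers attached to the successive ends over $\alpha_i$ are monotone; this bounds $w_\tau(\zeta_i^+)$ from above (and $w_\tau(\zeta_j^-)$ from below) by an explicit function of the partitions $p^+_i(C)$ and $p^-_j(C)$, with an analogous, simpler bound at hyperbolic orbits.

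Then I would substitute the adjunction formula and the writhe bounds into the definitions of $\mathrm{ind}(C)$ and $I(C)=c_\tau(C)+Q_\tau(C)+CZ^{I}_{\tau}(\alpha,\beta)$, using the Conley--Zehnder formulas \eqref{eqn:CZhyp}--\eqref{eqn:CZelip}, to obtain a refined index inequality
$$
\mathrm{ind}(C)\le I(C)-2\delta(C)-2D^+(C)-2D^-(C),
$$
where $D^\pm(C)\ge 0$ depend only on the partitions and the monodromy angles, and $D^+(C)=0$ precisely when, for each $i$, $p^+_i(C)$ maximizes the functional controlling the writhe bound among all partitions of $m_i$ (and likewise for $D^-$). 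A short optimization identifies this maximizer with the partition read off from the maximal concave lattice path below $y=\theta_i x$ (resp. the minimal convex lattice path above $y=\theta_j x$), i.e. with $p^+_{\alpha_i}(m_i)$ (resp. $p^-_{\beta_j}(n_j)$), and forces the all-ones partition at positive hyperbolic orbits and the partition with as many $2$'s as possible at negative hyperbolic orbits, matching the definitions recalled above. Finally, since $C$ is embedded $\delta(C)=0$, and since $\mathrm{ind}(C)=I(C)$ the displayed inequality is an equality, so $D^+(C)=D^-(C)=0$, which is exactly the partition condition.

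The main obstacle is the writhe bound of the second step: it rests on the fine asymptotic description of each end of $C$ --- existence and uniqueness of a leading asymptotic eigenvalue and eigenfunction, and the identification of its winding number with the Conley--Zehnder data through the Hofer--Wysocki--Zehnder spectral theory of the asymptotic operator --- together with the combinatorial step showing that the resulting bound on the total writhe of a multi-strand braid around a fixed Reeb orbit is optimized exactly by the lattice-path partitions $p^\pm_\gamma$. Everything else is bookkeeping with the index formulas recalled above.
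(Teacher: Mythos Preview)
The paper does not give its own proof of this theorem; it simply cites \cite{Hutchings3} and moves on. Your proposal is a faithful outline of the standard argument found there: the low-index structure theorem reduces to the case $\mathrm{ind}(C)=I(C)$ with $C$ embedded, and then the relative adjunction formula together with the asymptotic writhe bounds yields the ECH index inequality in the sharpened form where the slack is precisely the partition defect, forcing the partitions to be $p^\pm_\gamma$. So your approach matches the cited reference and there is nothing to compare against in the present paper.
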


\section{The Klein bottle and closed geodesics}
The Klein Bottle is the quotient of $\mathbb{R}^2$ by the group G generated by two diffeomorphism $f(x,y)=(x+1,y)$ and $g(x,y)=(1-x,y+1/2)$. We will denote by $K=\mathbb{R}^2/G$. Since both maps $f$ and $g$ are isometries in $\mathbb{R}^2$ with the flat metric, we also have a flat metric in $K$ such that the natural projection $\pi:\mathbb{R}^2\rightarrow K$ is a local isometry.

\begin{remark}
    We observe that the vector field $\partial_y$ is globally well defined over $K$.
\end{remark}

\begin{remark}
    Sometimes we write $[x,y]=\pi(x,y)$.
\end{remark}

It is well know that if $(\Sigma,g)$ is a compact Riemannian surface without boundary then the Reeb vector field $R$ of the canonical contact form in $U^*(\Sigma)$ is dual to the geodesic vector field, in the sense that the isomorphism of vector bundles induces by $g$ has the property that the pushfoward of the geodesic vector field is the Reeb vector field. Therefore, we can study the Reeb orbits looking at closed geodesics. Using this idea we can find good coordinates to work in the case that $\Sigma=K$ the Klein bottle.

\subsection{Toric coordinates}\label{almosttoric}

We consider the unit cotangent bundle $U^*(K)$ where we endowed $K$ with its flat Riemannian metric. This is an orientable $3$-manifold, it is well known that $U^*(K)$ has a canonical contact form $\lambda_{std}$ which can describe in coordinates as follows. If $(x,y)$ are local coordinates to $K$, then in the induced coordinates for $T^*(K)$, which we denote by $(x,y,z,w)$, we have
$$
\lambda_{std}=zdx+wdy.
$$

Let $y:T^*(K)\rightarrow \mathbb{R}$ be the smooth function defined as follows. For $\xi \in T_{p}^*K\simeq T_{p}K$, take any $(x,y)\in \mathbb{R}^2$, such that $\pi(x,y)=p$. The map $d\pi_{(x,y)}:\mathbb{R}^2\rightarrow T_{p}K$ is an linear isometry, then $y(p,\xi)$ is the $y$-coordinate of any preimage of $\xi$ by the map $d\pi_{(x,y)}$. Follows from the definition of $K$ that this does not depend on the point $(x,y)$ which projects to $p$ by the natural projection $\pi$.

Consider the open set $U\subset U^*(K)$, defined as 
$$
U=y^{-1}((-1,1))\cap U^*(K),
$$
roughly these are covectors that are not "vertical". The proposition below describe the toric symmetry of the Reeb dynamics into the open set $U$.

\begin{proposition}\label{toricprop}
    There exist a strictly contactmorphism between $(U,\lambda_{std})$ and
    $$
    (I\times T^2,\lambda_{can}),
    $$ 
    where $I=(-\pi/2,\pi/2)$, and the contact form $\lambda_{can}=\cos(\theta)dx+\sin(\theta)dy$. 
\end{proposition}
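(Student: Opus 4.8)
The plan is to construct the contactomorphism explicitly by first finding convenient coordinates on $U$. Since the Reeb flow on $U^*(K)$ is dual to the geodesic flow on the flat Klein bottle, I would lift everything to $\mathbb{R}^2$: a unit covector at $p=[x,y]$ corresponds to a unit vector, which I parametrize by an angle $\theta$ measuring the direction relative to the $\partial_y$ direction (which by the earlier remark is globally well-defined on $K$). Concretely, define a map
$$
\Phi:(-\pi/2,\pi/2)\times T^2\longrightarrow U,\qquad \Phi(\theta,[x,y])=\bigl(\pi(x,y),\,d\pi_{(x,y)}(\cos\theta,\sin\theta)\bigr).
$$
The first task is to check this is well-defined and a diffeomorphism onto $U$. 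Well-definedness requires that the ambiguity in $(x,y)\mapsto [x,y]$ in $K$ (the $G$-action) does not change the output; since $f$ is a translation and $g$ is $(x,y)\mapsto(1-x,y+1/2)$, the derivative $dg$ acts on tangent vectors by $(\cos\theta,\sin\theta)\mapsto(-\cos\theta,\sin\theta)$, i.e.\ $\theta\mapsto\pi-\theta$. This is precisely why I use the torus coordinate $[x,y]$ twisted appropriately: I need to set up the domain $T^2$ so that the $g$-identification on the base is compatible with $\theta\mapsto\pi-\theta$ on the fiber parameter. The condition $\theta\in(-\pi/2,\pi/2)$ exactly matches $y\in(-1,1)$ for the function $y$ defined before the proposition, since $y(p,\xi)=\sin\theta$ up to normalization, so $\Phi$ has image exactly $U$; surjectivity and injectivity are then straightforward from the explicit formula.

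Next I would pull back the contact form. Writing $\lambda_{std}=zdx+wdy$ in the induced cotangent coordinates, and noting that under $\Phi$ the fiber coordinates become $z=\cos\theta$, $w=\sin\theta$ (using that $d\pi$ is a linear isometry so the dual coordinates are just the components of the covector), one gets $\Phi^*\lambda_{std}=\cos(\theta)\,dx+\sin(\theta)\,dy=\lambda_{can}$ on the nose. One must be slightly careful that $x,y$ here denote the \emph{local} coordinates on $K$ coming from the quotient, matched with the $T^2=\mathbb{R}^2/\Gamma$ coordinates on the domain; the computation $\Phi^*(dx)=dx$, $\Phi^*(dy)=dy$ is immediate because $\Phi$ is the identity on the base in these coordinates. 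Then I would verify that $\lambda_{can}$ is indeed a contact form on $(-\pi/2,\pi/2)\times T^2$: compute $\lambda_{can}\wedge d\lambda_{can}=(\cos\theta\,dx+\sin\theta\,dy)\wedge(-\sin\theta\,d\theta\wedge dx+\cos\theta\,d\theta\wedge dy)=d\theta\wedge dx\wedge dy\neq 0$, which is nonvanishing, so $\Phi$ is a strict contactomorphism as claimed.

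The main obstacle is the well-definedness of $\Phi$ on the $T^2$ factor, i.e.\ correctly identifying which quotient of $\mathbb{R}^2$ in the fiber-angle variable makes the descent consistent with the $G$-action on the Klein bottle base. Because $g$ flips $x\mapsto 1-x$ (orientation-reversing on the base in the $x$-direction) and simultaneously flips $\theta\mapsto\pi-\theta$ on the fiber, one might naively expect only a twisted torus bundle rather than a trivial $I\times T^2$; the resolution is that the angular coordinate $\theta$ is \emph{not} the naive polar angle of the lifted vector but is measured so that the combined action on $(x,y,\theta)$ becomes, after the right change of variables, a pure translation on the torus factor, leaving $\theta$ fixed on the open region $\theta\in(-\pi/2,\pi/2)$. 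I would carefully track the $G$-action on the lifted data $(x,y,\theta)$, check that the subgroup generated by $f$ and $g^2$ acts by translations giving a $T^2$, and that $g$ itself acts trivially on the complement modulo that $T^2$ — equivalently, that reparametrizing the fiber absorbs the $\theta\mapsto\pi-\theta$ flip over the range in question. Once this bookkeeping is done, everything else is the routine verification sketched above, and Remark~\ref{remark1} (that these tori are not invariant for the naive $T^2$-bundle structure) is consistent with the fact that the invariant tori here are the $\{\theta=\mathrm{const}\}$ tori in these new coordinates.
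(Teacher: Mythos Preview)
Your map $\Phi$ is exactly the map $F$ the paper writes down, and your pullback computation $\Phi^*\lambda_{std}=\cos\theta\,dx+\sin\theta\,dy$ is the same verification, so the approach coincides with the paper's (the paper's proof is in fact just two lines: define $F$ and assert the three properties).

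Your third paragraph, however, muddles the well-definedness and injectivity issues. The domain torus is $T^2=\mathbb{R}^2/\langle f,g^2\rangle=\mathbb{R}^2/\mathbb{Z}^2$, the orientation double cover of $K$, so \emph{well-definedness} of $\Phi$ only requires invariance under $f$ and $g^2$, both pure translations---this is immediate and involves no flip of $\theta$. The element $g$ enters instead in \emph{injectivity}: two distinct points $[x,y]\neq[x',y']$ in $T^2$ with the same image in $K$ differ by $g$, and then matching covectors forces $(\cos\theta',\sin\theta')=(-\cos\theta,\sin\theta)$, i.e.\ $\theta'=\pi-\theta$, which lies outside $(-\pi/2,\pi/2)$. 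So $g$ does not ``act trivially on the complement,'' nor is the flip $\theta\mapsto\pi-\theta$ ``absorbed by a reparametrization'' as you suggest; rather, restricting $\theta$ to $(-\pi/2,\pi/2)$ simply guarantees that the $g$-identification never produces a collision in the image. Once you separate these two roles cleanly, your argument is complete and matches the paper's.
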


\begin{proof}
    One define the map $F:I\times T^2\rightarrow U^*(K)$ by
    $$
    F(\theta,[x,y])=(\pi(x,y),d\pi_{(x,y)}(\cos(\theta),\sin(\theta))),
    $$ 
    where $\pi:\mathbb{R}^2\rightarrow K$ is the natural quotient map. The image is just $U$ and the pullback of the standard contact form is as claimed, and $F$ is cleary an embedding.
\end{proof}

The complement of $U$ is a disjoint union of two Klein bottles, namely 
$$
K_{+}=y^{-1}(1)\cap U^*(K) \hspace{0.6cm} and \hspace{0.6cm}
K_{-}=y^{-1}(-1)\cap U^*(K).
$$
In fact, the map $K\rightarrow U^*(K)$ defined as $p\mapsto (p,\pm dy|_{p})$ are embeddings whose image is exactly $K_{\pm}$.

We can define the map $F$ more general as 
$$
\begin{array}{cccc}
F \ : & \! \mathbb{R}/2\pi \mathbb{Z}\times T^2 & \! \longrightarrow
& \! S^{*}(K) \\
& \! (\theta,[x,y])  & \! \longmapsto
& \! (\pi(x,y),d\pi_{(x,y)}(\cos(\theta),\sin(\theta)))
\end{array}
$$

This map is a quotient map. Therefore, the unit cotangent bundle of Klein bottle can be obtain from the three dimensional torus $T^3=\mathbb{R}/2\pi \mathbb{Z}\times T^2$, by the following identification
$$
{(\theta,[x,y])\sim (\pi-\theta,[1-x,y+1/2])}.
$$

\begin{remark}
Observe that the map $(\theta,[x,y])\in S^{*}(K)\mapsto [e^{2\pi y}]\in \mathbb{R}P^1\simeq S^1$, is a $T^2$-fibration over $S^1$, but the contact form is not invariant in this torus, and the Reeb vector field is not tangent to the fibers of this fibration. This implies that this contact manifold as a \textbf{Unit Cotangent Bundle of Klein Bottle} is different of this manifold with a contact form $a(t)dx+b(t)dy$ invariant under in each torus of the torus fibration. The following lemma give us some information about contact manifolds which has a contact form invariant under such fibrations.    
\end{remark}

\begin{lemma}\label{torictrivial}
    Let $Y^3$ a orientable, closed three-manifold. Suppose that $Y$ is a $T^2$-fibration over the circle $S^1$. Let $\lambda=a(t)dx+b(t)dy$ be a $T^2$-invariant contact form in $Y$. Then the contact structure is trivial.
\end{lemma}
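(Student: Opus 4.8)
The plan is to show directly that $\xi=\ker\lambda$ is a trivial oriented plane bundle, from which $c_1(\xi)=0$ follows. The strategy is to exhibit a nowhere-vanishing section of $\xi$ and then apply the elementary obstruction-theoretic fact that an oriented rank-two real vector bundle admitting a nowhere-vanishing section is trivial (split off the section; the complementary line bundle is oriented, hence trivial). First one checks that $\xi$ is genuinely an oriented two-plane field: the contact condition
$$
\lambda\wedge d\lambda=(a'b-ab')\,dt\wedge dx\wedge dy\neq 0
$$
forces $(a(t),b(t))\neq(0,0)$ for all $t$, so $\lambda$ is nowhere zero (hence $\xi$ has rank two), while $d\lambda|_{\xi}$ is a symplectic form on $\xi$ and therefore orients it.

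Next I would produce the section. Since $Y$ is a $T^2$-bundle over $S^1$ it is the mapping torus of a diffeomorphism of $T^2$, and compatibly with the coordinates in which $\lambda=a(t)\,dx+b(t)\,dy$ is written we may present $Y$ as $(\mathbb{R}^2\times\mathbb{R})/\mathbb{Z}$, where the generator of $\mathbb{Z}$ acts by an affine map on the $\mathbb{R}^2$-factor and by $t\mapsto t+1$ on the $\mathbb{R}$-factor. The coordinate vector field $\partial_t$ is invariant under this action, hence descends to a nowhere-vanishing vector field on $Y$; and since $\lambda$ has no $dt$-component and $dx(\partial_t)=dy(\partial_t)=0$, we get $\lambda(\partial_t)\equiv 0$. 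Thus $\partial_t$ is a nowhere-vanishing section of $\xi$. In fact one gets an explicit global frame: the fibre-tangent field $-b(t)\partial_x+a(t)\partial_y$ also lies in $\ker\lambda$, is nowhere zero by the displayed contact condition, and is everywhere linearly independent of $\partial_t$; the hypothesis that $Y$ is orientable makes the monodromy orientation-preserving, which is precisely what is needed for this second field to be globally well-defined.

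Finally, choosing any bundle metric on $\xi$ and writing $\xi$ as the direct sum of the trivial line subbundle spanned by $\partial_t$ and its orthogonal complement $\ell$, the bundle $\ell$ is an oriented real line bundle and hence trivial, so $\xi\cong\underline{\mathbb{R}}^2$; therefore $c_1(\xi)=e(\xi)=0$, i.e. the contact structure is trivial. The argument is entirely formal, with no analytic input: the only point deserving any care is the global well-definedness of $\partial_t$ (equivalently, of the ``obvious'' section of $\xi$) on the total space, and this is immediate from the mapping-torus description, since the monodromy identification modifies only the $T^2$-coordinates and translates $t$.
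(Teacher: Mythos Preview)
Your proof is correct and follows essentially the same approach as the paper: the paper's proof is a single line observing that $\partial_t$ lies in $\xi$ and has no zeros, hence $\xi$ is trivial as a complex line bundle. You spell out the same idea with considerably more care, verifying that $\lambda$ is nowhere zero, that $\partial_t$ descends to the mapping torus, and that a nowhere-vanishing section forces triviality via the splitting argument. The extra remark about the explicit second frame vector $-b(t)\partial_x+a(t)\partial_y$ is not needed for the argument (and its global well-definedness actually uses the compatibility of $(a,b)$ with the monodromy rather than orientability alone), but since your main line already establishes triviality from $\partial_t$ alone this does not affect the validity of the proof.
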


\begin{proof}
    The vector field $X=\partial_t$ is in contact structure and has no zeros, therefore $\xi$ is trivial as a $\mathbb{C}$-vector bundle, and therefore as a symplectic vector bundle.
\end{proof}

\begin{remark}\label{remark1}
In this case we have proved that the unit cotangent bundle of Klein Bottle with the canonical contact structure is not of this type because our contact structure is not trivial.    
\end{remark}

\section{Reeb vector field and Reeb orbits}

By proposition \ref{toricprop}, our manifold has a open set which the Reeb Vector field is tangent to the fibers $U\rightarrow I$, whose the complement are the Klein bottles $K_{\pm}$. In the toric part
$$
(-\pi/2,\pi/2)\times T^2,
$$
the Reeb vector field is given by $R=\cos(\theta)dx+\sin(\theta)dy$, therefore for each $\theta\in (-\pi/2,\pi/2)$ fixed, $R$ is tangent to the torus $\{\theta\}\times T^2$ and its flow are "straight lines" with slope $\tan(\theta)$. Therefore, if $\tan(\theta)$ is rational, the flow of $R$ are all closed with homology class in the torus $(q,p)$, where $\tan(\theta)=p/q$ with gcd$(q,p)=1$. If $\tan(\theta)$ is irrational, there is no Reeb orbits in the torus $\{\theta\}\times T^2$.

However, in the sets $K_{\pm}$ the Reeb vector field is just the "vertical". Therefore, there are two $[0,1/2]$-family of Reeb orbits one in each Klein bottle $K_{\pm}$. 

\begin{remark}
Note that the $C^0$ limit of orbits when the parameter of the family tends to $0$ or $1/2$ are double covering of the Reeb orbits that pass through $[(0,0)]$ and $[(1/2,0)]$ in $K_{\pm}$, the picture is that they lived in a Mobius band neighborhood of each orbit in the extremes of the family converging to the central circle.  
\end{remark}

\section{Pertubations and Conley-Zehnder index}\label{pertubation}

First of all we show how to perturb the contact form $\lambda_{st}$ near the Klein bottle $K_+$, with out loss of generality.

\subsection{Pertubations of $\lambda_{st}$ in the ``Toric part"}

Fix a real number $L>0$, then there exists finitely many torus in $(-\pi/2,\pi/2)\times T^2$ which is foliated by Reeb orbits of action less than $L$. For each circle of orbits parametrized by $S^1$, we choose a Morse function $f:S^1\rightarrow \mathbb{R}$ with only two critical points, as in Bourgeois thesis \cite{Bourgeois}, we can extend this function to a neighborhood of this torus. Consider this pertubations along all tori with is foliated by Reeb orbits of action less than $L$, call $\overline{f}$ the function which realize all such pertubations and consider the following contact form
$$
\lambda_{\epsilon}=(1+\epsilon\overline{f})\lambda.
$$
The next lemma shows how the Reeb orbits behave after pertubations in terms of action.

\begin{lemma}\label{actionless}(See \cite{Bourgeois})
    For each such $L>0$, there exist $\epsilon=\epsilon(L)>0$ such that all Reeb orbits of $\lambda_{\epsilon}$ of action less than $L$ in $U^*(K)\setminus K_{+}\cup K_{-}$ are nondegenerate and correspond to critical points of the functions $f$ that parametrize the families of Reeb orbits of action less than $L$.
\end{lemma}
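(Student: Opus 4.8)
The plan is to deduce the statement from Bourgeois's local Morse--Bott perturbation lemma \cite{Bourgeois}, applied near each of the finitely many degenerate tori, together with a compactness (Arzel\`a--Ascoli) argument that forces every $\lambda_\epsilon$-Reeb orbit of action less than $L$ to lie close to one of these tori, so that the local description applies to it.

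First I would record the structure of the unperturbed dynamics. Under the contactomorphism of Proposition \ref{toricprop}, $U$ becomes $(I\times T^2,\cos\theta\,dx+\sin\theta\,dy)$, the Reeb flow on $\{\theta\}\times T^2$ is linear of slope $\tan\theta$, and $\{\theta\}\times T^2$ is foliated by an $S^1$-family of simple closed Reeb orbits exactly when $\tan\theta=p/q$ is rational, the common action of these orbits growing without bound as $\theta\to\pm\pi/2$. Hence, as recorded just before the lemma, only finitely many tori $T_1,\dots,T_N$ carry $\lambda$-Reeb orbits of action less than $L$; being finite in number, they lie in a compact subset $[-\theta_L,\theta_L]\times T^2$ of $I\times T^2$, disjoint from $K_+\cup K_-$. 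By construction $\overline f$ is supported in a union $\bigcup_k N_k$ of pairwise disjoint neighborhoods of the $T_k$, disjoint from $K_\pm$, so $\lambda_\epsilon=\lambda$ outside $\bigcup_k N_k$; and since $\lambda_\epsilon\to\lambda$ with all derivatives as $\epsilon\to0$, after a first shrinking of $\epsilon$ we may assume that every $\lambda_\epsilon$-Reeb orbit of action less than $L$ has $\lambda$-action less than $L+1$ and conversely, and that the periods of $\lambda_\epsilon$-Reeb orbits are bounded below by a fixed $c>0$.

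Next I would invoke the local model on each $N_k$. There $\lambda$ is Morse--Bott in the sense of \cite{Bourgeois} with critical manifold the circle $S^1$ of simple orbits of $T_k$, and $\overline f|_{T_k}$ is pulled back from a Morse function $f_k\colon S^1\to\mathbb R$ with two critical points. Bourgeois's lemma then provides $\epsilon_k>0$ such that for $0<\epsilon\le\epsilon_k$ the Reeb orbits of $\lambda_\epsilon$ that are contained in $N_k$ and have action less than $L$ are precisely the iterates, up to action $L$, of the two simple orbits lying over the critical points of $f_k$; each of these is nondegenerate, and its Conley--Zehnder index, hence its elliptic or hyperbolic type, is determined by the Morse index of $f_k$ at the corresponding critical point.

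Finally I would run the global compactness argument. Suppose the conclusion fails: there are $\epsilon_n\to0$ and $\lambda_{\epsilon_n}$-Reeb orbits $\gamma_n\subset U$ of action less than $L$ that are not among the orbits over the critical points. Reparametrizing proportionally to arc length, the $\gamma_n$ are loops of length in $[c,L]$ with, by the Reeb equation, uniformly bounded derivatives of all orders on the region they occupy; by Arzel\`a--Ascoli and smooth dependence of the Reeb flow on initial conditions a subsequence converges in $C^\infty$ to a $\lambda$-Reeb orbit $\gamma_\infty$ of action at most $L$. Since the Klein bottles $K_\pm$ are $\lambda$-invariant and the $\lambda$-Reeb orbits in $U$ of action at most $L$ all lie on $T_1,\dots,T_N$, the limit $\gamma_\infty$ lies on some $T_k$; hence for $n$ large $\gamma_n$ is $C^1$-close to $\gamma_\infty$, in particular contained in $N_k$, which contradicts the local description in $N_k$ for $\epsilon_n\le\epsilon_k$. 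Taking $\epsilon(L)=\min_k\epsilon_k$, further reduced if necessary to accommodate the first shrinking, proves the lemma. The main obstacle is exactly this last step, namely excluding ``spurious'' short orbits of $\lambda_\epsilon$ that drift away from the $T_k$, possibly toward the Klein bottles; its resolution relies on the two facts isolated above, that $\overline f$ is supported near the finitely many $T_k$ and that short $\lambda$-Reeb orbits in the toric region remain in a compact set bounded away from $\theta=\pm\pi/2$.
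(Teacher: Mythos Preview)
The paper gives no proof of this lemma beyond the parenthetical citation to \cite{Bourgeois}; it treats the statement as a black-box input from Bourgeois's Morse--Bott perturbation theory. Your proposal is correct and supplies precisely the argument one would expect to underlie that citation: the local description of $\lambda_\epsilon$-orbits inside each $N_k$ is Bourgeois's lemma verbatim, and your Arzel\`a--Ascoli step is the standard way to globalize it, ruling out short orbits that might escape the perturbation region. Your handling of the one genuine subtlety---that a hypothetical limit $\gamma_\infty$ cannot lie on $K_\pm$ because $\overline f$ vanishes near $K_\pm$ and the toric $\lambda$-orbits of action at most $L$ are confined to $\{|\theta|\le\theta_L\}$---is the right observation, and once that is in place the contradiction closes cleanly. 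In short, there is nothing to compare: you have written out what the paper leaves implicit.
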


Thus each pertubed torus foliated by Reeb orbits of action less than $L$ splits into two nondegenerate Reeb orbit, as observed by Hutchings in \cite{Hutchings4}, one of them is elliptic and other is positive hyperbolic, it the pertubed torus is $\{\theta\}\times T^2$ with $\tan(\theta)=p/q$, we will denote these orbits by $e_{(q,p)}$ and $h_{(q,p)}$. 

The trivialization over Reeb orbits in the toric part $(-\pi/2,\pi/2)\times T^2$, which we call by $\tau$, is given by 
$$
\xi=span\{\partial_{\theta},-\sin(\theta)\partial_x+\cos(\theta)\partial_y\}.
$$

Given $k\in \mathbb{N}$, we can choose the Morse-Bott pertubation so that the rotation angle of $e_{(q,p)}$ satisfies $0<\phi<1/k$ and the linearized flow around the positive hyperbolic orbit do not rotate the eigenspace so much, so that by equation \ref{eqn:CZhyp} and \ref{eqn:CZelip}, we have
$$
CZ_{\tau}(e_{(q,p)}^k)=1\hspace{0.5cm} \textrm{and} \hspace{0.5cm } CZ_{\tau}(h_{(q,p)})=0.
$$

\subsection{Pertubations of $\lambda_{st}$ near $K_{\pm}$}

Let $f:\mathbb{R}\rightarrow \mathbb{R}_{\geq 0}$ be a smooth function with the following properties.

\begin{itemize}
    \item $f$ is $1/2$ periodic;
    \item In the interval $[0,1/2]$ $f$ has exactly one maximum at $x=1/4$ and two minimums at the extremes of the interval.
    \item The function $f$ has symmetry with respect to $x=1/4$, that is,  $f(x)=f(1/2-x)$, observe that this is equivalent to ask $f$ be even.
    \item There is no other critical points diferent from the maximum and minimum and the function $f$ is Morse.
\end{itemize}
In this case, we can define a function in the Klein bottle, say $K_+$ by the formula
$$
\overline{f}([x,y],dy|_{[x,y]})=f(x).
$$

It is well defined by the properties of $f$ and is smooth. Finally we can extend $\overline{f}$ to a tubular neighborhood of $K_+$ inside $U^{*}(K)$ such that it is constant in the fiber of the normal bundle of $K_+$. To finish use a cut-off function depending on the distance to $K_+$, we can take a small radius in the fiber of the normal bundle such that the Reeb vector field $R_{st}$ is $C^1$-close to its value in $K_+$, which is $dy$.

Now consider the contact form 
$$
\lambda_{\epsilon}=(1+\epsilon \overline{f})\lambda_{st}.
$$

\begin{proposition}
    Given $L>0$, there is $\epsilon=\epsilon(L)$ such that every Reeb orbit of $\lambda_{\epsilon}$ that is close to $K_+$ and has action less than $L$ are over critical points of $\overline{f}$.
\end{proposition}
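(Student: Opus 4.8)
The statement is a Morse–Bott perturbation result near the Klein bottle $K_+$, entirely parallel to Lemma \ref{actionless} but adapted to the situation where the degenerate family of Reeb orbits is parametrized by a circle along which $R_{st}$ is already ``vertical'' (equal to $dy$) and periodic with period $1/2$. The plan is to reduce the problem to a local normal-form computation in a neighborhood of $K_+$ in $U^*(K)$, using the coordinates $([x,y],t)$ where $t$ measures (a rescaling of) the distance to $K_+$ in the fiber of the normal bundle, and then to apply the standard Morse–Bott machinery of \cite{Bourgeois}.

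First I would set up the local picture: near $K_+$, the contact form $\lambda_{st}$ can be written in a Darboux–Weinstein–type normal form adapted to the Morse–Bott family $K_+$, so that the Reeb vector field $R_{st}$ restricted to $K_+$ is exactly $\partial_y$ (as already observed in the excerpt), and the family of closed Reeb orbits of $\lambda_{st}$ sweeping out $K_+$ is parametrized by the circle $x \in \mathbb{R}/\tfrac12\mathbb{Z}$. Then I would compute the Reeb vector field $R_\epsilon$ of $\lambda_\epsilon = (1+\epsilon\overline f)\lambda_{st}$. Writing $R_\epsilon$ in terms of $R_{st}$ and the Hamiltonian-type correction coming from $d\overline f$, one sees that to first order in $\epsilon$ the closed orbits of $R_\epsilon$ near $K_+$ of bounded action must concentrate, as $\epsilon \to 0$, near the critical locus of the restriction of $\overline f$ to $K_+$ — precisely because $\overline f$ is constant in the normal direction by construction, so the only obstruction to persistence of a whole circle of orbits is the gradient of $\overline f$ along the $x$-circle. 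Since $f$ (and hence $\overline f|_{K_+}$) is Morse with critical points only at $x = 0$ and $x = 1/4$ (i.e. two critical points on $\mathbb{R}/\tfrac12\mathbb{Z}$: one minimum, one maximum), the surviving orbits after perturbation are exactly those two, cf. \cite{Bourgeois, Hutchings4}.

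The key steps, in order, are: (i) fix $L>0$ and note that $K_+$ carries a single $S^1$-family (up to multiplicity) of Reeb orbits of $\lambda_{st}$, those of action $\leq L$ being finitely many multiple covers; (ii) choose the normal-form neighborhood and cut-off radius so small that $R_\epsilon$ is $C^1$-close to $\partial_y$ on it, uniformly for $\epsilon$ small — this is where the hypothesis that $\overline f$ is constant in the normal fiber and that we shrink the radius is used; (iii) apply the Morse–Bott perturbation lemma of Bourgeois \cite{Bourgeois} to the circle-family, which produces, for $\epsilon = \epsilon(L)$ small enough, exactly the nondegenerate orbits lying over $\mathrm{Crit}(\overline f|_{K_+})$ among orbits of action $< L$ close to $K_+$; (iv) invoke the action estimate (analogous to Lemma \ref{actionless}) guaranteeing that no new short orbits are created away from the critical points and that the multiple covers behave as expected. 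The quantifier ``$\epsilon = \epsilon(L)$'' is handled by the usual compactness/Gromov-type argument: if not, a sequence of orbits of $\lambda_{\epsilon_n}$ with $\epsilon_n \to 0$, action $< L$, and not over $\mathrm{Crit}(\overline f)$ would converge to an orbit of $\lambda_{st}$ in $K_+$ lying over a noncritical point, which the Morse–Bott analysis forbids.

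The main obstacle I expect is step (ii)–(iii): verifying carefully that the perturbation near $K_+$ falls within the scope of the Morse–Bott framework of \cite{Bourgeois}, since here the degenerate submanifold is the Klein bottle $K_+$ itself rather than a torus $\{\theta\}\times T^2$ as in the toric part, and $K_+$ is nonorientable. One must check that the relevant normal Hessian of the action functional along the family is nondegenerate transverse to the orbit direction — which amounts to nondegeneracy of the linearized normal return map in the fiber direction of the tubular neighborhood — and that the Morse function $\overline f|_{K_+}$, descending from the even $1/2$-periodic Morse function $f$, is genuinely Morse on the quotient circle. Both follow from the explicit normal form and the stated properties of $f$, but spelling out the normal form so that the hypotheses of the Bourgeois lemma are literally met is the technical heart of the argument; once that is in place, the conclusion is immediate.
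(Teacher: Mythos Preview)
Your plan is correct and mirrors the paper's argument: the paper writes the explicit formula $R_\epsilon = (1+\epsilon\overline{f})^{-1}R_\lambda + \epsilon(1+\epsilon\overline{f})^{-2}X$ with $X\in\xi$ determined by $d\lambda(X,\cdot)|_\xi = d\overline{f}|_\xi$, observes that $X=0$ over $\mathrm{Crit}(\overline{f})$ so those orbits persist, and then defers the ``no other short orbits'' claim to the Bourgeois argument exactly as in Lemma~\ref{actionless}.

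One correction to your geometry, however: the orbit space of the Reeb flow on $K_+$ is not the circle $\mathbb{R}/\tfrac12\mathbb{Z}$ but the closed interval $[0,1/2]$. The period-$1$ orbit through $x_0\in(0,1/2)$ also passes through $1-x_0$, because the Klein-bottle identification gives $[x_0,1/2]=[1-x_0,0]$; the endpoints $x_0=0$ and $x_0=1/2$ each carry an isolated period-$\tfrac12$ orbit. Consequently there are \emph{three} surviving orbits after perturbation, not two: the elliptic $e_{(0,1)}$ over the maximum at $x=1/4$, and the two negative hyperbolic orbits $h_{(0,1)}^1,\,h_{(0,1)}^2$ over the minima at $x=0$ and $x=1/2$. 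This miscount does not affect the proposition as stated, but it is exactly the nonorientability subtlety you flagged in your last paragraph, and getting it right is essential for the Conley--Zehnder and index computations that follow.
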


\begin{proof}
    The new Reeb vector field $R_{\epsilon}$ is given by
    $$
    R_{\epsilon}=\frac{1}{1+\epsilon \overline{f}}R_{\lambda}+\frac{\epsilon}{(1+\epsilon \overline{f})^2}X,
    $$
    where $X$ is the unique vector field with $X\in \xi=\ker \lambda$ such that 
    $$
    d\lambda(X,\cdot)|_{\xi}=d\overline{f}|_{\xi}(\cdot).
    $$
    In particular, if $p\in \textrm{Crit}(\overline{f})$, then $X(p)=0$ and $R_{\epsilon}(p)=\frac{1}{1+\epsilon\overline{f}(p)}$. This implies that the following curves are Reeb orbits
    \begin{eqnarray*}
    h_{(0,1)}^1(t)=([0,t],dy|_{[0,t]}); \quad 0\leq t\leq 1/2;\\
    h_{(0,1)}^2(t)=([1/2,t],dy|_{[1/2,t]}); \quad 0\leq t\leq 1/2;\\
    e_{(0,1)}(t)=([1/4,t],dy|_{[1/4,t]}); \quad 0\leq t\leq 1.
    \end{eqnarray*}
    Note that the action must change, but it is $\epsilon$-close to the lenght of the corresponding unperturbed Reeb orbit. If $\epsilon>0$ is sufficiently small, there is no Reeb orbits of action less than $L$ create other than these ones, this follows as in the Lemma \ref{actionless}.
\end{proof}

\subsection{Trivializations over $h_{(0,1)}^1$, $h_{(0,1)}^2$ and $e_{(0,1)}$}

We defined symplectic trivializations of the contact structure $\xi_{std}$ as follows. In $h_{(0,1)}^1(t)$ with $0\leq t\leq 1/2$, we consider
$$
\xi_{std}(h_{(0,1)}^1(t))=span\{\cos(2\pi t)\partial_{x}-\sin(2\pi t)\partial_{\theta}, \cos(2\pi t)\partial_{\theta}+\sin(2\pi t)\partial_{x}\}.
$$
Similarly, for $h_{(0,1)}^2(t)$ with $0\leq t\leq 1/2$,
$$
\xi_{std}(h_{(0,1)}^2(t))=span\{\cos(2\pi t)\partial_{x}-\sin(2\pi t)\partial_{\theta}, \cos(2\pi t)\partial_{\theta}+\sin(2\pi t)\partial_{x}\},
$$
where each vector field $\partial_x$ and $\partial_{\theta}$ is over the point $h_{(0,1)}^1(t)$ in the first case and over $h_{(0,1)}^2(t)$ in the second one. In the case of the Reeb orbit $e_{(0,1)}(t)$ we just consider as in the toric case
$$
\xi_{std}(e_{(0,1)}(t))=span\{\partial_{\theta},-\sin(\theta)\partial_x+\cos(\theta)\partial_y\}.
$$

The reason why we need take the trivialization with cossines and sines over the Reeb orbits $h_{(0,1)}^1(t)$ and $h_{(0,1)}^2(t)$ is that $\partial_x$ and $\partial_{\theta}$ are no more well defined vector field, because at final point $t=1/2$ they return back with a minus sign, we can overcome this difficult rotating a ``half" more to close up and obtain a well defined section of $\xi$ over these Reeb orbits.

\subsection{Computations of Conley-Zehnder index}

We will compute all for $h_{(0,1)}^1(t)$ and is similar to $h_{(0,1)}^2(t)$. Let us call
$$
e_1(t)=\cos(2\pi t)\partial_{x}-\sin(2\pi t)\partial_{\theta}\quad \textrm{and}\quad  e_2(t)=\cos(2\pi t)\partial_{\theta}+\sin(2\pi t)\partial_{x},
$$
the trivialization $\tau$ over the Reeb orbit $h_{(0,1)}^1(t)$. Let $\phi_{\epsilon}$ the flow of $R_{\epsilon}$ and $\phi$ the flow of $R_{\lambda}$. The linearized flow of $R_{\epsilon}$ of $h_{(0,1)}^1(t)$ restrict to the contact structure satisfies
$$
d\phi_{\epsilon}^t|_{\xi}=d\phi^t|_{\xi}\cdot \textrm{exp}(-\epsilon tJ_{0}Hess(\overline{f})|_{\xi}),
$$

where $J_0$ is the standard contact structure of $\mathbb{R}^2$. Therefore, we have to compute the linearized flow of the old flow and the Hessian of the function $\overline{f}$. The linearized old flow $d\phi^{t}(h_{(0,1)}^1(0))|_{\xi}$ along the contact structure is given by 
$$
d\phi^{t}(h_{1/0}(0))|_{\xi}=
\begin{bmatrix}
\cos(2\pi t) & -\sin(2\pi t)-t\cos(2\pi t) \\
\sin(2\pi t) & \cos(2\pi t)-t\sin(2\pi t) \\
\end{bmatrix}.
$$

A straightforward computation give that the Hessian of the function $\overline{f}$ in the contact structure over the orbit $h_{(0,1)}^1(t)$ is the following
$$
\textrm{Hess}(\overline{f})|_{\xi}=
\begin{bmatrix}
k\cos(2\pi t)^2 & k\sin(2\pi t)\cos(2\pi t) \\
k\sin(2\pi t)\cos(2\pi t) & k\sin(2\pi t)^2 
\end{bmatrix},
$$
where $k=f''(0)$. Recall by definition of $f$ that $k>0$. Therefore, the new linearized Reeb flow is given by
$$
d\phi_{\epsilon}^{1/2}|_{\xi}=
\begin{bmatrix}
    -1-\frac{k\epsilon}{4} & 1/2\\
    \frac{k\epsilon }{2} & -1
\end{bmatrix}
$$
its eigenvalues are given by
$$
\frac{-(2+\frac{k\epsilon}{4})\pm \sqrt{(2+\frac{k\epsilon}{4})^{2}-4}}{2}.
$$
Since $k>0$, both eigenvalues are negative and $h_{(0,1)}^1$ is a nondegenerate negative hyperbolic Reeb orbit.

\begin{proposition}
    The Conley-Zehnder index of $h_{(0,1)}^1$ in the trivialization $\tau$ is given by $CZ_{\tau}(h_{(0,1)}^1)=-1$.
\end{proposition}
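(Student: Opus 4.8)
The plan is to invoke \eqref{eqn:CZhyp}. We have already shown that $h_{(0,1)}^1$ is nondegenerate and negative hyperbolic, so there is an odd integer $n$ such that the linearized Reeb flow rotates the eigenline of the linearized return map by angle $n\pi$ with respect to $\tau$, and since $h_{(0,1)}^1$ is a simple Reeb orbit, \eqref{eqn:CZhyp} with $k=1$ gives $CZ_\tau(h_{(0,1)}^1)=n$. So everything reduces to computing $n$, and we already have at hand the full path of symplectic matrices $\Psi(t):=d\phi_\epsilon^t|_\xi$, $t\in[0,1/2]$, written in the trivialization $\tau$, with $\Psi(0)=\mathrm{Id}$ and $\Psi(1/2)=\begin{bmatrix}-1-\tfrac{k\epsilon}{4} & \tfrac12\\[2pt] \tfrac{k\epsilon}{2} & -1\end{bmatrix}$, where $k=f''(0)>0$.

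To extract $n$, I would fix $\epsilon>0$ small, pick an eigenvector $v_\epsilon$ of $\Psi(1/2)$, and study the loop $t\mapsto\mathbb{R}\cdot\Psi(t)v_\epsilon$ in $\mathbb{R}P^1$ (it is a loop because $\Psi(1/2)v_\epsilon$ is proportional to $\Psi(0)v_\epsilon=v_\epsilon$). The two eigenlines of the hyperbolic matrix $\Psi(1/2)$ are distinct and $\Psi(t)$ is invertible, so the two loops thus obtained are disjoint in $\mathbb{R}P^1$ and hence have the same winding number; this common winding number is $n$, one full turn of $\mathbb{R}P^1$ being a rotation of a line by $\pi$. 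To evaluate it I would let $\epsilon\to 0$. Using the factorization $\Psi(t)=(d\phi^t|_\xi)\exp\!\big(-\epsilon t\,J_0\,\textrm{Hess}(\overline f)|_\xi\big)$, the second factor tends to $\mathrm{Id}$ uniformly on $[0,1/2]$, while the eigenlines of $\Psi(1/2)$ both converge to $\mathbb{R}\cdot(1,0)^{T}$, the unique eigenline of the limiting parabolic matrix $\begin{bmatrix}-1&\tfrac12\\0&-1\end{bmatrix}$. Since the explicit formula for $d\phi^t|_\xi$ gives $(d\phi^t|_\xi)(1,0)^{T}=(\cos 2\pi t,\sin 2\pi t)^{T}$, the loops $t\mapsto\mathbb{R}\cdot\Psi(t)v_\epsilon$ converge in $C^0$ to $t\mapsto\mathbb{R}\cdot(\cos 2\pi t,\sin 2\pi t)^{T}$, which sweeps $\mathbb{R}P^1$ exactly once. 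As winding numbers are integers that are locally constant under $C^0$-perturbation of loops, $n=\pm1$ for all small $\epsilon$, and a careful accounting of the orientation of $\tau$ relative to $d\lambda$ and of the sign convention for $J_0$ gives $n=-1$; hence $CZ_\tau(h_{(0,1)}^1)=-1$.

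The one genuinely delicate point is fixing this sign: the limiting loop only determines $n$ up to sign, and to see that $n=-1$ rather than $+1$ one must use that the nondegenerate perturbation lands on the expected side of the parabolic endpoint $\begin{bmatrix}-1&\tfrac12\\0&-1\end{bmatrix}$. This is exactly where the hypothesis $k=f''(0)>0$ enters, since it makes $\textrm{Hess}(\overline f)|_\xi$ positive semidefinite, so the perturbation has a definite sign. Concretely, one can either compute the Robbin--Salamon index of $\Psi$ from the crossing form near $t=0$, or write $v_\epsilon$ and $\Psi(t)v_\epsilon$ explicitly for $\epsilon>0$ and track the winding directly; in either case it is the sign bookkeeping rather than the homotopy-theoretic content that is the main obstacle. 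The computation for $h_{(0,1)}^2$ is then identical by the symmetry between the two orbits.
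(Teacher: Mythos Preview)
Your approach is correct and matches the paper's: both compute the path $\Psi(t)=d\phi_\epsilon^t|_\xi$ in the trivialization $\tau$, track the rotation of an eigenline of the hyperbolic return map, and read off $n$ via \eqref{eqn:CZhyp}; the paper simply asserts that this rotation equals $-\pi$, while your passage to the $\epsilon\to 0$ limit is a clean way to make that ``straightforward computation'' explicit. One clarification on your discussion of the sign: the limiting loop $t\mapsto[\cos 2\pi t:\sin 2\pi t]$ in $\mathbb{RP}^1$ already carries a \emph{definite} winding number once the $d\lambda$-orientation on $\xi$ (and hence the orientation of $\tau$) is fixed, and by $C^0$-stability this \emph{is} $n$ for all small $\epsilon>0$---so the sign of $k=f''(0)$ matters only insofar as it makes the endpoint hyperbolic (so that \eqref{eqn:CZhyp} applies at all), not for deciding between $n=+1$ and $n=-1$.
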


\begin{proof}
    We just see that with the computations of the old linearized Reeb flow and Hessian of $\overline{f}$ we can compute the linearized Reeb flow over $h_{(0,1)}^1(t)$ in any time $t$, a straightforward computation show that this rotates a eigenvector by a angle of $-\pi$, using the equation \ref{eqn:CZhyp} we are done.
\end{proof}

The Reeb orbit $h_{(0,1)}^2$ also satifies $CZ_{\tau}(h_{(0,1)}^2)=-1$. In the case of Reeb orbit $e_{(0,1)}(t)$ its rotation angle is positive and very close to zero, therefore $CZ_{\tau}(e_{(0,1)})=1$.

In the klein bottle $K_{-}$ we consider the same with trivializations as in $K_{+}$ and obtain two negative hyperbolic Reeb orbits $h_{(0,-1)}^1$ and $h_{(0,-1)}^2$ both with Conley-Zehnder index equal to $-1$ and a elliptic Reeb orbit $e_{(0,-1)}$ which has Conley-Zehnder index equal to $1$.

\section{Singular Homology and Chern Class}

We start this section showing that the standard contact structure $\xi_{st}$ in the unit cotangent bundle of the flat Klein bottle has zero chern class as a complex bundle.

\begin{proposition}\label{chernclass}
    The first Chern class of the contact structure $c_1(\xi)$ is zero.
\end{proposition}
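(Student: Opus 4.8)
The plan is to exhibit an explicit nonvanishing section of $\xi_{st}$ over $U^*(K)$, or more precisely, to split $\xi_{st}$ as a complex line bundle into pieces whose first Chern class can be computed directly. Since $c_1$ of a complex line bundle over a three-manifold lies in $H^2(U^*(K),\mathbb{Z}) \simeq \mathbb{Z}\oplus\mathbb{Z}_2\oplus\mathbb{Z}_2$, it suffices to show $c_1(\xi_{st})$ pairs to zero with a generating set of $H_2(U^*(K),\mathbb{Z})$, equivalently that it vanishes on the two $T^2$-like classes detecting the free $\mathbb{Z}$ and on the two torsion classes.

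First I would use the ``almost toric'' structure from Proposition \ref{toricprop}: over the open dense set $U \cong (-\pi/2,\pi/2)\times T^2$ the contact structure is trivialized explicitly by $\xi = \mathrm{span}\{\partial_\theta,\,-\sin\theta\,\partial_x+\cos\theta\,\partial_y\}$, and this is a genuine trivialization of $\xi_{st}$ as a complex line bundle over $U$. The obstruction to extending this trivialization across the complement $K_+ \cup K_-$ is what computes $c_1(\xi_{st})$. The key point is the identification $(\theta,[x,y])\sim(\pi-\theta,[1-x,y+1/2])$ describing $U^*(K)$ as a quotient of $T^3$: the gluing map relating the $U$-trivialization on one side of each Klein bottle $K_\pm$ to itself on the other side is, up to homotopy, the linearization of this involution acting on $\xi$. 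I would compute that this gluing identification acts on the trivializing frame by a rotation, and then show — using the explicit trivializations over $h^1_{(0,1)}$, $h^2_{(0,1)}$, $e_{(0,1)}$ introduced in Section \ref{pertubation}, where the ``half more rotation'' needed to close up the frame over the hyperbolic orbits is exactly recorded — that the net winding around each of the generating $2$-cycles is zero. Concretely, the relative first Chern class $c_\tau(Z)$ of a generating surface $Z$ is computed by counting zeros of a generic section that has winding zero with respect to $\tau$ on each boundary orbit; choosing $Z$ built from the toric tori and capping disks/Möbius bands near $K_\pm$, the contributions from the two ends of each such annulus cancel because the Conley–Zehnder data over $h^1_{(0,1)}, h^2_{(0,1)}$ (with index $-1$) and $e_{(0,1)}$ (index $+1$) are set up symmetrically.

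An alternative and perhaps cleaner route, which I would run in parallel: $U^*(K)$ is a $T^2$-bundle over $S^1$ with monodromy $-\mathrm{Id}$, so $TU^*(K)$ already contains the nowhere-zero vertical (fiber-direction) field $\partial_y$, and more is true — the almost complex structure and the explicit frames above let one write $\xi_{st}$ with a frame that fails to close up only by an overall sign on passing through each $K_\pm$, and there are two of them, so the signs cancel globally. This is morally the same computation as in Lemma \ref{torictrivial}, except that here the relevant vector field lives in $\xi$ rather than being $\partial_t$; the content of the proposition is precisely that, despite $\xi_{st}$ not being of the invariant-torus form (Remark \ref{remark1}), the two defects introduced at $K_+$ and $K_-$ cancel.

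The main obstacle I anticipate is bookkeeping the torsion part of $c_1(\xi_{st})$ correctly. Vanishing of the $\mathbb{Z}$-part (pairing with the fiber class $T^2$) is essentially immediate from the toric trivialization, but the two $\mathbb{Z}_2$ summands of $H^2$ require pairing with non-obvious torsion $2$-cycles — represented by Klein bottles or tori coming from the identification $(\theta,[x,y])\sim(\pi-\theta,[1-x,y+1/2])$ — and on these the relative Chern number must be tracked mod $2$ through the orientation-reversing gluing. I expect the argument to hinge on the symmetry $f(x)=f(1/2-x)$ of the perturbation function and on the fact that $h^1_{(0,1)}$ and $h^2_{(0,1)}$ come in a symmetric pair, so that their winding contributions are equal and hence sum to zero mod $2$; making this rigorous is the delicate step, and I would do it by an explicit first-homology computation of $U^*(K)$ together with the change-of-trivialization formulas $c_\tau(Z)-c_{\tau'}(Z) = \sum_i m_i(\tau_i^+-\tau_i^{+\prime}) - \sum_j n_j(\tau_j^--\tau_j^{-\prime})$ recorded in Section \ref{ECHtheory}.
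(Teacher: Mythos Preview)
Your proposal has a genuine gap in how you plan to detect the torsion part of $c_1(\xi_{st})$. You write that it suffices to pair $c_1$ with a generating set of $H_2(U^*(K),\mathbb{Z})$, and later speak of ``torsion $2$-cycles'' against which to test the $\mathbb{Z}_2$ summands. But $H_2(U^*(K),\mathbb{Z})\cong\mathbb{Z}$ (generated by a fiber torus, as the paper records), so there are no torsion $2$-cycles; the evaluation map $H^2\to\operatorname{Hom}(H_2,\mathbb{Z})$ kills torsion entirely. The $\mathbb{Z}_2\oplus\mathbb{Z}_2$ in $H^2$ arises from $\operatorname{Ext}(H_1,\mathbb{Z})$ via universal coefficients, not from any pairing with $H_2$. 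So your framework of ``pairing with $2$-cycles and computing winding numbers'' cannot in principle see the torsion, and the later discussion of Conley--Zehnder indices and change-of-trivialization formulas does not address this.

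The paper's argument bypasses this difficulty entirely and is much shorter than what you outline. It works on the Poincar\'e dual side: $PD(c_1(\xi))\in H_1(U^*(K),\mathbb{Z})$ is represented by the zero locus of any generic section of $\xi$. Take the section $\partial_\theta$, which is nonvanishing on the toric part; extend it across $K_+$ and $K_-$ allowing zeros. The zero set is a curve $\gamma$ in $K_+$ together with a homologous curve in $K_-$, each representing the torsion generator of $H_1(K_\pm)$ (the projection of a horizontal line). Hence $PD(c_1(\xi))=2[\gamma]$, and since $[\gamma]$ is $2$-torsion in $H_1(U^*(K))$, this is zero. Your ``alternative route'' intuition that the two sign defects at $K_+$ and $K_-$ cancel is exactly this statement, but the correct formalization is the equation $PD(c_1(\xi))=2[\gamma]=0$ in $H_1$, not a winding-number cancellation against $H_2$.
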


\begin{proof}
    Consider the vector field $\partial_{\theta}$ in $(-\pi/2+\epsilon,\pi/2-\epsilon)\times T^2\subset U^*(K)$ we can extend this vector field in the contact structure $\xi$ putting zeroes along the torsion generator in each Klein bottle $K_+$ and $K_-$, but these curves are homologous, therefore
    $$
    PD(c_1(\xi))=[\psi^{-1}(0)]=2[\gamma],
    $$
    where $\psi$ is the extension of the vector field $\partial_{\theta}$ and $\gamma$ is the torsion generator of the homology of $K_+$ (the projection of horizontal line in the Klein bottle). Since $2[\gamma]=0$, we conclude that $PD(c_1(\xi))=0$.
\end{proof}

\subsection{Homology classes of Reeb orbits}
In this section, we will compute the singular homology of our three manifold, with this information we can understand better our orbit sets when is fixed a homology class $\Gamma$ necessary to define ECH.

Using the Mayer-Vietoris sequence, we compute the singular homology of $U^*(K)$ and describe the homology classes of all Reeb orbits. The answer is $H_1(U^*(K),\mathbb{Z})=\mathbb{Z}\oplus \mathbb{Z}_2\oplus \mathbb{Z}_{2}$, and the Reeb orbits have homology class
\begin{table}[h]
    \centering
    \begin{tabular}{c|c}
    \textrm{Reeb orbit} & homology class \\
    \hline
     $h_{(0,1)}^1$    & $(1,\overline{0},\overline{0})$  \\
     \hline
     $h_{(0,1)}^2$   & $(1,\overline{0},\overline{1})$ \\
     \hline
     $e_{(0,1)}$   & $(2,\overline{0},\overline{0})$ \\
     \hline
     $h_{(0,-1)}^1$   & $(-1,\overline{1},\overline{0})$ \\
     \hline
     $h_{(0,-1)}^2$   & $(-1,\overline{1},\overline{1})$ \\
     \hline
     $e_{(0,-1)}$   & $(-2,\overline{0},\overline{0})$ \\
    \end{tabular}
    \caption{Homology of Reeb orbits in $K_+$ and $K_{-}$}
    \label{tab:my_label}
\end{table}

For Reeb orbits in the torus $tan(\theta)=p/q$, and therefore with homology $(q,p)$ in this torus, will have singular homology class in $U^*(K)$ given by $(2p,\overline{0},\overline{q})$.

\begin{remark}
    For any integer $n$, the overline on it $\overline{n}$ means its class in the group $\mathbb{Z}_2$.
\end{remark}

The next lemma is used to ensures in computations of the ECH index that the relative intersection number $Q_{\tau}(Z)$ does not depend on $Z\in H_2(U^*K,\alpha,\beta)$.

\begin{lemma}
    The embedded torus $\{\theta\}\times T^2\subset U^*K$ generates $H_2(U^*K,\mathbb{Z})$ for any $\theta\in (-\pi/2,\pi/2)$. More precisely, if $i:\{\theta\}\times T^2\rightarrow U^*(K)$, denotes the inclusion and $[\{\theta\}\times T^2]$ is the fundamental class of this torus, then $i_{*}[\{\theta\}\times T^2]$ generates $H_2(U^*(K),\mathbb{Z})\simeq \mathbb{Z}$.
\end{lemma}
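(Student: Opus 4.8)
The plan is to compute $H_2(U^*(K),\mathbb{Z})$ directly and identify the image of the torus fundamental class inside it. First I would use the description of $U^*(K)$ as a $T^2$-bundle over $S^1$ with monodromy $A=-\mathrm{Id}$ on $H_1(T^2,\mathbb{Z})\cong\mathbb{Z}^2$, which was already noted in the excerpt. The Wang exact sequence for such a bundle reads
$$
\cdots \to H_2(T^2)\xrightarrow{A_*-\mathrm{Id}} H_2(T^2)\to H_2(U^*(K))\to H_1(T^2)\xrightarrow{A_*-\mathrm{Id}} H_1(T^2)\to\cdots
$$
On $H_2(T^2)\cong\mathbb{Z}$ the monodromy $A=-\mathrm{Id}$ acts by $\det(-\mathrm{Id})=+1$, so $A_*-\mathrm{Id}=0$ on $H_2$; on $H_1(T^2)\cong\mathbb{Z}^2$ we have $A_*-\mathrm{Id}=-2\,\mathrm{Id}$, which is injective. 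Hence the sequence gives a short exact sequence $0\to H_2(T^2)\to H_2(U^*(K))\to 0$, i.e. $H_2(U^*(K),\mathbb{Z})\cong\mathbb{Z}$, generated precisely by the image of the fiber class. This is exactly $i_*[\{\theta\}\times T^2]$, since a fiber of the bundle is a torus $\{\theta\}\times T^2$ (the $T^2$-fibration here being the one compatible with the $S^1$-coordinate on the base, not the quotient fibration mentioned in the earlier remark — but these fibers are isotopic inside $U^*(K)$, so the homology class is the same).

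Alternatively — and perhaps more in keeping with the Mayer–Vietoris computations already used in the paper for $H_1$ — I would decompose $U^*(K)=U\cup (N_+\sqcup N_-)$, where $U\cong I\times T^2$ is the toric part from Proposition \ref{toricprop} and $N_\pm$ are tubular neighborhoods of the Klein bottles $K_\pm$, with $U\cap N_\pm$ homotopy equivalent to $T^2$ (the boundary torus $\{\theta_\pm\}\times T^2$). One then feeds in $H_2(T^2)\cong\mathbb{Z}$, $H_2(U)\cong\mathbb{Z}$, $H_2(K_\pm)=0$ (the Klein bottle is nonorientable, so $H_2(K;\mathbb{Z})=0$), and $H_1(K_\pm)\cong\mathbb{Z}\oplus\mathbb{Z}_2$, together with the maps induced by inclusion $\{\theta_\pm\}\times T^2\hookrightarrow N_\pm$. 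The key computation is that on $H_1$, the map $H_1(\partial N_\pm)\cong\mathbb{Z}^2\to H_1(N_\pm)\cong H_1(K_\pm)\cong\mathbb{Z}\oplus\mathbb{Z}_2$ sends the class $(q,p)$ of the torus to $(p,\bar q)$ — this is precisely the content of the homology table for the Reeb orbits, since $e_{(q,p)}$ lies in the torus $\{\theta\}\times T^2$ with homology class $(q,p)$ there and maps to $(2p,\bar 0,\bar q)$ in $U^*(K)$, while the "half" orbits $h^i$ account for the $\mathbb{Z}_2$'s. Tracking the Mayer–Vietoris sequence then shows that the connecting map into $H_1$ kills everything coming from the two copies of $H_2(T^2)$ except the diagonal, which maps isomorphically onto $H_2(U^*(K))$, and that this surviving class is $i_*[\{\theta\}\times T^2]$.

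The main obstacle is the bookkeeping in the Mayer–Vietoris approach: one must carefully orient the two boundary tori $\{\theta_+\}\times T^2$ and $\{\theta_-\}\times T^2$ and correctly identify the two inclusion-induced maps $H_1(T^2)\to H_1(K_\pm)$ up to the monodromy twist, since an orientation or sign error there would change whether the relevant combination of fiber classes survives to $H_2$ or dies. The cleanest way to control this is to use the Wang sequence version, where the only nontrivial input is the action of $-\mathrm{Id}$ on $H_*(T^2)$, and then to observe that the inclusion $\{\theta\}\times T^2\hookrightarrow U^*(K)$ factors through a fiber of the bundle projection and hence hits the generator. I would write up the Wang-sequence argument as the main proof and remark that it is consistent with the homology table for Reeb orbits. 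Finally, to get the "more precisely" statement, I note that $[\{\theta\}\times T^2]$ is nonzero in $H_2$ because its intersection pairing with the $S^1$-class in $H_1(U^*(K))$ (the class $(1,\bar 0,\bar 0)$, represented by $h^1_{(0,1)}$) is $\pm 1$, so it is in fact a generator of $H_2(U^*(K),\mathbb{Z})\cong\mathbb{Z}$.
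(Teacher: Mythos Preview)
Your Wang-sequence computation of $H_2(U^*(K))\cong\mathbb{Z}$ is correct, but the identification of the generator with $i_*[\{\theta_0\}\times T^2]$ fails. The fibers of the $T^2$-bundle over $S^1$ with monodromy $-\mathrm{Id}$ are the tori of constant $y$, parametrized by $(\theta,x)$, \emph{not} the tori $\{\theta_0\}\times T^2$ parametrized by $(x,y)$; your parenthetical assertion that ``these fibers are isotopic inside $U^*(K)$'' is unjustified and in fact false. Indeed, $\{\theta_0\}\times T^2$ is a regular level set of the globally defined smooth function $y:U^*(K)\to[-1,1]$ introduced in Section~\ref{almosttoric} (in the toric coordinates one has $y=\sin\theta$), so it separates $U^*(K)$ into two pieces, one retracting onto $K_-$ and the other onto $K_+$. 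Consequently $i_*[\{\theta_0\}\times T^2]=0$ in $H_2(U^*(K))$. Your intersection-pairing check is consistent with this rather than with your claim: the orbit $h_{(0,1)}^1$ lies entirely in $K_+$, disjoint from $\{\theta_0\}\times T^2$, so their algebraic intersection number is $0$, not $\pm 1$.

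In short, the lemma as stated is false, and the paper's one-line appeal to the long exact sequence of the pair cannot work either (the map $H_2(T^2)\to H_2(U^*(K))$ in that sequence is zero). This does not, however, affect the paper's application: the lemma is only invoked to argue that $Q_\tau(Z)$ is independent of $Z\in H_2(U^*(K),\alpha,\emptyset)$, and since the relevant orbit sets satisfy $[\alpha]=0$, this follows directly from the general formula $Q_\tau(Z)-Q_\tau(Z')=(Z-Z')\cdot[\alpha]=0$ without any need to identify a generator of $H_2$.
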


\begin{proof}
    Follows by apply the long exact sequence in singular homology to the pair $\{\theta\}\times T^2\subset U^*K$.
\end{proof}

\subsection{Types of orbit sets}

Let $\alpha$ be an admissible orbit set with homology class zero. Then $\alpha$ has one of the following types.

\textbf{Type I.} The generator has only orbits from the toric coordinates given by Proposition \ref{toricprop}, together the eliptic orbits from $K_{\pm}$. We write 
$$
\alpha=e_{(0,-1)}^m\gamma_{(q_1,p_1)}\cdots \gamma_{(q_k,p_k)}e_{(0,1)}^n, 
$$
where $p_i/q_i\leq p_j/q_j$ for $i\leq j$. Subject to the conditions
$$
\sum_{i=1}^k p_i+n-m=0 \hspace{0.6cm} and \hspace{0.6cm} \sum_{i=1}^k
q_i\textrm{ is even}.
$$

\textbf{Type II.} The generator $\alpha$ has negative hyperbolic orbits from $K_{+}$, but not from $K_{-}$. For homological reasons, $\alpha$ can not have only one negative hyperbolic orbit. We write this orbit set in the multiplicative notations as follows.
$$
\alpha=e_{(0,-1)}^m\gamma_{(q_1,p_1))}\cdots \gamma_{(q_k,p_k)}e_{(0,1)}^n h_{(0,1)}^1h_{(0,1)}^2, 
$$
subject to the conditions
$$
\sum_{i=1}^k p_i+n-m+1=0 \hspace{0.6cm} and \hspace{0.6cm} \sum_{i=1}^k
q_i\textrm{ is odd}.
$$

\textbf{Type III.} The generator $\alpha$ has negative hyperbolic orbits from $K_{-}$, but not from $K_{+}$. For homological reasons, $\alpha$ can not have only one negative hyperbolic orbit. Therefore we can write 

$$
\alpha=h_{(0,-1)}^1h_{(0,-1)}^2 e_{(0,-1)}^m\gamma_{(q_1,p_1)}\cdots \gamma_{(q_k,p_k)}e_{(0,1)}^n , 
$$
subject to the conditions
$$
\sum_{i=1}^k p_i+n-m-1=0 \hspace{0.6cm} and \hspace{0.6cm} \sum_{i=1}^k
q_i\textrm{ is odd}.$$
\textbf{Type IV.} In this case, $\alpha$ has all negative hyperbolic orbits, we write
$$
\alpha=h_{(0,-1)}^1h_{(0,-1)}^2 e_{(0,-1)}^m\gamma_{(q_1,p_1)}\cdots \gamma_{(q_k,p_k)}e_{(0,1)}^n h_{(0,1)}^1h_{(0,1)}^2,
$$
subject to the conditions
$$
\sum_{i=1}^k p_i+n-m=0 \hspace{0.4cm} \sum_{i=1}^k
n_iq_i\textrm{ is even}.$$

\subsection{Bijection with K-lattice paths}\label{bijectioncomb}

We can define now a map $ECC^L(U^*(K),\lambda_{\epsilon},0,J)$ to $C^{L}$, by taking the concatenation in the plane of the toric homology classes and in the case with negative hyperbolic arrows just take ``half arrows" as explained in the introduction \ref{intro: K-lattice}. This map is a bijection and we write to $\alpha$ an ECH generator, $P_{\alpha}$ the corresponding K-lattice path. The next section shows that this map preserves the $\mathbb{Z}$-grading in both homologies.

\begin{remark}
    We can extended this map in the following way, consider $\alpha$ be an admissible orbit set not necessary with total homology equal to zero, we associated $P_{\alpha}$ to the concatenation of its homology in $\mathbb{Z}^2$, possibly with half-arrows in the order given by the $y$-function as described in section \ref{almosttoric}.
\end{remark}

\section{ECH index computations}

\subsection{Admissible surface in $\mathbb{R}\times U^*(K)$}

In this section, we construct admissible surfaces $S$ necessary to compute the relative ECH index. We describe four operations to construct such admissible surfaces.

Let $\alpha$ and $\beta$ be orbit sets with same homology class, we will describe three operations which are useful in the construction of our surfaces.

\textbf{Operation I}. Consider in the open set $(-\pi/2,\pi/2)\times T^2\subset U^*(K)$ two curves $\gamma_1$ and $\gamma_2$ in the torus $\{\theta_0\}\times T^2$ that are projection of horizontal streight lines of $\mathbb{R}^2$ to $T^2$. It is clear that in this open set these two curves are not homologous to zero, but in our three manifold $U^*(K)$ they are, the Operation $I$ is the following, we move these two curves using the $\theta$-coordinate until $\theta=\pi/2$, we have now in the Klein bottle $K_{+}$ two curves such that in the Klein bottle there a cylinder wich closed the two cylinders $[\theta_0,\pi/2]\times \gamma_1$ and $[\theta_0,\pi/2]\times \gamma_2$ in a orientable way.

\begin{figure}[!htb]
\centering
\includegraphics[width=0.60\textwidth]{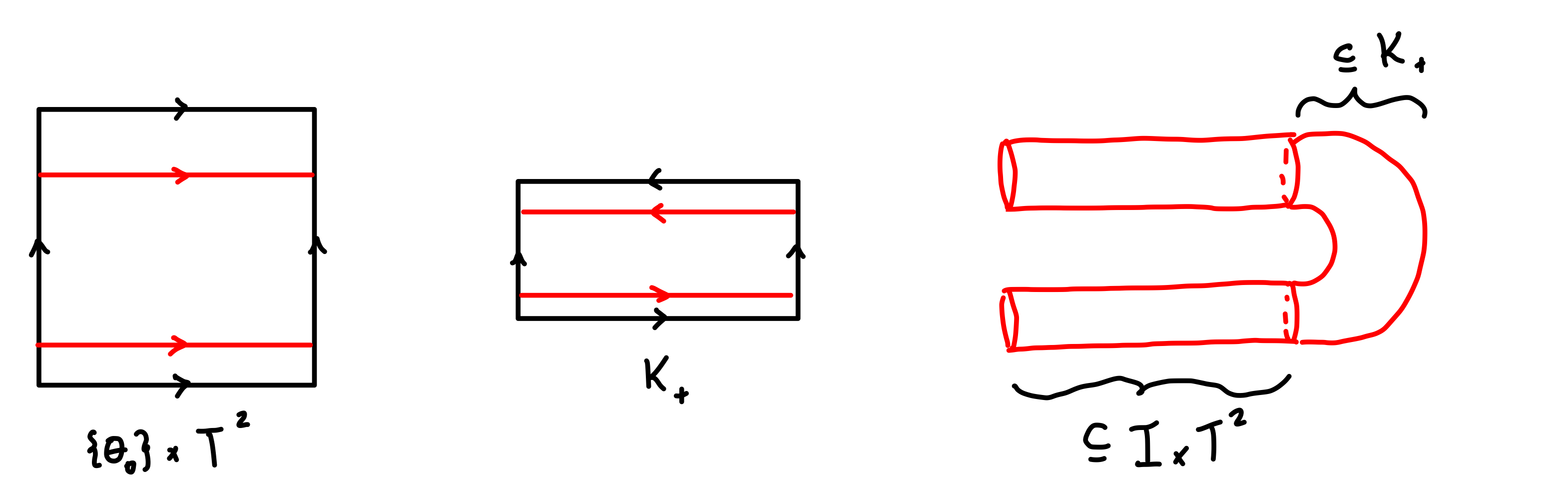}
\caption{\label{fig1} The operation I construction}
\end{figure}

\begin{remark}
    The same procedure can be done moving $\theta$ to $-\pi/2$, and we will still call this of Operation I.
\end{remark}

\textbf{Operation II}. When the orbits sets contain some negative hyperbolic orbits we in general will working with the new orbit sets which are obtain by double the multiplicity with the same Reeb orbits, this make easier all computations on the ECH index.

The operation II is the construction of a surface near the ends at negative hyperbolic orbits. We will describe this to the orbit $h_{1/0}^1$, but the construction is entirely analogous for the others negative hyperbolic orbits.

For $s$ descreasing from $1$, the slice $S\cap \{s\}\times Y$ near $h_{1/0}^1$ consist of the curve in $\{s\}\times\{-\pi/2+1-s\}\times T^2$ with homology $(0,-1)$ in the torus we move that curve as $s$ varies and when $s$ tends to $1$ we move this curve to $[0,1]\times K_{+}$ and continues moving $s$ until we colapse this curve as the double covering of the Reeb orbit $h_{\nicefrac{1}{0}}^1$, we draw below this procedure.
\begin{figure}[!htb]
\centering
\includegraphics[width=0.90\textwidth]{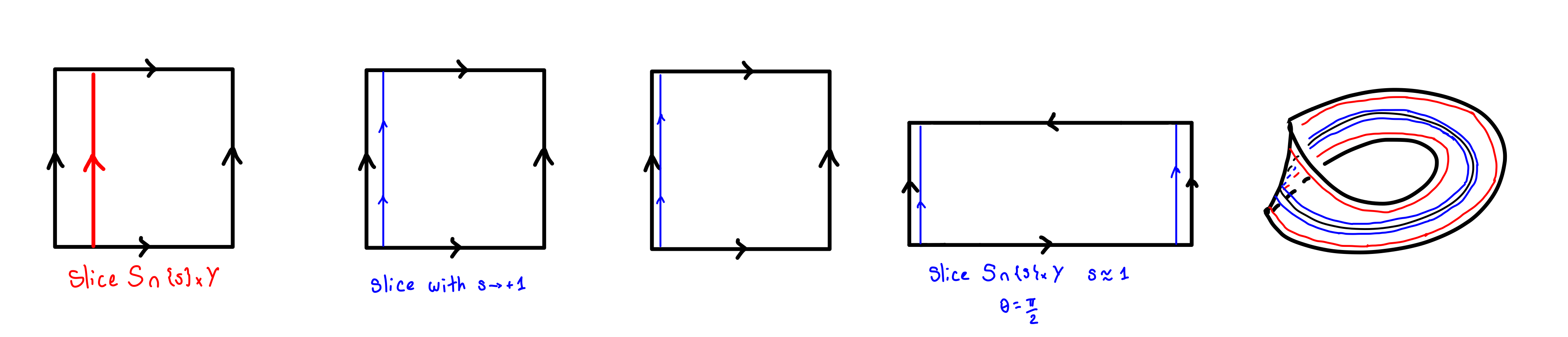}
\caption{\label{fig1} Surface S near negative hyperbolic orbits}
\end{figure}

\textbf{Operation III}. When we construct a surface with ends at $m$-coverings of a same Reeb orbit, we take $m$-copies of this same orbit, the operation III consist of moving the trivial cylinder over each copie of these orbit to the same orbit, in such way that in the end we have an admissible surface with ends at covering of the Reeb orbit. 

\begin{remark}
    Observe that the operation I does not add to the relative chern class, in general we will use the vector field $\partial_{\theta}$ plus a modification of this vector field in the surface nearby the negative hyperbolic orbits which are given by the operation II, to have a well defined section of the contact structure which has winding number zero with respect the trivialization $\tau$. We observe that, we can interpolate between $\partial_{\theta}$ and $-\partial_{\theta}$ in the surface of the operation I in the portion that it intersect the Klein bottle $K_{+}$ using the vector field $\partial_{x}$. This therefore create a smooth section of the contact structure along the surface $S$.
\end{remark}

\textbf{Operation IV}. (Surgeries) Let $\alpha=\{(\gamma_i,m_i)\}$ and $\beta=\{(\beta_j,n_j)\}$ be two toric orbit sets. Suppose that $[\alpha]=[\beta]\in H_1(U^*(K),\mathbb{Z})$, we will constuct a surface $S$ embedded (except possibly at the boundary) with $[S]\in H_2(U^*(K),\alpha,\beta)$ in $[0,1]\times I\times T^2$. Denote $\theta(\gamma_i)$ the $\theta$-coordinate which $\gamma_i\subset \{\theta(\gamma_i)\}\times T^2$. 

We start with half-cylinders 
$$
[1/2,1]\times \{\theta(\gamma_i)+\epsilon/k\}\times \pi_{T^2}(\gamma_i)
$$
for each $\gamma_i$ and $1\leq k\leq m_i$. Also we consider half-cylinders 
$$
[0,1/2]\times \{\theta(\beta_j)-\epsilon/k\}\times \pi_{T^2}(\beta_j)
$$
for each $\beta_j$ and $1\leq k\leq n_j$. The $\pm \epsilon/k$ are chosen to ensure that the half-cylinders are pairwise disjoint. We construct $S$ to be the union of such half-cylinders and a movie of curves $S(\theta)$ in $\{1/2\}\times I\times T^2$. For $\theta$ less than min$\{\theta(\gamma_i),\theta(\beta_j)\}_{i,j}$, $S(\theta)$ is empty.

Suppose that there only one half-cylinder between $\theta_{-}$ and $\theta_{+}$, say $[1/2]\times \{\theta(\gamma_i)\}\times \pi_{T^2}(\gamma_i)$, we obtain $S(\theta_{+})$ from $S(\theta_{-})$ as follows.
\begin{itemize}
    \item If $S(\theta_{-})$ and $\gamma_i$ are parallel, then we add or remove a component to/from $S(\theta_{+})$ depending if we can closed up the surface with a cylinder in $\{1/2\}\times \{\theta(\gamma_i)\}\times T^2$ in a orientable fashion way or not.
    \item If they are not parallel, we perform a "surgerie": the boundary of the half cylinder at $\{1/2\}\times \{\theta(\gamma_i)\}\times T^2$ is $\{\theta(\gamma_i)\}\times \pi_{T^2}(-\gamma_i)$. We resolve each intersection of $S(\theta_{-})$ and $-\gamma_i$ and linearly interpolate $S(\theta)$ between $\theta_{\gamma_i}$ and $\theta_{+}$ until achived the straight line in the same homology of each closed component of the curve after resolve all intersections.
\end{itemize}

The case of a half-cylinder in $\beta_j$ is similar. Thus we construct surface $S$ with boundaries in $\{1\}\times \{\theta(\gamma_i)+\epsilon/k\}$ and $\{0\}\times \{\theta(\beta_j)-\epsilon/k\}\times \pi_{T^2}(\beta_j)$ and in a even number of horizontal circles with homology $(1,0)\in \mathbb{Z}^2$ in some torus $\{1/2\}\times \{\theta_{\textrm{max}}\}\times T^2$. As a final step we deform this surface so that it has boundaries in $\{1\}\times \gamma_i^{m_i}$ and $\{0\}\times \beta_j^{n_j}$ and in a bunch of "horizontal" circles we apply the \textbf{Operation I} because there are a even number of such circles.

\begin{remark}
    Of course, not every orbit set $\alpha$ with zero homology is toric, but the idea in computations of the ECH index is take the orbit set $2\alpha$ which is defined with the same orbits, but the multiplicities are multiplied by $2$. In this case, we can apply this constructions, once we can use the \textbf{Operation II}.
\end{remark}

\subsection{The relative first Chern class}\label{prop: relchern}

The following proposition give us a formula for the relative first Chern class.

\begin{proposition}\label{relchern}
    Let $\alpha$ and $\beta$ be admissible orbit sets such that $[\alpha]=[\beta]$. For any $Z\in H_2(Y,\alpha,\beta)$
    $$
    c_{\tau}(Z)=\frac{n_{\alpha}-n_{\beta}}{2}
    $$
    where $n_{\alpha}$ is the number of negative hyperbolic orbits at $\alpha$.
\end{proposition}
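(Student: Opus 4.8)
The plan is to evaluate $c_{\tau}$ on an explicit admissible representative assembled from the Operations of the previous subsection, using a section of $f^{*}\xi$ that on the toric part is essentially $\partial_{\theta}$.

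First, since $c_{1}(\xi)=0$ by Proposition \ref{chernclass}, the identity $c_{\tau}(Z)-c_{\tau}(Z')=\langle c_{1}(\xi),Z-Z'\rangle$ shows that $c_{\tau}(Z)$ is independent of $Z\in H_{2}(Y,\alpha,\beta)$, so we may compute it on any convenient representative. Because the relative Chern class is additive, $c_{\tau}(2Z)=2c_{\tau}(Z)$ for $2Z\in H_{2}(Y,2\alpha,2\beta)$; I would compute $c_{\tau}(2Z)$ and divide by $2$. The point of doubling is that each negative hyperbolic orbit then appears with multiplicity $2$, so Operation II yields a \emph{connected} double-cover end there and every winding number stays integral.

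Next, represent $2Z$ by the surface $S$ built from Operations I--IV: a movie of toric curves and half-cylinders in $[0,1]\times(-\pi/2,\pi/2)\times T^{2}$ (Operation IV), the surplus horizontal $(1,0)$-circles capped through $K_{\pm}$ (Operation I), and the collars of the negative hyperbolic ends built by Operation II. Take $\psi$ to equal $\partial_{\theta}$ on the toric part of $S$, and interpolate it through $\partial_{x}$ to $-\partial_{\theta}$ on the portions of $S$ meeting $K_{\pm}$, as in the remark following Operation III, so that $\psi$ is nonvanishing away from the negative hyperbolic ends. On every toric boundary component, and on every end at an elliptic orbit $e_{(0,\pm1)}$, the trivialization is $\tau=\mathrm{span}\{\partial_{\theta},\ -\sin\theta\,\partial_{x}+\cos\theta\,\partial_{y}\}$, so $\psi=\partial_{\theta}$ already has winding number $0$ with respect to $\tau$ and these ends force no zeros.

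Finally, analyze the ends at $h_{(0,1)}^{1},h_{(0,1)}^{2},h_{(0,-1)}^{1},h_{(0,-1)}^{2}$. There $\tau$ is the frame with the $\cos(2\pi t),\sin(2\pi t)$ coefficients fixed in Section \ref{pertubation} --- the one rotated half a turn further in order to close up along the orbit --- so along the connected double-cover collar from Operation II this amounts to exactly one full extra turn relative to the section coming from the toric side. Hence $\psi$ has winding number $\pm1$ there with respect to $\tau$, and correcting it to winding $0$ forces exactly one zero near that end, counted $+1$ when the orbit belongs to $\alpha$ (a positive end) and $-1$ when it belongs to $\beta$ (a negative end), by the orientation conventions of Section \ref{ECHtheory}. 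Adding up, $c_{\tau}(2Z)=n_{\alpha}-n_{\beta}$, so $c_{\tau}(Z)=\tfrac{1}{2}(n_{\alpha}-n_{\beta})$.

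The step I expect to be the main obstacle is the last one: checking rigorously that the natural section winds exactly once, and with which sign, along each double-cover collar of a negative hyperbolic orbit --- i.e.\ that the half-rotation built into the trivializations over $h_{(0,\pm1)}^{1,2}$ is precisely what produces the ``$1/2$'' in the formula --- and verifying that the passages of $S$ through neighborhoods of $K_{\pm}$ contribute no further zeros, which is exactly what the interpolation through $\partial_{x}$ in the remark after Operation III is there to guarantee. Everything else is either a formal property of the relative Chern class or a direct consequence of the explicit frames fixed in Section \ref{pertubation}.
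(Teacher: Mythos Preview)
Your proposal is correct and follows essentially the same approach as the paper: double to $2Z$, build the surface from Operations I--IV, use $\partial_{\theta}$ as the section on the toric part, and observe that the half-twist in the trivialization over each $h_{(0,\pm1)}^{1,2}$ forces a single zero of the appropriate sign near each double-cover collar. If anything, you are more explicit than the paper about why one may pass to $2Z$ and about the interpolation through $\partial_{x}$ near $K_{\pm}$; the paper's own argument is terse on exactly the point you flag as the main obstacle.
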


\begin{proof}
    We construct a admissible surface $S$, such that $[S]\in H_2(Y,2\alpha,2\beta)$. Using the operation II we know the surface $S$ nearby each double covering of the negative hyperbolic orbit.
    Consider the vector field $\partial_{\theta}$ given by the $\theta$ coordinate in the toric part, we consider this vector field along $S$ except in a small neighborhood of each neagtive hyperbolic end. We observe the following, the winding number with respect the trivialization of $\partial_{\theta}$ in a slice of $S$ very close to the negative hyperbolic orbits on $\alpha$ is $-1$, therefore we need add $+1$ zeroes for each end of $\alpha$ at negative hyperbolic orbits to have a extension of $\partial_{\theta}$ to all positive end of $S$ such that the vector field in the end has winding number zero with respect $\tau$.
    
    In the same way, the vector field $\partial_{\theta}$ has winding number $+1$ at the negative ends of $S$ in negative hyperbolic orbits, therefore we need add $-1$ zeros to each such end, and from this we obtain the formula in the proposition.
\end{proof}

\begin{remark}
    The equation \ref{eq: difchern} implies that the relative first Chern class does not depend on the relative homology class $Z\in H_2(U^*(K),\alpha,\beta)$.
\end{remark}

\begin{remark}
    The equality $[\alpha]=[\beta]$ implies necessary that $n_{\alpha}-n_{\beta}$ is an even number.
\end{remark}

\subsection{Fredholm index formula}\label{Fredholm1}

Suppose that $\alpha$ and $\beta$ are orbit sets with same homology $[\alpha]=[\beta]$. Let $C$ be a regular pseudo-holomorphic curve in $M^J(\alpha,\beta)$. By definition
$$
\textrm{ind}(C)=-\chi(C)+2c_{\tau}([C])+CZ_{\tau}^{ind}(\alpha,\beta).
$$
By proposition \ref{relchern} and Conley-Zehnder computations from the section \ref{pertubation}, we obtain an explicit formula to the Fredholm index of a $J$-holomorphic curve with finite energy in the symplectization of $U^*(K)$ with a perturbed contact form $\lambda_{\epsilon(L)}$.

\begin{equation}\label{eq: fredind}
\textrm{ind}(C)=2(g(C)+e(\alpha)-1)+n_{\alpha}+n_{\beta}+h(\alpha)+h(\beta).    
\end{equation}

\subsection{ECH index for $U^*(K)$}

We will compute the ECH index of each type of admissible orbit set. A first observation is that if $\alpha$ and $\beta$ are nullhomologous ECH generators by the Proposition \ref{prop: ECHPRO} the ECH index $I(\alpha,\beta,Z)$ does not depend on the relative homology class $Z$. We write $I(\alpha):=I(\alpha,\emptyset)$. Moreover by additive property of the ECH index, we have $I(\alpha,\beta)=I(\alpha)-I(\beta)$. We will compute $I(\alpha,\emptyset)$ for $\alpha$ orbit set of any type. 
\smallskip

\textbf{ECH index of type I orbit sets}. Let $\alpha=\gamma_{(q_1,p_1))}\cdots \gamma_{(q_k,p_k)}$ be a orbit set of \textbf{type I} such that we order the factors so that $p_i/q_i\leq p_j/q_j$ if $i<j$. Suppose that $[\alpha]=0\in H_1(U^*(K),\mathbb{Z})$. We will compute its grading $I(\alpha,\emptyset)$.

Let $S$ be admissible surface $S\subset [0,1]\times U^*(K)$ with $[S]\in H_2(U^*(K),\alpha,\emptyset)$ as follows. We apply the \textbf{Operation IV} together with the \textbf{Operation III} to obtain a surface that has boundary in $\alpha$ and boundary in a disjoint union of simple curves in the torus $\{1/2\}\times \{\theta_0\}\times T^2$ for some $\pi/2>\theta_0>\theta_{max}(\alpha)$, where
$$
\theta_{max}(\alpha)=\max\{\theta\mid \textrm{there is a Reeb orbit $\gamma$ in $\alpha$ that is in the torus }\{\theta\}\times T^2\}.
$$
Now we have a bunch of parallel simple curves with  homology in the torus $\pm (q,p)$ such that the total homology in $U^*(K)$ is zero. After "canceling" pairs with homology with a change o sign, we have a family of disjoint parallel curves with homology, say, $(q,p)$. Let $n$ be the number of such curves, then by homology
$$
2np=0\quad \textrm{and}\quad nq\equiv 0\pmod{2}.
$$
Therefore, or $n=0$ and we are done the construction of the surface, or $p=0$ and therefore each curve is "horizontal" with homology $(q,0)$, but because each curve is simple we would have $q=\pm 1$, in particular $n$ is an even number. Therefore, there is a even number of "horizontal" curves with same homology within in the torus, to finish we apply the \textbf{operation I} to finish the construction of the surface $S$.

By Proposition \ref{prop: relchern} we already know compute the relative chern class, in this case is zero because there are no negative hyperbolic orbits in $\alpha$. We can assume that on the complement of the surgery points, the circles used to construct $S$ as a movies of curves never point in the $y$-direction in the each torus.

To compute the term $Q_{\tau}([S])$ we perturb $S$ as follows. Consider the section of the normal bundle of $S$ given by $\psi=\pi_{NS}(\partial_{y})$, defined in $S\cap \{\theta<\pi/2-\epsilon\}$, where $\pi/2-\epsilon>\theta_0$. The portion of $S$ with $\theta>\pi/2-\epsilon$ we perturb the parallel circles in the $y$-direction in the torus and applying the \textbf{operation I} we construct the surface $S'$.   

First observe that $S$ and $S'$ have no link number between then because $\psi$ is $\tau$-trivial, therefore to compute $Q_{\tau}$ we just need compute the zeros of $\psi$ with sign, but $\psi$ vanishes only at the surgery points, as in \cite{Hutchings5} all zeros are positive, thus

$$
Q_{\tau}([S])=-\sum_{i<j}det
\begin{bmatrix}
p_i & p_j \\
q_i & q_j \\
\end{bmatrix}.
$$

In this case, we observe that $Q_{\tau}([S])$ coincides with $2Area(P_{\alpha})$, this follows from the fact that this determinant counts a area of a edge compared with other, because this counts is make twice we obtain the formula. Therefore we have $I(\alpha)=I(P_{\alpha})$ for each $\alpha$ of type I.
\smallskip

\textbf{ECH index of type II orbit set}. Let $\alpha$ be a ECH generator of type II, that is, of the form $\alpha=\alpha_1 h_{(0,1)}^1 h_{(0,1)}^2$, where $\alpha_1=\gamma_{(q_1,p_1)}\cdots \gamma_{(q_k,p_k)}$ such that $i<j$ implies $p_i/q_i\leq p_j/q_j$. As in the remark \ref{rem: quadratic} we consider the orbit set $2\alpha:=\alpha\alpha$. Therefore each negative hyperbolic orbit in $2\alpha$ has multiplicity two. We apply the \textbf{Operation II} together with the construction of cylinders and apply the \textbf{Operation IV} doing surgeries to obtain a surface which has boundary in $2\alpha$ and as in the case of toric orbit sets, a bunch of "horizontal" curves in a torus $\theta_0$ very close to $\pi/2$ and we can close the surface in a orientable manner as in the toric case.

By Proposition \ref{prop: relchern}, for any $Z\in H_2(U^*(K),\alpha,\emptyset)$, we have $c_{\tau}(Z)=1$. To compute the relative intersection pairing we use the fact that $Q_{\tau}$ is quadratic in the sense of Remark \ref{rem: quadratic}. First observe that $Q_{\tau}(Z)$ does not depend on because any tori $\{\theta\}\times T^2\hookrightarrow U^*(K)$ is the generator of $H_2(U^*(K),\mathbb{Z})$, therefore for any another relative class $Z'$ we have
$$
Q_{\tau}(Z)-Q_{\tau}(Z')=([Z]-[Z'])\cdot [\alpha],
$$
where ``$\cdot$" denotes the usual intersection number of an oriented manifold, but all Reeb orbits can always be perturbed to have zero intersection of this tori. Therefore, $Q_{\tau}(Z)=Q_{\tau}(Z')$.

Fix $Z\in H_2(U^*(K),\mathbb{Z})$, then $2Z\in H_2(U^*(K),\mathbb{Z})$, therefore
$$
Q_{\tau}(Z)=\frac{Q_{\tau}(2Z)}{4}=\frac{Q_{\tau}([S])}{4}.
$$
Now $Q_{\tau}([S])=\# \textrm{int}(S)\cap \textrm{int}(S')-lk_{\tau}(S,S')$. The intersection part is compute as follows, as in the case o toric orbit sets, we use the section $\psi=\pi_{NS}(\partial_y)$ to perturb $S$ outside from the portion of $S$ in $\theta>\pi/2-\epsilon$, where in this portion we perturb just taking parallel circles to the curves on slices and interpolate this pertubations which do not add any intersection point other than the zeros of $\psi$ which are in the surgery points. Because we take the double of multiplicities the number of intersection is multiply by four, therefore
$$
\#\textrm{int}(S)\cap \textrm{int}(S')=4(-\sum_{i<j}det
\begin{bmatrix}
p_i & p_j \\
q_i & q_j \\
\end{bmatrix}
+\sum_{i=1}^k q_i),
$$
but now we have non-trivial linking number between the braids in near the ends of these two surfaces, which can compute as follows. The trivialization of the negative hyperbolic orbits $h_{(0,1)}^1$ and $h_{(0,1)}^2$, give us the slice $S\cap \{s\}\times Y$ and $S'\cap \{s\}\times Y$, near each negative hyperbolic orbit two curves which are boundary of a mobius band with central circle exactly the negative hyperbolic orbit. Therefore, we can compute this link number, computing the self link number of the braid which is the boundary of this mobius band aroung the central circle as in the Figure  \ref{fig: braids}.
\begin{figure}[!htb]
\centering
\includegraphics[width=0.30\textwidth]{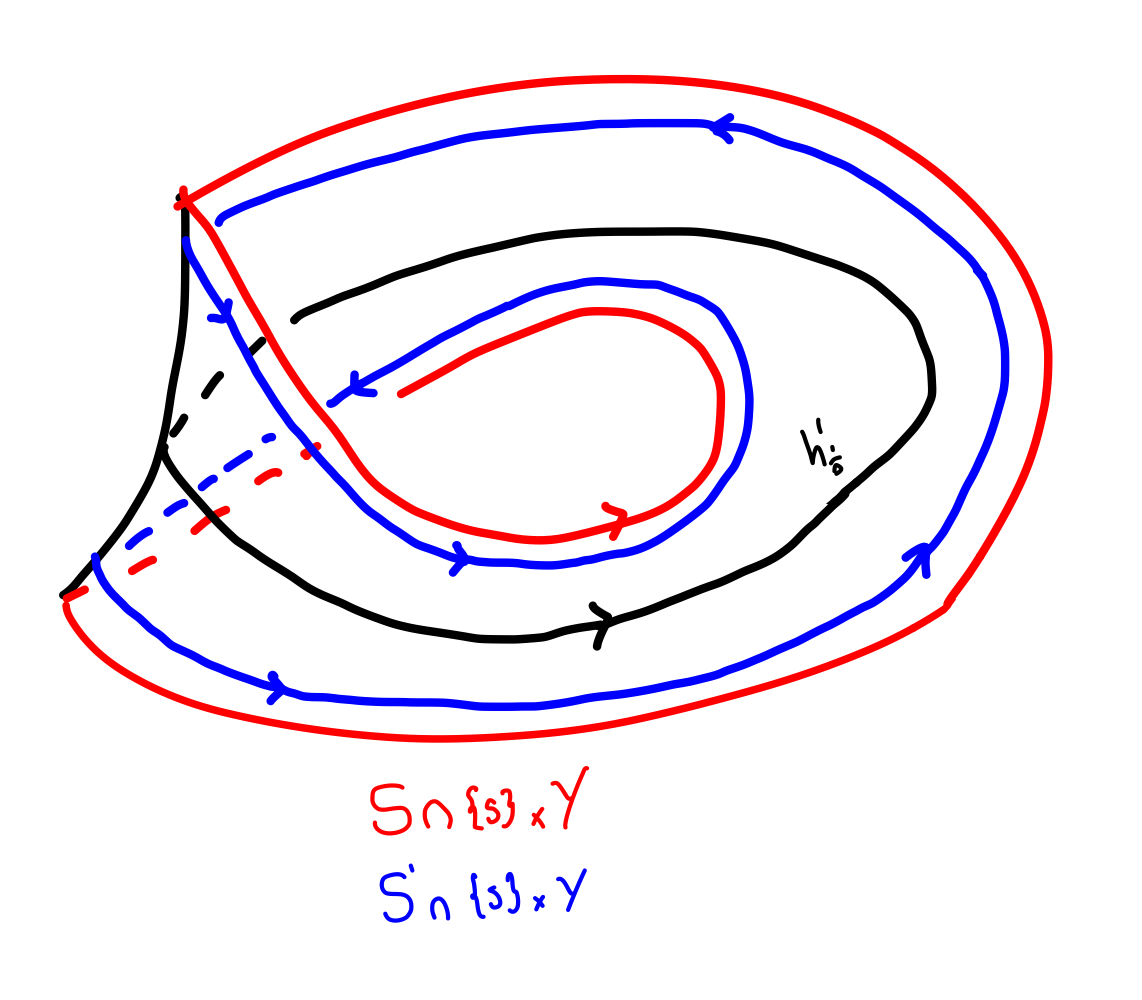}
\caption{\label{fig: braids} Braids aroung negative hyperbolic orbits}
\end{figure}

Thus we have two braids, each one around $h_{(0,1)}^1$ or $h_{(0,1)}^2$ which has self-link number $-2$. Therefore, $lk_{\tau}(S,S')=-4$, which implies that
$$
Q_{\tau}([S])=4(-\sum_{i<j}det
\begin{bmatrix}
p_i & p_j \\
q_i & q_j \\
\end{bmatrix}
+\sum_{i=1}^k q_i)+4,
$$
and consequently
$$
Q_{\tau}(Z)=-\sum_{i<j}det
\begin{bmatrix}
p_i & p_j \\
q_i & q_j \\
\end{bmatrix}
+\sum_{i=1}^k q_i+1,
$$
for any $Z\in H_2(Y,\alpha,\emptyset)$. We compute the ECH index using these formulas and obtain
$$
I(\alpha)=-\sum_{i<j}det
\begin{bmatrix}
p_i & p_j \\
q_i & q_j \\
\end{bmatrix}
+\sum_{i=1}^k q_i+m(\alpha_1)-h(\alpha_1)
$$

As in the case of toric orbit, the first and second sum is exactly $2$Area$(P_{\alpha})$. Therefore, we obtain the equality $I(\alpha)=I(P_{\alpha})$.

\begin{remark}
    Although we derive the same formula as in the toric case, we do so through a different approach. In this case, the $+1$ of the $Q_{\tau}$ term and the $+1$ of the $c_{\tau}$ term cancel out the $-2$ from Conley-Zehnder index of both orbits $h_{(0,1)}^1$ and $h_{(0,1)}^2$.
\end{remark}

\begin{remark}
    The reason that we can compute the link number from the picture of a Mobius band neighborhood of each negative hyperbolic orbit is the following. The trivialization $\tau$ generates an embedding of a solid torus neighborhood of each negative hyperbolic orbit into $\mathbb{R}^3$, but using "coordinates" $(x,y)$ in the neighborhood of each negative hyperbolic orbit give us also an embedding of this Mobius band into $\mathbb{R}^3$, the key fact is that both embedding are isotopic because the $\tau$ trivialization just rotate "half" more the old embedding.
\end{remark}

\textbf{ECH index of type III orbit set}. Let $\alpha=h_{(0,-1)}^1\cdot h_{(0,-1)}^2\cdot \alpha_1$ be a generatorof type III. We can compute its ECH index as in the case of a generator of type II and obtain the formula
$$
I(\alpha)=2Area(P(\alpha))+l(\alpha_1)-h(\alpha_1),
$$
which implies equality between the ECH-index and the combinatorial index in this case.

\textbf{ECH index of type IV orbit set.} Now we compute that the ECH granding of $\alpha=h_{(0,-1)}^1 h_{(0,-1)}^2\alpha_1 h_{(0,1)}^1 h_{(0,1)}^2$, where $\alpha_1$ is a toric orbit set with homology zero. As in the last cases, we take the double orbit $2\alpha$ and construct the surfaces $S$ representing a relative homology class in $H_2(U^*(K),2\alpha,\emptyset)$. We use the equation from the Remark \ref{rem: quadratic} and the pertubations of $S$ as in the last cases to show that
$$
Q_{\tau}(Z)=2\textrm{Area}(P_{\alpha})+2,
$$
the intersection terms are given by the sum of determinant as usual, an each braid around the negative hyperbolic orbits given $-2$, by the formula summing up this number we obtain $8$, but by the Remark \ref{rem: quadratic} we have to divide this by $4$, and that's why appears $+2$ in the $Q_{\tau}$ formula in this case.

The relative chern class is computed from the Proposition \ref{prop: relchern}, we obtain $c_{\tau}(Z)=2$, because each negative hyperbolic orbit has Conley-Zehnder index $-1$. Therefore, we derive the same formula in this case $I(\alpha)=I(P_{\alpha})$, and this finish the proof that the bijection to the combinatorial complex preserves grading.

We can resume all formulas in a unique formula given by the next proposition.

\begin{proposition}
    Let $\alpha$ be an ECH generator with $[\alpha]=0$. Then
    \begin{equation}\label{eq: combECHindex}
    I(\alpha)=2(L(P_{\alpha})-1)-n_{\alpha}/2-x(P_{\alpha})-h(\alpha)
    \end{equation}
    where $L(P_{\alpha})$ is the number of lattice point in the region enclosed by $P_{\alpha}$ and the $x$-axis.
\end{proposition}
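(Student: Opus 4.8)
The plan is to use the identity $I(\alpha)=I(P_\alpha)$, established type by type in the previous subsections, and to rewrite the combinatorial index $I(P_\alpha)=2\,\mathrm{Area}(P_\alpha)+m(P_\alpha)-h(P_\alpha)$ by means of Pick's theorem. Since $h(P_\alpha)=h(\alpha)$, and $n_\alpha$ equals the number of $h^-$-labelled half-arrows of $P_\alpha$ (so that $n_\alpha/2$ is the number of chains of two such half-arrows), it suffices to prove the purely combinatorial relation
$$
2\,\mathrm{Area}(P_\alpha)+m(P_\alpha)=2\bigl(L(P_\alpha)-1\bigr)-\tfrac{n_\alpha}{2}-x(P_\alpha).
$$

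The first step is to observe that the region $R_{P_\alpha}\subset\mathbb{R}^2$ enclosed by $P_\alpha$ and the $x$-axis is a lattice polygon. The only non-integral point that can appear on $\partial R_{P_\alpha}$ is the midpoint of a chain of two $h^-$-half-arrows of length $1/2$; but such a chain is a straight segment joining two adjacent lattice points, so it creates no polygon vertex, while every genuine vertex of $\partial R_{P_\alpha}$ — the non-smooth points of the convex graph defining $P_\alpha$ together with the two endpoints on the $x$-axis — lies in $\mathbb{Z}^2$. Pick's theorem then gives $\mathrm{Area}(R_{P_\alpha})=L_{\mathrm{int}}+\tfrac{1}{2}L_\partial-1$, where $L_{\mathrm{int}}$ and $L_\partial$ denote the numbers of interior and boundary lattice points; since $L(P_\alpha)=L_{\mathrm{int}}+L_\partial$ this rearranges to $2\,\mathrm{Area}(P_\alpha)=2L(P_\alpha)-L_\partial-2$.

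The second step is to count $L_\partial$. The boundary $\partial R_{P_\alpha}$ is the union of the horizontal segment from $(0,0)$ to $(x(P_\alpha),0)$, which carries $x(P_\alpha)+1$ lattice points, and of $P_\alpha$ itself. Traversing $P_\alpha$ from the origin, a toric edge with primitive direction $v$ and total vector $dv$ passes through exactly $d$ new lattice points beyond its initial vertex, so the toric part contributes $m(P_\alpha)$ new lattice points in total, while each chain of two $h^-$-half-arrows contributes exactly one new lattice point, namely its far endpoint. Hence $P_\alpha$ meets $\mathbb{Z}^2$ in $1+m(P_\alpha)+n_\alpha/2$ points, and removing the two endpoints shared with the horizontal segment yields $L_\partial=x(P_\alpha)+m(P_\alpha)+n_\alpha/2$. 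Substituting this into $2\,\mathrm{Area}(P_\alpha)=2L(P_\alpha)-L_\partial-2$ and then into $I(P_\alpha)=2\,\mathrm{Area}(P_\alpha)+m(P_\alpha)-h(\alpha)$ produces exactly formula \eqref{eq: combECHindex}.

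The only point requiring genuine care is the handful of degenerate paths, most notably the unique Type IV generator with empty toric part, whose "enclosed region" degenerates to a vertical segment rather than an honest polygon; for these finitely many paths Pick's theorem does not apply verbatim, but the bookkeeping identity $2\,\mathrm{Area}=2L-L_\partial-2$ still holds with $\mathrm{Area}=0$, and formula \eqref{eq: combECHindex} is checked by hand. One should also confirm along the way that the two half-arrows of a chain never carry an interior lattice point and that a half-arrow chain abutting a collinear vertical toric edge, as in Type II, does not disturb the traversal count; both facts are immediate from the definition of the four types of convex K-lattice paths.
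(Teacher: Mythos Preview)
Your proof is correct and follows exactly the approach indicated in the paper, whose own proof consists of the single sentence ``It is an automatic consequence of the Pick formula for the area of lattice polygons in the plane.'' You have simply supplied the routine bookkeeping that the paper omits: the Pick identity $2\,\mathrm{Area}=2L-L_\partial-2$ together with the boundary count $L_\partial=x(P_\alpha)+m(P_\alpha)+n_\alpha/2$, and a word on the degenerate paths.
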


\begin{proof}
    It is a automatic consequence of the Pick formula for the area of lattice polygons in the plane.
\end{proof}

\section{$J$-holomorphic curves}\label{sec: Jhol}

\subsection{Automatic transversality for almost complex structures}

The standard almost complex structure $J_{st}$ in $\mathbb{R}\times T^3$ given by
$$
J(\partial_s)=\cos(\theta)\partial_x+\sin(\theta)\partial_y,\hspace{0.5cm} J(\partial_{\theta})=-\sin(\theta)\partial_x+\cos(\theta)\partial_y,
$$
descends to $\mathbb{R}\times U^*(K)$, and we continue to denote it by $J_{st}$. Moreover, this almost complex structure is invariant under the map 
$$
R(s,\theta,[x,y])=(s,-\theta,[x,-y]).
$$ 
Such almost complex structure are highly restrictive to achieving transversality. However, an argument with automatic transversality provides useful information about the moduli space of certain pseudoholomorphic curves that appear in the differential. To transition to a generic pertubation of $J_{st}$ while preserving this information, we must first establish a stability result (see \ref{Prop: stabcurve}). This allows us to utilize data obtained in the "R-symmetric" setting (see equation \ref{bijmoduli}). 

We now briefly recall some key facts about automatic transversality. For more general results see \cite{Wendl}.

Let $\lambda$ a nondegeneate contact form on a closed three-manifold $Y$ and $J$ be a $\lambda$-compatible almost complex structure in $\mathbb{R}\times Y$. If $u:(\Sigma,j)\rightarrow (\mathbb{R}\times Y, J)$ is a $J$-holomorphic immersion, with normal bundle $N$, then it has a deformation operator
$$
D_u:W^{1,2}(\Sigma,N)\rightarrow L^2(\Sigma, T^{0,1}\Sigma\otimes_{\mathbb{C}}N).
$$
The moduli space of $J$-holomorphic curves near $u$ is cut out transversely if $D_u$ is surjective. We use the following result of automatic transversality. Let $h_{+}(u)$ denote the number of ends of a $J$-holomorphic curve $u$ at positive hyperbolic orbits, including even covers of negative hyperbolic orbits.

\begin{proposition}\label{automatic}
    Let $\lambda$ be a nondegenerate contact form on a closed three-manifold $Y$ and $J$ be a $\lambda$-compatible almost complex structure in $\mathbb{R}\times Y$. Let $u$ be a $J$-holomorphic curve which is a immersion. If the domain $\Sigma$ of $u$ is connected with genus $g$, and if
    $$
    2g-2+h_+(u)<ind(u),
    $$
    Then the linearized operator $D_u$ is surjective, without any genericity assumption on $J$.
\end{proposition}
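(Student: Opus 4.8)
The plan is to follow the strategy of Hofer--Lizan--Sikorav and Wendl \cite{Wendl}: realize $D_u$ as an asymptotically Cauchy--Riemann operator on a complex line bundle and rule out a cokernel using positivity of zeros. First I would use the immersion hypothesis to set up the normal bundle $N\to\dot\Sigma$ over the punctured domain and write the normal linearization as a real-linear operator
\[
D_u\colon W^{1,2}(\dot\Sigma,N)\longrightarrow L^2(\dot\Sigma,T^{0,1}\dot\Sigma\otimes_{\mathbb C}N),
\]
asymptotic at each puncture $\zeta$ to $-\partial_s-A_\zeta$, where $A_\zeta$ is the asymptotic operator of the corresponding Reeb orbit. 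Choosing the Sobolev weights to match the actual exponential decay of $u$ at its ends makes $D_u$ Fredholm of index $\operatorname{ind}(u)$, and $\operatorname{coker}(D_u)\cong\ker(D_u^{*})$ for the formal $L^2$-adjoint $D_u^{*}$, which is again an asymptotically Cauchy--Riemann operator on a line bundle over $\dot\Sigma$ (with positive and negative punctures exchanged and asymptotic operators negated). So the goal becomes to show $\ker(D_u^{*})=0$.

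Next I would invoke the analytic heart of the matter, the generalized similarity (Carleman) principle together with the Hofer--Wysocki--Zehnder asymptotic description: a nonzero element $\theta$ of $\ker(D_u^{*})$ (or of $\ker(D_u)$) has only finitely many zeros, each of strictly positive local order, and at each puncture $\zeta$ decays like $e^{\lambda s}e(t)$ for a nonzero eigenvalue $\lambda$ and eigenfunction $e$ of the relevant asymptotic operator, so that a well-defined asymptotic winding number $\operatorname{wind}_\zeta(\theta)$ is attached to each end. I would then bound these winding numbers by the extremal winding numbers of the asymptotic operators, which are pinned down by the Conley--Zehnder indices via $CZ_\tau=2\alpha+p$ with $p\in\{0,1\}$ the parity. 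The delicate point is the parity bookkeeping: at an end of even Conley--Zehnder parity --- a positive hyperbolic end or an even cover of a negative hyperbolic orbit, i.e.\ precisely the ends counted by $h_+(u)$ --- the extremal windings relevant to $\ker D_u$ and to $\ker D_u^{*}$ coincide, whereas at an odd-parity end they differ by one, leaving an extra unit of slack.

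To finish, I would combine the winding bounds with a relative first Chern number / Poincar\'e--Hopf count for sections of $N$ (respectively of the adjoint bundle): writing $\operatorname{ind}(u)$ in terms of $\chi(\dot\Sigma)$, the asymptotic windings of a hypothetical nonzero $\theta\in\ker(D_u^{*})$, and its (nonnegative) total order of vanishing, the winding inequalities together with the parity bookkeeping would force
\[
\operatorname{ind}(u)\ \le\ 2g-2+h_+(u),
\]
contradicting the standing hypothesis $2g-2+h_+(u)<\operatorname{ind}(u)$. Hence $\ker(D_u^{*})=0$, so $D_u$ is surjective, with no genericity assumption on $J$. I expect the main obstacle to be the asymptotic analysis behind the second paragraph: controlling the zeros and asymptotic winding of solutions near the punctures, and identifying the extremal windings with the $\floor{CZ/2}$-type quantities --- in particular the parity that makes only $h_+(u)$, rather than the total number of punctures, enter the bound. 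For the complete argument I would refer to Wendl \cite{Wendl}.
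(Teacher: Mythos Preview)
Your proposal is correct and in fact gives substantially more detail than the paper does: the paper's entire proof is a one-line citation, ``See Lemma 4.1 in \cite{Jo Nelson},'' with no argument at all. Your sketch of the Hofer--Lizan--Sikorav/Wendl strategy (normal bundle setup for immersed curves, Carleman similarity principle plus HWZ asymptotics to control zeros and asymptotic winding, and the parity bookkeeping that singles out $h_+(u)$) is exactly the content underlying that reference, so there is no discrepancy to discuss.
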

\begin{proof}
    See Lemma 4.1 in \cite{Jo Nelson}.
\end{proof}

Now we can proof the following, which is enough to describe the ECH differential in Morse-Bott pertubations of $(U^*(K),\lambda)$.

\begin{proposition}\label{Prop: curv0}
    Let $J$ be a pertubation of $J_{st}$ in $\mathbb{R}\times U^*(K)$ which is symmetric with respect to the map $R$. Then any $J$-holomorphic punctured sphere with Fredholm index $1$ with at most two positive hyperbolic ends is cut out transversely. 
\end{proposition}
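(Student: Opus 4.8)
The plan is to deduce this from the automatic transversality criterion of Proposition \ref{automatic} applied to $J$-holomorphic curves $u$ whose domain $\Sigma$ is a punctured sphere (genus $g=0$) with Fredholm index $\mathrm{ind}(u)=1$. In that setting the inequality $2g-2+h_+(u) < \mathrm{ind}(u)$ becomes $-2 + h_+(u) < 1$, i.e. $h_+(u) \le 2$. So the content of the proposition is precisely that the hypothesis ``at most two positive hyperbolic ends'' — where, as in the statement of Proposition \ref{automatic}, $h_+(u)$ counts ends at positive hyperbolic orbits together with even covers of negative hyperbolic orbits — is exactly what is needed to invoke automatic transversality, provided $u$ is an immersion.

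First I would reduce to the case of an immersed somewhere-injective curve. Since $J$ is a perturbation of $J_{st}$ and we are looking at curves that appear in the differential, I would argue (using the structure theorems from Section \ref{ECHtheory} together with the $R$-symmetry) that the relevant curves are somewhere injective, or else handle multiply covered components separately via their underlying somewhere-injective curve. For a somewhere-injective $J$-holomorphic curve in a symplectization of a three-manifold, the singularities are isolated and the adjunction/writhe inequalities force, for $\mathrm{ind}(u)=1$ and genus $0$ with controlled ends, that $u$ is in fact an immersion — this is the standard argument from Hutchings' index inequality (the ECH index bound $I(u) \ge \mathrm{ind}(u) + 2\delta(u)$ and the relation between $I$ and $\mathrm{ind}$ under our perturbation, cf. equation \eqref{eq: fredind}). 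With that in hand, Proposition \ref{automatic} applies verbatim and gives surjectivity of $D_u$, hence transversal cutting-out of the moduli space near $u$, with no genericity assumption on $J$.

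The main obstacle I expect is the immersion step: ruling out that $u$ has a critical point (or, for a multiply covered component, organizing the bookkeeping so that the somewhere-injective reduction still has $h_+ \le 2$ and genus $0$). Concretely, one must control the count $h_+(u)$ and the genus under passing to the normalization, and verify that the ECH/Fredholm index arithmetic for $U^*(K)$ with the Morse--Bott perturbed contact form — where each Klein bottle contributes a pair of negative hyperbolic orbits $h^1_{(0,\pm1)}, h^2_{(0,\pm1)}$ — does not produce a curve with $\mathrm{ind}=1$, genus $0$, $h_+ \le 2$, yet a genuine critical point. The writhe bound around the hyperbolic and even-cover-of-negative-hyperbolic ends, combined with the partition conditions recalled in Section \ref{ECHtheory}, should close this: a critical point would contribute positively to the ECH index via $\delta(u)>0$, contradicting $I(u)=\mathrm{ind}(u)=1$ for a curve counted in the differential. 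Once immersedness is established, the rest is a direct citation of Proposition \ref{automatic}.
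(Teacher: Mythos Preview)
Your first paragraph is the entire proof, and it matches the paper exactly: the paper's argument is one sentence, ``This follows directly from the Proposition \ref{automatic}.'' With $g=0$ and $\mathrm{ind}(u)=1$, the inequality $2g-2+h_+(u)<\mathrm{ind}(u)$ reduces to $h_+(u)\le 2$, which is precisely the hypothesis.

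Everything after your first paragraph --- the reduction to somewhere-injective curves, the appeal to the ECH index inequality $I(u)\ge \mathrm{ind}(u)+2\delta(u)$ to force $\delta(u)=0$, the bookkeeping for multiply covered components --- is absent from the paper and not needed for the way the proposition is used. The paper simply does not address the immersion hypothesis of Proposition \ref{automatic}; in the actual application the relevant curves are the nontrivial components of ECH index one currents, which are embedded by the structure theorem quoted in Section \ref{ECHtheory}, hence immersed. Your extra machinery would make the statement honest for \emph{arbitrary} punctured spheres rather than just embedded ones, but the paper is content with the looser reading. Note also that the $R$-symmetry of $J$ plays no role in the transversality argument itself; automatic transversality is insensitive to it, and the hypothesis appears in the statement only because that is the class of almost complex structures the paper wants to work with downstream.
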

\begin{proof}
    This follows directly from the Proposition $\ref{automatic}$.
\end{proof}

In our case, we will use certain pertubations of $J_{st}$ and of the contact form as in the Section \ref{pertubation}. The proposition above does not say that an almost complex structure $J$ with $R$ symmetry is regular to $J$-holomorphic curves with three positive hyperbolic ends, which arise in the case of embedded Fredholm index one curves (see Section \ref{Fredholm1}). However we make the following observation: for $J_{st}$, there is a bijection of the moduli spaces
\begin{equation}\label{bijmoduli}
\mathcal{M}_{J_{st}}(\alpha,\beta)\simeq \mathcal{M}_{J_{st}}(R(\alpha),R(\beta)),    
\end{equation}
where $R(\alpha)$ is the orbit set when we apply in each Reeb orbit in $\alpha$ the map $R$ which is a contactmorphism for $\lambda_{st}$ and conserves the multiplicities. In fact, this holds for any $J$ which is invariant and the map $id_{\mathbb{R}}\times R$ in the symplectization of $U^*(K)$. We show that the equation \ref{bijmoduli} holds for more general $J$ than only with "R-symmetry".

\begin{proposition}\label{Prop: stabcurve}
    Let $J_0$ be a generic pertubation of $J_{st}$ and $\alpha$ and $\beta$ two orbit sets for $\lambda_{\epsilon}$ such that $\alpha$ has at most two positive hyperbolic orbits and $\beta$ only elliptic orbits or negative hyperbolic with multiplicity one and suppose that there are no broken pseuholomorphic curves between $\alpha$ and $\beta$ for any $J$. Then there is a bijection of the Fredholm index one moduli spaces
    $$
    \mathcal{M}^{J_{st}}_{1}(\alpha,\beta)\simeq \mathcal{M}^{J_0}_{1}(\alpha,\beta),
    $$
    In particular, we conserve the $R$-symmetry bijection \ref{bijmoduli} for a generic $J$ near $J_{st}$ which is not necessary symmetric with respect $R$.
\end{proposition}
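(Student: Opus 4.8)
The plan is to run a continuation argument between $J_{st}$ and $J_0$ and show that the resulting parametrized moduli space is a trivial cobordism; the only delicate point is that $J_{st}$, together with the almost complex structures interpolating it to $J_0$, need not be generic, so in place of a generic perturbation I would invoke automatic transversality, which applies uniformly along the path because every curve in sight is a punctured sphere with at most two positive hyperbolic ends.

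Concretely, I would first fix a smooth path $(J^r)_{r\in[0,1]}$ of $\lambda_{\epsilon}$-compatible almost complex structures on $\mathbb{R}\times U^*(K)$ with $J^0=J_{st}$ and $J^1=J_0$. The first step is to pin down the topology of every curve that can occur. Given $r\in[0,1]$ and $\mathcal{C}\in\mathcal{M}^{J^r}_1(\alpha,\beta)$, the structure theorem for $J$-holomorphic currents of low ECH index gives $\mathcal{C}=\mathcal{C}_0\sqcup\mathcal{C}_1$ with $\mathcal{C}_0$ a union of trivial cylinders and $\mathcal{C}_1$ embedded with $\textrm{ind}(\mathcal{C}_1)=I(\mathcal{C}_1)=1$. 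Since $\alpha$ and $\beta$ are admissible, every hyperbolic orbit has multiplicity one, so the partition conditions rule out ends of $\mathcal{C}_1$ at even covers of negative hyperbolic orbits; hence $h_{+}(\mathcal{C}_1)$ is exactly the number of ends of $\mathcal{C}_1$ at positive hyperbolic orbits, which is at most two because $\alpha$ has at most two positive hyperbolic orbits and $\beta$ has none. A short check with the Fredholm index formula \ref{eq: fredind} and $\textrm{ind}(\mathcal{C}_1)=1$ then shows $2g(\mathcal{C}_1)-2+h_{+}(\mathcal{C}_1)<\textrm{ind}(\mathcal{C}_1)$, so $\mathcal{C}_1$ falls in the range of Proposition \ref{Prop: curv0}. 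As the inequality in Proposition \ref{automatic} does not involve the almost complex structure, it holds for $\mathcal{C}_1$ for every $r$, and therefore each reduced moduli space $\mathcal{M}^{J^r}_1(\alpha,\beta)/\mathbb{R}$ is cut out transversely with no genericity assumption on $J^r$.

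With uniform transversality in hand, the parametrized moduli space $\mathcal{P}=\{(r,[\mathcal{C}])\mid [\mathcal{C}]\in\mathcal{M}^{J^r}_1(\alpha,\beta)/\mathbb{R}\}$ is a smooth one-manifold whose boundary is $\mathcal{M}^{J_{st}}_1(\alpha,\beta)/\mathbb{R}\sqcup\mathcal{M}^{J_0}_1(\alpha,\beta)/\mathbb{R}$, and the projection $\mathcal{P}\to[0,1]$ is a submersion, because surjectivity of $D_u$ already makes the full linearization $(\dot r,\eta)\mapsto\dot r\,\partial_r F+D_u\eta$ surjective. For compactness I would appeal to the compactness theorem for holomorphic curves in symplectizations: any sequence in $\mathcal{P}$ subconverges to a holomorphic building, but by hypothesis there are no broken configurations between $\alpha$ and $\beta$ for any almost complex structure, so no breaking occurs and the limit lies in $\mathcal{P}$. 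Thus $\mathcal{P}$ is a compact one-manifold submersing onto $[0,1]$ with boundary over $\{0,1\}$, hence a trivial fiber bundle, which gives the bijection $\mathcal{M}^{J_{st}}_1(\alpha,\beta)\simeq\mathcal{M}^{J_0}_1(\alpha,\beta)$ after restoring the $\mathbb{R}$-factors. The last assertion then follows formally: $R$ is a $\lambda_{st}$-contactomorphism preserving the type and multiplicity of each Reeb orbit and $\textrm{id}_{\mathbb{R}}\times R$ intertwines $J_{st}$-holomorphic curves, so \ref{bijmoduli} holds for $J_{st}$; since $R(\alpha)$ and $R(\beta)$ satisfy the same hypotheses as $\alpha$ and $\beta$, applying the bijection just proved to both pairs and composing with \ref{bijmoduli} transports the $R$-symmetry to the generic $J_0$.

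The main obstacle I anticipate is the first step: one has to be certain that every curve in these moduli spaces, not merely the expected ones, is a punctured sphere with at most two positive hyperbolic ends, so that automatic transversality applies uniformly in $r$ and the non-genericity of $J_{st}$ becomes harmless. This is exactly where the admissibility of $\alpha$ and $\beta$ (to eliminate even covers of negative hyperbolic orbits) and the hypothesis bounding the number of positive hyperbolic orbits of $\alpha$ are used; without such a priori control of the curve topology, the argument would require a substantially more delicate, genuinely non-automatic transversality analysis.
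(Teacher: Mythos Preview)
Your approach is essentially the paper's: take a path $J_t$ from $J_{st}$ to $J_0$, use automatic transversality to get uniform regularity along the path, and invoke the no-broken-curves hypothesis to make the parametrized moduli space a compact one-dimensional cobordism, hence a bijection between the ends. The paper's proof is terser but follows exactly this outline.

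One small wrinkle worth flagging: in your first step you invoke the structure theorem for ECH index one currents to decompose $\mathcal{C}=\mathcal{C}_0\sqcup\mathcal{C}_1$, but that theorem (as stated in the paper) assumes $J$ is \emph{generic}, which is precisely what you cannot assume for the intermediate $J^r$. This detour is in fact unnecessary: the statement concerns the Fredholm index one moduli space of \emph{curves}, not currents, so there is nothing to decompose. You can argue directly that any $C\in\mathcal{M}^{J^r}_1(\alpha,\beta)$ has $h_+(C)\le 2$ from the hypotheses on $\alpha$ and $\beta$ (admissibility rules out even covers of negative hyperbolic orbits, $\alpha$ has at most two positive hyperbolic orbits, $\beta$ has none), and then $g(C)=0$ follows from the index formula together with $\textrm{ind}(C)=1$. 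With that, Proposition~\ref{automatic} applies for every $r$, and the rest of your argument goes through as written.
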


\begin{proof}
    Follows from automatic transversality that for any $J$ compatible almost complex structure $\mathcal{M}^{J}_{1}(\alpha,\beta)$ is cut out transversely.  It is well know that the space of compatible almost complex structure is contractible. We can take a smooth path $J_t$ with $0\leq t\leq 1$ connecting $J_{st}$ to $J_0$. In this case we note that the moduli spaces
    $$
    W=\{(t,C)\mid C\in \mathcal{M}^{J_t}_{1}(\alpha,\beta)/\mathbb{R}\},
    $$
    is a compact cobordism between the moduli spaces, because there are never any broken pseudoholomorphic curves between $\alpha$ and $\beta$ for any $J$. So the moduli spaces $\mathcal{M}^{J_{st}}_1(\alpha,\beta)/\mathbb{R}$ and $\mathcal{M}^{J_0}_1(\alpha,\beta)/\mathbb{R}$ are diffeomorphic.
\end{proof}

\begin{remark}\label{remark: nodoublerounding}
    This proposition is crucial because we require the bijection to maintain that "$R$-symmetry" and to exclude the case of "double rounding". By following the approach in the appendix of \cite{Hutchings1} we can take a
    generic pertubation of $J_{st}$ to excluded the case of ``double rounding" at same time that we preseve the "$R$-symmetry" of the Moduli spaces of $J$-holomorphic curves (in the sense of equation \ref{bijmoduli}) that will be counted in the ECH differential. The bijection \ref{bijmoduli} will be important in low index computation of the ECH differential as we will see in the section \ref{sec: lowind}.
\end{remark}

\begin{remark}
    We also observe when the $C$, $D$ operations or interior rounding are applied, the orbit sets formed by the positive end and negative end of the embedded part of the ECH index one current satisfy the hypotesis of Proposition \ref{Prop: stabcurve}. Therefore, in all of these cases, the bijection \ref{bijmoduli} holds.
\end{remark}

\begin{lemma}\label{lem: oneorbit}
    Suppose that $\gamma$ is a toric Reeb orbit, and let $\beta$ be an orbit set such that $n[\gamma]=[\beta]$ for some natural number $n\geq 1$. If there exists a $J$-holomorphic curve connecting $\{(\gamma,n)\}$ and $\beta$ in $(-\pi/2,\pi/2)\times T^2$. Then $\beta$ has only one orbit in $\theta(\gamma)$-torus, and $C$ is a Morse-Bott cylinder. 
\end{lemma}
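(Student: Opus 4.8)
The plan is to prove the statement in two stages. First I would use an action/length estimate to force every Reeb orbit occurring in $\beta$ to lie in the same torus $\{\theta(\gamma)\}\times T^2$ as $\gamma$; then I would invoke the Morse--Bott description of holomorphic curves near a single torus of Reeb orbits, as in Section~\ref{pertubation}, to conclude that $C$ is a Morse--Bott cylinder, so that in particular $\beta$ is a single orbit there.

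For the first stage, set $\theta_0\eqdef\theta(\gamma)$ and let $(q,p)$ be the homology class of $\gamma$ in $H_1((-\pi/2,\pi/2)\times T^2;\mathbb{Z})\cong\mathbb{Z}^2$, so that $\tan\theta_0=p/q$ and the length of $\gamma$ equals $\|(q,p)\|$ for the flat metric. Since $C$ is a finite energy $J$-holomorphic curve in the symplectization with positive end $\{(\gamma,n)\}$ and negative end $\beta$, and since $d\lambda_\epsilon$ is nonnegative on $J$-holomorphic curves, Stokes' theorem gives $0\le\int_C d\lambda_\epsilon=n\,A_\epsilon(\gamma)-A_\epsilon(\beta)$. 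Writing $\beta=\{(\beta_j,m_j)\}$ with $[\beta_j]=(q_j,p_j)$, the hypothesis $n[\gamma]=[\beta]$ reads $\sum_j m_j(q_j,p_j)=(nq,np)$, and for the fixed $L$ (which we may take larger than $n\,A(\gamma)$) and $\epsilon=\epsilon(L)$ small, Lemma~\ref{actionless} identifies each $\beta_j$ with one of the perturbed orbits $e_{(q_j,p_j)}$, $h_{(q_j,p_j)}$ lying in the torus $\{\theta(\beta_j)\}\times T^2$; hence $A_\epsilon(\beta_j)=\|(q_j,p_j)\|+O(\epsilon)$ uniformly. The triangle inequality in $\mathbb{R}^2$ gives $\sum_j m_j\|(q_j,p_j)\|\ge\|(nq,np)\|=n\|(q,p)\|$, and this inequality has a deficit bounded below by a universal positive constant (there being only finitely many orbit sets of action $<L$) unless every $(q_j,p_j)$ is a nonnegative multiple of $(q,p)$. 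Comparing with $A_\epsilon(\beta)\le n\,A_\epsilon(\gamma)=n\|(q,p)\|+O(\epsilon)$ and letting $\epsilon\to0$ forces the latter alternative: every orbit of $\beta$ lies in the torus $\{\theta_0\}\times T^2$.

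For the second stage, all ends of $C$, positive and negative, now lie over the single Morse--Bott circle of Reeb orbits foliating $\{\theta_0\}\times T^2$ before perturbation. I would then apply the Morse--Bott analysis of Section~\ref{pertubation} (following \cite{Bourgeois}) localized near this torus: for $\epsilon$ small, such a curve corresponds to a cascade of branched covers of trivial cylinders over orbits of $\{\theta_0\}\times T^2$ glued along gradient trajectories of the Morse function $f\colon S^1\to\mathbb{R}$ parametrizing that circle. The function $f$ has exactly one maximum, realizing the elliptic orbit $e_{(q,p)}$ with $CZ_\tau=1$, and one minimum, realizing the positive hyperbolic orbit $h_{(q,p)}$ with $CZ_\tau=0$; since a gradient trajectory on $S^1$ cannot branch, the only nonconstant connected such cascade runs from the maximum down to the minimum along one of the two arcs of $S^1$. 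Therefore, unless $\beta=\{(\gamma,n)\}$ and $C$ is a branched cover of the trivial cylinder over $\gamma$ (the degenerate case $\int_C d\lambda_\epsilon=0$, which is excluded since in the applications $C$ is the nontrivial component of a holomorphic current), we must have $\gamma=e_{(q,p)}$, the orbit set $\beta=\{(h_{(q,p)},n)\}$ is a single orbit of $\{\theta_0\}\times T^2$, and $C$ is (a cover of) one of the two Morse--Bott cylinders, as claimed; in particular no such curve exists when $\gamma=h_{(q,p)}$.

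I expect the first stage to be the main obstacle, namely making the norm comparison robust under the Morse--Bott perturbation: the deficit in the triangle inequality on $H_1(T^2;\mathbb{Z})$ is a fixed positive constant whenever $\beta$ leaves the torus $\{\theta_0\}\times T^2$, while $\lambda_\epsilon$ changes every relevant action only by $O(\epsilon)$, so fixing $L$ first and then choosing $\epsilon=\epsilon(L)$ small enough (as in Lemma~\ref{actionless}) makes the deficit dominate. Once $C$ is confined to a neighborhood of a single Morse--Bott torus, the remaining assertion is just the Morse theory of a function on a circle.
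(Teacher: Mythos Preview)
Your proposal is correct and follows essentially the same route as the paper: combine the action decrease from Stokes' theorem, $nA(\gamma)\ge A(\beta)$, with the reverse triangle-inequality bound $A(\beta)=\sum_j m_j\|(q_j,p_j)\|\ge\|(nq,np)\|=nA(\gamma)$ to force all $(q_j,p_j)$ parallel to $(q,p)$, hence $\beta$ lives in the $\theta(\gamma)$-torus, after which the only nontrivial curve is a Morse--Bott cylinder. You are more explicit than the paper about controlling the $O(\epsilon)$ perturbation of the actions and about the cascade picture in the second stage, but the underlying argument is the same.
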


\begin{proof}
    Let $[\gamma]=(q,p)$ and $\beta=\{(\beta_i,m_i)\}$, with $[\beta_{i}]=(q_i,p_i)$. Then, we have
    $$
    (nq,np)=(\sum_{i}m_iq_i,\sum_i m_ip_i),
    $$
    but if $C$ has most of two negative ends, we have $nA(\gamma)>\sum_{i}m_iA(\gamma_i)$, that is,
    $$
    n\sqrt{q^2+p^2}>\sum_{i}m_i\sqrt{q_i^2+p_i^2}.
    $$
    However, using the homology restriction and this inequality, we obtain
    $$
    \sqrt{(\sum_i m_iq_i)^2+(\sum_i m_ip_i)^2}>\sum_i m_i\sqrt{q_i^2+p_i^2}.
    $$
    Now we apply the triangular inequality in the vectors $v_i=(m_iq_i,m_ip_i)$, we get
    \begin{eqnarray*}
    \sqrt{(\sum_i m_iq_i)^2+(\sum_i m_ip_i)^2}&=& ||\sum_i v_i||\\
    &\leq & \sum_i||v_i||\\
    &=& \sum_i m_i \sqrt{q_i^2+p_i^2}.
    \end{eqnarray*}
    Leading to a contradiction. Therefore, $C$ has at most one negative end, but this implies that $\beta=\{(\beta_1,m)\}$ where $\beta_1$ is the other Reeb orbit with action less than $L$ in the torus $\theta(\gamma)$. In this case the unique possibility is that $C$ is a Morse-Bott cylinder.
\end{proof}

\subsection{Local energy inequality and positivity}

In this section we obtain constrains to the existence of pseudoholomorphic curves in our manifold inspired in ideas from Hutchings \cite{Hutchings4}.

Let $C$ a $J$-holomorphic curve in $\mathbb{R}\times Y$. If $C$ is transverse to $S_{\theta}=\mathbb{R}\times \{\theta_0\}\times T^2$, (which occurs for generic $\theta$), and $C$ has no ends at $\theta_0$-torus, then $C_{\theta_0}=C\cap S_{\theta_0}$ is a (possibly empty) compact $1$-dimensional submanifold of $C$. When non-empty, it is the boundary of the subdomain given by $C\cap \{\theta\leq \theta_0\}$. Thus, the orientation of $C$ induces an orientation on $C_{\theta_0}$. Our convention is to use the ''outer normal first'' convention. Consequently, we obtain a well-defined class $[C_{\theta_0}]\in H_1(S_{\theta_0})=\mathbb{Z}^2$, which we refer to as the slice class.

\begin{lemma}\label{lemma: localenergy}
    Let $C$ be a $J$-holomorphic curve and $\theta_0\in (-\frac{\pi}{2},\frac{\pi}{2})$ such that $C$ intersects $\mathbb{R}\times \{\theta_0\}\times T^2$ transversally. If $[C_{\theta_0}]=(q,p)$, then
    $$
    det\begin{bmatrix}
    \cos(\theta) & q \\
    \sin(\theta) &  p\\
    \end{bmatrix}\geq 0
    $$
    with equality only if $C$ does not intersect $\mathbb{R}\times \{\theta_0\}\times T^2$.
\end{lemma}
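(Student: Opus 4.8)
The plan is to prove the inequality pointwise along the slice $C_{\theta_0}=C\cap(\mathbb{R}\times\{\theta_0\}\times T^2)$, by observing that the standard almost complex structure identifies the $1$-form $d\theta$ with $\mu_{\theta}:=-\sin\theta\,dx+\cos\theta\,dy$ and that, because $\theta_0$ is a regular value of $\theta|_C$, this $1$-form is \emph{positive} on $C_{\theta_0}$ with respect to its induced (``outer normal first'') orientation. Here $\mu_{\theta_0}$ restricted to $\{\theta_0\}\times T^2$ is exactly the $1$-form pairing with the slice class $(q,p)$ to give the determinant, so positivity of $\mu_{\theta_0}$ on $C_{\theta_0}$ yields the claim.

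First I would record the elementary pointwise identity $\mu_\theta\circ J_{st}=d\theta$ on $\mathbb{R}\times\big((-\pi/2,\pi/2)\times T^2\big)$: evaluating on the frame $\{\partial_s,\ R=\cos\theta\,\partial_x+\sin\theta\,\partial_y,\ \partial_\theta,\ -\sin\theta\,\partial_x+\cos\theta\,\partial_y\}$, one checks that $\mu_\theta$ vanishes on $J_{st}\partial_s=R$ and on $J_{st}R=-\partial_s$, equals $1$ on $J_{st}\partial_\theta=-\sin\theta\,\partial_x+\cos\theta\,\partial_y$, and vanishes on $J_{st}(-\sin\theta\,\partial_x+\cos\theta\,\partial_y)=-\partial_\theta$; these are precisely the values of $d\theta$ on the same vectors. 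We may assume $\theta_0$ is generic, in particular a regular value of $\theta|_C$ lying outside the finitely many tori near which the contact form and $J$ were perturbed in Section \ref{pertubation}, so that the standard toric data is in force near $\{\theta_0\}\times T^2$; this is all the applications need. Since $u\colon(\Sigma,j)\to(\mathbb{R}\times Y,J_{st})$ satisfies $du\circ j=J_{st}\circ du$, the identity pulls back to $u^*\mu_\theta\circ j=u^*d\theta$, equivalently $(u^*d\theta)\circ j=-u^*\mu_\theta$.

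Next I would exploit the orientation. As $\theta_0$ is a regular value, $C_{\theta_0}$ is a compact $1$-manifold; its ``outer normal first'' orientation means that $(\nu,\tau)$ is a positive frame of $T\Sigma$ whenever $\tau$ is the oriented tangent to $C_{\theta_0}$ and $\nu\in T\Sigma$ points in the direction of increasing $\theta$, so $u^*d\theta(\nu)>0$ while $u^*d\theta(\tau)=0$. Comparing the positive frame $(\nu,\tau)$ with the positive frame $(\tau,j\tau)$ forces the $j\tau$-component of $\nu$ to be negative, hence $(u^*d\theta)(j\tau)<0$ at every point of $C_{\theta_0}$. Combined with the pulled-back identity this gives $u^*\mu_\theta(\tau)=-(u^*d\theta)(j\tau)>0$, that is, $u^*\mu_\theta$ is a strictly positive $1$-form on the oriented manifold $C_{\theta_0}$.

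Finally, along $C_{\theta_0}$ one has $\theta\equiv\theta_0$, so $u^*\mu_\theta$ restricts to $u^*\mu_{\theta_0}$ there, while by definition of the slice class $\int_{C_{\theta_0}}dx=q$ and $\int_{C_{\theta_0}}dy=p$. Therefore
$$\det\begin{bmatrix}\cos\theta_0 & q\\ \sin\theta_0 & p\end{bmatrix}=p\cos\theta_0-q\sin\theta_0=\int_{C_{\theta_0}}\mu_{\theta_0}=\int_{C_{\theta_0}}u^*\mu_\theta\ \geq\ 0,$$
with equality if and only if $C_{\theta_0}=\emptyset$, i.e.\ $C$ does not meet $\mathbb{R}\times\{\theta_0\}\times T^2$. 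The only genuinely delicate points are the sign bookkeeping in the orientation comparison and the reduction to a generic $\theta_0$ where the contact form and almost complex structure are the standard toric ones; everything else is a formal consequence of $J$-holomorphicity. This is the analogue for $U^*(K)$ of the local positivity estimates used for $T^3$ in \cite{Hutchings4}.
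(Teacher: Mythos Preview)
Your proof is correct and reaches the same inequality, but by a different mechanism than the paper. The paper argues integrally: since $d\lambda|_C\ge 0$ for any admissible $J$, the function $\rho(\theta)=\int_{C\cap(\mathbb{R}\times[\theta_0,\theta]\times T^2)}d\lambda$ is nondecreasing; by Stokes $\rho(\theta)=\int_{C_\theta}\lambda-\int_{C_{\theta_0}}\lambda=(q\cos\theta+p\sin\theta)-(q\cos\theta_0+p\sin\theta_0)$, whence $\rho'(\theta_0)=p\cos\theta_0-q\sin\theta_0\ge0$, with a second--derivative analysis for the equality clause. You instead work pointwise: from the identity $\mu_\theta\circ J_{st}=d\theta$ you deduce that $u^*\mu_{\theta_0}$ is strictly positive on the oriented slice, then integrate.

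Your route yields strict pointwise positivity (hence a cleaner equality case), but at the cost of generality: the identity $\mu_\theta\circ J=d\theta$ holds only for $J_{st}$, whereas the paper's argument needs only $d\lambda|_C\ge0$, valid for \emph{every} $\lambda$-compatible $J$. Since the $J$ used in Section~\ref{sec: Jhol} is a global generic perturbation of $J_{st}$, your reduction to ``where the standard toric data is in force'' addresses the contact form but not $J$. The fix is immediate and recovers the paper's idea in pointwise form: write $d\lambda=d\theta\wedge\mu_\theta$ and note that $(u^*d\lambda)(\nu,\tau)\ge0$ together with $(u^*d\theta)(\nu)>0$, $(u^*d\theta)(\tau)=0$ forces $(u^*\mu_\theta)(\tau)\ge0$, with no reference to the specific $J$.
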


\begin{proof}
    By definition, $d\lambda$ is pointwise nonnegative on $C$, with equality only if the tangent space is the span of $\partial_s$ and $R$. Consider the function $\rho$ defined for $\theta>\theta_0$, close to $\theta_0$, by the formula
    $$
    \rho: \theta\mapsto \int_{C\cap \mathbb{R}\times [\theta_0,\theta] \times T^2}d\lambda.
    $$
    This function is non-decreasing as $\theta$ increases. Hence, its derivative with respect to $\theta$ is nonnegative. By Stokes' theorem, we have
    $$
    \rho(\theta)=\int_{C_{\theta}}\lambda-\int_{C_{\theta_0}}\lambda.
    $$
    For $\theta$ sufficiently close to $\theta_0$, we have the equality in slice classes $[C_{\theta}]=[C_{\theta_0}]$. Using that $\lambda=\cos(\theta)dx+\sin(\theta)dy$, we obtain
    $$
    \rho'(\theta)=-\sin(\theta)q+\cos(\theta)p.
    $$
    Therefore, we must have
    $$
    \det\begin{bmatrix}
    \cos(\theta) & q \\
    \sin(\theta) &  p\\
    \end{bmatrix}\geq 0
    $$
    which extends by continuity to $\theta_0$. Suppose now that we have equality, an argument with the second derivative shows that the only possibility if that $C_{\theta_0}=\emptyset$.
\end{proof}

\subsection{Path can not cross}

In this section we compute the slice class $[C_{\theta}]$ in terms of the combinatorics of its positive and negative asymptotic ends. Let $\alpha$ be an admissible orbit set, consider $n_{\alpha}=n_{\alpha}^-+n_{\alpha}^+$, where $n_{\alpha}^-$ and $n_{\alpha}^+$ is the number of negative hyperbolic orbits arising from $K_-$ and $K_+$ of $\alpha$ respectively. We assume that both $n_{\alpha}^-$ and $n_{\alpha}^+$ are even numbers. For any $\theta_0\in (-\pi/2,\pi/2)$, denote $P_{\alpha}^{<\theta_0}$ the vector obtained by summing all of vectors in the $K$-lattice path $P_{\alpha}$ which correspond to \textbf{toric Reeb orbits} arising from $\theta<\theta_0$.

\begin{proposition}
    Let $\alpha$ and $\beta$ be admissible orbit set. Suppose that $C\in \mathcal{M}^J(\alpha,\beta)$. Then there exist a nonnegative integer $r_{-}$ (which does not depend on $\theta_0$), such that
    \begin{equation}\label{eq: sliceclasscomb}
    [C_{\theta_0}]=P_{\beta}^{<\theta_0}-P_{<\alpha}^{\theta_0}+(r_{-}, \frac{n_{\alpha}^{-}-n_{\beta}^{-}}{2}+n_1-m_1)
    \end{equation}
    where $n_1$ and $m_1$ are the multiplicity of $e_{(0,-1)}$ in $\alpha$ and $\beta$ respectively. Moreover $r$ has the same parity of the number of the orbit $h_{(0,-1)}^2$ in $\alpha$ and $\beta$.
\end{proposition}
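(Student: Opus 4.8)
The plan is to pin down $[C_{\theta_0}]$ by first varying $\theta_0$ to reduce the identity to a single constant vector, then evaluating that constant by a homological computation near the Klein bottle $K_-$, and finally extracting the sign of the $x$-component from the local energy inequality. Throughout, $\theta_0$ denotes a generic value for which $C$ is transverse to $S_{\theta_0}=\mathbb{R}\times\{\theta_0\}\times T^2$. First I would set $V(\theta_0):=[C_{\theta_0}]-P_{\beta}^{<\theta_0}+P_{\alpha}^{<\theta_0}\in H_1(T^2)=\mathbb{Z}^2$ and show it is independent of $\theta_0$. Indeed, if $[\theta_1,\theta_2]$ contains no $\theta$-value of a toric Reeb orbit of $\alpha$ or $\beta$, then by asymptotic convergence of $C$ to its ends the set $C\cap(\mathbb{R}\times[\theta_1,\theta_2]\times T^2)$ lies in a slab $\{|s|\le N\}$ and is a compact cobordism from $C_{\theta_1}$ to $C_{\theta_2}$, so $[C_{\theta_1}]=[C_{\theta_2}]$, while $P_{\alpha}^{<\theta}$ and $P_{\beta}^{<\theta}$ are unchanged; and when $\theta_0$ increases past the $\theta$-value of a toric orbit $\gamma$ with primitive class $(q_\gamma,p_\gamma)$ occurring with multiplicity $m$ in $\alpha$ and $n$ in $\beta$, truncating $C$ at $|s|=N$ and using the standard fact that a positive (resp.\ negative) end contributes its Reeb orbit with positive (resp.\ negative) multiplicity to the slice class shows $[C_{\theta_0}]$ jumps by $(n-m)(q_\gamma,p_\gamma)$, exactly the jump of $P_{\beta}^{<\theta_0}-P_{\alpha}^{<\theta_0}$. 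Hence $V$ is a constant vector, which I write as $(r_-,s_-)$; note that only the asymptotics of $C$, not $J$-holomorphicity, are used here, and the surfaces of Operations~I and~IV furnish explicit models for these cobordisms.

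Next I would compute $V$ by choosing $\theta_0$ below the $\theta$-values of all toric orbits of $\alpha$ and $\beta$, so that $P_{\alpha}^{<\theta_0}=P_{\beta}^{<\theta_0}=0$ and $V=[C_{\theta_0}]$. The region $A:=\{\theta\le\theta_0\}\subset U^*(K)$ is the mapping cylinder of the free double cover $q\colon T^2\to K_-$, so $H_1(A)\cong H_1(K_-)=\mathbb{Z}\,g\oplus(\mathbb{Z}/2)\,\tau$ and the slice inclusion induces $q_*(q,p)=2p\,g+\bar q\,\tau$; using the table of homology classes of the Reeb orbits together with injectivity of $H_1(K_-)\hookrightarrow H_1(U^*(K))$ and the formula $(q,p)\mapsto(2p,\bar 0,\bar q)$, one checks that $[h_{(0,-1)}^1]=g$, $[h_{(0,-1)}^2]=g+\tau$ and $[e_{(0,-1)}]=2g$. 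Now truncate $C\cap A$ at $|s|=N$: besides $C_{\theta_0}$, its boundary consists only of the ends of $C$ at the $K_-$-orbits $h_{(0,-1)}^1,h_{(0,-1)}^2,e_{(0,-1)}$ (every other orbit of $\alpha$ and $\beta$ lies in $\{\theta>\theta_0\}$), and the local model of a surface near a negative hyperbolic end as in Operation~II pins down the homology contributed by each such end. The resulting boundary relation in $H_1(K_-)$ reads $q_*[C_{\theta_0}]=[\alpha\cap K_-]-[\beta\cap K_-]$, where $[\alpha\cap K_-]$ is the sub-orbit-set of $\alpha$ supported on $K_-$ viewed as a $1$-cycle; expanding both sides with the classes above and using $n_\alpha^-\equiv n_\beta^-\pmod 2$ gives $2s_-=(n_\alpha^--n_\beta^-)+2(n_1-m_1)$ in the $\mathbb{Z}$-factor, i.e.\ $s_-=\tfrac{n_\alpha^--n_\beta^-}{2}+n_1-m_1$, and $r_-\equiv a_\alpha+a_\beta\pmod 2$ in the $\mathbb{Z}/2$-factor, where $a_\alpha,a_\beta\in\{0,1\}$ are the multiplicities of $h_{(0,-1)}^2$ in $\alpha$ and $\beta$; this is precisely the $y$-component of the formula and the asserted parity of $r_-$.

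For nonnegativity I would apply Lemma~\ref{lemma: localenergy} at $\theta_0$ close to $-\pi/2$, where $[C_{\theta_0}]=(r_-,s_-)$ has stabilized: the displayed determinant equals $s_-\cos\theta_0-r_-\sin\theta_0$, which tends to $r_-$ as $\theta_0\to-\pi/2^+$ and is nonnegative, so $r_-\ge 0$. The step I expect to be the main obstacle is the homological boundary computation near $K_-$: since $K_-$ is non-orientable and each negative hyperbolic orbit $h_{(0,-1)}^i$ is the core of a Möbius band whose boundary braids around it, getting the overall sign right requires carefully reconciling the ``outer normal first'' convention defining $[C_{\theta_0}]$, the opposite boundary orientations of cylindrical ends at positive versus negative punctures, and the $2$-to-$1$ identification $T^2\to K_-$ — it is exactly there that a sign slip would replace $s_-$ by its negative. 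The surfaces of Operation~II are designed to make these orientation checks concrete.
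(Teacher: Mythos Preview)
Your proposal is correct and follows essentially the same route as the paper: both reduce the formula to computing the slice class $[C_{-\pi/2+\epsilon}]$ for $\epsilon$ small (the paper via a single boundary relation on $C\cap(\mathbb{R}\times[-\pi/2+\epsilon,\theta_0]\times T^2)$, you via the equivalent step of showing $V(\theta_0)$ is locally constant), then identify that slice class by a homological computation near $K_-$, and finally invoke the local energy inequality for $r_-\ge 0$. The paper is terse at the $K_-$ step, simply asserting the value of $i_*[C_{-\pi/2+\epsilon}]$, whereas you spell out the map $H_1(T^2)\to H_1(K_-)$ and the contributions of $h_{(0,-1)}^1$, $h_{(0,-1)}^2$, $e_{(0,-1)}$ explicitly; this is the same computation, just more detailed.
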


\begin{proof}
    Let $\epsilon>0$ be small enough such that $\alpha$ and $\beta$ has no Reeb orbits in 
    $$
    (-\pi/2,-\pi/2+\epsilon)\times T^2.
    $$
    Consider $C_{[-\pi/2+\epsilon,\theta_0]}$ the portion of $C$ in $\mathbb{R}\times[-\pi/2+\epsilon,\theta_0]\times T^2$, the boundary of $C_{[-\pi/2+\epsilon,\theta_0]}$ has components corresponding to the curves $C_{\theta_0}$, $C_{-\pi/2+\epsilon}$ and the positive and negative ends of $C$ with $-\pi/2+\epsilon<\theta<\theta_0$.

    Let $\alpha'$ and $\beta'$ be the orbit sets which $C$ has ends in the portion $\pi/2+\epsilon<\theta<\theta_0$. By the choice of orientation in the boundary we have
    $$
    [C_{\theta_0}]-[C_{-\pi/2+\epsilon}]=[\beta']-[\alpha']
    $$
    in $H_2(\mathbb{R}\times \{-\pi/2+\epsilon<\theta<\theta_0\}\times T^2,\mathbb{Z})\simeq \mathbb{Z}^2$.

    If $i:\mathbb{R}\times (-\pi/2,\pi/2)\times T^2\rightarrow U^*K$ is the inclustion, then we can compute $i_{*}[C_{-\pi/2+\epsilon}]$, to obtain that
    $$
    [C_{-\pi/2+\epsilon}]=(r_{-},\frac{n_{\alpha}^{-}-n_{\beta}^{-}}{2}+n_1-m_1),
    $$
    where $r_{-}$ is a integer which is has the parity of the number of the orbits $h_{(0,-1)}^2$ in $\alpha$ and $\beta$, $n_1$ and $m_1$ are the multiplicity of $e_{(0,-1)}$ in $\alpha$ and $\beta$ respectively.

    Note that by local energy inequality $r_{-}\geq 0$. Therefore, we write the toric part of $\alpha$ and $\beta$ by
    $$
    \prod_{i}\gamma_{(q_i,p_i)}\quad \textrm{and}\quad \prod_{j}\gamma_{(r_j,s_j)},
    $$
    then, we obtain
    $$
    [C_{\theta_0}]=(\sum_{s_j/r_j<\tan(\theta_0)}r_j-\sum_{p_i/q_i<\tan(\theta_0)}q_i+r_{-},\sum_{s_j/r_j<\tan(\theta_0)}s_j-\sum_{p_i/q_i<\tan(\theta_0)}p_i+\frac{n_{\alpha}^{-}-n_{\beta}^{-}}{2}+n_1-m_1).
    $$
\end{proof}

\begin{remark}
    We can also give a formula in terms of $P_{\alpha}^{>\theta}$, in this case we have
    \begin{equation}
    [C_{\theta_0}]=P_{\alpha}^{>\theta_0}-P_{\beta}^{>\theta_0}+(r_+,\frac{n_{\alpha}^{+}-n_{\beta}^{+}}{2}+n_1^{+}-m_1^{+})
    \end{equation}
    Moreover, by local energy inequality $r_+\leq 0$.
\end{remark}

\begin{definition}
    Let $P$ and $Q$ be $K$-lattice path. We say that $Q$ is above $P$ is $Q$ is contained in the region enclosed by $P$, the $x$-axis and the line $x=x(P)$, where $x(P)$ is the $x$-coordinate of the end point of $P$.
\end{definition}

\begin{proposition}\label{prop: pathcantcross}
    Let $\alpha$ and $\beta$ be admissible orbit sets with same total homology. Suppose that $\mathcal{M}^J(\alpha,\beta)\neq \emptyset$. Then, there exist $r^-\geq 0$ and $r^+\leq 0$ such that $P_{\beta}+(r_-,0)$ is above $P_{\alpha}$, moreover
    $$
    x(P_{\beta})-r_++r_-=x(P_{\alpha}).
    $$
\end{proposition}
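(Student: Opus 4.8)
The plan is to deduce the statement from the slice-class formula \eqref{eq: sliceclasscomb} and its companion in terms of $P_\alpha^{>\theta}$, together with the local energy inequality (Lemma \ref{lemma: localenergy}). First I would fix a generic $\theta_0\in(-\pi/2,\pi/2)$ at which $C$ is transverse to $\mathbb{R}\times\{\theta_0\}\times T^2$ and has no ends. For such $\theta_0$ the slice class $[C_{\theta_0}]=(q,p)$ satisfies
$$
\det\begin{bmatrix}\cos\theta_0 & q\\ \sin\theta_0 & p\end{bmatrix}\geq 0,
$$
i.e. $-\sin(\theta_0)q+\cos(\theta_0)p\geq 0$, which says that the vector $(q,p)$ lies (weakly) to the left of the ray of slope $\tan\theta_0$. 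Reading this through \eqref{eq: sliceclasscomb}, the $x$-coordinate of $P_\beta+(r_-,0)$ at the "height" corresponding to $\theta_0$ is at least the $x$-coordinate of $P_\alpha$ there; running $\theta_0$ from $-\pi/2$ to $\pi/2$ and using that both paths are convex and monotone, this pointwise comparison of $x$-extents is exactly the statement that the translated path $P_\beta+(r_-,0)$ is contained in the region enclosed by $P_\alpha$, the $x$-axis and the vertical line $x=x(P_\alpha)$, i.e. $P_\beta+(r_-,0)$ is above $P_\alpha$.

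The nonnegativity $r_-\geq 0$ and $r_+\leq 0$ are precisely the conclusions of the local energy inequality as already extracted in the proof of the previous proposition (the remark after it records $r_+\leq 0$), so I would simply cite those. For the endpoint identity, I would take $\theta_0\to \pi/2$ and $\theta_0\to -\pi/2$ in the two slice-class formulas. As $\theta_0\to-\pi/2$, no toric orbits of $\alpha$ or $\beta$ have been passed, so $P_\beta^{<\theta_0}=P_\alpha^{<\theta_0}=(0,0)$ and $[C_{\theta_0}]=(r_-,\ast)$; as $\theta_0\to\pi/2$, all toric orbits have been passed, so $P_\beta^{<\theta_0}$ is the full displacement vector $(x(P_\beta),0)$ of $P_\beta$'s toric part, similarly for $\alpha$, and the companion formula gives $[C_{\theta_0}]=-(r_+,\ast)+(\text{full toric displacements})$. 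Matching the $x$-components of the two evaluations — using that the total toric $x$-displacement of $P_\alpha$ equals $x(P_\alpha)$ up to the half-arrow bookkeeping, and similarly for $\beta$, and that the $n_\alpha^\pm$ contributions are purely in the $y$-slot — yields $x(P_\beta)-r_++r_-=x(P_\alpha)$ after chasing the parity/multiplicity terms, which cancel because $[\alpha]=[\beta]$.

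I expect the main obstacle to be the bookkeeping at the two ends of the $\theta$-range: making precise how the "$<\theta_0$" truncated path $P_\beta^{<\theta_0}$ limits to the honest toric displacement vector of $P_\beta$ as $\theta_0\to\pi/2$, and correctly accounting for the negative-hyperbolic half-arrows from $K_\pm$ (which contribute to $x(P)$ in Types II and IV but do not appear among the toric slice contributions) and for the elliptic multiplicities $n_1,m_1$ of $e_{(0,\pm1)}$, which enter \eqref{eq: sliceclasscomb} only in the $y$-component. Once one checks that all of these extra terms enter only the second coordinate and cancel in pairs using $[\alpha]=[\beta]$, the $x$-coordinate identity is forced. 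The "above" statement then follows purely formally from convexity: two convex lattice paths with a common starting height and the pointwise $x$-extent inequality coming from the sign of the determinant for every $\theta_0$ must be nested, which is the desired containment.
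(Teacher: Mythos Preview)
Your overall strategy matches the paper's: both arguments rest on the slice-class formula \eqref{eq: sliceclasscomb} together with the local energy inequality, and the signs of $r_-$ and $r_+$ as well as the endpoint identity are extracted by sending $\theta_0\to\pm\pi/2$, exactly as you outline. For the identity $x(P_\beta)-r_++r_-=x(P_\alpha)$ your limiting argument is essentially what the paper does.

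Where your proposal and the paper diverge is the ``above'' assertion. Your key sentence --- that the determinant inequality yields ``the $x$-coordinate of $P_\beta+(r_-,0)$ at the height corresponding to $\theta_0$ is at least the $x$-coordinate of $P_\alpha$ there'' --- is not what the inequality says. The parameter $\theta_0$ is a \emph{slope}, not a height, and $\det(e^{i\theta_0},[C_{\theta_0}])\ge 0$ is a half-plane condition mixing both coordinates; it only degenerates to an $x$-comparison in the limits $\theta_0\to\pm\pi/2$. One can rescue your sweeping idea by reinterpreting it as a support-function argument (the vertex of $P_\beta^{r_-}$ with tangent direction $e^{i\theta_0}$ lies on the inner side of the tangent line to $P_\alpha$ at the same slope, and by convexity that vertex is the extremal point of $P_\beta^{r_-}$ for that half-plane), but this is not what you wrote, and making it precise still requires separately handling the $x$-axis and the vertical line $x=x(P_\alpha)$.

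The paper proceeds differently and more cleanly. It first uses the limits $\theta_0\to\pi/2$ and $\theta_0\to 0^+$ to place $P_\beta^{r_-}$ inside the bounding rectangle $[0,x(P_\alpha)]\times[y_{\min}(P_\alpha),0]$. Then it argues by contradiction: if $P_\beta^{r_-}$ were not above $P_\alpha$, convexity forces at least two intersection points $(a,b)$ and $(c,d)$; taking $\theta_0$ with $\tan\theta_0=(d-b)/(c-a)$ and applying the local energy inequality at $\theta_0+\epsilon$ yields a negatively-oriented basis, a contradiction. This crossing argument sidesteps all the $y$-component bookkeeping you flag as the ``main obstacle'', since one only needs the sign of the determinant at a single well-chosen slope.
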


\begin{proof}
    Consider $r_-$ and $r_+$ given by the combinatorial formula of the slice class of the $J$-holomorphic curve $C\in \mathcal{M}^J(\alpha,\beta)$. Denote by $P_{\beta}^r$ the translation of $P_{\beta}$ by the vector $(r,0)$. Denote $y_{\min}(P_{\alpha})$ the lowest $y_0\leq0$ such that $P_{\alpha}$ intersects the line $y=y_0$. We have to show that
    $$
    P_{\beta}^{r_-}\subset [0,x(P_{\alpha})]\times [y_{\min}(P_{\alpha}),0].
    $$
    Now taking $\theta_0\rightarrow \pi/2$ in the equation \ref{eq: sliceclasscomb}, we have
    $$
    -(x(P_{\beta})-x(P_{\alpha})+r_-)\geq 0.
    $$
    Therefore, $P_{\beta}^{r_-}\subset [0,x(P_{\alpha})]\times (-\infty,0]$. In other hand, using the equation \ref{eq: sliceclasscomb} with $\theta_0\rightarrow 0^+$, we obtain
    $$
    y_{\min}(P_{\beta})\geq y_{\min}(P_{\alpha}),
    $$
    so that $P_{\beta}^{r_-}\subset [0,x(P_{\alpha})]\times [y_{\min}(P_{\alpha}),0]$.

    \textbf{Claim.} The $K$-lattice path $P_{\alpha}$ do not intersect $P_{\beta}^{r_-}$.

    Indeed, by convexity of the the $K$-lattice path there exist at least two intersect points, which we suppose to be the first and the second $(a,b)$ and $(c,d)$ with $a<c$ and $b,d\leq 0$.

    Consider $\theta_0\in (-\pi/2,\pi/2)$ with $\tan(\theta_0)=d-b/c-a$. For $\epsilon>0$ sufficiently small, because $P_{\beta}^{r-}$ is above $P_{\alpha}$ before $(a,b)$ and bellow between $x=a$ and $x=c$ we must have
    $$
    \{e^{i\theta_0},[C_{\theta_0+\epsilon}]\} \textrm{ is a negative basis of }\mathbb{R}^2.
    $$
    However, by local energy inequality we have
    $$
    \det(e^{i\theta_0}, [C_{\theta_{\theta_0+\epsilon}}])\geq 0
    $$
    This is a contradiction, therefore $P_{\alpha}$ and $P_{\beta}^{r_-}$ are $K$-lattice path without intersections, this implies that $P_{\beta}^{r_-}$ is above $P_{\alpha}$.
\end{proof}

\section{Low index computations on ECH}\label{sec: lowind}

\subsection{Differential in index $1$ and $2$}

In this section, we use the information about ECH homology from Theorem \ref{Homologythem} to show the existence of certain $J$-holomorphic curves. This will be the first induction-step in the proof of differential, demonstrating that the $C$ operation occurs the ECH differential. From this point forward, we assume that $J$ is a small pertubation of $J_{st}$ such that the equation \ref{bijmoduli} holds, and there are no "double rounding" as explained in the Remark \ref{remark: nodoublerounding}.

We begin by noting that all generators of index less than two is in the table below.

\begin{table}[h!]
\centering
\begin{tabular}{|c|c|c|}
\hline
$I=0$ & $I=1$ & $I=2$ \\
\hline
$\emptyset$ & $h_{(1,-1)}h_{(0,1)}^1h_{(0,1)}^2$ & $e_{(1,0)}^2$ \\
\hline
$h_{(0,-1)}^1h_{(0,-1)}^2h_{(0,1)}^1h_{(0,1)}^2$ & $h_{(0,-1)}^1h_{(0,-1)}^2h_{(1,1)}$ & $h_{(1,-1)}h_{(1,1)}$ \\
\hline
 & $h_{(1,0)}e_{(1,0)}$ & $e_{(0,-1)}e_{(0,1)}$ \\
\hline
 & & $e_{(1,-1)}h_{(0,1)}^1h_{(0,1)}^2$\\
 \hline
 & & $h_{(0,-1)}^1h_{(0,-1)}^2e_{(1,1)}$\\
 \hline
 & & $e_{(0,-1)}e_{(0,1)}h_{(0,-1)}^1h_{(0,-1)}^2h_{(0,1)}^1h_{(0,1)}^2$\\
 \hline
\end{tabular}
\caption{Low index ECH generators}
\end{table}

\begin{theorem}\label{thm:1}
    We claim that the diferential ECH in grading one is
    \begin{align*}
    \partial(e_{(1,0)}h_{(1,0)})&=0\\
    \partial(h_{(1,-1)}h_{(0,1)}^1h_{(0,1)}^2)&=\partial(h_{(0,-1)}^1h_{(0,-1)}^2h_{(1,1)})=h_{(0,-1)}^1 h_{(0,-1)}^2 h_{(0,1)}^1 h_{(0,1)}^2   
    \end{align*}

\end{theorem}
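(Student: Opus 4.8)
The plan is to determine $\partial$ on all three grading-one generators almost entirely from the homology computation of Theorem~\ref{Homologythem}, together with the obstruction results of Section~\ref{sec: Jhol} and the $R$-symmetry, doing essentially no direct curve counting. Write $g_1:=e_{(1,0)}h_{(1,0)}$, $g_2:=h_{(1,-1)}h_{(0,1)}^1h_{(0,1)}^2$, $g_3:=h_{(0,-1)}^1h_{(0,-1)}^2h_{(1,1)}$ and $v:=h_{(0,-1)}^1h_{(0,-1)}^2h_{(0,1)}^1h_{(0,1)}^2$. By the table of low-index generators, $C_0=\mathbb{Z}_2\langle\emptyset,v\rangle$ and $C_1=\mathbb{Z}_2\langle g_1,g_2,g_3\rangle$. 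Since there are no generators of negative grading, $\partial|_{C_0}=0$, so $ECH_0\cong C_0/\operatorname{im}(\partial_1)$ with $\partial_1:=\partial|_{C_1}$; as $ECH_0=\mathbb{Z}_2$, the map $\partial_1$ has rank exactly one. The first genuine input is that $\partial$ never has $\emptyset$ as a summand: a curve from an orbit set to $\emptyset$ would have only positive ends, but the $\mathbb{R}$-coordinate $s$ is superharmonic along $J$-holomorphic curves (because $J\partial_s=R$ and $d\lambda\geq 0$ on such curves) and so cannot attain an interior minimum. Hence $\langle\partial\alpha,\emptyset\rangle=0$ for all $\alpha$, so $\operatorname{im}(\partial_1)\subseteq\mathbb{Z}_2\langle v\rangle$, and by the rank count $\operatorname{im}(\partial_1)=\mathbb{Z}_2\langle v\rangle$. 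In particular each $\partial g_i$ is $0$ or $v$, and not all of them vanish.

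Next I would show $\partial g_1=0$ by showing $\mathcal{M}^J(g_1,v)=\emptyset$ using Proposition~\ref{prop: pathcantcross}. If there were a $J$-holomorphic curve from $g_1$ to $v$, then $P_v+(r_-,0)$ would be above $P_{g_1}$ for some $r_-\geq 0$. But $P_{g_1}$ is the Type~I path running straight along the $x$-axis from $(0,0)$ to $(2,0)$, enclosing zero area, whereas $P_v$ is the unique Type~IV path with no toric part and descends to height $-1$; hence $P_v+(r_-,0)$ can never be contained in the (degenerate) region bounded by $P_{g_1}$, the $x$-axis and $x=2$. This contradiction gives $\partial g_1=0$, and therefore at least one of $\partial g_2,\partial g_3$ equals $v$.

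Finally I would use the $R$-symmetry to force $\partial g_2=\partial g_3$. The strict contactomorphism $R(s,\theta,[x,y])=(s,-\theta,[x,-y])$ interchanges $K_+$ and $K_-$ and the tori $\tan\theta=-1$ and $\tan\theta=1$; choosing the Morse--Bott data $R$-symmetrically, $R(h_{(1,-1)})=h_{(1,1)}$, $R(h_{(0,1)}^i)=h_{(0,-1)}^i$ and $R(v)=v$, so $R(g_2)=g_3$. For the chosen generic $J$ near $J_{st}$, Proposition~\ref{Prop: stabcurve} applies to the pairs $(g_2,v)$ and $(g_3,v)$: on the positive side there is a single positive hyperbolic orbit, on the negative side only negative hyperbolic orbits of multiplicity one, and no genuine breaking occurs since it would require a chain-complex generator of grading strictly between $0$ and $1$. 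Hence the index-one moduli counts for $J$ agree with those for $J_{st}$, and the bijection \ref{bijmoduli} gives $\#\mathcal{M}^{J}_1(g_2,v)=\#\mathcal{M}^{J}_1(g_3,v)$, i.e. $\langle\partial g_2,v\rangle=\langle\partial g_3,v\rangle$. Combined with the previous step this forces $\partial g_2=\partial g_3=v$, which, with $\partial g_1=0$, is exactly the asserted formula.

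I expect the main obstacle to be bookkeeping rather than anything deep. One must be sure the table of generators of grading $\leq 1$ is complete (so that $\dim C_0=2$ and $\dim C_1=3$), and one must check the hypotheses of Proposition~\ref{Prop: stabcurve} for the two relevant pairs; the genuinely delicate point is the upgrade from "$[\emptyset]$ is a nontrivial homology class'' to "$\emptyset$ is never a summand of $\partial\alpha$'', since it is this stronger statement that pins down $\operatorname{im}(\partial_1)$ and thereby lets the rank count together with the $R$-symmetry carry the whole argument.
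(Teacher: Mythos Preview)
Your overall strategy --- pin down $\partial_1$ by combining the known rank of $ECH_0$ with the $R$-symmetry and a separate argument that $\partial g_1=0$ --- is close in spirit to what the paper does, and the pieces ``$\partial g_1=0$ via Proposition~\ref{prop: pathcantcross}'' and ``$\langle\partial g_2,v\rangle=\langle\partial g_3,v\rangle$ via \eqref{bijmoduli}'' are fine (modulo applying Proposition~\ref{Prop: stabcurve} to the \emph{embedded} part $(h_{(1,-1)},h_{(0,-1)}^1h_{(0,-1)}^2)$ rather than to $(g_2,v)$ itself). The paper, however, runs the contradiction through $ECH_1$ rather than $ECH_0$: it first argues that all grading-two generators except $h_{(1,-1)}h_{(1,1)}$ have vanishing differential, then shows that an \emph{even} count in the moduli spaces $\mathcal{M}^J_1(h_{(1,\mp1)},h_{(0,\pm1)}^1h_{(0,\pm1)}^2)$ would force $ECH_1\cong\mathbb{Z}_2^2$, contradicting Theorem~\ref{Homologythem}. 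This sidesteps entirely the question of whether $\emptyset$ can appear in $\partial\alpha$.

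That question is precisely where your argument has a genuine gap, and you correctly flagged it yourself as the delicate step. The claim that the $\mathbb{R}$-coordinate $s$ is \emph{super}harmonic is false: for admissible $J$ one has $da\circ j=-f^*\lambda$, hence $-\Delta a\,dx\wedge dy=d(da\circ j)=-f^*d\lambda\le 0$, i.e.\ $a$ is \emph{sub}harmonic. The maximum principle therefore forbids interior \emph{maxima} of $a$ (yielding the standard conclusion that every nonconstant finite-energy curve in a symplectization has at least one \emph{positive} end), but it does \emph{not} forbid interior minima; $J$-holomorphic curves with only positive ends do exist in symplectizations in general. Consequently, knowing only that $[\emptyset]\neq 0$ and $\dim\operatorname{im}(\partial_1)=1$ leaves open $\operatorname{im}(\partial_1)=\mathbb{Z}_2\langle\emptyset+v\rangle$, and your argument does not rule this out. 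The gap is easily patched by a direct Fredholm index computation using \eqref{eq: fredind}: since $\beta=\emptyset$ forces the trivial part $C_0$ to be empty, the embedded curve $C_1$ must have $\alpha'=g_i$, and one checks $\operatorname{ind}(C_1)=2g+2e(\alpha')-2+n_{\alpha'}+h(\alpha')\ge 3$ in all three cases. With that in place your route works; without it, the rank count alone does not close.
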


\begin{proof}
    By the equation \ref{bijmoduli}, there is a bijection
    $$
    \mathcal{M}^J_1(h_{(1,-1)},h_{(0,-1)}^1 h_{(0,-1)}^2)\longrightarrow  \mathcal{M}^J_1(h_{(1,1)},h_{(0,1)}^1 h_{(0,1)}^2).
    $$

    It Follows immediately from index considerations that $\partial(\alpha)=0$ for any ECH index two generator $\alpha$, except possibly for $h_{(1,-1)}h_{(1,1)}$. Suppose both manifolds in the bijection above contain an even number of elements. This would imply that 
    $$
    \partial(h_{(1,-1)}h_{(1,1)})=e_{(1,0)}h_{(1,0)},
    $$ 
    and 
    $$
    \partial(h_{(1,-1)}h_{(0,1)}^1h_{(0,1)}^2)=\partial(h_{(0,-1)}^1h_{(0,-1)}^2h_{(1,1)})=0.
    $$
    However, in this case, the homology in grading one would be isomorphic to $\mathbb{Z}_2\oplus \mathbb{Z}_2$, which contradicts the Theorem \ref{Homologythem}. Therefore, both manifolds must have an odd number of elements, which completes the proof.
\end{proof}

\begin{corollary}
        The ECH differential in grading two is zero for all generators, except $h_1h_1$ whose differential is  
    $$
    \partial(h_{(1,-1)}h_{(1,1)})=h_{(1,-1)}h_{(0,1)}^1h_{(0,1)}^2+h_{(0,-1)}^1h_{(0,-1)}^2h_{(1,1)}+h_{(1,0)}e_{(1,0)}.
    $$
\end{corollary}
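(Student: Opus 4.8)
The plan is to leverage the previously established Theorem \ref{thm:1} together with the relation $\partial^2 = 0$ and the known homology in degree one and two from Theorem \ref{Homologythem}, plus index-parity considerations. First I would read off the full list of index-two generators from the low-index table: $e_{(1,0)}^2$, $h_{(1,-1)}h_{(1,1)}$, $e_{(0,-1)}e_{(0,1)}$, $e_{(1,-1)}h_{(0,1)}^1h_{(0,1)}^2$, $h_{(0,-1)}^1h_{(0,-1)}^2e_{(1,1)}$, and $e_{(0,-1)}e_{(0,1)}h_{(0,-1)}^1h_{(0,-1)}^2h_{(0,1)}^1h_{(0,1)}^2$. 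The target generators in degree one are $h_{(1,-1)}h_{(0,1)}^1h_{(0,1)}^2$, $h_{(0,-1)}^1h_{(0,-1)}^2h_{(1,1)}$, and $h_{(1,0)}e_{(1,0)}$. For each index-two generator one first uses the parity property of the ECH index (Proposition \ref{prop: ECHPRO}): the differential can only connect generators whose numbers of positive hyperbolic orbits differ by one, which already kills most potential matrix entries. The generators with an even number (zero or two) of positive hyperbolic orbits can only map to the degree-one generator with an odd number, namely $h_{(1,0)}e_{(1,0)}$ or $h_{(1,-1)}h_{(0,1)}^1h_{(0,1)}^2$ depending on homological/combinatorial compatibility.

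Next I would use the combinatorial action filtration and the homology-class constraint to eliminate the remaining entries: a differential coefficient $\langle \partial \alpha, \beta\rangle \neq 0$ forces $A(\alpha) \ge A(\beta)$ and $[\alpha] = [\beta]$ in $H_1$, and the $R$-symmetry bijection \eqref{bijmoduli} pairs up moduli spaces. In particular $\partial(e_{(1,0)}^2)$, $\partial(e_{(0,-1)}e_{(0,1)})$, $\partial(e_{(1,-1)}h_{(0,1)}^1h_{(0,1)}^2)$, $\partial(h_{(0,-1)}^1h_{(0,-1)}^2e_{(1,1)})$ and $\partial$ of the six-orbit generator must all vanish, either because there is no lower-action generator in the correct homology class and with the correct hyperbolic-orbit parity, or because the only candidate is excluded by an explicit moduli-space count; here one invokes Proposition \ref{Prop: stabcurve} and the automatic-transversality results of Section \ref{sec: Jhol} to see these moduli spaces are empty or even. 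The one remaining generator is $h_{(1,-1)}h_{(1,1)}$, which has two positive hyperbolic orbits, so its differential is a $\mathbb{Z}_2$-combination of $h_{(1,-1)}h_{(0,1)}^1h_{(0,1)}^2$, $h_{(0,-1)}^1h_{(0,-1)}^2h_{(1,1)}$ and $h_{(1,0)}e_{(1,0)}$.

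To pin down $\partial(h_{(1,-1)}h_{(1,1)})$ exactly I would compute $\partial^2(h_{(1,-1)}h_{(1,1)}) = 0$ using Theorem \ref{thm:1}: applying $\partial$ to each of the three candidate terms gives $\partial(h_{(1,-1)}h_{(0,1)}^1h_{(0,1)}^2) = \partial(h_{(0,-1)}^1h_{(0,-1)}^2h_{(1,1)}) = h_{(0,-1)}^1h_{(0,-1)}^2h_{(0,1)}^1h_{(0,1)}^2$ and $\partial(h_{(1,0)}e_{(1,0)}) = 0$, so the sum of the coefficients of $h_{(1,-1)}h_{(0,1)}^1h_{(0,1)}^2$ and $h_{(0,-1)}^1h_{(0,-1)}^2h_{(1,1)}$ must be even, i.e. both appear or neither does. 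To decide between these two possibilities and to fix the coefficient of $h_{(1,0)}e_{(1,0)}$ I would use the homology computation: in the proof of Theorem \ref{thm:1} it was shown that the two relevant moduli spaces have an odd number of elements in order to force $ECH_1 = \mathbb{Z}_2$, and a parallel bookkeeping in degree two forces $\partial(h_{(1,-1)}h_{(1,1)})$ to be exactly $h_{(1,-1)}h_{(0,1)}^1h_{(0,1)}^2 + h_{(0,-1)}^1h_{(0,-1)}^2h_{(1,1)} + h_{(1,0)}e_{(1,0)}$, so that $\ker\partial_2 / \operatorname{im}\partial_3$ is one-dimensional, matching $ECH_2(U^*(K),\xi_{st},0) = \mathbb{Z}_2$ from Theorem \ref{Homologythem}. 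The main obstacle is the last step: carefully matching the rank of the degree-two part of the differential with the known homology — in particular showing that the coefficient of $h_{(1,0)}e_{(1,0)}$ is $1$ and not $0$ — since a priori $\partial^2 = 0$ alone leaves the possibility $\partial(h_{(1,-1)}h_{(1,1)}) = h_{(1,-1)}h_{(0,1)}^1h_{(0,1)}^2 + h_{(0,-1)}^1h_{(0,-1)}^2h_{(1,1)}$; ruling this out requires the homological dimension count for $ECH_1$ and $ECH_2$ together, exactly as in the proof of Theorem \ref{thm:1}.
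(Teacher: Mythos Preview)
Your plan is mostly sound, but the final step---fixing the coefficient of $h_{(1,0)}e_{(1,0)}$ via the homology of Theorem~\ref{Homologythem}---does not work as stated, and the paper takes a different route there. Observe that with the already-established Theorem~\ref{thm:1}, both candidates
\[
\partial(h_{(1,-1)}h_{(1,1)}) = h_{(1,-1)}h_{(0,1)}^1h_{(0,1)}^2 + h_{(0,-1)}^1h_{(0,-1)}^2h_{(1,1)}
\quad\text{and}\quad
\partial(h_{(1,-1)}h_{(1,1)}) = \text{(same)} + h_{(1,0)}e_{(1,0)}
\]
produce a one-dimensional image in $\ker\partial_1$ (which is two-dimensional, spanned by $h_{(1,0)}e_{(1,0)}$ and $h_{(1,-1)}h_{(0,1)}^1h_{(0,1)}^2 + h_{(0,-1)}^1h_{(0,-1)}^2h_{(1,1)}$), so $ECH_1 \cong \mathbb{Z}_2$ in either case. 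To use $ECH_2$ you would need $\operatorname{im}\partial_3$, which is not yet available at this point in the paper. So the ``parallel bookkeeping'' you invoke cannot distinguish the two possibilities.

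The paper instead settles this coefficient directly: the contribution $\langle \partial(h_{(1,-1)}h_{(1,1)}), h_{(1,0)}e_{(1,0)}\rangle$ is the interior rounding at the vertex $(1,-1)$, and the relevant embedded Fredholm-index-one curve is supplied by Taubes' existence result \cite{Taubes} (the same source used throughout for the rounding-the-corners curves). The other two coefficients come straight from the odd moduli-space counts established in Theorem~\ref{thm:1}, by attaching a trivial cylinder over $h_{(1,\mp 1)}$ to the curve in $\mathcal{M}^J_1(h_{(1,\pm 1)}, h_{(0,\pm 1)}^1 h_{(0,\pm 1)}^2)$. For the vanishing of the differential on the remaining index-two generators, the paper argues via the Fredholm index formula~\eqref{eq: fredind}: no embedded Fredholm-index-one curve can occur as the nontrivial piece of an ECH-index-one current from those generators. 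Your action/parity/$R$-symmetry sketch is in the right spirit but less direct than this.
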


\begin{proof}
    This is an easily from the proof of Theorem \ref{thm:1}, which establishes the parity of the number of elements of both moduli spaces:
    $$
    \mathcal{M}^J_1(h_{(1,-1)},h_{(0,-1)}^1 h_{(0,-1)}^2)\simeq  \mathcal{M}^J_1(h_{(1,1)},h_{(0,1)}^1 h_{(0,1)}^2).
    $$
    which is odd. Using the unique index one curve in $\mathcal{M}_1^J(h_{(1,-1)}h_{(1,1)},e_{(1,0)}^2)$ given by Taubes in \cite{Taubes}, we conclude the differential of $h_{(1,-1)}h_{(1,1)}$. The fact that all other generators has ECH differential zero follows from the observation that, in any case, a nontrivial embedded Fredholm index one curve can not exist as part of a current of ECH index one.
\end{proof}

\begin{remark}
    In the Theorem \ref{thm:1}, we deduce $ECH^{L}(U^*(K),\lambda_{\epsilon(L)},0)$ isomorphic to $\mathbb{Z}_2\oplus \mathbb{Z}_2$. This contradicts the Theorem \ref{Homologythem}, because by taking the direct limit with $L\rightarrow +\infty$, we recover the ECH homology. However, for any fixed $L$ it is isomorphic to two copies of $\mathbb{Z}_2$. Therefore, in the limit, we preserve this property, leading to a contradiction.
\end{remark}

\subsection{U map in $e_{(0,-1)}^ke_{(0,1)}^k$}

In this section we compute the $U_{J,z}$ in the generator $e_{(0,-1)}^ke_{(0,1)}^k$.

\begin{proposition}\label{U-map-ee}
    Let $U$ the $U$-map of the ECH homology of the unit cotangent bundle of Klein bottle. Then $U(e_{(0,-1)}^ke_{(0,1)}^k)=e_{(0,-1)}^{k-1}e_{(0,1)}^{k-1}$.    
\end{proposition}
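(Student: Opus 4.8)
The plan is to compute $U(e_{(0,-1)}^k e_{(0,1)}^k)$ by exhibiting an explicit $J$-holomorphic current of ECH index $2$ through a generic point $(0,z)\in\mathbb R\times U^*(K)$ connecting $e_{(0,-1)}^k e_{(0,1)}^k$ to $e_{(0,-1)}^{k-1} e_{(0,1)}^{k-1}$, and arguing that it is the only contribution mod $2$. First I would verify by the combinatorial index formula \eqref{eq: combECHindex} (equivalently by $I(P)=2\mathrm{Area}(P)+m(P)-h(P)$) that $I(e_{(0,-1)}^k e_{(0,1)}^k)=2k$ and $I(e_{(0,-1)}^{k-1} e_{(0,1)}^{k-1})=2k-2$, so the index drop is exactly $2$ and the pair is a legitimate target for the $U$ map. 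The relevant $K$-lattice path here is the ``vertical'' one: $k$ copies of the half-arrows going down labeled $h^-$ at the start and $k$ up at the end (Type IV with no toric part); in orbit-set language $e_{(0,-1)}$ contributes a downward unit and $e_{(0,1)}$ an upward unit, so these generators are the iterated $U$-preimages of $\emptyset$.

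Next I would locate the index-$2$ current. The natural candidate lives inside the ``Klein bottle'' regions: near $K_-$, the Morse–Bott family of Reeb orbits degenerating to $e_{(0,-1)}$ sweeps out a neighborhood, and there is a Morse–Bott ``flow cylinder'' (from the critical point of $f$) giving an index-$2$ curve from $e_{(0,-1)}^k$ to $e_{(0,-1)}^{k-1}$ — more precisely a current consisting of a cylinder through the marked point together with $k-1$ trivial cylinders over $e_{(0,-1)}$, and likewise $k$ trivial cylinders over $e_{(0,1)}$, with the analogous picture on the $K_+$ side. One checks using Lemma \ref{lem: oneorbit} and the local energy inequality (Lemma \ref{lemma: localenergy}) that any curve contributing to $U$ must stay near the tori through $e_{(0,\pm1)}$, and that its embedded index-one component is the Morse–Bott cylinder realizing the rounding at the vertex where the marked point sits; automatic transversality (Proposition \ref{automatic}, applicable since these curves are punctured spheres/cylinders with no positive hyperbolic ends) then guarantees it is cut out transversely. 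The $R$-symmetry bijection \eqref{bijmoduli} interchanges the $K_+$ and $K_-$ contributions, so I would use it to reduce to a single side.

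The main obstacle will be the mod-$2$ count: showing that $\#_2\mathcal M_{2,z}(e_{(0,-1)}^k e_{(0,1)}^k,\beta)$ is $1$ for $\beta=e_{(0,-1)}^{k-1}e_{(0,1)}^{k-1}$ and $0$ for every other index-$2$ target $\beta$. For the vanishing part, one rules out other $\beta$ by the path-can't-cross constraint (Proposition \ref{prop: pathcantcross}) together with the action filtration: any $\beta$ reachable by an index-$2$ current has $A(\beta)\le A(e_{(0,-1)}^k e_{(0,1)}^k)$ and $P_\beta$ (up to horizontal translation by $r_-\ge 0$, with $r_+\le 0$) must lie above the vertical path $P_{e_{(0,-1)}^k e_{(0,1)}^k}$, which forces $\beta$ to be of the same vertical type with smaller multiplicities, and the index constraint then pins $\beta=e_{(0,-1)}^{k-1}e_{(0,1)}^{k-1}$. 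For the count being exactly $1$, I would count the Morse–Bott cylinders through a generic point in the standard way (as in Bourgeois's thesis and Hutchings–Taubes): the $S^1$-family of Reeb orbits near $e_{(0,-1)}$ through a fixed point $z$ gives, after the Morse–Bott perturbation, a single rigid cylinder hitting $(0,z)$, and the trivial-cylinder factors over the remaining multiplicities contribute nothing further; the $R$-symmetric pairing of the two sides is handled by noting $U$ is well defined independently of $z$, so one may place $z$ near $K_-$ and get a single curve. I expect the bookkeeping of multiplicities — ensuring the partition conditions force exactly the ``$k\mapsto k-1$'' drop on each side rather than, say, $k\mapsto k-2$ on one side and $k\mapsto k$ on the other — to be the delicate point, resolved by the negative/elliptic partition conditions and the index count $I=2$.
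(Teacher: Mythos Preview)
Your uniqueness argument is close to the paper's: both use the local energy inequality (you via Proposition~\ref{prop: pathcantcross}, the paper directly via slice classes and Lemma~\ref{lemma: localenergy}) to force every Reeb orbit in $\beta$ to have $q=0$, and then the index constraint pins $\beta=e_{(0,-1)}^{k-1}e_{(0,1)}^{k-1}$. The paper additionally runs the Fredholm index formula on the embedded component $C_1$ to read off $g(C_1)=n_{\beta_0}=h_{\beta_0}=0$ before invoking local energy; you skip this step but it is implicit in your use of path-can't-cross.

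There is, however, a genuine gap in your existence/count argument. Your candidate current --- a ``Morse--Bott flow cylinder'' near $K_-$ dropping the multiplicity of $e_{(0,-1)}$ by one, together with trivial cylinders everywhere else --- does not exist. A cylinder from a cover of $e_{(0,-1)}$ to a lower cover of $e_{(0,-1)}$ (or a plane with a single end at $e_{(0,-1)}$) is forbidden homologically: $[e_{(0,-1)}]=(-2,\bar 0,\bar 0)\neq 0$ in $H_1(U^*K)$, so no curve with ends only near $K_-$ can realize the drop $k\mapsto k-1$. The embedded index-$2$ component $C_1$ must have positive ends at covers of \emph{both} $e_{(0,-1)}$ and $e_{(0,1)}$ and genuinely span the toric region; it is not localized near one Klein bottle. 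Relatedly, your $R$-symmetry picture of ``two contributions, one from each side'' is not how the count works: there is a single curve through the marked point, not a pair. The paper does not construct $C_1$ by hand; it instead cites Taubes \cite{Taubes} for the fact that the count of such index-$2$ currents is $1$.

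A smaller but related confusion: the $K$-lattice path of $e_{(0,-1)}^k e_{(0,1)}^k$ is of Type~I, consisting of $k$ full vertical down-arrows labeled ``$e$'' followed by $k$ full vertical up-arrows labeled ``$e$''. It is not of Type~IV and has no $h^-$ half-arrows; those correspond to the negative hyperbolic orbits $h_{(0,\pm 1)}^i$, not to the elliptic $e_{(0,\pm 1)}$.
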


\begin{proof}
    Let $\beta$ be a ECH generator such that $\langle U(e_{(0,-1)}^ke_{(0,1)}^k),\beta \rangle=1$. Then there is $J$-holomorphic current of index $2$ between $e_{(0,-1)}^ke_{(0,1)}^k$ and $\beta$, so there is $1\leq i,j\leq k$ such that the current is of the form $C=C_0\cup C_1$ where $C_0$ are trivial cylinders in $e_{(0,-1)}$ with multiplicity $i$, a trivial cylinder in $e_{(0,1)}$ with multiplicity $j$, and $C_1$ is an index two embedded $J$-holomorphic curve in $\mathcal{M}^J(e_{(0,-1)}^i e_{(0,1)}^j,\beta_0)$ as in the figure \ref{fig2}. 
    
    We compute the Fredholm index for $C_1$ by partition condition we obtain
    $$
    2=\textrm{ind}(C_1)=2g(C_1)+2+n_{\beta_0}+h_{\beta_0}.
    $$
     \begin{figure}[!htb]
    \centering
    \includegraphics[width=0.40\textwidth]{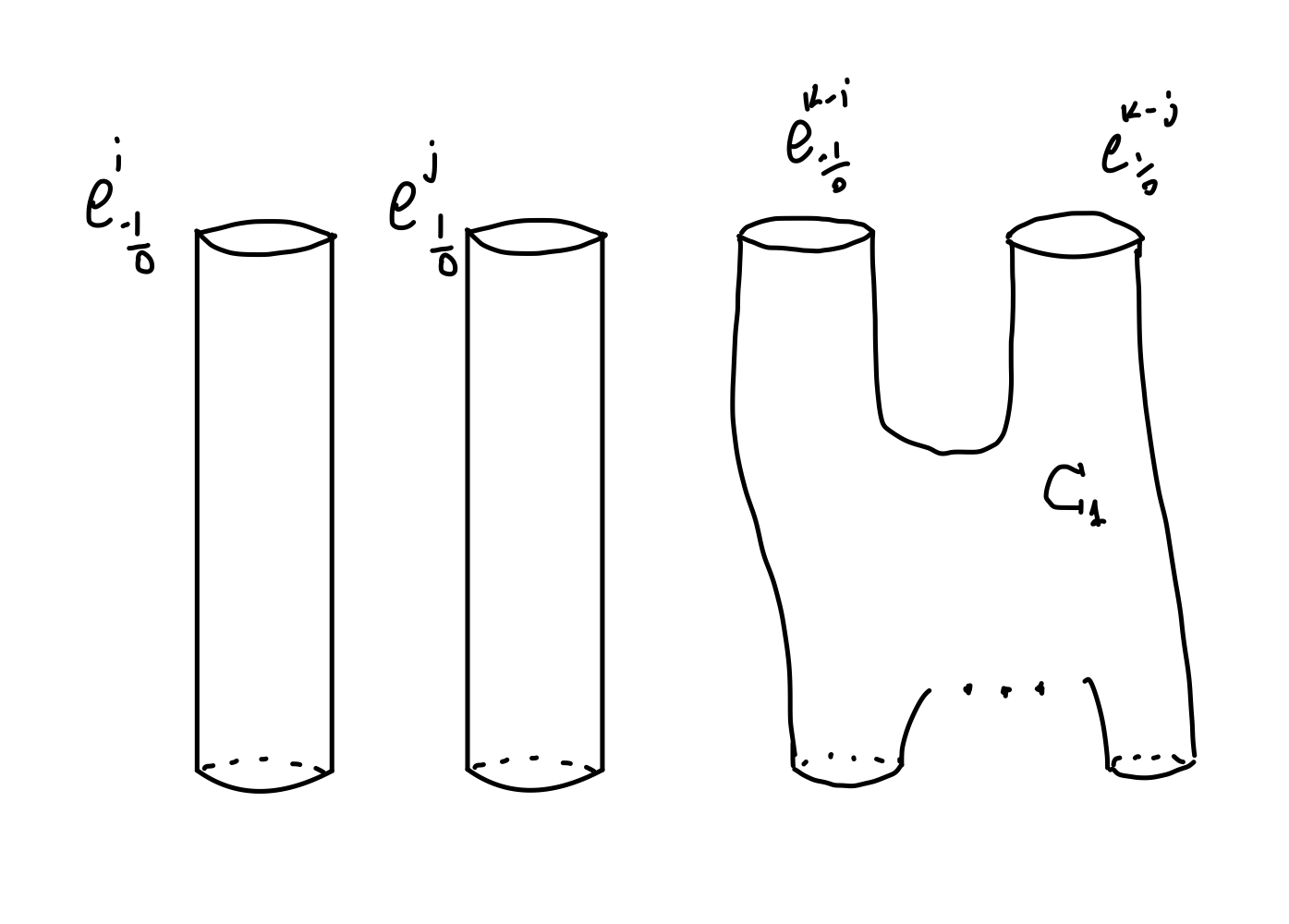}
    \caption{\label{fig2} The current counted by U map}
    \end{figure}
    In particular, $g(C_1)=n_{\beta_0}=h_{\beta_0}=0$, so we can write $\beta_0=\prod_{l=1}^k e_{(q_l,p_l))}$. Take any $\theta_0<\pi/2$ such that $p_l/q_l<\textrm{tan}(\theta_0)$ for all $l=1,...,k$. We can compute the homology $[C_{\theta_0}]$ and obtain
    $$
    [C_{\theta_0}]=\sum_{l=1}^k (q_l,p_l)+(0,j-i)\in \mathbb{Z}^2,
    $$
    By the local energy inequality we have
    $$
    \textrm{det}\begin{bmatrix}
        \cos(\theta_0) & \sum_{l=1}^k q_l\\
        \sin(\theta_0) & \sum_{l=1}^k p_l +j-i\\
    \end{bmatrix}
    \geq 0,
    $$
    that is,
    $$
    \cos(\theta_0)(\sum_{l=1}^k p_l +j-i)-\sin(\theta_0)(\sum_{l=1}^k q_l)\geq 0.
    $$
    Now observe that, this is true always that tan$(\theta_0)\geq p_l/q_l$, because there exist finite orbits in $\beta_0$ we can take $\theta_0\rightarrow \pi/2$, we obtain
    $$
    -\sum_{l=1}^k q_l\geq 0.
    $$
    but by definition, any orbit set has $q_l\geq 0$ for all $l$, this implies that
    $$
    \sum_{l=1}^k q_l=0.
    $$
    Therefore, $q_l=0$ for all $l=1,...,k$, but the unique elliptic orbits with $q=0$ are $e_{(0,-1)}$ and $e_{(0,1)}$, thus $\beta_0=e_{(0,-1)}^a e_{(0,1)}^b$. Now because $\beta=\{(e_{(0,-1)},i+a), (e_{(0,1)},b+j)\}$ and $I(e_{(0,-1)}^k e_{(0,1)}^k,\beta)=2$ implies that $i+a=k-1$ and $b+j=k-1$, thus $\beta=e_{(0,-1)}^{k-1}e_{(0,1)}^{k-1}$.
    To finish, we can apply Taubes results in \cite{Taubes} to concluded that this count of index two $J$-holomorphic currents is always $1$.
\end{proof}

\begin{corollary}\label{U-map-Isomorphism}
    The $U$ map in homology, for $k>0$
    $$
    U:ECH_{2k}(U^*K,\xi_{std},0)\longrightarrow ECH_{2k-2}(U^*K,\xi_{std},0),
    $$
    is an isomorphism.
\end{corollary}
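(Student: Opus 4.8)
Here is the plan. By Theorem~\ref{Homologythem} we have $ECH_{2k}(U^*K,\xi_{std},0)\cong\mathbb{Z}_2$ for every $k\ge 0$, so for $k>0$ the degree $-2$ map $U\colon ECH_{2k}\to ECH_{2k-2}$ is a $\mathbb{Z}_2$-linear map between one-dimensional $\mathbb{Z}_2$-vector spaces; hence it is an isomorphism if and only if it is nonzero, equivalently surjective. So the entire content of the corollary is to produce, in each even grading $2k$, a homology class on which $U$ does not vanish. The natural candidate is $\sigma_k:=[e_{(0,-1)}^k e_{(0,1)}^k]$: the orbit set $e_{(0,-1)}^k e_{(0,1)}^k$ is admissible of Type~I with all orbits elliptic, and (as already used in the proof of Proposition~\ref{U-map-ee}, and as follows from the slice-class and local-energy constraints of Lemma~\ref{lemma: localenergy} and Proposition~\ref{prop: pathcantcross}) it is an ECH cycle; by the index computations its grading is $2k$, consistent with $U$ having degree $-2$.

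The key step is then to show $\sigma_k\ne 0$ for all $k\ge 0$, which I would do by induction on $k$. For the base case $k=0$ one has $\sigma_0=[\emptyset]$, which is nonzero in $ECH_0\cong\mathbb{Z}_2$: indeed $(D^*K,\omega_{std})$ is an exact symplectic filling of $(U^*K,\lambda_{std})$, so $[\emptyset]$ is a nontrivial ECH generator in degree $0$ — this is precisely the fact invoked in the proof of Theorem~\ref{Homologythem} (it is the class sent to $i_0$ under the Taubes isomorphism). For the inductive step, assume $\sigma_{k-1}\ne 0$. Proposition~\ref{U-map-ee} gives $U\sigma_k=\sigma_{k-1}$, which is nonzero by hypothesis; since $U$ is linear, $\sigma_k$ cannot be zero. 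This closes the induction.

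Finally, for $k>0$ the class $\sigma_k$, being a nonzero element of the one-dimensional space $ECH_{2k}\cong\mathbb{Z}_2$, is its generator, and $U\sigma_k=\sigma_{k-1}$ generates $ECH_{2k-2}$; thus $U$ sends a generator to a generator and is an isomorphism. In this write-up essentially everything is formal once Proposition~\ref{U-map-ee} is in hand, so the real obstacle is not in the corollary itself but upstream, in Proposition~\ref{U-map-ee}: one must show that the only generator hit by $U_{J,z}$ out of $e_{(0,-1)}^k e_{(0,1)}^k$ is $e_{(0,-1)}^{k-1}e_{(0,1)}^{k-1}$ — which is where the local-energy inequality forces all other elliptic orbits to drop out — and that the mod-$2$ count of ECH index two currents through a point is exactly $1$, which rests on the Taubes existence results from \cite{Taubes}. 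The only bookkeeping to be careful about in the corollary is the grading $|e_{(0,-1)}^k e_{(0,1)}^k|=2k$ and the cycle property, both of which match the low-index table and the index formula.
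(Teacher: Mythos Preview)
Your proof is correct and follows essentially the same approach as the paper: both spaces are one-dimensional over $\mathbb{Z}_2$ by Theorem~\ref{Homologythem}, and Proposition~\ref{U-map-ee} shows $U$ sends $[e_{(0,-1)}^k e_{(0,1)}^k]$ to $[e_{(0,-1)}^{k-1} e_{(0,1)}^{k-1}]$, so $U$ is nonzero and hence an isomorphism. The paper's proof is a single sentence and leaves implicit the induction you spell out (that $[\emptyset]\ne 0$ and hence each $\sigma_k\ne 0$); your version makes this bookkeeping explicit, but the content is the same.
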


\begin{proof}
    By Theorem \ref{Homologythem}, $ECH_{2k}(U^*K,\xi_{std},0)$ is isomorphic to $\mathbb{Z}_2$ for $k\geq 0$, the results follows by the Proposition \ref{U-map-ee}.
\end{proof}

\section{ECH differential}

In this section we show that the bijection defined in the section \ref{bijectioncomb} preserves differential in each complex. 

\subsection{Geometric framework implies the combinatorial}

\begin{proposition}\label{prop: geometric--->combinatorial}
    Let $\alpha$ and $\beta$ be ECH generators with $[\alpha]=[\beta]=0$. If $\langle \partial \alpha,\beta\rangle=1$, then $\langle \delta P_{\alpha},P_{\beta}\rangle=1$.
\end{proposition}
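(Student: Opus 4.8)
The plan is to show that every geometric $J$-holomorphic current of ECH index one between $\alpha$ and $\beta$ forces $P_\beta$ to be obtained from $P_\alpha$ by exactly one of the three combinatorial operations ($R_{\text{int}}$, $C$, or $D$). First I would invoke the structure theorem for ECH index one currents: $C = C_0 \sqcup C_1$ where $C_0$ is a union of trivial cylinders and $C_1$ is embedded with $\text{ind}(C_1) = I(C_1) = 1$. By the Fredholm index formula \eqref{eq: fredind}, $1 = 2(g(C_1) + e(\alpha_1) - 1) + n_{\alpha_1} + n_{\beta_1} + h(\alpha_1) + h(\beta_1)$, where $\alpha_1, \beta_1$ are the orbit sets of the ends of $C_1$. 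This is a severe constraint: it forces $g(C_1) = 0$, exactly one elliptic orbit among the ends of $C_1$ (so $C_1$ is a punctured sphere), and the total count $n_{\alpha_1} + n_{\beta_1} + h(\alpha_1) + h(\beta_1) = 1$ — so $C_1$ has at most two positive hyperbolic ends (counting even covers of negative hyperbolic orbits), which is precisely the hypothesis of Proposition \ref{Prop: stabcurve}, and hence the $R$-symmetry bijection \eqref{bijmoduli} applies and there is no double rounding.

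Next I would use Proposition \ref{prop: pathcantcross}: since $\mathcal{M}^J(\alpha,\beta) \neq \emptyset$, there are $r_- \geq 0$, $r_+ \leq 0$ with $P_\beta + (r_-,0)$ above $P_\alpha$ and $x(P_\beta) - r_+ + r_- = x(P_\alpha)$. Combined with the index-one constraint on the number of edges, the "above" relation leaves only finitely many combinatorial possibilities for how $P_\beta$ sits inside the region bounded by $P_\alpha$. I would then split into cases according to where $C_1$ has its negative-hyperbolic ends: if $C_1$ has no ends at the orbits $h^\bullet_{(0,\pm 1)}$ from the Klein bottles, then $r_- = r_+ = 0$, the toric slice-class analysis shows $P_\beta$ and $P_\alpha$ differ only at an interior corner, and the partition conditions together with the labeling rule ($h$ on exactly one new edge, or all $e$) reproduce exactly interior rounding $R_{\text{int}}$. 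If $C_1$ has ends at $h^1_{(0,1)}, h^2_{(0,1)}$ (or the $K_-$ analogues), then by Lemma \ref{lem: oneorbit}-type reasoning and the slice formula \eqref{eq: sliceclasscomb} the change $r_- \in \{0,1\}$ records precisely the passage between a toric $h$-edge of slope in $(-\infty,-1]$ (resp. $[1,\infty)$) and two vertical half-arrows labeled $h^-$, i.e. the $C$ operation; the sub-case where the slope lies in $(-1,0]$ (resp. $[0,1)$) gives the "excise two $x$-axis points and take convex hull" version of $C$; and when $\alpha$ is of Type II/III/IV with first (resp. last) toric edge labeled $h$, forgetting the $h^-$ vertex is the $D$ operation. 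In each case I would check that the ECH index drop $I(\alpha) - I(\beta) = 1$ matches the combinatorial grading drop $I(P_\alpha) - I(P_\beta) = 1$, which is automatic once the bijection of Section \ref{bijectioncomb} is known to preserve grading.

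The main obstacle I anticipate is the bookkeeping in the $C$ and $D$ cases near the Klein bottles: one must match the integer $r_-$ (and its parity, tied to the number of $h^2_{(0,\pm1)}$ orbits) from the slice-class formula \eqref{eq: sliceclasscomb} against the precise combinatorial recipe for $C$ and $D$, including the degenerate sub-case of slope in $(-1,0]$ versus $(-\infty,-1]$, and to rule out any "mixed" current that would simultaneously round an interior corner and modify an endpoint — this is exactly where the no-double-rounding hypothesis from Remark \ref{remark: nodoublerounding} and the at-most-two-positive-hyperbolic-ends bound enter, so I would need to argue carefully that the index-one constraint leaves no room for such a combined modification. Once that exclusion is in place, the correspondence is forced and the proposition follows.
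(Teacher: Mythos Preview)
Your Fredholm index analysis contains a genuine error that undercuts the rest of the argument. From
\[
1 = 2\bigl(g(C_1)+e(\alpha')-1\bigr) + n_{\alpha'}+n_{\beta'}+h(\alpha')+h(\beta')
\]
it does \emph{not} follow that $g(C_1)=0$ and $n_{\alpha'}+n_{\beta'}+h(\alpha')+h(\beta')=1$. There are three admissible solutions: $(g,e(\alpha'))=(0,1)$ with $n+h=1$; $(g,e(\alpha'))=(0,0)$ with $n+h=3$; and $(g,e(\alpha'))=(1,0)$ with $n+h=1$. The paper treats all three. Your outline covers only the first, and that case produces only interior rounding at a corner with one $e$-edge and one $h$-edge. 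The $C$ operation for slopes in $(-1,0)$ actually arises from the \emph{genus one} case (where $\alpha'=h_{(q,p)}$, $\beta'$ is all elliptic, and one must use positivity of intersections with trivial cylinders in $C_0$ to force $\alpha'$ to be an extremal edge, then a lattice-point count to pin down $|p/q|<1$). The $C$ and $D$ operations involving the Klein-bottle orbits, and interior rounding at a corner with two $h$-edges, all live in the $(g,e)=(0,0)$ case with $n+h=3$; in particular $n_{\alpha'}+n_{\beta'}=2$ is needed to see negative hyperbolic ends at all. Your later paragraphs discuss exactly these situations (ends at $h^1_{(0,\pm1)},h^2_{(0,\pm1)}$), but they are inconsistent with your claimed constraint $n+h=1$.

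A second gap: you assert that if $C_1$ has no ends at the Klein-bottle orbits then $r_-=r_+=0$ and the change is confined to an interior corner. This is not automatic; $C_1$ could a priori cross $K_\pm$ without having ends there. The paper rules this out case by case, typically by showing that if $\alpha'$ is not extremal in $P_\alpha$ then $C_1$ must intersect a trivial cylinder in $C_0$, contradicting $I(C)=1$ via the inequality $I(C_0\cup C_1)\geq I(C_1)+2\#(C_0\cap C_1)$. Similarly, ruling out $h(\alpha')=3$ (double rounding) and $h(\alpha')=1$ with $h(\beta')=2$ in the $n+h=3$ subcase each requires its own argument (the latter via an action estimate, Lemma~\ref{onehyperbolic}). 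Your sketch gestures at the no-double-rounding remark but does not engage with the remaining exclusions; the actual case analysis is where most of the work lies.
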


\begin{proof}
    Let $C=C_0\cup C_1$ be the $J$-holomorphic current counted by the ECH differential $\partial$, where:
    \begin{itemize}
        \item $C_0$ consists of trivial cylinders,
        \item $C_1\in \mathcal{M}_1^J(\alpha',\beta')$ is the nontrivial component embedded and of Fredhom index equal to $1$.
    \end{itemize}
    By the Fredholm index formula \ref{eq: fredind}, we have
    \begin{equation}\label{eq: ind=1g=1}    1=ind(C_1)=2(g(C_1)+e(\alpha')-1)+n_{\alpha'}+n_{\beta'}+h(\alpha')+h(\beta').
    \end{equation}
    \textbf{Assumption.} We will assume that $C_1$ is not a Morse-Bott cylinder between $e_{p/q}$ and $h_{p/q}$, because they appear in pairs, so do not contribute to the ECH differential.

    We consider the first case to analyze.

    \textbf{Case: genus one.} $(g(C_1)=1)$.

    The equation \ref{eq: ind=1g=1} simplifies to
    $$
    1=2e(\alpha')+n_{\alpha'}+n_{\beta'}+h(\alpha')+h(\beta').
    $$
    In particular, $e(\alpha')=0$, otherwise the (RHS) of the above equation is greater or equal to two. Moreover $n_{\alpha'}+n_{\beta'}$ is an even number, therefore $n_{\alpha'}=n_{\beta'}=0$. Thus
    $h(\alpha')+h(\beta')=1$, since $\alpha'\neq \emptyset$, we obtain
    $$
    \alpha'=h_{(q,p)}\quad and \quad \beta'=\prod_i e_{(q_i,p_i)}.
    $$

    \textbf{Claim 1.} $\alpha'$ corresponds to an extremal edge of $P_{\alpha}$, and $\beta'$ matches an extremal lattice path of $P_{\beta}$.

    \textbf{Proof of Claim 1.} By Lemma \ref{lem: oneorbit} the $J$-holomorphic curve $C_1$ is not contained in $(-\pi/2,\pi/2)\times T^2$, that is, $C_1$ must intersect $\mathbb{R}\times K_-$ or $\mathbb{R}\times K_+$. 
    
    If $\alpha'$ was not an extremal edge of $P_{\alpha}$, then $C_1$ must intersect some trivial cylinder in $C_0$, we have the following inequality by positivity of intersection of $J$-holomorphic curves
    $$
    1=I(C_0\cup C_1)\geq I(C_1)+2\#(C_0\cap C_1)\geq 3,
    $$
    which is a contradiction. The same argument shows that $\beta'$ matches a initial or final $K$-lattice path of $P_{\beta}$, without loss of generality we suppose that $\alpha'$ correspond to the first edge of $P_{\alpha}$.
    
    \textbf{Claim 2.} $p/q\in (-1,0)$.

    \textbf{Proof of Claim 2.}
    Suppose for contradiction that $p/q\leq -1$, we have
    $$
    1=I(\alpha,\beta)=2(L(\alpha)-L(\beta))-(x(\alpha)-x(\beta))-1.
    $$
    Write $L(\alpha)=1+x(\alpha)+L^+(\alpha)$, where $L^+(\alpha)$ are the count of lattice point on $P_{\alpha}$ strictly below the $x$-axis. Then this equation simplifies to
    $$
    2=2(L^+(\alpha)-L^+(\beta))+(x(\alpha)-x(\beta)).
    $$
    By the Proposition \ref{prop: pathcantcross}, we must have $L^+(\alpha)-L^+(\beta)\in \{0,1\}$.

    \textbf{Case 1.} $L^+(\alpha)-L^+(\beta)=1$
    
    The condition $L^+(\alpha)-L^+(\beta)=1$ is impossible, since in this case $x(\alpha)=x(\beta)$, which forces $r_-=r_+=0$ in Proposition \ref{prop: pathcantcross}, but then $P_{\beta}$ is a convex lattice path above the line of slope $p/q$ passing through origin with initial point $(0,0)$ and final point $(q,p)$, the only possibility is that $\beta=e_{(q,p)}$, but this yields $I(\alpha,\beta)=-1$, a contradiction.

    \textbf{Case 2.} $L^+(\alpha)-L^+(\beta)=0$

    The condition $L^+(\alpha)-L^+(\beta)=0$ is also impossible, in this situation $r_-$ and $-r_+$ are nonnegative and even integer numbers, we have
    $$
    -r_++r_-= 2.
    $$
    So we must have $(r_-,-r_+)=(2,0)$ or $(r_-,-r_+)=(0,2)$. We necessary have $r_+=0$, because $[h_{p/q}]=[\beta']$, implies that the end point of $P_{\beta'}$ is in $y=p$, and the because the lattice path $P_{\beta'}$ is above the line passing through origin of slope $p/q$, we must have $p_i/q_i<0$ for any $i$, by local energy inequality we have $r_+=0$.

    The condition $p/q\leq -1$ implies that $L^+(\alpha)>L^+(\beta)$ and we obtain again a contradiction. Therefore, both cases yields a contradiction, and this proves the claim $2$.

    Similarly, if $\alpha'=h_{(q,p)}$ is the last edge of $P_{\alpha}$, then a similar proof shows that $p/q\in (0,1)$. Therefore, this implies the $C$ operation in the case $|p/q|<1$ and extreme edges labeled with $``h"$.

    \textbf{Case: genus zero.} $(g(C_1)=0)$.

    The Fredhom index formula simplifies to
    $$    3=2e(\alpha')+n_{\alpha'}+n_{\beta'}+h(\alpha')+h(\beta').
    $$
    This gives two subcases based on $e(\alpha')\in \{0,1\}$. 
    
    \textbf{Subcase 1:} $e(\alpha')=0$.
    
    The equation above becomes 
    $$
    3=n_{\alpha'}+n_{\beta'}+h(\alpha')+h(\beta').
    $$
    Since $n_{\alpha'}+n_{\beta'}$ must be even, we have:
    $$
    n_{\alpha'}+n_{\beta'}\in \{0,2\}
    $$

    \textbf{Case 1.1.} $n_{\alpha'}+n_{\beta'}=0$.

    This implies 
    $$
    h(\alpha')+h(\beta')=3.
    $$
    We can not have $h(\alpha')=1$, otherwise
    $$
    1=I(\alpha,\beta)=2(L^{+}(\alpha)-L^{+}(\beta))+(x(\alpha)-x(\beta))+1.
    $$
    Therefore, $L^{+}(\alpha)=L^{+}(\beta)$ and $x(\alpha)=x(\beta)$, but this implies that the unlabeled $K$-lattice path associated to $\alpha$ and $\beta$ are equal, this is impossible unless $C_1$ is a Morse-Bott cylinder between some positive hyperbolic orbit and an elliptic orbit in the same Morse-Bott torus, but we are assuming that $C_1$ is not such curve.

    It is also impossible have $h(\alpha')=3$. As explained in the Remark \ref{remark: nodoublerounding}, we consider some generic pertubations of $J_{std}$ and by a argument with SFT-compactness we derive a contradiction. Now suppose that $h(\alpha')=2$. In this case we have $h(\beta')=1$.
    
    \textbf{Claim.} $\alpha'$ and $\beta'$ are contained and homologous in $(-\pi/2,\pi/2)\times T^2$.

    Indeed, if this not holds then $\alpha'$ must be extremal in $\alpha$, that is, $\alpha'$ is the two first arrows or the two last arrows in $P_{\alpha}$, because if not then $C_1$ must intersect some trivial cylinder in $C_0$ which is a contradiction. Without loss of generality, suppose that $\alpha'$ correspond to the two first arrows on $P_{\alpha}$.

    In this case, $C_1$ must intersect $\mathbb{R}\times K_-$, the condition on $I(\alpha,\beta)=1$, implies
    $$
    2(L^{+}(\alpha)-L^{+}(\beta))+(x(\alpha)-x(\beta))=2.
    $$
    If $L^{+}(\alpha)-L^{+}(\beta)=0$, we have $r_-=2$, by Proposition \ref{prop: pathcantcross}, $P_{\alpha'}$ and $P_{\beta'}+(2,0)$ has the same end point and the condition $L^{+}(\alpha)-L^{+}(\beta)=0$ implies that the last edge of $P_{\beta'}$ coincides with the last edge of $P_{\alpha'}$. Write 
    $$
    \alpha'=h_{(q_1,p_1))}h_{(q_2,p_2)}\quad \textrm{and}\quad \beta'=\prod_{i=1}^{k-1}e_{(s_i,r_i)}\cdot h_{(s_k,r_k))}\cdot \prod_{i=k+1}^n e_{(s_i,r_i)}\gamma_{(q_2,p_2)}.
    $$
    Now, take $\theta_0\in (-\pi/2,\pi/2)$, such that $\tan(\theta_0)=p_2/q_2$, for $\epsilon>0$ sufficiently small we have
    $$
    [C_{\theta_0-\epsilon}]=(x(\beta),y(\beta))-(x(\alpha),y(\alpha))+(2,-m_1)=(0,0),
    $$
    by local energy inequality $C$ do not intersect the slice $\theta=\theta_0-\epsilon$, but $C$ has ends in $\theta=\theta_0$, thus a contradiction.

    If $L^{+}(\alpha)-L^{+}(\beta)=1$, then $r_-=r_+=0$. Therefore, $P_{\alpha'}$ and $P_{\beta'}$ has same start and end points, but this implies that $\beta'$ has only Reeb orbits in $(-\pi/2,\pi/2)\times T^2$ because the first edge of $P_{\alpha'}$ is not vertical, and we concluded that $\alpha'$ and $\beta'$ are homologous in $(-\pi/2,\pi/2)\times T^2$, a contradiction.

    Thus, we concluded the proof that in $h(\alpha')=2$, these orbit sets are homologous in $(-\pi/2,\pi/2)\times T^2$. In this case, by local energy inequality the only possibility for the $J-$holomorphic curve $C_1$ is from Taubes results in \cite{Taubes} and this correspond to the operation interior rounding the corners.

    \textbf{Case 1.2.} $n_{\alpha'}+n_{\beta'}=2$.

    In this case, we have three possibilities $(n_{\alpha'},n_{\beta'})=(2,0),(0,2)$ or $(1,1)$. The case $(1,1)$ is ruled out by the following lemma.

    \begin{lemma}\label{onehyperbolic}
    Let $h_{(q,p)}$ a positive hyperbolic orbit. Suppose that $\beta$ is a toric orbit set and that the lattice path $P_{\beta}$ is above the lattice path $P_{h_{p/q}}$ and that $[\beta]=[h_{(q,p)}]$ in $H_1(U^*(K),\mathbb{Z})$, then the action of $\beta$ is greater than or equal to $\sqrt{q^2+p^2}$.
    \end{lemma}

    \begin{proof}
    The equality in the homology class implies that the lattice path $P_{\beta}$ has end point on the line $y=p$, the fact that $P_{\beta}$ is above of $P_{h_{(q,p)}}$ necessary implies that its total lenght has to be greater than or equal to the lenght of $P_{h_{(q,p)}}$.
    \end{proof}

    The Fredholm index condition gives 
    $$
    h(\alpha')+h(\beta')=1
    $$
    The only possibility is $h(\alpha')=1$ and $h(\beta')=0$, due to the action obstruction.

    Suppose that $n_{\alpha'}=2$ and $n_{\beta'}=0$. Without loss of generality, suppose that
    $$
    \alpha'=h_{(0,-1)}^1h_{(0,-1)}^2h_{(q,p)}\quad \textrm{and}\quad \beta'=\prod_{i=1}^n e_{(q_i,p_i)}
    $$
    The $K$-lattice path $P_{\alpha'}$ matches to the extremal initial of $P_{\alpha}$, otherwise $C_1$ must intersect a trivial cylinder in $C_0$. We compute ECH index to obtain:
    $$
    1=I(\alpha,\beta)=2(L^+(\alpha)-L^+(\beta))+(x(\alpha)-x(\beta))-2,
    $$
    which simplifies to
    $$
    3=2(L^+(\alpha)-L^+(\beta))+(x(\alpha)-x(\beta)).
    $$
    We concluded that, $L^+(\alpha)-L^+(\beta)\in\{0,1\}$. We now show that $L^+(\alpha)-L^+(\beta)=0$ leads to a contradiction.
    
    Indeed, if $L^+(\alpha)-L^+(\beta)=0$, then $3=x(\alpha)-x(\beta)$, in this case $r_-$ (from the Proposition \ref{prop: pathcantcross}) is an odd number, in any case $r_-=1$ or $r_-=3$ by convexity is impossible the existence of such $K$-lattice path. 

    Therefore, we must have $L^+(\alpha)-L^+(\beta)=1$, which implies $x(\alpha)-x(\beta)=1$. This forces $r_-=1$ and $r_+=0$ by the condition on the parity of $r_-$ and $r_+$. In this case $P_{\beta'}=D(P_{\alpha'})$, that is, $P_{\beta'}$ start in $(1,0)$ has same end point of $P_{\alpha'}$ and there are no lattice point in the region interior between these $K$-lattice paths. This is the case of the $D$ operation when the ECH generator start or ends with two negative hyperbolic orbits and a positive hyperbolic orbit.

    Now suppose that $n_{\alpha'}=0$ and $n_{\beta'}=2$. We compute the ECH index and obtain:
    $$
    1=2(L^+(\alpha)-L^+(\beta))+(x(\alpha)-x(\beta)).
    $$
    We concluded that, $L^+(\alpha)-L^+(\beta)=0$ and $x(\alpha)-x(\beta)=1$. In this case, without loss of generality $\alpha'=h_{(q,p)}$ and $\beta'=h_{(0,-1)}^1 h_{(0,-1)}^2\prod_{i=1}^n e_{(q_i,p_i)}$. In this case $r_-=1$ and $r_+=0$ and we concluded that $P_{\beta'}=C(P_{\alpha'})$, that is, $P_{\beta'}$ is obtained by doing rounding in the starting point of $P_{\alpha'}$, but the two first arrows are semi-arrows starting in $(1,0)$ and there is no lattice point in the region between these two $K-$lattice path. This finish the case $e(\alpha')=0$. 

    \textbf{Subcase 2:} $e(\alpha')=1$.
    
    In this case, the Fredhom index equation for $C_1$ is
    $$
    1=n_{\alpha'}+n_{\beta'}+h(\alpha')+h(\beta').
    $$
    We know that $n_{\alpha'}+n_{\beta'}$ is an even number, therefore $n_{\alpha'}=n_{\beta'}=0$. The ECH index equation give
    $$
    0=2(L^+(\alpha)-L^+(\beta))+(x(\alpha)-x(\beta)),
    $$
    so that $L^+(\alpha)-L^+(\beta)=0$ and $x(\alpha)-x(\beta)=0$. In this case, $r_-=0=r_+$ and by local energy inequality $C_1$ is contained in $\mathbb{R}\times (-\pi/2,\pi/2)\times T^2$. In particular $h(\alpha')=1$ and $h(\beta')=0$. By local energy inequality, the only possibility for $C_1$ is the $J-$holomorphic curve produced by Taubes results in \cite{Taubes}.  This is the case of interior rounding the corners, when the vertex has adjacent edges labeled by $``e"$ and $``h"$.
\end{proof}

\subsection{Combinatorial framework implies the geometric}

We now prove that the combinatorial differential model recovers the geometric embedded contact homology differential.

\begin{proposition}
    Let $\alpha$ and $\beta$ be ECH generators with $[\alpha]=[\beta]=0$. If $\langle \delta P_{\alpha},P_{\beta} \rangle=1$, then $\langle \partial \alpha,\beta \rangle=1$.
\end{proposition}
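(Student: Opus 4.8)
The plan is to run the converse of the previous proposition: given a combinatorial differential coefficient $\langle \delta P_\alpha, P_\beta\rangle = 1$, exhibit a genuine $J$-holomorphic current of ECH index $1$ from $\alpha$ to $\beta$ and check that it is counted (mod $2$) with coefficient $1$. Since $\delta = C + D + R_{\mathrm{int}}$, I would split into the three cases according to which operation produces $P_\beta$ from $P_\alpha$, and in each case localize the problem to a single corner (for $R_{\mathrm{int}}$) or a single extremal edge (for $C$ and $D$), using positivity of intersection exactly as in the proof of Proposition \ref{prop: geometric--->combinatorial}: the nontrivial embedded component $C_1$ cannot meet any trivial cylinder in $C_0$, so the edge(s) of $P_\alpha$ being modified must be extremal, and $\alpha'$, $\beta'$ are the corresponding sub-orbit-sets with $\alpha = \alpha' \gamma$, $\beta = \beta' \gamma$ for a common toric remainder $\gamma$.

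For the interior rounding case, the local model is two toric edges $e,e'$ (or $e,h$, or $h,e$, but never $h,h$ since that has index $2$ as computed in Section \ref{sec: lowind}) meeting at a lattice point in $(-\pi/2,\pi/2)\times T^2$; here the relevant $J$-holomorphic curve connecting $\alpha' = \gamma_{(q_1,p_1)}\gamma_{(q_2,p_2)}$ (with one $h$-label) to $\prod_i e_{(q_i',p_i')}$ was produced by Taubes's existence results \cite{Taubes} and counted with coefficient $1$ — this is precisely the "rounding the corners" count of \cite{Hutchings4}, and the $\mathbb{Z}_2$-count is $1$ because the moduli space, being cut out transversely by automatic transversality (Proposition \ref{automatic}, via $h_+(C_1) \le 2 < \mathrm{ind}(C_1)$ after accounting for labels), is a single point. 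For the $C$ operation one uses the index-one curves near $K_\pm$: when the extremal edge is $h_{(q,p)}$ with $|p/q| < 1$ this is the curve analyzed in Case: genus zero, Subcase 1, Case 1.2 of the previous proof, and when $|p/q|\ge 1$ it is the curve of Theorem \ref{thm:1} (the one realizing $\partial(h_{(1,-1)}h_{(0,1)}^1h_{(0,1)}^2) = h_{(0,-1)}^1h_{(0,-1)}^2h_{(0,1)}^1h_{(0,1)}^2$), whose count was shown to be odd. The $D$ operation is dual: the extremal data is $h_{(0,\pm 1)}^1 h_{(0,\pm 1)}^2 h_{(q,p)}$ mapping to $\prod e_{(q_i,p_i)}$, and again the relevant count comes from Taubes's existence theorem combined with the homology computation of Theorem \ref{Homologythem} forcing it to be odd, exactly as in the corollary to Theorem \ref{thm:1}.

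The main obstacle is establishing that each of these geometric counts equals $1$ mod $2$ rather than merely being odd in one distinguished instance — i.e., propagating the "base case" counts of Section \ref{sec: lowind} to all edges and all slopes. The strategy for this is an induction on the action (or on $x(P_\alpha) + \mathrm{Area}(P_\alpha)$): using the $U$-map isomorphism of Corollary \ref{U-map-Isomorphism}, the chain-level relation $U\partial = \partial U$, and the bijection \ref{bijmoduli} together with the stability Proposition \ref{Prop: stabcurve}, one reduces the count at a general extremal edge to the count at a "model" edge of the same slope class mod $2$, where Theorem \ref{thm:1} and Proposition \ref{U-map-ee} apply; translation-invariance of $J_{st}$ in the $[x,y]$-torus directions identifies curves with the same slope data. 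One also invokes $\partial^2 = 0$ for the combinatorial $\delta$ (which must be checked separately, but follows from the geometric $\partial^2 = 0$ once the two previous propositions are in place) to pin down ambiguous parities. Finally, the "no double rounding" genericity of Remark \ref{remark: nodoublerounding} must be maintained throughout so that the SFT-compactness argument excluding $h(\alpha') = 3$ configurations goes through, ensuring the combinatorial and geometric differentials have matching supports.
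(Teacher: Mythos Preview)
Your outline for $R_{\mathrm{int}}$ is essentially the paper's Proposition \ref{prop: introunding}, and your $C$-operation sketch (base case via Theorem \ref{thm:1}, then propagation) is close in spirit to Proposition \ref{Prop: CoperationJ}. One difference worth noting: the paper's propagation mechanism is not the $U$-map relation $U\partial = \partial U$ you suggest, but rather $\partial^2 = 0$ applied to hand-built test generators such as $h_{(q,p)}h_{(q,-p)}$ or $h_{(0,-1)}^1 h_{(0,-1)}^2 h_{(q-1,-1)} h_{(1,0)} e_{(0,1)}^2$. These are chosen so that after one application of $\partial$ (already known by induction), the unknown count appears as the only term of its type, and $\partial^2 = 0$ forces its parity.

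The genuine gap is your treatment of the $D$ operation. You assert that its base count is forced to be odd ``exactly as in the corollary to Theorem \ref{thm:1}'', i.e.\ by the homology computation of Theorem \ref{Homologythem} together with Taubes's existence results. But Theorem \ref{thm:1} and its corollary only pin down the $C$-operation base case (the parity of $\mathcal{M}_1^J(h_{(1,\pm 1)}, h_{(0,\mp 1)}^1 h_{(0,\mp 1)}^2)$). The $D$-operation base case is the parity of $\mathcal{M}_1^J(h_{(0,-1)}^1 h_{(0,-1)}^2 h_{(1,-1)}, e_{(0,-1)}^2)$, and knowing $ECH_* \cong \mathbb{Z}_2$ in each nonnegative degree does \emph{not} force this to be odd: if it were even, Lemma \ref{lemma: Doplema} shows $e_{(0,-1)}^k e_{(0,1)}^k$ would be a cycle that is not a boundary, and it alone would carry the homology in degree $2k$---perfectly consistent with Theorem \ref{Homologythem}. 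The paper rules this scenario out by an entirely different mechanism you do not mention: the Weyl law for the ECH spectrum (Corollary \ref{asymptotic}). If the $D$ count were even, one would have $c_n(U^*K,\lambda_{st}) = \mathcal{A}(e_{(0,-1)}^n e_{(0,1)}^n) = 2n$, whence $c_n^2/n \to \infty$, contradicting $c_n^2/n \to 2\,\mathrm{vol}(U^*K,\lambda_{st}) < \infty$. This asymptotic input is essential and is not supplied by Taubes's curve existence, by automatic transversality, or by the rank of $ECH_*$.
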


The proof proceds in some steps. First, we show that interior rounding in $K$-lattice path's contributes to ECH differential. In fact we will prove that if the first or last ``toric" arrow of $P_{\alpha}$ is elliptic, then $\partial(\alpha)=\delta(P_{\alpha})$.

\begin{proposition}\label{prop: introunding}
    Let $\alpha$ be an admissible orbit set, and $P_{\alpha}$ denote its corresponding K-lattice path. Suppose that $\beta$ is an admissible orbit set such that $P_{\beta}$ is obtained from $P_{\alpha}$ by interior rounding operation. Then $\langle \partial(\alpha),\beta \rangle=1$.
\end{proposition}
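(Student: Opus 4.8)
The plan is to reduce the assertion to a purely toric count and then transplant Hutchings' computation of the $T^3$ ECH differential. Since $P_\beta$ is obtained from $P_\alpha$ by rounding a single corner $v$ lying strictly below the $x$-axis, and interior rounding is never performed at a vertex adjacent to an $h^-$-arrow, the two edges $E,E'$ of $P_\alpha$ meeting at $v$ are ``toric arrows''. Write $\alpha=\alpha_0\alpha'$ and $\beta=\alpha_0\beta'$, where $\alpha'$ is the orbit set carried by $\{E,E'\}$ and $\beta'$ the one carried by the rounded edges; then $\alpha$ and $\beta$ coincide outside a finite cluster of tori $\{\theta\}\times T^2$ in the toric region, and $\langle\partial\alpha,\beta\rangle$ counts mod $2$ the index-$1$ currents $C=C_0\sqcup C_1$ with $C_0$ a union of trivial cylinders and $C_1$ embedded of Fredholm index $1$.

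First I would confine $C_1$ to $\mathbb{R}\times(-\pi/2,\pi/2)\times T^2$ and make it disjoint from $C_0$: positivity of intersections with the trivial cylinders over $\alpha_0$ forces disjointness exactly as in the proof of Proposition~\ref{prop: geometric--->combinatorial}, and since $P_\beta$ has the same endpoints as $P_\alpha$, Proposition~\ref{prop: pathcantcross} gives $r_\pm=0$ in the slice-class formula~\eqref{eq: sliceclasscomb}, whence by the local energy inequality, Lemma~\ref{lemma: localenergy}, $C_1$ never enters $\mathbb{R}\times K_\pm$. Once $C_1$ is confined to the toric region, I would identify $(-\pi/2,\pi/2)\times T^2$ with an open piece of $T^3$ on which $\lambda_{can}$ and $J_{st}$ restrict to the standard contact form and almost complex structure; then $C_1$ is one of the curves realizing a single corner-rounding in Hutchings' computation of the $T^3$ ECH differential in~\cite{Hutchings4}, whose labeling rule (all new edges ``$e$'' when $E,E'$ are both elliptic, otherwise a single ``$h$'' distributed in all admissible ways) is precisely the interior rounding rule, and Hutchings showed the corresponding mod-$2$ count is $1$. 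To pass from $J_{st}$ to the generic $J$ used in our chain complex I would invoke automatic transversality: in the label configurations where $C_1$ is a punctured sphere with at most two positive hyperbolic ends, Propositions~\ref{automatic} and~\ref{Prop: curv0} together with the cobordism argument of Proposition~\ref{Prop: stabcurve} keep the count unchanged, Remark~\ref{remark: nodoublerounding} excludes spurious double-rounding currents, and the $R$-symmetry bijection~\eqref{bijmoduli} handles positive- and negative-slope corners simultaneously; in the remaining configurations—e.g.\ a corner whose two adjacent edges are both positive hyperbolic, or an $e$--$h$ corner—where automatic transversality is unavailable, I would instead quote the curve-existence result of Taubes~\cite{Taubes}, exactly as in the $e(\alpha')\le 1$ subcases of Proposition~\ref{prop: geometric--->combinatorial}, to obtain an odd count. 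Summing over the ways of labeling the new edges reproduces $\langle\delta P_\alpha,P_\beta\rangle$ and yields $\langle\partial\alpha,\beta\rangle=1$.

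The main obstacle I expect is the configurations where automatic transversality fails: there the count must be supplied externally by~\cite{Taubes} (or by reproducing Hutchings' argument for those corners in $T^3$), and one must verify both that it is odd and that no other currents contribute. A secondary technical point is the confinement step—checking carefully from~\eqref{eq: sliceclasscomb} and Lemma~\ref{lemma: localenergy} that $C_1$ cannot drift toward the Klein bottles even when $E,E'$ are steep—together with the bookkeeping that identifies our open $K$-lattice paths (endpoints on the $x$-axis) with Hutchings' closed $T^3$ loops, i.e.\ that the corner-rounding count is genuinely a local quantity insensitive to the global shape of the path.
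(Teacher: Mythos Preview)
Your proposal is correct and follows the same two-step skeleton as the paper---confine $C_1$ to the toric region, then invoke a known count---but you take a more elaborate route at both steps. For confinement, the paper argues directly from the local energy inequality (Lemma~\ref{lemma: localenergy}) that $C_1$ is trapped in $\mathbb{R}\times[\theta_{\min},\theta_{\max}]\times T^2$, rather than passing through Proposition~\ref{prop: pathcantcross} and the $r_\pm=0$ computation. For the count, the paper simply cites Taubes~\cite{Taubes} once confinement is established, without identifying with an open piece of $T^3$, without the automatic-transversality case split, and without separately handling the $h$--$h$ and $e$--$h$ corners. Your detour through~\cite{Hutchings4} is valid (Hutchings' $T^3$ computation itself rests on~\cite{Taubes}), and your confinement argument is arguably more careful about why $C_1$ must avoid the trivial cylinders in $C_0$; but the worry you flag about configurations where automatic transversality fails is moot, since Taubes' result already supplies the odd count uniformly, which is why the paper does not perform that case analysis.
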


\begin{proof}
    Suppose that $P_{\beta}$ is obtained from $P_{\alpha}$ by interior rounding the corner, follows from local energy inequality that a $J$-holomorphic current $C=C_0\cup C_1$ of ECH index one between $\alpha$ and $\beta$ satisfies the following:
    \begin{itemize}
        \item $C_1$ has ends exactly on the orbit sets where $\alpha$ and $\beta$ are not equal.
    \end{itemize}
    Therefore, the ends of $C_1$ are homologous on $(-\pi/2,\pi/2)\times T^2$. Applying local energy inequality, we deduce that $C_1$ is confined as follows: if $\theta_{\min}$ and $\theta_{\max}$ are real numbers in $(-\pi/2,\pi/2)$ such that, the orbits in positive or negative ends of $C_1$ are in $\{\theta\}\times T^2$ torus with $\theta_{\min}\leq \theta\leq\theta_{\max}$, then 
    $$
    C_1\subset \mathbb{R}\times [\theta_{\min},\theta_{\max}]\times T^2.
    $$

    Follows from Taubes results in \cite{Taubes}, that there exist a unique $J-$holomorphic curve between the positive ends of $C_1$ and the negative ends of $C_1$, this shows that $\langle \partial \alpha,\beta\rangle=1.$
    \end{proof}

    \begin{corollary}\label{Cor: Diff1}
    Supppose that $P_{\alpha}$ has first or last ``toric edge" elliptic, then $\partial \alpha=\delta P_{\alpha}$.
    \end{corollary}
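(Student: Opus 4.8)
The plan is to prove the corollary by establishing the two inclusions of supports ``$\partial\alpha\subseteq\delta P_\alpha$'' and ``$\delta P_\alpha\subseteq\partial\alpha$'' (over $\mathbb{Z}_2$ a chain is determined by its support, so this is exactly $\partial\alpha=\delta P_\alpha$). Each inclusion is essentially a citation of a result already in hand. For the first, Proposition \ref{prop: geometric--->combinatorial} says precisely that $\langle\partial\alpha,\beta\rangle=1$ implies $\langle\delta P_\alpha,P_\beta\rangle=1$; since $\alpha\mapsto P_\alpha$ is a bijection of generators (Section \ref{bijectioncomb}), this shows $\mathrm{supp}(\partial\alpha)\subseteq\mathrm{supp}(\delta P_\alpha)$ with no hypothesis on $P_\alpha$ at all.

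For the reverse inclusion I would first unwind $\delta P_\alpha=C(P_\alpha)+D(P_\alpha)+R_{\mathrm{int}}(P_\alpha)$ using the hypothesis. The $C$ operation is only defined when an extremal toric edge is labeled ``$h$'', and the $D$ operation only when an extremal toric edge of a Type II, III or IV path is labeled ``$h$''; hence, once the first and the last toric edges of $P_\alpha$ are elliptic, $C(P_\alpha)=D(P_\alpha)=0$ and $\delta P_\alpha=R_{\mathrm{int}}(P_\alpha)$. Every term $P_\beta$ of $R_{\mathrm{int}}(P_\alpha)$ is, by construction, obtained from $P_\alpha$ by interior rounding at a corner off the $x$-axis, so Proposition \ref{prop: introunding} gives $\langle\partial\alpha,\beta\rangle=1$. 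Thus $\mathrm{supp}(\delta P_\alpha)\subseteq\mathrm{supp}(\partial\alpha)$, and together with the first inclusion this gives $\partial\alpha=\delta P_\alpha$ under the identification $P$.

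Because both halves are delegated to the preceding propositions, no real computation is left; the only point to check with care is combinatorial: every $P_\beta$ occurring in $R_{\mathrm{int}}(P_\alpha)$ must correspond to an admissible orbit set with zero homology class, so that Proposition \ref{prop: introunding} genuinely applies to it. This is built into the labeling rule for interior rounding, which creates at most one ``$h$''-edge per rounded corner (so hyperbolic orbits keep multiplicity one) and leaves the endpoint $x(P_\alpha)$ and the $h^-$ half-arrows untouched (so the type, and hence the parity condition that guarantees nullhomology, is preserved). I expect this bookkeeping, rather than any geometric input, to be the only place requiring attention: the genuinely geometric ingredients — the local energy confinement of the relevant index-one currents and the existence and uniqueness of the interior-rounding $J$-holomorphic curve from Taubes' results in \cite{Taubes} — are already absorbed into Propositions \ref{prop: geometric--->combinatorial} and \ref{prop: introunding}. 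Finally, at an elliptic extremal toric edge the corresponding $C$ and $D$ terms simply do not arise, which is what makes the elliptic hypothesis exactly the condition under which $\delta$ reduces to $R_{\mathrm{int}}$ and the corollary goes through.
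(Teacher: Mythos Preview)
Your proposal is correct and follows essentially the same route as the paper's own proof: cite Proposition~\ref{prop: geometric--->combinatorial} for the inclusion $\mathrm{supp}(\partial\alpha)\subseteq\mathrm{supp}(\delta P_\alpha)$, observe that the elliptic-extremal hypothesis forces $C(P_\alpha)=D(P_\alpha)=0$ so that $\delta P_\alpha=R_{\mathrm{int}}(P_\alpha)$, and then cite Proposition~\ref{prop: introunding} for the reverse inclusion. Your write-up is more explicit than the paper's (which compresses the argument to two sentences), and your added bookkeeping about admissibility and nullhomology of the rounded paths is correct but not strictly needed, since those facts are already implicit in the setup of Proposition~\ref{prop: introunding}.
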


    \begin{proof}
    Let $\alpha$ be an ECH generator such that $P_{\alpha}$ is a $K$-lattice path with first or last ``toric edge" is not positive hyperbolic, follows from Proposition \ref{prop: geometric--->combinatorial}, that if $\langle \partial \alpha,\beta\rangle=1$, then $P_{\beta}$ is obtained from $P_{\alpha}$ by interior rounding the corner. By Proposition \ref{prop: introunding} follows the result.
    \end{proof}

    \subsection{C operation for $p/q\in \mathbb{Q}\setminus (-1,1)$}

    We now examine the situation where the first or last ``toric edge" of $P_{\alpha}$ is labeled by ``h" (corresponding to a positive hyperbolic Reeb orbit). In this section, we consider two cases
    \begin{itemize}
        \item The first edge of $P_{\alpha}$ is labeled by ``h" with slope in $
        (-\infty,-1]$.
        \item The last edge of $P_{\alpha}$ is labeled by ``h" with slope in $[1,+\infty)$.
    \end{itemize}
    Recall that for a positive hyperbolic Reeb orbit $h_{(q,p)}$ with $p<0$, the $C$-operation $C(h_{(q,p)})$ is the unique $K$-lattice path starting in $(1,0)$ and with end point in $(q,p)$, with two first down semi-arrows and all toric orbits labeled by ``e", such that there is no interior lattice point in the region defined by $P_{h_{p/q}}$ and $C(h_{p/q})$.

    \begin{center}
\begin{tikzpicture}
    \begin{scope}
        \foreach \x in {0,1,2} {
            \foreach \y in {0,-1,-2,-3,-4,-5} {
                \fill (\x,\y) circle (1pt);
            }
        }
        \foreach \x/\y in {0/0, 0/-2, 2/-1, 2/0} {
            \fill (\x,\y) circle (1pt);
        }
        \draw[->, thick] (0,0) -- node[left]{$h$} (2,-5) ;
        \node[right] at (3.5,-2.5) {\large $\underset{\longrightarrow}{C}$};
    \end{scope}
    
    \begin{scope}[xshift=5cm]
        \foreach \x in {0,1} {
            \foreach \y in {0,-1,-2,-3,-4,-5} {
                \fill (\x,\y) circle (1pt);
            }
        }
        \foreach \x/\y in {0/0, 0/-2} {
            \fill (\x,\y) circle (1pt);
        }
        \draw[->, thick] (0,0) -- node[left]{$h^{-}$} (0,-0.5);
        \draw[->, thick] (0,-0.5) -- node[left]{$h^{-}$} (0,-1);
        \draw[->, thick] (0,-1) -- node[left]{$e$} (0,-2);
        \draw[->, thick] (0,-2) -- node[right]{$e$} (1,-5);
    \end{scope}
\end{tikzpicture}
\end{center}

    \begin{proposition}\label{Prop: CoperationJ}
        Let $h_{(q,p)}$ be a positive hyperbolic orbit where $p/q\in \mathbb{Q}\setminus (-1,1)$. Then, we have
        $$
        \#_2\mathcal{M}_1^J(h_{(q,p)},C(h_{(q,p)}))=1
        $$
        Moreover, if $p/q\leq -1$ and $C\in \mathcal{M}_1^J(h_{(q,p)},C(h_{(q,p)}))$. Then $C\subset \mathbb{R}\times \{\theta\leq \arctan{(p/q)}\}\times T^2$. The similar result holds for $p/q\geq 1$.
    \end{proposition}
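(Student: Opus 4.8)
The plan is to realize the $C$-operation geometrically by an explicit $J$-holomorphic curve and then use automatic transversality together with Taubes' existence results to pin down the count modulo $2$. First I would reduce to the case $p/q \leq -1$ with $h_{(q,p)}$ the first edge; the case $p/q \geq 1$ is symmetric under the involution $R$ of equation \ref{bijmoduli}. The target orbit set $C(h_{(q,p)})$ is by definition a $K$-lattice path $\beta$ of Type II (or IV) whose toric part is a concave lattice path of elliptic orbits from $(1,0)$ to $(q,p)$ lying just above the segment from $(1,0)$ to $(q,p)$, together with the two negative-hyperbolic half-arrows $h_{(0,1)}^1, h_{(0,1)}^2$ from $K_+$. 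I would first verify the index bookkeeping: using the combinatorial index formula \ref{eq: combECHindex} (equivalently the Fredholm formula \ref{eq: fredind}) check that $I(h_{(q,p)}, C(h_{(q,p)})) = 1$, so that any contributing current indeed has ECH index one and, by Proposition $\ref{automatic}$ applied as in Proposition \ref{Prop: curv0}, the relevant moduli space is cut out transversely for an $R$-symmetric $J$ near $J_{st}$ and hence (by Proposition \ref{Prop: stabcurve}) for a generic nearby $J$.

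The geometric heart is the construction and confinement of the curve. I would build a model $J_{st}$-holomorphic curve $C_1$ as follows: in the toric region $\mathbb{R}\times(-\pi/2,\pi/2)\times T^2$ it is essentially the surface produced by Operation IV applied to the toric orbit set $\beta^{\mathrm{tor}}$ (the concave elliptic path) and the single orbit $h_{(q,p)}$ — this is a ``pair of pants''-type piece realizing the rounding — and near $\theta = -\pi/2$ I splice in two copies of the Operation II model surface, each a half-cylinder limiting onto $h_{(0,1)}^1$ and $h_{(0,1)}^2$ respectively as $\theta\to -\pi/2$ (here I use that the negative-hyperbolic half-arrows record the double cover of the central circles in the Möbius neighborhoods of $K_+$). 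The confinement statement $C\subset\mathbb{R}\times\{\theta\le\arctan(p/q)\}\times T^2$ I would derive from the local energy inequality, Lemma \ref{lemma: localenergy}: the slice-class formula \ref{eq: sliceclasscomb} together with Proposition \ref{prop: pathcantcross} forces $P_{\beta} + (r_-,0)$ to lie above $P_{h_{(q,p)}}$ with $r_-=1$ and $r_+=0$; combined with the hypothesis $p/q\le -1$ (so the single edge $h_{(q,p)}$ is the leftmost/steepest), the inequality $\det\begin{bmatrix}\cos\theta & q_{\text{slice}}\\ \sin\theta & p_{\text{slice}}\end{bmatrix}\ge 0$ is violated for $\theta>\arctan(p/q)$ unless $C_{\theta}=\emptyset$, so $C$ cannot intersect any slice with $\theta>\arctan(p/q)$.

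To get the count to be exactly $1$ mod $2$ rather than merely nonempty, I would invoke the uniqueness part of Taubes' results in \cite{Taubes} (used in the same way as in the proof of Proposition \ref{U-map-ee} and the corollary to Theorem \ref{thm:1}): once we know the positive end is the single orbit $h_{(q,p)}$, the negative ends are $\beta^{\mathrm{tor}}\cdot h_{(0,1)}^1 h_{(0,1)}^2$, the curve has genus zero, and it is confined to the stated half-space, the relevant Seiberg–Witten / $J$-holomorphic count is $1$. Alternatively, a purely ECH argument is available: since by Corollary \ref{Cor: Diff1} the $C$-operation is the only possible term of $\delta P_\alpha$ that $\partial$ could fail to see in this configuration, and $\partial^2=0$ together with the known homology from Theorem \ref{Homologythem} and the $U$-map computation Corollary \ref{U-map-Isomorphism} would force the count to be odd; I would present the Taubes argument as primary and mention the homological consistency check as corroboration.

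\textbf{Main obstacle.} The delicate point is the confinement argument near $\theta=-\pi/2$: the local energy inequality is stated for generic $\theta_0$ where $C$ is transverse to the slice, but the interesting behavior happens precisely where $C$ develops ends at the orbits $h_{(0,1)}^i$ coming out of the Klein bottle $K_+$, and one must argue carefully — via the Operation II model and the slice-class formula \ref{eq: sliceclasscomb} with the correction term $(r_-, (n_\alpha^- - n_\beta^-)/2 + n_1 - m_1)$ — that no part of $C$ escapes to $\theta$ larger than $\arctan(p/q)$ and loops back. Equivalently one must rule out extra components of $C_1$ sneaking into a slice with $\theta$ close to $\pi/2$; here the evenness of $r_-$ parity (the number of $h_{(0,-1)}^2$ orbits, which is zero in this configuration) and the equality $x(P_\beta) - r_+ + r_- = x(P_\alpha)$ from Proposition \ref{prop: pathcantcross} do the necessary rigidifying.
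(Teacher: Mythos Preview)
Your primary strategy has a genuine gap. The surfaces produced by Operations~II and~IV in Section~6 are \emph{topological} admissible representatives of relative homology classes, built solely to compute $c_\tau$, $Q_\tau$ and the ECH index; nothing in their construction makes them $J_{st}$-holomorphic, and you cannot ``splice'' them into an honest $J$-holomorphic curve. Likewise, the Taubes existence/uniqueness results from \cite{Taubes} are only invoked in this paper for curves that are confined to the toric open set $(-\pi/2,\pi/2)\times T^2$ (interior rounding, Proposition~\ref{U-map-ee}); they say nothing about curves with ends at the negative hyperbolic orbits coming from the Klein bottles $K_\pm$, which is exactly what $C(h_{(q,p)})$ contains. (Incidentally, for $p/q\le -1$ those orbits are $h_{(0,-1)}^1,h_{(0,-1)}^2$ from $K_-$, not $h_{(0,1)}^i$ from $K_+$ as you wrote.) So the ``geometric heart'' of your proposal does not actually establish nonemptiness of $\mathcal{M}_1^J(h_{(q,p)},C(h_{(q,p)}))$, let alone an odd count.

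The paper's proof is entirely different and purely homological. It is an induction on $|p|$. The base case $|p|=1$ is Theorem~\ref{thm:1}, which was itself proved by contradiction with the known homology. For the inductive step one takes the test generator $\alpha=h_{(q,p)}h_{(q,-p)}$, assumes the count at height $|p|$ is even, and computes $\partial\alpha$ using Propositions~\ref{prop: geometric--->combinatorial} and~\ref{prop: introunding}: only interior rounding at the vertex $(q,p)$ survives, producing summands $\alpha_i,\alpha_k,\alpha_f$ whose first/last ``$h$''-edges have strictly smaller height. Applying $\partial$ again and using the inductive hypothesis, $\partial\alpha_i$ contributes a term containing $h_{(0,-1)}^1h_{(0,-1)}^2$ and $\partial\alpha_f$ one containing $h_{(0,1)}^1h_{(0,1)}^2$, neither of which can be cancelled by the purely toric $\partial\alpha_k$; hence $\partial^2\alpha\ne 0$, a contradiction. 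The confinement statement is then a separate consequence of the local energy inequality. Your ``alternative'' homological argument gestures in this direction but does not identify the right test generator or the induction, and invokes the $U$-map and global homology, which are not used here.
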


\begin{proof}

    \textbf{Base case:} For $p/q=\pm 1$, the result follows directly from the Theorem \ref{thm:1}, which in particular establishe that 
    $$
    \#_2\mathcal{M}_1^J(h_{(1,1)},h_{(0,-1)}^1 h_{(0,-1)}^2)=\#_2\mathcal{M}_1^J(h_{(1,-1)},h_{(0,1)}^1 h_{(0,1)}^2)=1.
    $$
    \textbf{Induction:} Now, we proceed by induction on the ``height" $p$, using the fact that $\partial^2=0$. Indeed, suppose that $q>0$ and $p<0$ are integers and $gcd(q,p)=1$. Let $\alpha$ be the toric generator given by $\alpha=h_{(q,p)}h_{(q,-p)}$. Assume, for contradiction, that the result does not hold for $p$. Then, the differential of $\alpha$ is only the interior rounding by the Proposition \ref{prop: introunding} and Proposition \ref{prop: geometric--->combinatorial}, that is
    $$
    \partial(\alpha)=R_{int}(\alpha),
    $$
    which corresponds to performing rounding while losing a "h" in the vertex $(q,p)$. Therefore, we can express 
    $$
    \partial{\alpha}=\alpha_i+\sum_k \alpha_k+\alpha_f,
    $$ 
    where 
    \begin{itemize}
        \item $\alpha_i$ is the unique generator in $\partial \alpha$, whose first edge is labeled by ``h".
        \item $\alpha_f$ is the unique generator in $\partial \alpha$, whose last edge is labeled by ``h".
        \item $\alpha_k$ are generators in $\partial \alpha$ such that the edge labeled by ``h" is not the first or the last.
    \end{itemize}
    Applying the ECH differential again, we obtain the following result
    $$
    \partial^2(\alpha)=\partial(\alpha_i)+\sum_k \partial(\alpha_k)+\partial(\alpha_f).
    $$
    Follows from Corollary \ref{Cor: Diff1} that the differential of each $\alpha_k$ involves only a sum of toric generators with only elliptic orbits. In contrast, by induction the first term $\partial(\alpha_i)$ contains exactly one generator summand  involving both negative hyperbolic orbits $h_{(0,-1)}^1h_{(0,-1)}^2$, and the last term $\partial(\alpha_f)$ contains a generator summand involving both negative hyperbolic orbits $h_{(0,1)}^1h_{(0,1)}^2$. Therefore, $\partial^2(\alpha)\neq 0$, which leads to a contradiction. 
    
    By induction the result holds for any $p$, and the proof of existence is complete. Now by local energy inequality any $J$-holomorphic curve between $h_{p/q}$ and $C(h_{p/q})$, must lie in the region describe in the proposition. We conclude for $p>0$ using the equation \ref{bijmoduli}.
\end{proof}

By extremal edges of a $K$-lattice path $P$, we referee by the first or last edge of the $K$-lattice path $P$.

\begin{theorem}\label{thm; ECHDIFF1}
    Let $\alpha$ be a toric orbit set with $[\alpha]=0$ such that the extremal edges of $P_{\alpha}$, if positive hyperbolic, satisfies $p/q\leq -1$ or $p/q\geq 1$. Then
    $$
    \partial(\alpha)=\delta P_{\alpha}=R_{int}(\alpha)+C(\alpha).
    $$
\end{theorem}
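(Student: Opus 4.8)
The plan is to combine the two implications already established in the previous subsections with the new input from Proposition~\ref{Prop: CoperationJ}. By Proposition~\ref{prop: geometric--->combinatorial}, whenever $\langle\partial\alpha,\beta\rangle=1$ we get $\langle\delta P_\alpha,P_\beta\rangle=1$, so $\partial\alpha$ is always a sum of terms appearing in $\delta P_\alpha$. Thus it suffices to prove the reverse inclusion: every generator appearing in $\delta P_\alpha$ is hit by $\partial\alpha$ with coefficient $1$. By hypothesis the extremal edges of $P_\alpha$ are either elliptic or positive hyperbolic with slope in $(-\infty,-1]\cup[1,+\infty)$, so the $D$ operation does not occur (that requires the extremal arrows to be the $h^-$ half-arrows) and the $C$ operation, when it occurs, is of the ``rounding into two $h^-$ half-arrows'' type. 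Hence $\delta P_\alpha = R_{\mathrm{int}}(\alpha) + C(\alpha)$ combinatorially, and the content of the theorem is that both pieces are realized geometrically.

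First I would dispatch the interior rounding contribution. Every generator $P_\beta$ obtained from $P_\alpha$ by interior rounding at a corner is realized by $\partial\alpha$ with coefficient $1$: this is exactly Proposition~\ref{prop: introunding}, which uses the local energy inequality to confine the nontrivial component $C_1$ to the relevant slab $\mathbb{R}\times[\theta_{\min},\theta_{\max}]\times T^2$ and then invokes the Taubes uniqueness/count result for the curve between the two asymptotic ends. Second, I would handle the $C$ operation at an extremal positive hyperbolic edge with $|p/q|\geq 1$. Suppose the first edge of $P_\alpha$ is $h_{(q,p)}$ with $p/q\leq -1$; write $\alpha = h_{(q,p)}\cdot\alpha_0$ where $\alpha_0$ corresponds to the rest of $P_\alpha$. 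Proposition~\ref{Prop: CoperationJ} provides a $J$-holomorphic curve $C_1\in\mathcal{M}^J_1(h_{(q,p)},C(h_{(q,p)}))$ with $\#_2=1$, confined to $\mathbb{R}\times\{\theta\leq\arctan(p/q)\}\times T^2$. Since this region is disjoint from the part of $\mathbb{R}\times U^*(K)$ where the trivial cylinders over the remaining orbits of $\alpha_0$ live (those orbits all sit at $\theta>\arctan(p/q)$, as the edges of a convex $K$-lattice path are ordered by slope), the current $C_1\sqcup C_0$, with $C_0$ the trivial cylinders over $\alpha_0$, is an honest ECH-index-one current from $\alpha$ to $C(h_{(q,p)})\cdot\alpha_0 = C(\alpha)$ (the first-edge part); positivity of intersection is automatic because the two pieces are separated in $\theta$. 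The symmetric argument using the bijection \eqref{bijmoduli} handles the last-edge case with $p/q\geq 1$.

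Finally I would argue that these are all the contributions and the coefficients are correct. Given $\langle\partial\alpha,\beta\rangle=1$, Proposition~\ref{prop: geometric--->combinatorial} forces $P_\beta$ to be obtained from $P_\alpha$ by interior rounding or by the $C$ operation at an extremal edge (the $D$ operation is excluded by the slope hypothesis), so no extraneous terms appear. For the coefficients: the interior rounding terms have coefficient $1$ by Proposition~\ref{prop: introunding}; the $C$-operation term has coefficient $1$ because, by Proposition~\ref{Prop: CoperationJ}, $\#_2\mathcal{M}^J_1(h_{(q,p)},C(h_{(q,p)}))=1$ and the confinement to $\{\theta\leq\arctan(p/q)\}$ shows this curve cannot interact with the trivial cylinders, and conversely (again by the confinement plus the local energy inequality) any index-one current contributing to this coefficient must be of exactly this product form, so the count is genuinely $1$ and not merely $1\bmod 2$ of something larger. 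Assembling, $\partial\alpha = R_{\mathrm{int}}(\alpha)+C(\alpha) = \delta P_\alpha$.

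The main obstacle I anticipate is the bookkeeping in the last paragraph: one must be careful that when $C(\alpha)$ is a sum of \emph{two} generators (the $C$ operation applied both at the first and at the last edge), the corresponding currents are genuinely distinct and each is counted once, and that the ``separated in $\theta$'' argument really does rule out any index-one current in $\mathcal{M}^J_1(\alpha,\beta)$ other than the expected product ones — this relies essentially on the convexity of $K$-lattice paths (edges ordered by slope) together with Lemma~\ref{lemma: localenergy} and Proposition~\ref{prop: pathcantcross}, and on the ``no double rounding'' genericity of $J$ from Remark~\ref{remark: nodoublerounding} to exclude degenerate configurations with $h(\alpha')=3$ or similar.
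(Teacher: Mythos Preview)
Your proposal is correct and follows essentially the same line as the paper's proof: use Proposition~\ref{prop: geometric--->combinatorial} to restrict $\partial\alpha$ to terms of $R_{\mathrm{int}}(\alpha)+C(\alpha)$, invoke Proposition~\ref{prop: introunding} for the interior rounding coefficients, and invoke Proposition~\ref{Prop: CoperationJ} for the $C$-operation coefficients. The paper's version is much terser --- it simply writes $\partial\alpha=aR_{\mathrm{int}}(\alpha)+bC(\alpha)$ and cites the two propositions to force $a=b=1$ --- whereas you spell out the step the paper leaves implicit, namely that the mod~$2$ count $\#_2\mathcal{M}^J_1(h_{(q,p)},C(h_{(q,p)}))=1$ translates into $\langle\partial\alpha,C(\alpha)\rangle=1$ by taking the product with trivial cylinders over $\alpha_0$, using the $\theta$-confinement in Proposition~\ref{Prop: CoperationJ} to guarantee disjointness and to rule out any other index-one current contributing to this coefficient.
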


\begin{proof}
    By hypothesis, $P_{\alpha}$ does not start with two down semi-arrows together with a toric edge labeled by ``h" or end with a toric edge labeled by ``h" together two up semi-arrows. The proposition \ref{prop: geometric--->combinatorial} implies that 
    $$
    \partial \alpha=aR_{int}(\alpha)+bC(\alpha)
    $$
    with $a,b\in \{0,1\}$. By Proposition \ref{prop: introunding} we must have $a=1$. Now the Proposition \ref{Prop: CoperationJ} implies that $b=1$.
\end{proof}

\subsection{$D$ operation for $p/q\in \mathbb{Q}\setminus (-1,1)$}

We now establishe the $D$ operation in the ECH differential, when the first or last toric orbit of $P_{\alpha}$ is positive hyperbolic with slope in $p/q\in \mathbb{Q}\setminus (-1,1)$ and is adjacent two negative hyperbolic orbit. This configuration was described in the introduction (see Section \ref{descrip: differential}). 

Unlike the case of the $C$ operation, the proof here relies on the Weyl Law for the ECH spectrum (see Corollary \ref{asymptotic}), to establish the existence of certain pseudoholomorphic curves.

\begin{lemma}
    The condition $\#_2\mathcal{M}_1^J(h_{(0,-1)}^1h_{(0,-1)}^2h_{(1,-1)},e_{(0,-1)}^2)=1$ holds if and only if we have the $D$ operation in the ECH differential for $|p/q|\geq 1$.
\end{lemma}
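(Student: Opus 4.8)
The plan is to show the biconditional by analyzing, via $\partial^2=0$ and the Weyl law, the consequences of the base count $\#_2\mathcal{M}_1^J(h_{(0,-1)}^1h_{(0,-1)}^2h_{(1,-1)},e_{(0,-1)}^2)$ being $0$ or $1$, and matching these against what the $D$ operation predicts. Recall that $D$ is only potentially relevant for generators of Type II, III, IV whose first (or last) toric edge is labeled ``h'': combinatorially $D$ collapses the two adjacent $h^-$ half-arrows into the first point of the toric edge. The statement claims that the single geometric count above governs \emph{all} such $D$ contributions for extremal slopes $|p/q|\ge 1$. First I would note, by the $R$-symmetry bijection \eqref{bijmoduli}, that $\#_2\mathcal{M}_1^J(h_{(0,-1)}^1h_{(0,-1)}^2h_{(1,-1)},e_{(0,-1)}^2)=\#_2\mathcal{M}_1^J(h_{(0,1)}^1h_{(0,1)}^2h_{(1,1)},e_{(0,1)}^2)$, so the ``smallest'' case on each side is controlled by one number, call it $\nu\in\{0,1\}$.

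For the ``only if'' direction, I would suppose $\nu=1$ and run an induction on the height $|p|$ entirely parallel to the induction in Proposition~\ref{Prop: CoperationJ}. The base case is $\nu=1$ by hypothesis. For the inductive step, take the Type II (or IV) generator whose toric part is the single edge $h_{(q,p)}$ with the two $h^-$ half-arrows; by Proposition~\ref{prop: geometric--->combinatorial} its differential is a sum of interior roundings, a possible $C$-contribution (already established by Theorem~\ref{thm; ECHDIFF1}), and a possible $D$-contribution, the latter being the unique generator obtained by forgetting the $h^-$-vertex. Applying $\partial$ once more and using $\partial^2=0$, the ``$h$-labeled'' summand of $\partial^2$ that could only be cancelled by the $D$-term (after composing $D$ with the already-known interior-rounding/$C$-behavior from Corollary~\ref{Cor: Diff1} and Theorem~\ref{thm; ECHDIFF1}) must vanish, which forces the $D$-coefficient to be $1$ exactly when $\nu=1$; tracking the bookkeeping of which summand survives is where the Weyl law enters, because one must rule out the \emph{a priori} possibility that the length scales conspire so that $D$ is absent and some higher generator compensates — the asymptotic formula of Corollary~\ref{asymptotic} pins down the growth of the ECH spectrum and hence forbids this. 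Propagating up in $|p|$ gives $D$ for all extremal slopes with $|p/q|\ge1$.

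For the ``if'' direction I would argue contrapositively: assume $\nu=0$, i.e. $\#_2\mathcal{M}_1^J(h_{(0,-1)}^1h_{(0,-1)}^2h_{(1,-1)},e_{(0,-1)}^2)=0$. Then in particular the smallest candidate for a $D$-contribution — the differential of the Type II generator with toric part $h_{(1,-1)}$ — has no such term, so the $D$ operation fails to appear in the ECH differential in the case $|p/q|=1$, and a fortiori the ``if and only if'' equivalence with ``$D$ for all $|p/q|\ge1$'' forces that $D$ is absent. The content is really that the specific count and the general $D$-operation phenomenon stand or fall together; one direction is the induction above, and the other is the trivial observation that $|p/q|=1$ is a special case of $|p/q|\ge1$, so if $D$ holds in general it holds there, which by Proposition~\ref{prop: geometric--->combinatorial} and unraveling the ECH-index computation forces $\nu=1$.

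The main obstacle I anticipate is the inductive step of the ``only if'' direction: one must show that when $\nu=1$, the surviving ``hyperbolic'' summand of $\partial^2(\alpha)$ can \emph{only} be killed by the $D$-term and not by some combination of $C$-terms, interior roundings of intermediate generators, or a spurious curve — and crucially that $D$ cannot be ``accidentally zero'' with the obstruction cancelled elsewhere. This requires carefully separating the generators appearing in $\partial\alpha$ by whether their extremal edges carry ``h'' or ``$h^-$'' labels (as in the $\alpha_i,\alpha_k,\alpha_f$ decomposition of Proposition~\ref{Prop: CoperationJ}), invoking Corollary~\ref{Cor: Diff1} to compute $\partial$ on the elliptic-ended pieces, Theorem~\ref{thm; ECHDIFF1} for the $C$-behavior, and finally the Weyl law to exclude the degenerate scenario. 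Getting the signs/parities and the uniqueness of the offending summand exactly right is the delicate part; everything else is bookkeeping with the combinatorial index formula \eqref{eq: combECHindex} and Proposition~\ref{prop: pathcantcross}.
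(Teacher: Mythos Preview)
Your proposal has two genuine gaps.

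First, the inductive generator you choose does not work. The Type~III generator $h_{(0,-1)}^1h_{(0,-1)}^2h_{(q,p)}$ with a single toric edge is, for $p/q\le -1$, not null-homologous: the Type~III constraints force $p=1$, so it is not even an ECH generator in the class $0$ for the slopes in question. Even if one padded it with elliptic orbits to fix the homology, the resulting $K$-lattice path has no interior vertex at which $R_{\mathrm{int}}$ applies (rounding is forbidden at the vertex adjacent to the $h^-$ half-arrows), and $C$ does not act at that end. So $\partial$ of your generator is either $0$ or just the putative $D$-term, and $\partial^2=0$ yields no information at all. The paper's induction instead applies $\partial^2=0$ to an auxiliary generator containing a \emph{second} hyperbolic toric edge, e.g.\ $\alpha=h_{(0,-1)}^1h_{(0,-1)}^2h_{(q,p)}h_{(1,1)}e_{(0,1)}^{-p}$ (for $q$ even; replace $h_{(1,1)}$ by $h_{(1,2)}$ for $q$ odd). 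Interior rounding at the vertex between the two $h$-edges produces a term $h_{(0,-1)}^1h_{(0,-1)}^2h_{(q',p')}\cdots$ with $|p'|<|p|$, to which the induction hypothesis applies; this gives, in $\partial^2\alpha$, a \emph{toric} summand (no $h^-$ factors) that cannot be cancelled by any other term, since under the assumed failure of $D$ at height $|p|$ every other summand of $\partial^2\alpha$ still carries $h_{(0,-1)}^1h_{(0,-1)}^2$. That is the actual mechanism, and your outline does not supply it.

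Second, the Weyl law plays no role in this lemma. The argument is purely algebraic via $\partial^2=0$ once the base case is assumed. In the paper the Weyl law (Corollary~\ref{asymptotic}) is invoked in the \emph{subsequent} proposition to establish that the base count $\#_2\mathcal{M}_1^J(h_{(0,-1)}^1h_{(0,-1)}^2h_{(1,-1)},e_{(0,-1)}^2)$ is actually $1$; conflating these two steps obscures where the analytic input really enters.

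Your treatment of the $(\Leftarrow)$ direction is fine: it is indeed the trivial observation that $|p/q|=1$ is a special case.
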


\begin{proof}
    ($\Rightarrow$) Suppose that $\#_2\mathcal{M}_1^J(h_{(0,-1)}^1h_{(0,-1)}^2h_{(1,-1)},e_{(0,-1)}^2)=1$. We prove by induction that on $-p$ (for $p<0$) that $\#_2 \mathcal{M}_1^J(h_{-1/0}^1 h_{-1/0}^2 h_{p/q}, D(h_{p/q})) = 1$.

    \textbf{Base case:} In the case $p=-1$, the claim holds by assumption.

     \textbf{Induction step:} Assume the result holds for $-1,...,p+1$ and prove to $p$ (recall $p$ is negative).
    Suppose by contradiction that it fails for $p$. We derive a contradiction using $\partial^2=0$. 
    
    If $q>0$ is even, consider the generator
    $$
    \alpha=h_{(0,-1)}^1h_{(0,-1)}^2h_{(q,p)}h_{(1,1)}e_{(0,1)}^{-p},
    $$
    which satisfies $[\alpha]=0$ in $H_1(U^*(K),\mathbb{Z})$. By induction hypothesis, we compute
    $$
    \partial(\alpha)=h_{(0,-1)}^1h_{(0,-1)}^2h_{(q',p')}\alpha_0e_{(0,1)}^{-p}+\sum_{i}h_{(0,-1)}^1h_{(0,-1)}^2\alpha_ie_{(0,1)}^{-p}+h_{(0,-1)}^1h_{(0,-1)}^2h_{(q,p)}e_{(1,2)}e_{(0,1)}^{-p-1},
    $$
    where the orbit sets $\alpha_0$ is an orbit set with only elliptic orbits, while $\alpha_i$ has a unique positive hyperbolic orbit which is not the first (in the order given by tangency). The first generator is the unique generator obtained by doing rounding in the vertice $(q,p-1)$ so that the hyperbolic orbit is the first in the order given by tangency $p'/q'$, the sum are only orbit sets obtained by doing rounding and such that the unique new hyperbolic orbit is not the first in the order of tangency, the last generator is obtained by doing rounding in the vertice $(q+1,p+1)$.

    In that case, we compute the second ECH differential of $\alpha$ and obtain
    $$
    0=\partial^2(\alpha)=D(h_{(q',p')})\alpha_0e_{(0,1)}^{-p+1}+\sum_i\partial(h_{(0,-1)}^1h_{(0,-1)}^2\alpha_ie_{(0,1)}^{-p+1}),
    $$
    but observe that the first summand is a toric generator, that is, there is no negative hyperbolic orbit, but the second summand has always the two negative hyperbolic orbits $h_{(0,-1)}^1h_{(0,-1)}^2$ from the Thereom \ref{thm; ECHDIFF1}, also by our contradiction assumption, the third summand is obtained only by doing rounding in the vertice $(1,p-1)$, that is, is a summand of orbit sets which always have the two negative hyperbolic orbits $h_{-1/0}^1h_{-1/0}^2$, but then a toric generator is a combination of generators with negative hyperbolic orbits, a contradiction. Now if $q$ is odd just take the generator $\alpha=h_{-1/0}^1h_{-1/0}^2h_{p/q}h_{1/2}e_{1/0}^{-p-1}$, with a similar argument we derive the same contradiction.
    Therefore, the result must holds to $p$ and this prove that we have $D$ operation to $p/q\leq -1$. The same argument applies to $p/q\geq 1$ or we just apply the symmetry condition \ref{bijmoduli}.
\end{proof}

\begin{lemma}\label{lemma: Doplema}
    Let $\alpha$ be an ECH generator such that $\langle \partial \alpha,e_{(0,-1)}^ne_{(0,1)}^n\rangle=1$ with $n\geq 2$. Then 
    $$
    \alpha=h_{(0,-1)}^1h_{(0,-1)}^2h_{(1,-n+1)}e_{(0,1)}^n\quad \textrm{or}\quad \alpha=e_{(0,-1)}^nh_{(1,n-1)}h_{(0,1)}^1h_{(0,1)}^2.
    $$
\end{lemma}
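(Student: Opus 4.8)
The plan is to transfer the statement to the combinatorial model with Proposition~\ref{prop: geometric--->combinatorial} and then to exploit the extreme rigidity of the $K$-lattice path $P_{\beta}$ attached to $\beta:=e_{(0,-1)}^{n}e_{(0,1)}^{n}$: it is the ``needle'' that descends vertically from $(0,0)$ to $(0,-n)$ and returns vertically to $(0,0)$, so that $x(P_{\beta})=0$, all of its edges are vertical and labeled $e$, and it carries no $h^{-}$ half-arrows. I would begin with the grading bookkeeping: iterating Proposition~\ref{U-map-ee} gives $U^{n}[\beta]=[\emptyset]$, hence $I(\beta)=2n$ and $I(\alpha)=2n+1$, so by the parity clause of Proposition~\ref{prop: ECHPRO} the generator $\alpha$ has an odd number of positive hyperbolic orbits, i.e.\ $h(P_{\alpha})\ge 1$. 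From $\langle\partial\alpha,\beta\rangle=1$ and Proposition~\ref{prop: geometric--->combinatorial} we get $\langle\delta P_{\alpha},P_{\beta}\rangle=1$, so $P_{\beta}$ arises from $P_{\alpha}$ by interior rounding, by the $C$ operation, or by the $D$ operation.

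Next I would discard the first two possibilities. Interior rounding a corner changes the number of $h$-labeled edges by $0$, or by $-1$ and only at a corner flanked by two $h$-edges; since $h(P_{\alpha})$ is odd it equals $1$ (no corner has two $h$-edges, so the rounded path still has one) or is $\ge 3$ (the rounded path still has $\ge 2$), and either way the output keeps an $h$-edge, contradicting $h(P_{\beta})=0$. For the $C$ operation: its first form inserts $h^{-}$ half-arrows, and if $P_{\alpha}$ already is of Type II or III then $C$ acts on the toric $h$-edge at the opposite end and leaves the existing $h^{-}$ half-arrows in place; in both cases $C(P_{\alpha})$ still carries an $h^{-}$ half-arrow. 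The only remaining case is $P_{\alpha}$ of Type I with $C$ acting in its second form on an extremal $h$-edge $h_{(q,p)}$ of slope in $(-1,0]$ (symmetrically $[0,1)$ on the other side), which it replaces by a convex chain of $e$-edges running from an $x$-axis lattice point to the lowest vertex of $P_{\alpha}$; since that vertex is the far endpoint $(q,p)$ of the old $h$-edge, the chain is vertical only in the degenerate situation $h_{(q,p)}=h_{(2,-1)}$, which forces $\alpha=h_{(2,-1)}e_{(0,1)}$ and $n=1$ (and the borderline subcase $h_{(1,0)}$ is impossible, since then $\alpha$ contains no $e_{(0,-1)}$ and neither does $C(P_{\alpha})$). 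As $n\ge 2$, this rules $C$ out, so $P_{\beta}=D(P_{\alpha})$.

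I would then analyse the $D$ operation, which is defined only on Type II, III or IV paths whose first (resp.\ last) toric edge carries the label $h$ and which absorbs the two neighbouring $h^{-}$ half-arrows into that edge while relabeling the created edges by $e$. Type IV is excluded at once: $D$ removes only one of its two $h^{-}$ pairs, whereas $P_{\beta}$ has none. If $P_{\alpha}$ is of Type II, the requirement that its first toric edge be $h$ forces $\alpha$ to have no $e_{(0,-1)}$ (such an edge, having slope $-\infty$, would itself be the first toric edge), so $\alpha=h_{(0,-1)}^{1}h_{(0,-1)}^{2}h_{(q,p)}e_{(0,1)}^{d}$; then $D$ replaces the block $h_{(0,-1)}^{1}h_{(0,-1)}^{2}h_{(q,p)}$ by a convex $e$-labeled path from $(1,0)$ to $(q,p-1)$, and for this to coincide with the initial vertical segment $e_{(0,-1)}^{n}$ of the needle it must be vertical, so $q=1$; then $[\alpha]=0$ gives $d=1-p$, and comparing with the needle gives $1-p=n$, i.e.\ $p=-n+1$ and $d=n$, so $\alpha=h_{(0,-1)}^{1}h_{(0,-1)}^{2}h_{(1,-n+1)}e_{(0,1)}^{n}$. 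The Type III case is obtained from this by the $R$-symmetry bijection~\ref{bijmoduli} (which exchanges $K_{\pm}$, fixes $\beta$ and sends $h_{(q,p)}$ to $h_{(q,-p)}$), and gives $\alpha=e_{(0,-1)}^{n}h_{(1,n-1)}h_{(0,1)}^{1}h_{(0,1)}^{2}$, completing the proof.

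The step I expect to be the main obstacle is the exclusion of the second form of the $C$ operation: one must verify that for no admissible generator $\alpha$ with $I(\alpha)=2n+1$, $n\ge 2$, does rounding an extremal $h$-edge of slope in $(-1,1)$ produce the needle. The combinatorial version sketched above keeps track of the lowest vertex of $P_{\alpha}$ and of the convexity constraints on $K$-lattice paths; an alternative, curve-theoretic route uses the local energy inequality (Lemma~\ref{lemma: localenergy}) together with the partition conditions to prevent the underlying genus-one, ECH-index-one curve from sharing an asymptotic end $e_{(0,\pm1)}$ with a trivial cylinder. Once $R_{\textrm{int}}$ and $C$ have been disposed of, the $D$-operation analysis is a short combinatorial computation.
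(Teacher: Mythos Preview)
Your argument is correct but takes a genuinely different route from the paper. The paper proves this lemma \emph{before} establishing Proposition~\ref{prop: geometric--->combinatorial}, and accordingly works directly with the $J$-holomorphic current $C=C_0\cup C_1$: it splits into cases according to the number $n_\alpha$ of negative hyperbolic orbits in $\alpha$, computes the Fredholm index of $C_1$ via equation~\eqref{eq: fredind}, uses the slice class and the local energy inequality (Lemma~\ref{lemma: localenergy}) to force $j=0$ for the negative end $e_{(0,-1)}^i e_{(0,1)}^j$, and finishes with a direct ECH-index computation to pin down $(q,p)$. You instead invoke Proposition~\ref{prop: geometric--->combinatorial} up front, transferring the problem to the combinatorial differential $\delta$, and then argue purely at the level of $K$-lattice paths that neither $R_{\mathrm{int}}$ nor $C$ can produce the needle $P_\beta$ for $n\ge 2$, leaving only the $D$ operation. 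This is legitimate since the proof of Proposition~\ref{prop: geometric--->combinatorial} does not rely on Lemma~\ref{lemma: Doplema} (it uses only the Fredholm formula, Lemma~\ref{lem: oneorbit}, Proposition~\ref{prop: pathcantcross}, Remark~\ref{remark: nodoublerounding}, and Taubes' existence results), so there is no circularity---though it does require reordering the exposition. Your combinatorial exclusion of the second form of $C$ (the case you flag as the main obstacle) is sound: for Type~I with a single extremal $h$-edge of slope in $(-1,0]$, the convex-hull operation forces $x(P_\alpha)=2$ for the output to be vertical, and then the only possibility is $h_{(2,-1)}e_{(0,1)}$ with $n=1$. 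What your approach buys is a cleaner and more uniform case analysis, at the cost of depending on a heavier result; the paper's approach is self-contained at this point in the exposition but has to redo pieces of the curve-theoretic analysis that Proposition~\ref{prop: geometric--->combinatorial} later packages.
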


\begin{proof}
    Suppose that $n_{\alpha}=0$. Let $C=C_0\cup C_1$ be an index one $J$-holomorphic current between $\alpha$ and $e_{(0,-1)}^ne_{(0,1)}^n$. Let $\alpha'$ be the orbit set which is the positive asymptotic of $C_1$. The Fredholm index is
    $$
    1=ind(C_1)=2(g(C_1)+e(\alpha')-1)+h(\alpha').
    $$
    If $g(C_1)=1$, then $e(\alpha')=0$ and $h(\alpha')=1$. Let $\beta=e_{(0,-1)}^ie_{(0,1)}^j$ be the orbit set which is the negative asymptotic of $C_1$.

    \textbf{Claim.} $j=0$.

    In fact, if $0<\theta_0<\pi/2$, write $[C_{\theta}]=(r,s)$ the slice class of $C_1$. We can compute the slice class and obtain
    \begin{equation}\label{eq: nat}
    s=-j,
    \end{equation}
    now by the Lemma \ref{lemma: localenergy}, we have $s\cos(\theta_0)-r\sin(\theta_0)\geq 0$, because $\theta_0$ is arbitrary, we obtain $-j\geq 0$, but $j\geq 0$, therefore, $j=0$.

    \textbf{Claim.} $i=n$.

    In this case, we write $\alpha=h_{(q,p)}e_{(0,1)}^n$, but then
    $$
    I(\alpha)=qn+n=n(q+1).
    $$
    However, $2n+1=I(\alpha)=n(q+1)$, this implies that $q\leq 2$, but if $q=2$, then $n=1$, and if $q\in\{0,1\}$ we obtain contradictions. 
    
    If $n_{\alpha}=2$, let $C=C_0\cup C_1$ be an index one $J$-holomorphic current between $\alpha$ and $e_{(0,-1)}^ne_{(0,1)}^n$. Let $\alpha'$ be the orbit set which is the positive asymptotic of $C_2$. By the Fredholm index formula we have
    $$
    1=ind(C_1)=2(g(C_1)+e(\alpha')-1)+2+h(\alpha').
    $$
    Therefore, $g(C_1)=e(\alpha')=0$, and $h(\alpha')=1$. This implies that $\alpha'=h_{(0,-1)}^1h_{(0,-1)}^2 h_{(q,p)}$ or $\alpha'=h_{(q,p)}h_{(0,1)}^1h_{(0,1)}^2$.

    Let $\beta=e_{(0,-1)}^ie_{(0,1)}^j$, the orbit set which is the negative asymptotic of $C_1$. Without loss of generality, suppose that $\alpha'=h_{(0,-1)}^1h_{(0,-1)}^2 h_{(q,p)}$, with $p/q\leq0$.

    \textbf{Claim.} $j=0$.

    In fact, if $0<\theta_0<\pi/2$, write $[C_{\theta}]=(r,s)$ the slice class of $C_1$. We can compute the slice class and obtain
    \begin{equation}\label{eq: nat}
    s=-j,
    \end{equation}
    now by the Lemma \ref{lemma: localenergy}, we have $s\cos(\theta_0)-r\sin(\theta_0)\geq 0$, because $\theta_0$ is arbitrary, we obtain $-j\geq 0$, but $j\geq 0$, therefore, $j=0$.

    \textbf{Claim.} $i=n$.

    Write $\alpha=h_{(0,-1)}^1 h_{(0,-1)}^2 e_{(0,-1)}^{n-i}h_{(q,p)}e_{(0,1)}^n$, and the trivial part of the $J$-holomorphic current $C_0$ is the trivial cylinder in $e_{-1/0}$ with multiplicities $n-i$. We can compute explicitly the ECH index of $\alpha$ and obtain
    $$
    I(\alpha)=2(q(n-i+1)+\frac{q(i-1)}{2})+2n-i.
    $$
    Because $I(\alpha)=2n+1$, we obtain
    $$
    i+1\geq q(2n-i+1)\geq 2n-i+1\geq n+1,
    $$
    thus, $i=n$, $q=1$ and $p=1-n$.

    Follows that $C_0$ is the trivial cylinder in $e_{-1/0}$ with multiplicity $n$ and $C_1$ is an embedded $J$-holomorphic curve in $\mathcal{M}_1^J(h_{(0,-1)}^1h_{(0,-1)}^2h_{(1,1-n)},e_{(0,1)}^n)$, this finish the proof of the lemma.
\end{proof}

\begin{proposition}
    The equality $\#_2\mathcal{M}_1^J(h_{(0,-1)}^1h_{(0,-1)}^2h_{(1,-1)},e_{(0,-1)}^2)=1$ holds.
\end{proposition}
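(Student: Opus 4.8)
The statement asserts the single count $\#_2\mathcal{M}_1^J(h_{(0,-1)}^1h_{(0,-1)}^2h_{(1,-1)},e_{(0,-1)}^2)=1$, which is the ``base case'' feeding the inductive $D$-operation lemma. The strategy is to pin this count down using the only two tools that give us \emph{equalities} rather than inequalities: the Weyl law for the ECH spectrum (Corollary \ref{asymptotic}) together with the already-known structure of the homology (Theorem \ref{Homologythem}), the $U$-map computation (Proposition \ref{U-map-ee}, Corollary \ref{U-map-Isomorphism}), and the previously established pieces of the differential ($C$ operation and interior rounding, Theorem \ref{thm; ECHDIFF1}). Concretely, I would argue by contradiction: suppose $\#_2\mathcal{M}_1^J(h_{(0,-1)}^1h_{(0,-1)}^2h_{(1,-1)},e_{(0,-1)}^2)=0$.

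\medskip\noindent\textbf{Step 1: Identify all index-one curves into $e_{(0,-1)}^ne_{(0,1)}^n$.} By Lemma \ref{lemma: Doplema}, any ECH generator $\alpha$ with $\langle\partial\alpha,e_{(0,-1)}^ne_{(0,1)}^n\rangle=1$ (for $n\geq 2$) must be $h_{(0,-1)}^1h_{(0,-1)}^2h_{(1,-n+1)}e_{(0,1)}^n$ or its mirror $e_{(0,-1)}^nh_{(1,n-1)}h_{(0,1)}^1h_{(0,1)}^2$. Moreover the relevant moduli space for either one reduces, after peeling off trivial cylinders in $e_{(0,\mp1)}$, to $\mathcal{M}_1^J(h_{(0,-1)}^1h_{(0,-1)}^2h_{(1,-n+1)},e_{(0,1)}^n)$, and by the $R$-symmetry bijection \eqref{bijmoduli} the two counts agree. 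So the vanishing assumption for $n=2$, combined with the earlier induction on $-p$ in the preceding lemma, forces $\#_2\mathcal{M}_1^J(h_{(0,-1)}^1h_{(0,-1)}^2h_{(1,-n+1)},e_{(0,1)}^n)=0$ for \emph{all} $n\geq 2$; that is, the $D$ operation never occurs.

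\medskip\noindent\textbf{Step 2: Compute the filtered homology without the $D$ operation, and compare to the Weyl law.} If the $D$ operation is absent, then $\delta=C+R_{\mathrm{int}}$ on the whole combinatorial complex (using Theorem \ref{thm; ECHDIFF1} and Corollary \ref{Cor: Diff1} to cover the remaining cases, i.e. extremal edges elliptic or with $|p/q|\geq 1$; the troublesome $|p/q|<1$ hyperbolic-extremal cases are handled by the $C$ operation's ``convex hull'' variant, which is already geometric). The generators $e_{(0,-1)}^ke_{(0,1)}^k$ have index $2k$ and, by Proposition \ref{U-map-ee}, $U$ maps each onto the previous one, so each survives to a nonzero class in $ECH_{2k}$. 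The point is now to track \emph{which} generator realizes $c_k=c_{\sigma_k}$ with $\sigma_k$ the generator of $ECH_{2k}\simeq\mathbb{Z}_2$: without the $D$ operation, I claim the minimal-action index-$2k$ cycle representing $[\emptyset]$ under $U^k$ grows like $\sqrt{k}$ with the \emph{wrong} constant, i.e. $\lim c_k^2/k \neq 2\operatorname{vol}(U^*(K),\lambda_{st})$, contradicting Corollary \ref{asymptotic}. The volume $\int_{U^*(K)}\lambda\wedge d\lambda$ is computed directly from the explicit $\lambda_{can}=\cos\theta\,dx+\sin\theta\,dy$ on the ``almost toric'' piece (it is $2\pi^2$ after accounting for the Klein bottle identification), while the combinatorial side computes $\min\{A(P)\mid |P|=2k\}$ over $K$-lattice paths; with the $D$ operation present, paths of Type II/III/IV become available and lower the minimal action to match the Weyl asymptotics, whereas without it one is stuck with Type I paths only and the asymptotic constant is off. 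Deriving this mismatch quantitatively is the crux.

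\medskip\noindent\textbf{Step 3: Conclude.} The contradiction in Step 2 forces $\#_2\mathcal{M}_1^J(h_{(0,-1)}^1h_{(0,-1)}^2h_{(1,-1)},e_{(0,-1)}^2)$ to be odd; since by Lemma \ref{lemma: Doplema} this is the \emph{only} contribution and the moduli space is cut out transversely (Proposition \ref{Prop: curv0}/\ref{Prop: stabcurve}, as $h_{(0,-1)}^1h_{(0,-1)}^2h_{(1,-1)}$ has two positive hyperbolic ends coming from even covers of negative hyperbolic orbits and $e_{(0,-1)}^2$ is elliptic) and compact, the mod-$2$ count is exactly $1$. I expect \textbf{Step 2 to be the main obstacle}: one must show cleanly that, among all $K$-lattice paths of index $2k$ with $U^k[\,\cdot\,]=[\emptyset]$, suppressing Types II--IV genuinely changes the $k\to\infty$ growth rate of the minimal action — this requires a careful asymptotic analysis of $\min A(P)$ in both regimes and matching the toric-part optimum against the exact volume $\pi^2$ (half of $2\pi^2$, since $D^*K$ is ``half'' of $D^*T^2$). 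An alternative to Step 2, if the asymptotic bookkeeping proves delicate, is to instead use $\partial^2=0$ directly on a cleverly chosen low-index generator (as in the preceding $C$-operation argument) to force the base-case count, but the Weyl-law route is the one flagged by the authors in the outline.
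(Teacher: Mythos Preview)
Your overall strategy --- assume the count vanishes, deduce the $D$ operation never occurs, and contradict the Weyl law --- matches the paper, and Step 1 is right. But Step 2 misidentifies the mechanism and, as you describe it, would not yield a contradiction.

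You expect that killing $D$ changes $c_k$ from one $C\sqrt{k}$ asymptotic to another, requiring a delicate volume comparison. The paper's argument is much sharper and needs no asymptotic analysis at all: by Lemma~\ref{lemma: Doplema} combined with your Step 1, the generator $e_{(0,-1)}^n e_{(0,1)}^n$ has zero coefficient in $\partial\beta$ for \emph{every} $\beta$. Hence any cycle homologous to it must contain it as a summand, so lives in action filtration $\ge A(e_{(0,-1)}^n e_{(0,1)}^n)=2n$. Since this generator is itself a cycle with $U^n$ of its class equal to $[\emptyset]$ (Proposition~\ref{U-map-ee}), one obtains $c_n=2n$ exactly, whence $c_n^2/n=4n\to\infty$, an immediate violation of Corollary~\ref{asymptotic}.

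Your picture --- ``without $D$ one is stuck with Type I paths only'' --- is a misconception: the $D$ operation does not control which $K$-lattice paths exist as generators (all four types are always present); it controls which cycles are homologous. Removing $D$ does not shrink the domain of $\min_P A(P)$; it isolates the homology class $[e_{(0,-1)}^n e_{(0,1)}^n]$ from every low-action cycle. The minimum you propose to analyze would still scale like $\sqrt{k}$ in every regime and would detect nothing. Note also that at this point in the paper the $C$ operation for $|p/q|<1$ has not yet been established (that comes afterward), so your premise that $\delta=C+R_{\mathrm{int}}$ is already known on the whole complex is circular; the paper's argument sidesteps this by never needing the full differential --- only the single fact that nothing hits $e_{(0,-1)}^n e_{(0,1)}^n$.
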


\begin{proof}
    Suppose by contradiction that not, then the generator $e_{(0,-1)}^ne_{(0,1)}^n$, which has ECH grading equal to $2n$, is the unique ECH generator whose class in ECH homology is nonzero, this follows from \ref{U-map-ee}. Now by Theorem \ref{Homologythem}, the homology in nonnegative gradings is $\mathbb{Z}_2$, and so if this equality does not holds follows from \ref{lemma: Doplema} that the generator $e_{(0,-1)}^ne_{(0,1)}^n$ do not satisfies $\langle \partial\beta,e_{(0,-1)}^ne_{(0,1)}^n\rangle=1$ for any $\beta$, but we know that $e_{(0,-1)}^ne_{(0,1)}^n$ is a cycle. In this case, the ECH spectrum is given by the action of $e_{(0,-1)}^ne_{(0,1)}^n$, that is
    $$
    c_{n}(U^*(K),\lambda_{st})=2n,
    $$
    but in this case we have, 
    $$
    \lim_{n\rightarrow +\infty}c_{n}(U^*(K),\lambda_{st})^2/n=\lim_{n\rightarrow +\infty}4n^2/n=+\infty,
    $$
    this contradicts the Corollary \ref{asymptotic}.
\end{proof}

\begin{corollary}\label{Cor: D-oper1}
    The $D$ operation is on the ECH differential if $\alpha$ is an ECH generator such that 
    \begin{itemize}
        \item $P_{\alpha}$ starts with $P_{h_{(0,-1)}^1h_{(0,-1)}^2h_{p/q}}$, with $p/q\leq -1$.
        \item $P_{\alpha}$ ends with $P_{h_{p/q}h_{(0,1)}^1h_{(0,1)}^2}$, with $p/q\geq 1$.
    \end{itemize}
\end{corollary}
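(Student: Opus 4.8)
\emph{Proof proposal.} The plan is to combine the Proposition just proved with the preceding Lemma and then run a localization argument. From that Proposition we have $\#_2\mathcal{M}_1^J(h_{(0,-1)}^1h_{(0,-1)}^2h_{(1,-1)},e_{(0,-1)}^2)=1$, so by the Lemma the $D$ operation occurs in the ECH differential for every positive hyperbolic orbit $h_{(q,p)}$ with $|p/q|\geq 1$; concretely, the induction carried out in the proof of that Lemma gives
$$
\#_2\mathcal{M}_1^J(h_{(0,-1)}^1h_{(0,-1)}^2h_{(q,p)},D(h_{(q,p)}))=1 \qquad\text{for all }p/q\leq -1,
$$
together with the symmetric statement $\#_2\mathcal{M}_1^J(h_{(q,p)}h_{(0,1)}^1h_{(0,1)}^2,D(h_{(q,p)}))=1$ for $p/q\geq 1$, the latter being deduced from the former via the $R$-symmetry bijection \ref{bijmoduli}. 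What remains is to pass from a single positive hyperbolic orbit flanked by the two negative hyperbolic ones to an arbitrary ECH generator whose $K$-lattice path begins or ends with this block.

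So I would argue as follows. Suppose $P_\alpha$ starts with $P_{h_{(0,-1)}^1h_{(0,-1)}^2h_{p/q}}$ with $p/q\leq -1$, and write $\alpha=h_{(0,-1)}^1h_{(0,-1)}^2h_{(q,p)}\cdot\alpha_1$, where $\alpha_1$ collects the remaining orbits; let $\beta$ be the generator with $P_\beta=D(P_\alpha)$, that is $\beta=D(h_{(0,-1)}^1h_{(0,-1)}^2h_{(q,p)})\cdot\alpha_1$. Let $C=C_0\sqcup C_1$ be an ECH index one current contributing to $\langle\partial\alpha,\beta\rangle$, with $C_1$ the embedded Fredholm index one component and $C_0$ a union of trivial cylinders. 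Since $P_\alpha$ and $P_\beta$ agree away from the initial block, the ends of $C_1$ lie among the orbits where $\alpha$ and $\beta$ differ; were $C_1$ to meet a trivial cylinder over an orbit of $\alpha_1$, positivity of intersection would force $1=I(C)\geq I(C_1)+2\#(C_0\cap C_1)\geq 3$, a contradiction, exactly as in Claim~1 of Proposition~\ref{prop: geometric--->combinatorial} and in Proposition~\ref{Prop: CoperationJ}. Moreover the slice-class formula \ref{eq: sliceclasscomb} together with the local energy inequality (Lemma~\ref{lemma: localenergy}) confines $C_1$ to $\mathbb{R}\times\{\theta\leq\arctan(p/q)\}\times T^2$, as in Proposition~\ref{Prop: CoperationJ}, so $C_0$ is precisely the collection of trivial cylinders over the orbits of $\alpha_1$. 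Cancelling this common part and using additivity of the ECH index (Proposition~\ref{prop: ECHPRO}) reduces the count to $\langle\partial\alpha,\beta\rangle=\#_2\mathcal{M}_1^J(h_{(0,-1)}^1h_{(0,-1)}^2h_{(q,p)},D(h_{(q,p)}))=1$. The case where $P_\alpha$ ends with $P_{h_{p/q}h_{(0,1)}^1h_{(0,1)}^2}$, $p/q\geq 1$, is entirely symmetric via \ref{bijmoduli}.

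The step I expect to be the main obstacle is the bookkeeping inside this localization: one must check that the positive ends of $C_1$ are exactly $\{h_{(0,-1)}^1,h_{(0,-1)}^2,h_{(q,p)}\}$ and its negative ends exactly the orbits of $D(h_{(q,p)})$, with nothing spilling over into $\alpha_1$ and with neither negative hyperbolic orbit appearing alone. The impossibility of a lone negative hyperbolic end is homological (the classes of $h_{(0,-1)}^1$ and $h_{(0,-1)}^2$ differ by a $\mathbb{Z}_2$ class, as in the discussion of types of orbit sets), and the absence of spillover is forced by combining positivity of intersection with the trivial cylinders of $C_0$ and the confinement provided by Proposition~\ref{prop: pathcantcross} and Lemma~\ref{lemma: localenergy}; once these are in hand the reduction to the already established single-orbit count is purely formal.
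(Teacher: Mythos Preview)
Your proof is correct. The paper states the Corollary without any proof, treating it as the direct combination of the preceding Lemma (``base case implies $D$ operation for $|p/q|\geq 1$'') and the Proposition (``base case holds''). What you have written makes explicit the localization step that the paper leaves implicit, namely the passage from the single-orbit count $\#_2\mathcal{M}_1^J(h_{(0,-1)}^1h_{(0,-1)}^2h_{(q,p)},D(h_{(q,p)}))=1$ to the coefficient $\langle\partial\alpha,D(\alpha)\rangle=1$ for an arbitrary $\alpha$ whose path begins with this block. Your confinement argument via the slice-class formula and local energy is the exact analogue of the paper's treatment of the $C$ operation in Proposition~\ref{Prop: CoperationJ} and Theorem~\ref{thm; ECHDIFF1}, so your route is the natural one and entirely in the spirit of the paper.

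One small logical point worth tightening: the positivity-of-intersection inequality $1=I(C)\geq I(C_1)+2\#(C_0\cap C_1)$ shows only that \emph{if} $C=C_0\cup C_1$ already has ECH index one then $C_1\cap C_0=\emptyset$; it does not, by itself, show that \emph{every} curve in $\mathcal{M}_1^J(\alpha',\beta')$ misses the trivial cylinders over $\alpha_1$. That direction is what your confinement claim $C_1\subset\{\theta\leq\arctan(p/q)\}$ provides, and it is what actually identifies the two counts. So the confinement argument is doing the essential work; the positivity remark is redundant here (though harmless).
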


\subsection{C and D operations with $|p/q|<1$}

To finish the description of the ECH differential, remains to show the $C$ and $D$ operations when:
\begin{itemize}
    \item $P_{\alpha}$ starts with $P_{h_{(0,-1)}^1h_{(0,-1)}^2h_{(q,p)}}$ with $p/q>-1$.
    \item $P_{\alpha}$ end with $P_{h_{(q,p)}h_{(0,1)}^1h_{(0,1)}^2}$, with $p/q<1$.
    \item $P_{\alpha}$ starts with $P_{h_{(q,p)}}$ with $p/q\in (-1,0)$.
    \item $P_{\alpha}$ ends with $P_{h_{(q,p)}}$ with $p/q\in (0,1)$.
\end{itemize}

Turns out that all information to end the description of the ECH differential follows from the information that we already have.

\begin{proposition}
    The $D$ operation for $p/q\leq -1$ implies the $D$ operation for $p/q>-1$.
\end{proposition}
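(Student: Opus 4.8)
The plan is to reduce the statement to a single model existence result near the relevant end of the $K$-lattice path, and then establish that model result by a $\partial^2=0$ bootstrap that uses, as its only geometric input, the cases of the ECH differential already proved: Proposition~\ref{prop: introunding}, Corollary~\ref{Cor: Diff1}, Theorem~\ref{thm; ECHDIFF1} and Corollary~\ref{Cor: D-oper1}.

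\textbf{First reduction.} Let $\alpha$ be an ECH generator whose $K$-lattice path $P_{\alpha}$ starts with $P_{h^1_{(0,-1)}h^2_{(0,-1)}h_{(q,p)}}$ with $p/q\in(-1,0]$ (the case of the last edge with $p/q\in[0,1)$ is identical after applying the $R$-symmetry bijection \eqref{bijmoduli}). Since $\alpha$ is of Type II, there is no $C$ contribution at the first edge, so by Proposition~\ref{prop: geometric--->combinatorial} we already have $\partial\alpha=R_{\mathrm{int}}(\alpha)+b\,D(\alpha)$ with $b\in\{0,1\}$, where the interior rounding term is present by Proposition~\ref{prop: introunding}; we must show $b=1$. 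If $\langle\partial\alpha',\beta'\rangle=1$ is realized by a current $C_0\cup C_1$ with $C_1$ the nontrivial Fredholm index one component, then, exactly as in the proof of Theorem~\ref{thm; ECHDIFF1}, positivity of intersection with the trivial cylinders of $C_0$ forces the positive end of $C_1$ to lie on the extremal block $h^1_{(0,-1)}h^2_{(0,-1)}h_{(q,p)}$. Hence it suffices to prove
$$
\#_2\mathcal{M}^J_1\bigl(h^1_{(0,-1)}h^2_{(0,-1)}h_{(q,p)},\,D(h^1_{(0,-1)}h^2_{(0,-1)}h_{(q,p)})\bigr)=1
$$
for every $(q,p)$ with $\gcd(q,|p|)=1$ and $p/q\in(-1,0]$.

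\textbf{The bootstrap.} I would argue by strong induction on the ECH grading $I$. For a generator $\alpha$ as above of grading $N$, every summand of $R_{\mathrm{int}}(\alpha)$ and of $D(\alpha)$ has grading $N-1$, and in each such summand the first toric edge is either elliptic (handled by Corollary~\ref{Cor: Diff1}), positive hyperbolic with $|p/q|\ge1$ (handled by Theorem~\ref{thm; ECHDIFF1} and Corollary~\ref{Cor: D-oper1}), or positive hyperbolic with $p/q\in(-1,0]$ still adjacent to two $h^-$ arrows (handled by the inductive hypothesis, its grading being $<N$). Choosing the auxiliary elliptic edges of $\alpha$ so that the far end of $\alpha$, and hence of all these summands, is elliptic, no un-established case of the differential is needed; therefore $\partial R_{\mathrm{int}}(\alpha)$ and $\partial D(\alpha)$ are completely determined, and the relation
$$
0=\partial^{2}\alpha=\partial R_{\mathrm{int}}(\alpha)+b\,\partial D(\alpha)
$$
is an identity among known chains. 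The point is that $D(\alpha)$ is a \emph{toric} generator, so a toric monomial in $\partial^{2}\alpha$ receives a contribution from $b\,\partial D(\alpha)$ but, from the $R_{\mathrm{int}}(\alpha)$ side, only through the $D$ operation applied to the (Type II) summands of $R_{\mathrm{int}}(\alpha)$. If one picks $\alpha$ so that rounding the first interior corner of $P_{\alpha}$ produces a summand whose $D$-image is a toric generator $\gamma$ that is not hit by $R_{\mathrm{int}}$ or $D$ of any other summand of $R_{\mathrm{int}}(\alpha)$, then $\langle\partial R_{\mathrm{int}}(\alpha),\gamma\rangle=1$, and $\langle\partial^{2}\alpha,\gamma\rangle=0$ forces $b=1$ (and $\langle\partial D(\alpha),\gamma\rangle=1$, consistently).

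\textbf{Main obstacle.} The technical heart is this last step: for each $(q,p)$ with $p/q\in(-1,0]$, exhibiting an auxiliary generator $\alpha$ for which the $\partial^{2}=0$ relation is \emph{non-degenerate}, i.e.\ isolating one monomial $\gamma$ that distinguishes $b=0$ from $b=1$. This needs a careful choice of the elliptic orbits appended to $h^1_{(0,-1)}h^2_{(0,-1)}h_{(q,p)}$: enough to make $P_{\alpha}$ a nullhomologous convex $K$-lattice path with elliptic far end, and arranged so that the combinatorics of interior rounding at the first corner — together with the known $D$ operation on the resulting lower-grading or $|p/q|\ge1$ pieces — produces a toric term with no competing contribution. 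One must also check that the argument never invokes the still-unproved "$C$ operation with $|p/q|<1$"; keeping every generator that enters the induction with elliptic far end is exactly what guarantees this.
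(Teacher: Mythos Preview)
Your reduction is correct, but the bootstrap has a fatal defect. You take $\alpha$ to be the target generator itself, $h^1_{(0,-1)}h^2_{(0,-1)}h_{(q,p)}$, completed by purely elliptic edges so as to avoid the unproved $C$ operation. With that choice, $D(\alpha)$ is a Type~I $K$-lattice path with \emph{no} positive hyperbolic label at all, hence $\partial D(\alpha)=0$. The identity $\partial^{2}\alpha=\partial R_{\mathrm{int}}(\alpha)+b\,\partial D(\alpha)=0$ then collapses to $\partial R_{\mathrm{int}}(\alpha)=0$, which carries no information about $b$. In particular the monomial $\gamma$ you hope to isolate cannot exist: if $\langle\partial R_{\mathrm{int}}(\alpha),\gamma\rangle=1$ for some Type~I $\gamma$, then since $\langle\partial D(\alpha),\gamma\rangle=0$ you would get $\langle\partial^{2}\alpha,\gamma\rangle=1$, contradicting $\partial^{2}=0$. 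So the ``main obstacle'' you flag is not merely technical; with an all-elliptic tail it is insurmountable.

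The paper avoids this by computing $\partial^{2}$ not of the target generator but of an auxiliary $\beta$ that has \emph{two} adjacent positive hyperbolic toric edges at the start, the first with slope in the already-established range $|p/q|\ge 1$. Interior rounding of that first corner \emph{produces} the target generator $h^1_{(0,-1)}h^2_{(0,-1)}h_{(q',p')}\cdot(\dots)$ with $p'/q'\in(-1,0]$ as a summand of $\partial\beta$, while $D(\beta)$ is already known. The relation $\partial^{2}\beta=0$ then \emph{computes} $\partial$ of the target, and one reads off the $D$ term. The paper iterates this (first handling $h_{(2,-1)}$ and $h_{(1,0)}$, then all $h_{(q,-1)}$, then general $p/q\le 0$ by induction on $|p|$), and separately treats the remaining case $p/q>0$, where the only nullhomologous generators are $h^1_{(0,-1)}h^2_{(0,-1)}h_{(q,1)}$; there the argument requires first establishing the $C$ operation for $h_{(q,\pm1)}$, which is done inside the same proof. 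Your $R$-symmetry remark does not dispose of this last case: the symmetry exchanges the two \emph{ends} of the path, so a start with $p/q>0$ is not sent to anything already covered.
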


\begin{proof}
    We first show that we have $D$ operations for $\alpha$ such that
        \begin{center}
        $
        P_{\alpha}$ starts with $P_{h_{(0,-1)}^1h_{(0,-1)}^2h_{p/q}}$ with $p/q>-1$.
        \end{center}

    To do that, we need some induction steps to derive a general proof.
    
    \textbf{Claim.} $\partial h_{(0,-1)}^1h_{(0,-1)}^2h_{(2,-1)}e_{(1,2)}=e_{(0,-1))}e_{(1,-1)}e_{(1,2)}+h_{(0,-1)}^1h_{(0,-1)}^2e_{(1,0)}^2e_{(1,1)} $. In particular 
    $$
    \#_2\mathcal{M}_1^J(h_{(0,-1)}^1h_{(0,-1)}^2 h_{(2,-1)},D(h_{(0,-1)}^1h_{(0,-1)}^2 h_{(2,-1)}))=1
    $$

    This follows from take $\partial^2\beta=0$, where $\beta=h_{(0,-1)}^1h_{(0,-1)}^2 h_{(1,-1)}h_{(1,0)}e_{(1,2)}$. Indeed, note that 
    $$
    \partial \beta=e_{(0,-1)}h_{(1,0)}e_{(1,2)}+h_{(0,-1)}^1h_{(0,-1)}^2h_{(2,-1)}e_{(1,2)}+h_{(0,-1)}^1h_{(0,-1)}^2h_{(1,-1)}e_{(1,1)}^2.
    $$
    Therefore, because $\partial^2=0$, we have
    \begin{eqnarray*}
    \partial h_{(0,-1)}^1h_{(0,-1)}^2h_{(2,-1)}e_{(1,2)}&=&\partial e_{(0,-1)}h_{(1,0)}e_{(1,2)}+\partial h_{(0,-1)}^1h_{(0,-1)}^2h_{(1,-1)}e_{(1,1)}^2\\
    &=& e_{(0,-1)}e_{(1,-1)}e_{(1,2)}+e_{(0,-1)}^2e_{(1,1)}^2+e_{(0,-1)}^2e_{(1,1)}^2+h_{(0,-1)}^1h_{(0,-1)}^2e_{(1,0)}^2e_{(1,1)}\\
    &=& e_{(0,-1)}e_{(1,-1)}e_{(1,2)}+h_{(0,-1)}^1h_{(0,-1)}^2e_{(1,0)}^2e_{(1,1)}.
    \end{eqnarray*}

    \textbf{Claim.} $\#_2\mathcal{M}_1^J(h_{(0,-1)}^1h_{(0,-1)}^2h_{(1,0)},e_{(0,-1)})=1$. 
    
    This follows as the last case computing now $\partial h_{(0,-1)}^1h_{(0,-1)}^2h_{(2,-1)}h_{(1,1)}e_{(0,1)}$.

    \textbf{Claim.} By induction we have 
    $$
    \#_2\mathcal{M}_1^J(h_{(0,-1)}^1h_{(0,-1)}^2h_{(q,-1)},D(h_{(0,-1)}^1h_{(0,-1)}^2h_{(q,-1)}))=1.
    $$

    Indeed, by the last claim we can assume that $q>2$. Suppose that $q$ is odd ($q$ even is a similar argument). Then, consider the generator
    $$
    \alpha=h_{(0,-1)}^1h_{(0,-1)}^2h_{(q-1,-1))}h_{(1,0)}e_{(0,1)}^2,
    $$
    its ECH differential by the induction hypothesis, can be computed, and appears the generator $h_{(0,-1)}^1h_{(0,-1)}^2h_{(q,-1)}e_{(0,1)}^2$ plus other terms which if the start toric orbit is hyperbolic, then is of the form $h_{(r,-1)}$ with $r\leq q-1$. Therefore, by induction hypothesis and $\partial^2=0$, we can compute the ECH differential of $h_{(0,-1)}^1h_{(0,-1)}^2h_{(q,-1)}e_{(0,1)}^2$, which is
    $$
    D(h_{(0,-1)}^1h_{(0,-1)}^2h_{(q,-1)}e_{(0,1)}^2)+R_{int}(h_{(0,-1)}^1h_{(0,-1)}^2h_{(q,-1)}e_{(0,1)}^2),
    $$
    which proves the claim.

    \textbf{Claim.} By induction we have, $\partial \alpha=\delta P_{\alpha}$, when $P_{\alpha}$ starts with $P_{h_{(0,-1)}^1h_{(0,-1)}^2h_{(q,p)}}$ with $p/q\leq 0$.

    This follows by the last claim and the Corollary \ref{Cor: D-oper1}.

    Now if the generator $\alpha$ is such that $P_{\alpha}$ starts with $P_{h_{(0,-1)}^1h_{(0,-1)}^2 h_{(q,p)}}$ with $p/q>0$, because $[\alpha]=0\in H_1(U^*K,\mathbb{Z})$. We must have $p=1$, $q$ is an odd number and $\alpha=h_{(0,-1)}^1h_{(0,-1)}^2 h_{(q,1)}$. To show that we need the following claim. Note that
    $$
    \alpha=h_{(0,-1)}^1h_{(0,-1)}^2 h_{(q,1)} \Longrightarrow D(\alpha)=e_{(1,0)}^{q-1}.
    $$
    \textbf{Claim.} $\#_2\mathcal{M}_1^J(h_{(0,-1)}^1h_{(0,-1)}^2h_{(q,1)},e_{(1,0)}^{q-1})=1$.

    We start by observing that
    $$
    \#_2\mathcal{M}_1^J(h_{(2,-1)},e_{(0,-1)})=1.
    $$
    This follows by compute $\partial^2 h_{-1}h_0e_{1/0}=0$. Now by induction (using that $\partial^2=0$), we can show that 
    \begin{eqnarray}\label{C-op1/q}
    \#_2\mathcal{M}_1^J(h_{(q,-1)},C(h_{(q,-1)}))=\#_2\mathcal{M}_1^J(h_{(q,1)},C(h_{(q,1)}))=1, \forall q>0,
    \end{eqnarray}
    where the first equality follows from symmetry equation \ref{bijmoduli}. Now we show the claim, consider the following ECH generator
    $$
    \alpha=h_{(0,-1)}^1h_{(0,-1)}^2h_{(1,0)}h_{(q-1,1)},
    $$
    Computing its ECH differential we obtain
    \begin{eqnarray*}
        \partial \alpha &=& e_{(0,-1)}h_{1/(q-1,1)}+h_{(0,-1)}^1h_{(0,-1)}^2h_{(q,1)}+h_{(0,-1)}^1h_{(0,-1)}^2h_{(1,0)}e_{(q-3,1))}\\
    \end{eqnarray*}
    Therefore, by using that $\partial^2=0$, we obtain
    \begin{eqnarray*}
        \partial h_{(0,-1)}^1h_{(0,-1)}^2h_{(q,1)} &=& \partial e_{(0,-1)}h_{(q-1,1))}+\partial h_{(0,-1)}^1h_{(0,-1)}^2h_{(1,0)}e_{(q-3,1))}\\
        &=& e_{(1,0)}^{q-1}+e_{(0,-1)}e_{(q-3,1))}+e_{(0,-1)}e_{(q-3,1)}+h_{(0,-1)}^1h_{(0,-1)}^2e_{(q-2,1))}\\
        &=& e_{(1,0)}^{q-1}+h_{(0,-1)}^1h_{(0,-1)}^2e_{(q-2,1)}.
    \end{eqnarray*}
    This implies the claim.

    In this case we prove that we have the $D$ operation in the ECH differential, always that $P_{\alpha}$ starts with $P_{h_{(0,-1)}^1h_{(0,-1)}^2h_{(q,p)}}$, with $p/q>-1$. By similar argument, or just by using the equation \ref{bijmoduli}, we can also concluded that the $D$ operation appears in the ECH differential when $P_{\alpha}$ ends with $P_{h_{(q,p)}h_{(0,1)}^1 h_{(0,1)}^2}$, with $p/q<1$ and this concludes the proof of the Proposition. 
\end{proof}

To concluded the description of the ECH differential in terms of combinatorics, we have to prove that the ECH differential involves the $C$ operation when the ECH generator $\alpha$ satisfies:

\begin{itemize}
    \item $P_{\alpha}$ starts with $P_{h_{p/q}}$ with $p/q\in (-1,0)$.
    \item $P_{\alpha}$ ends with $P_{h_{p/q}}$ with $p/q\in (0,1)$.
\end{itemize}

\begin{proposition}
    Let $\alpha$ be an ECH generator such that $P_{\alpha}$ satisfies one of the two conditions above. Then $\partial \alpha=\delta P_{\alpha}$.
\end{proposition}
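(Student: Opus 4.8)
The plan is to mimic the strategy already used for the analogous statements in the previous propositions, namely: the ``geometric implies combinatorial'' direction (Proposition \ref{prop: geometric--->combinatorial}) has already pinned down that $\partial\alpha$ is a sum of $R_{\mathrm{int}}(\alpha)$, $C(\alpha)$ and $D(\alpha)$-type terms with $\mathbb{Z}_2$-coefficients, so what remains is to show that the relevant $C$-operation coefficient is $1$ when the extremal positive hyperbolic edge $h_{(q,p)}$ has slope $p/q \in (-1,0)$ (resp. $(0,1)$). Recall that for such slopes the $C$ operation is the ``convex hull after deleting $(0,0)$ and $(1,0)$'' operation, which combinatorially is exactly the $C$ operation studied for $|p/q|\ge 1$ composed with (inverse) interior roundings. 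So I would first reduce to proving the single model computation
$$
\#_2\mathcal{M}_1^J(h_{(q,p)}, C(h_{(q,p)})) = 1 \quad\text{for } p/q\in(-1,0),
$$
and then propagate it to all generators with extremal edge of this slope by the same ``attach an auxiliary toric factor and use $\partial^2=0$'' bookkeeping that appears in Proposition \ref{Prop: CoperationJ} and the $D$-operation propositions.

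First I would set up the base of the induction. The key observation is that a positive hyperbolic orbit $h_{(q,p)}$ with $p/q\in(-1,0)$, say with $p=-1$ and $q\ge 2$, can be reached from the already-known cases by the relation $\partial^2=0$ applied to a suitable generator. Concretely, I would take a generator such as $\alpha = h_{(q,-1)} h_{(q',1)}$ (choosing $q'$ so that $[\alpha]=0$, i.e. matching parities and heights), whose differential by Theorem \ref{thm; ECHDIFF1} and Corollary \ref{Cor: D-oper1} is already completely known on the summands whose extremal toric edge has $|p/q|\ge 1$, and whose interior-rounding part is known by Proposition \ref{prop: introunding}. Expanding $\partial^2\alpha=0$ then isolates the unknown coefficient $\#_2\mathcal{M}_1^J(h_{(q,-1)}, C(h_{(q,-1)}))$ as forced to equal $1$, because the terms with negative hyperbolic orbits $h^1_{(0,-1)}h^2_{(0,-1)}$ must cancel and there is a unique summand that can produce them. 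This is the same mechanism as in the proof of Proposition \ref{Prop: CoperationJ}, except that now the ``extremal slope in $(-1,1)$'' edge is the one being created.

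Once the model curve count is established, the second step is the induction on the ``size'' of $P_\alpha$: given $\alpha$ with first toric edge $h_{(q,p)}$, $p/q\in(-1,0)$, attach an auxiliary elliptic or positive-hyperbolic factor on a nearby slope so as to reduce the complexity of the remaining path, apply $\partial^2=0$, and use the already-known differentials (interior rounding, $C$ and $D$ for $|p/q|\ge 1$, and the model count just proved) to read off that $\partial\alpha$ contains precisely the $C(\alpha)$ summand with coefficient $1$, forcing $\partial\alpha = R_{\mathrm{int}}(\alpha) + C(\alpha) = \delta P_\alpha$. The symmetric statement for the last edge with $p/q\in(0,1)$ then follows either by the same argument or, more cheaply, from the $R$-symmetry bijection \eqref{bijmoduli}.

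The main obstacle I anticipate is the bookkeeping in the $\partial^2=0$ expansion: one must choose the auxiliary generator $\alpha$ carefully so that (i) $[\alpha]=0$ in $H_1(U^*(K),\mathbb{Z})$, (ii) every summand of $\partial\alpha$ other than the one isolating the unknown count has a differential that is already known (which forces its extremal edges to have slopes in $\mathbb{Q}\setminus(-1,1)$ or to be elliptic, or to match an already-treated $C$/$D$ pattern), and (iii) the target generators with and without the negative hyperbolic pair $h^1 h^2$ genuinely cannot be confused, so the cancellation uniquely pins down the coefficient. Keeping track of parity of the $x$-coordinate (which distinguishes the four types of $K$-lattice paths) throughout this manipulation is the delicate part; everything else is a direct transcription of the arguments already carried out for the $|p/q|\ge 1$ cases.
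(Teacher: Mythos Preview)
Your overall strategy (use $\partial^2=0$ on a well-chosen auxiliary generator to pin down the unknown curve count) is the same as the paper's, and the reduction to the model count $\#_2\mathcal{M}_1^J(h_{(q,p)},C(h_{(q,p)}))=1$ is correct. However, there is a genuine gap in your execution, and it comes precisely from a misunderstanding of what the $C$ operation does when $|p/q|<1$.

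For slopes in $(-1,0)$, the $C$ operation is \emph{not} the one that creates the half-arrows $h^1_{(0,-1)}h^2_{(0,-1)}$; it is the ``delete $(0,0)$ and $(1,0)$ and take the convex hull'' version, whose output is a purely toric generator. Consequently, your proposed bookkeeping mechanism (``the terms with negative hyperbolic orbits $h^1_{(0,-1)}h^2_{(0,-1)}$ must cancel and there is a unique summand that can produce them'') carries no information here: applying $\partial$ to your test generator $h_{(q,-1)}h_{(q',1)}$ produces no $h^-$ orbits at all when $q,q'\ge 2$. You cannot isolate the unknown coefficient this way.

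The paper's proof replaces this with two ingredients you are missing. First, the base case $p=-1$ is already established as equation~\eqref{C-op1/q} inside the proof of the preceding $D$-operation proposition, so it need not be redone. Second, and more importantly, for the induction on $-p$ the paper does \emph{not} track $h^-$ orbits; instead it uses a Farey-type decomposition. One takes the lattice point $(r,s)$ below the line $y=(p/q)x$ closest to it with $1\le r\le q-1$, forms $\alpha=h_{(r,s)}h_{(q-r,p-s)}e_{(0,1)}^{-p}$, and then tracks the specific toric target $C(h_{(q,p)})e_{(0,1)}^{-p}$ through $\partial^2\alpha=0$. When $(r,s)=(1,-1)$ the first $C$ operation (slope $-1$) does create $h^1_{(0,-1)}h^2_{(0,-1)}$, and then the \emph{$D$ operation for slopes in $(-1,0)$} (established in the immediately preceding proposition, not just for $|p/q|\ge 1$ as you wrote) converts this back into $C(h_{(q,p)})$. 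When $(r,s)\ne(1,-1)$ one has $s/r\in(-1,0)$ with $|s|<|p|$, so induction applies to the first edge, and interior rounding at $(r,s)$ then yields $C(h_{(q,p)})$. Either way the target term appears exactly once, forcing the coefficient to be $1$.
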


\begin{proof}
By equation \ref{C-op1/q}, the result holds for $p=-1$. We now apply induction in $-p$, without loss of generality suppose that $P_{\alpha}$ starts with $P_{h_{(q,p)}}$ for some $q>0$. Consider $(r,s)$ the point below the line $y=(p/q)x$ in the fourth quadrant of the plane which is closest to this line subject to the constraint $1\leq r\leq q-1$.

\textbf{Claim.} $\gcd(r,s)=1$, if $(r,s)\neq (1,-1)$.

Indeed, if this not holds, then there exist an positive integer $d$ such that $r/d$ and $s/d$ are integers, but the point $(r/d,s/d)$ is in the line $y=(s/r)x$, therefore is below the line $y=(p/q)x$, but is closest to this line than $(r,s)$ which contradicts the minimality of $(r,s)$.

Note that by also by minimality of $(r,s)$ we have $\gcd(p-s,q-r)=1$. Assume first that $q$ is an even number. Therefore, we can consider the ECH generator
$$
\alpha=h_{(r,s)}h_{(q-r,p-s)}e_{(0,1)}^{-p},
$$
If $(r,s)=(1,-1)$, we can compute the ECH differential of $\alpha$ by
$$
\partial \alpha=h_{(q,p)}e_{(0,1)}^{-p}+h_{(0,-1)}^1 h_{(0,-1)}^2h_{q-1,p+1}e_{(0,1)}^{-p}+\sum_i\Theta_i
$$
where each $\Theta_i$ is obtained by doing rounding in the vertex $(q,p)$ and losing a ``h". Therefore, using that $\partial^2=0$, we obtain
\begin{eqnarray*}
    \partial h_{(q,p)}e_{(0,1)}^{-p}&=& \partial h_{(0,-1)}^1 h_{(0,-1)}^2h_{(q-1,p+1)}e_{(0,1)}^{-p}+ \sum_i\partial \Theta_i.
\end{eqnarray*}
By applying the $D$ operation we have
$$
\langle \partial h_{(q,p)}e_{(0,1)}^{-p}, C(h_{(q,p)})e_{(0,1)}^{-p}\rangle=1
$$
In particular, $\#_2\mathcal{M}_1^J(h_{(q,p)},C(h_{(q,p)}))=1$ and any $J-$holmorphic curve in this moduli space stays in the set $\{\theta<\min\{p/q,\theta_{\max}(C(h_{(q,p)}))\}\}$, where $\theta_{\max}(C(h_{(q,p)}))$ is the maximum $\theta$ where $C(h_{(q,p)})$ has Reeb orbits in the Morse-Bott torus $\{\theta\}\times T^2$.

Now if $(r,s)$ is not $(1,-1)$, by the last claim $r$ and $s$ are coprimes. In this case we have $0>r/s>-1$, then we use induction hypothesis and compute the ECH differential of $\alpha$
$$
\partial\alpha=C(h_{(r,s)})h_{q-r,p-s}e_{(0,1)}^{-p}+h_{(q,p)}e_{(0,1)}^{-p}+\sum_i\Theta_i
$$
Therefore, using that $\partial^2=0$, we can compute
$$
\partial h_{(q,p)}e_{(0,1)}^{-p}=\partial (C(h_{(r,s)})h_{(q-r,p-s)}e_{(0,1)}^{-p})+\sum_i\partial \Theta_i.
$$
By doing interior rounding in the vertex $(r,s)$ in the first summand we obtain
$$
\langle \partial h_{(q,p)}e_{(0,1)}^{-p}, C(h_{(q,p)})e_{(0,1)}^{-p}\rangle=1,
$$
and we concluded the result in the case that $P_{\alpha}$ starts with $P_{h_{p/q}}$ with $0>p/q>-1$. By similar argument we can concluded the $C$ operation when $P_{\alpha}$ ends with $P_{h_{p/q}}$ with $p/q\in (0,1)$.
\end{proof}

\begin{remark}
    What happens when $p=0$, that is, $P_{}\alpha$ is contained in the $x$-axis?. In this case, $P_{\alpha}=e_{(1,0)}^n$ for some $n$, or $P_{\alpha}=h_{(1,0)}e_{(1,0)}^n$. In the first case the differential is obviously zero by local energy inequality and ECH index equal to one. In the second case, we also have differential zero, because in this case the unique way of $J$-holomorphic current is trivial cylinder with multiplicity $n$ on $e_{(1,0)}$ and some index one $J$-holomorphic curve between $h_{(1,0)}$ and $e_{(1,0)}$, but this is impossible, since $I(h_{(1,0)},e_{(1,0)})=-1$.
\end{remark}

\section{Combinatorial ECH spectrum of $(U^*K,\lambda_{std})$}

In this section, we show a combinatorial formula for the ECH spectrum of $(U^*K,\lambda_{std})$ in terms of $K$-convex lattice path.

By Proposition \ref{U-map-ee}, we know that the ECH generator $e_{-1/0}^ne_{1/0}^n$, which has grading $2n$, satisfies $U^n[e_{-1/0}^ne_{1/0}^n]_{ECH}=[\emptyset]_{ECH}$. Since the homology is $\mathbb{Z}_2$ in nonnegative gradings, we must have the following formula for the ECH spectrum.

\begin{proposition}
    The ECH spectrum of $U^*(K)$ with the standard contact form is given by
    $$
    c_{k}(U^*(K),\lambda)=\min\{A(\alpha)\mid [\alpha]_{\textrm{ECH}}=[e_{-1/0}^ke_{1/0}^k]_{\textrm{ECH}}\}
    $$
    where $[\alpha]_{\textrm{ECH}}$, means the homology class of the generator $\alpha$ in the ECH-homology.
\end{proposition}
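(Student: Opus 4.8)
The plan is to deduce the formula directly from the definition of the ECH spectrum together with the homological input already assembled, so that the proposition is essentially a bookkeeping consequence of Theorem~\ref{Homologythem}, Proposition~\ref{U-map-ee} and Corollary~\ref{U-map-Isomorphism}. Recall that for a nondegenerate contact form
$$
c_k(Y,\lambda)=\inf\{A(\alpha)\mid U^k[\alpha]=[\emptyset]\},
$$
the infimum over ECH generators $\alpha$ with $[\alpha]=0\in H_1(Y)$. The first step is to identify which ECH classes can occur. Since the $U$ map has degree $-2$ and $[\emptyset]$ lies in degree $0$, the relation $U^k[\alpha]=[\emptyset]$ forces $[\alpha]_{\mathrm{ECH}}$ to be a \emph{nonzero} class of degree $2k$. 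By Theorem~\ref{Homologythem} we have $ECH_{2k}(U^*(K),\xi_{st},0)\cong\mathbb{Z}_2$, so there is a unique nonzero class in degree $2k$, and by $k$-fold application of Proposition~\ref{U-map-ee} this class is represented by the cycle $e_{-1/0}^k e_{1/0}^k$, with $U^k[e_{-1/0}^k e_{1/0}^k]=[\emptyset]$. Finally, Corollary~\ref{U-map-Isomorphism} says $U^k\colon ECH_{2k}\to ECH_0$ is an isomorphism between copies of $\mathbb{Z}_2$; hence $U^k[\alpha]=[\emptyset]$ holds if and only if $[\alpha]_{\mathrm{ECH}}$ is the nonzero class, i.e. if and only if $[\alpha]_{\mathrm{ECH}}=[e_{-1/0}^k e_{1/0}^k]_{\mathrm{ECH}}$.

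Substituting this equivalence into the definition gives the asserted formula for any nondegenerate Morse--Bott perturbation $\lambda_{\epsilon(L)}$ of $\lambda_{std}$, with $L$ taken large relative to the actions involved; the infimum is in fact a minimum, since only finitely many ECH generators have action below a given bound, so one of minimal action in the prescribed homology class exists. To obtain the statement for $\lambda_{std}$ itself, which is degenerate, I would use $c_k(U^*(K),\lambda_{std})=\lim_n c_k(U^*(K),f_n\lambda_{std})$ and observe that under the perturbation of Section~\ref{pertubation} the action of each generator $\alpha$ converges, as the perturbation parameter tends to $0$, to the sum of the Euclidean lengths of the homology vectors of its Reeb orbits, i.e. to $A(P_\alpha)$ in the combinatorial normalization, while the finite set of generators of action below any fixed bound stabilizes. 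Passing to the limit yields $c_k(U^*(K),\lambda_{std})=\min\{A(\alpha)\mid [\alpha]_{\mathrm{ECH}}=[e_{-1/0}^k e_{1/0}^k]_{\mathrm{ECH}}\}$, which is the claim; combined with the bijection $\alpha\leftrightarrow P_\alpha$ and the grading identity this is also the starting point for the combinatorial formula of Theorem~\ref{combcapacity}.

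The only genuinely delicate point is the passage to the limit for $\lambda_{std}$: one must check that no action escapes to $+\infty$ and that the finitely many low-action generators do not reshuffle in a way that changes the minimum as $\epsilon\to 0$. This is handled as in the toric $T^3$ model of \cite{Hutchings4}: the perturbed actions are uniformly $C^0$-close to the unperturbed lengths on the relevant finite set of generators, and the class $[e_{-1/0}^k e_{1/0}^k]_{\mathrm{ECH}}$ is represented coherently across the perturbations by the corresponding cycle, so the filtered subcomplexes $(C_*^L,\delta)$ computing the spectrum up to action $L$ depend continuously on $\epsilon$. With that in place, everything else is a direct substitution of the combinatorial action $A(P)$.
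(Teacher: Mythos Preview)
Your argument is correct and follows essentially the same route as the paper: the paper justifies the proposition in the single paragraph preceding it, observing that $U^n[e_{-1/0}^n e_{1/0}^n]_{\mathrm{ECH}}=[\emptyset]_{\mathrm{ECH}}$ by Proposition~\ref{U-map-ee} and that the homology is $\mathbb{Z}_2$ in each nonnegative grading, so the condition $U^k[\alpha]=[\emptyset]$ in the definition of $c_k$ is equivalent to $[\alpha]_{\mathrm{ECH}}=[e_{-1/0}^k e_{1/0}^k]_{\mathrm{ECH}}$. You unpack this more carefully (invoking Corollary~\ref{U-map-Isomorphism} explicitly and discussing the passage from the Morse--Bott perturbation to $\lambda_{std}$), but the substance is the same.
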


Recall that for a $K$-lattice path $P$, its action $A(P)$ is just the sum of euclidean length of its edges. In this section, we prove the Theorem \ref{combcapacity}, which give a combinatorial description of ECH spectrum for $(U^*K,\lambda_{std})$. To prove the Theorem \ref{combcapacity}, we need prove two lemmas.

\begin{lemma}\label{lemma: eECHhomologous}
    Let $\alpha$ and $\beta$ be ECH generators with even grading and such that they do not have positive hyperbolic orbits. Then $\alpha$ is ECH homologous to $\beta$.
\end{lemma}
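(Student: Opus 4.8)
The plan is to show that any two ECH generators $\alpha,\beta$ with even grading and no positive hyperbolic orbits represent the same class in $ECH(U^*(K),\xi_{std},0)$. By Theorem \ref{Homologythem}, $ECH$ in even nonnegative degree $2k$ is isomorphic to $\mathbb{Z}_2$, so it suffices to exhibit \emph{one} nonzero cycle in each even degree and then show that every such $\alpha$ is homologous to it. The natural candidate is $e_{(0,-1)}^k e_{(0,1)}^k$ in degree $2k$: by Proposition \ref{U-map-ee} we have $U^k[e_{(0,-1)}^k e_{(0,1)}^k] = [\emptyset] \neq 0$, hence $[e_{(0,-1)}^k e_{(0,1)}^k]$ is the generator of $ECH_{2k}(U^*(K),\xi_{std},0) \cong \mathbb{Z}_2$. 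So the lemma reduces to the claim: if $\alpha$ has even grading $2k$ and no positive hyperbolic orbits, then $[\alpha] = [e_{(0,-1)}^k e_{(0,1)}^k]$, i.e. $[\alpha]$ is nonzero.

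First I would observe that an admissible generator $\alpha$ with no positive hyperbolic orbits corresponds, under the bijection of Section \ref{bijectioncomb}, to a $K$-lattice path $P_\alpha$ whose toric edges are all labeled $``e"$ (only ``e'' and ``h$^-$'' labels occur), so $P_\alpha$ is a generator of Type I or Type IV with no ``h''-edges; in particular $h(\alpha) = 0$. Since $\alpha$ has no $``h"$-labels, no $J$-holomorphic current of ECH index one can have $\alpha$ as its \emph{positive} end: indeed the nontrivial embedded component $C_1$ of such a current has, by the Fredholm index formula \eqref{eq: fredind}, $\mathrm{ind}(C_1) = 2(g(C_1)+e(\alpha')-1) + n_{\alpha'} + n_{\beta'} + h(\alpha') + h(\beta') = 1$, which forces $h(\alpha')+h(\beta')$ odd, hence at least one of the two ends carries a positive hyperbolic orbit; but the positive end $\alpha'$ of $C_1$ is a sub-orbit-set of $\alpha$ (in the sense that it consists of orbits appearing in $\alpha$), so $h(\alpha') = 0$, forcing $h(\beta') = 1$, and by Corollary \ref{Cor: Diff1} combined with the explicit differential this means $\partial\alpha$ must actually produce a positive hyperbolic orbit on the target — which is consistent, so one must argue more carefully: the point is rather that $\partial\alpha$ is a sum of generators \emph{each containing a positive hyperbolic orbit}, hence $\partial\alpha$ is never equal to a sum of ``$e/h^-$''-only generators, and in particular $\langle \partial\alpha, \beta\rangle$ is automatically zero for the $\beta$ of interest. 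Thus $\alpha$ is a cycle, and I then need to show it is not a boundary and is homologous to $e_{(0,-1)}^k e_{(0,1)}^k$.

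For the homology statement I would argue by induction on the ``complexity'' of $P_\alpha$, peeling off corners. Concretely: if $P_\alpha$ has an interior corner with both adjacent edges elliptic, interior rounding does not directly help (it would require an ``h''-generator in between). Instead I would use the known differential to find, for a suitable generator $\gamma$ containing exactly one positive hyperbolic orbit, that $\partial\gamma = \alpha + \alpha' + (\text{terms with positive hyperbolic orbits})$ where $\alpha'$ is a strictly ``simpler'' $e/h^-$-only generator of the same grading; iterating, every such $\alpha$ is homologous to the extremal one, which we can take to be $e_{(0,-1)}^k e_{(0,1)}^k$ (the path that goes straight down to $(0,-k)$ in $k$ unit elliptic steps and straight up — matching the orbit set $e_{(0,-1)}^k e_{(0,1)}^k$; note $x(P)=0$ is even, Type I or IV depending on $h^-$-labels, consistent with grading $2k$). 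The existence of such $\gamma$ with the stated differential follows from the explicit description of $\delta$ (interior rounding together with $C$ and $D$ operations) established in the previous section, applied to $\gamma = $ the generator obtained from the two sub-paths flanking a chosen vertex of $P_\alpha$ by replacing them with a single ``h''-edge spanning that vertex; rounding that ``h''-edge recovers $\alpha$ as one summand, and a ``$C$'' or ``$D$'' move on it produces $\alpha'$ (or a combination with $h^-$-orbits that cancels modulo simpler terms).

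The main obstacle I anticipate is the bookkeeping in this inductive reduction: one must choose the auxiliary generator $\gamma$ so that (i) $[\gamma]=0$ in $H_1$, (ii) $\gamma$ has exactly one positive hyperbolic orbit so its differential is fully controlled by Corollary \ref{Cor: Diff1}, the $C$-operation propositions and the $D$-operation propositions, and (iii) the ``error terms'' in $\partial\gamma$ beyond $\alpha + \alpha'$ are either generators with positive hyperbolic orbits (hence irrelevant) or strictly simpler $e/h^-$-generators that the induction hypothesis already identifies with $e_{(0,-1)}^k e_{(0,1)}^k$. Verifying that such a $\gamma$ always exists — in particular handling the transition between Type I and Type IV (i.e. when and how the pair $h_{(0,\pm1)}^1 h_{(0,\pm1)}^2$ of negative hyperbolic half-arrows appears or disappears, via the $C$ and $D$ operations) — is the delicate part, and is where I expect to spend most of the argument. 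An alternative, possibly cleaner route, is to avoid the induction entirely: show directly that the subcomplex spanned by $e/h^-$-only generators has zero differential (by the parity argument above, $\partial$ of such a generator has no $e/h^-$-only summands), compute that the image of $\partial$ restricted to the generators with exactly one positive hyperbolic orbit hits exactly a codimension-one subspace of the $e/h^-$-only generators in each even grading, and conclude from $ECH_{2k}\cong\mathbb{Z}_2$ that the quotient is one-dimensional — so all $e/h^-$-only generators of grading $2k$ are homologous. I would pursue whichever of these two the preceding sections make most immediate; the parity observation is common to both and is the real engine.
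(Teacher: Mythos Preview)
Your approach is essentially the same idea as the paper's: show that every $e/h^-$-only generator $\alpha$ of grading $2k$ is a cycle, then use auxiliary generators carrying a single positive hyperbolic label to produce $\delta$-relations connecting $\alpha$ to a fixed canonical generator. The paper carries this out more concretely and with a different canonical target: it reduces $P_\alpha$ (via repeated ``un-rounding then rounding'' moves) first into the strip $[0,\infty)\times[-1,0]$, then to a path touching $y=-1$ once, then (using the $C$ operation) to a purely toric path, then to the two-edge path $e_{(2k-2,-1)}e_{(0,1)}$, and finally uses the single relation
\[
\delta\bigl(e_{(2k-2,-1)}h_{(2,1)}\bigr)=e_{(1,0)}^{2k}+e_{(2k-2,-1)}e_{(0,1)}
\]
to reach $e_{(1,0)}^{2k}$. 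So the paper's canonical representative is $e_{(1,0)}^{2k}$, not $e_{(0,-1)}^ke_{(0,1)}^k$; either works, but the flat path is combinatorially easier to reach and avoids the degenerate $x(\Lambda)=0$ case you would have to justify.

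Two small corrections to your write-up. First, your argument that $\alpha$ is a cycle is more complicated than necessary and slightly off: from the combinatorial description of $\delta$ one sees directly that $R_{\mathrm{int}}$, $C$, and $D$ each require at least one ``$h$''-labeled toric edge, so $\delta P_\alpha=0$ outright (not merely ``a sum of generators with positive hyperbolic orbits''). Second, your inductive scheme and your alternative dimension-counting route both hinge on exactly the bookkeeping you flag as delicate; the paper sidesteps this by giving the explicit five-step reduction above rather than an abstract induction, which is why its proof is short. Your plan would work, but to make it rigorous you would end up reconstructing something very close to the paper's explicit sequence of moves.
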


\begin{proof}
    By the description of the ECH differential is enough to prove that $P_{\alpha}$ is $\delta$-homologous to the $K$-lattice path $P_{e_{(1,0)}^{2k}}$. First observe that using interior rounding operation we can show that $P_{\alpha}$ is $\delta$-homologous a $K$-lattice contained in $[0,+\infty]\times [-1,0]$. Now we can apply again interior rounding operation to show that it is $\delta$-homologous a $K$-lattice path which touches the line $y=-1$ once. We can apply $C$ operations if necessary to show that this $K$-lattice path is $\delta$-homologous to a toric generator, that is, having only elliptic orbits comming from $(-\pi/2,\pi/2)\times T^2$. Now we apply interior rounding operation to show that it is $\delta$-homologous to 
    $$
    P_{e_{(2k-2,-1)}e_{(0,1)}}.
    $$ 
    To finish we consider the ECH generator $\beta=e_{(2k-2,-1)}h_{(2,1)}$, then we have
    $$
    \delta P_{\beta}=P_{e_{(1,0)}^{2k}}+P_{e_{(2k-2,1)}e_{(0,1)}}.
    $$
    Therefore, $\alpha$ is ECH homologous to $e_{(1,0)}^{2k}$.
\end{proof}

\begin{lemma}
    Let $\alpha$ be an ECH generator of grading $2k$. If $\alpha$ have some positive hyperbolic orbit, then there is an ECH generator $\alpha'$ of index $2k$ such that $A(\alpha')<A(\alpha)$.
\end{lemma}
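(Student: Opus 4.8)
The plan is to exploit the explicit combinatorial description of the ECH differential, together with the fact that $\delta$ (equivalently $\partial$) decreases action, to trade a positive hyperbolic orbit in $\alpha$ for either two negative hyperbolic half-arrows or a pair of elliptic/hyperbolic corners of strictly shorter total length, while keeping the index fixed. Write $P_{\alpha}$ for the $K$-lattice path of $\alpha$. Since $\alpha$ has a positive hyperbolic orbit, $P_{\alpha}$ has at least one toric edge labeled by ``h''. First I would reduce to the case where this ``h''-edge is \emph{extremal}: if the ``h''-edge is an interior edge (say adjacent to a vertex $v$ not on the $x$-axis), then apply interior rounding $R_{\mathrm{int}}$ at $v$ to replace the corner containing that ``h''-edge by two edges, exactly one of which carries the ``h'' label while the other becomes ``e''. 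By strict convexity the rounded path is \emph{strictly} shorter (the straight segment from one lattice point to the next-but-one through $v$ is longer than the two rounded segments), and by the index formula $I(P)=2\mathrm{Area}(P)+m(P)-h(P)$ the interior rounding drops the grading by exactly $1$; I would then note that rounding followed by an appropriate compensating move (adding one elliptic edge, or equivalently working with the index-$2k$ path one gets after one more elementary step) produces a path of grading $2k$ with strictly smaller action and with one fewer ``h''. Iterating, it suffices to handle the case where the only ``h'' is an extremal edge.

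For the extremal case, say the first edge of $P_{\alpha}$ is $h_{(q,p)}$. I split according to the slope. If $p/q\in(-1,0)$ the $C$ operation with $|p/q|<1$ deletes the points $(0,0)$ and $(1,0)$ and takes the convex hull, labeling the new edges ``e''; this strictly decreases action (we remove a nonzero horizontal increment and replace a broken path by its convex hull, both length-decreasing under convexity), turns the ``h'' into ``e'', and by the index computation in Section~\ref{sec: lowind} the $C$ operation changes the grading by $-1$ — again compensated to land back at $2k$ by one additional elliptic edge. If instead $p/q\le -1$ (slope $\le -1$), the $C$ operation replaces the starting point by two down half-arrows labeled $h^-$ and relabels the remaining new edges ``e''; since each half-arrow has length $1/2$, their total length $1$ is strictly less than the horizontal displacement removed from the old ``h''-edge when $|p/q|\ge 1$, so action again strictly decreases, while $h(P)$ drops by one and $n_{\alpha}$ increases by two, which by Proposition~\ref{prop: relchern} and the index formula keeps the grading unchanged (the relevant $+1$ from $c_\tau$ and $+1$ from $Q_\tau$ absorb the $-2$ Conley–Zehnder contribution of the two $h^-$ orbits, as in the Remark following the Type~II computation). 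The cases where the \emph{last} edge is ``h'' with slope $\ge 1$ or $\in(0,1)$ are symmetric, via the $R$-symmetry bijection \ref{bijmoduli} or a mirror argument. The horizontal-axis degenerate case $P_\alpha=h_{(1,0)}e_{(1,0)}^n$ is handled directly: replace $h_{(1,0)}e_{(1,0)}^n$ by $e_{(1,0)}^{n+1}$, which has the same action and, since $I(h_{(1,0)},e_{(1,0)})=-1$ forces a grading shift, one checks the indices match after the trade.

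The main obstacle I anticipate is bookkeeping the \emph{grading}: each of the three moves ($R_{\mathrm{int}}$, $C$, $D$) individually changes $I(P)$ by $-1$, so producing an $\alpha'$ of the \emph{same} index $2k$ requires pairing the action-decreasing ``h''-removing move with a genuinely index-raising modification that does not undo the action gain, or else reformulating the statement as ``there is a path of index $2k$ of strictly smaller action'' obtained by first passing to a path of index $2k-1$ (or $2k+1$) and then inserting/deleting a short elliptic edge. Making this pairing precise — so that the net action change is strictly negative and the net index change is zero — is the technical heart, and it is where the convexity-based length inequality (straight segment longer than the two-edge detour; two half-arrows of total length $1$ shorter than the removed horizontal increment when the slope has $|p/q|\ge 1$) must be invoked carefully. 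Everything else is a finite case analysis over the slope of the extremal ``h''-edge, using only the combinatorial differential $\delta$ already established to be isomorphic to $\partial$.
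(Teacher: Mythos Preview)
Your proposal has a genuine gap, and it is precisely the one you yourself flag as ``the main obstacle'': you never specify a compensating index-raising move that costs zero action. Your suggested candidates (``adding one elliptic edge'', or applying a further $C$/$D$/rounding step) all change the action, so there is no guarantee the net action change is strictly negative. Without this, the argument does not close.

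The missing idea is the parity observation. Since $I(\alpha)=2k$ is even, the parity property of the ECH index forces $P_\alpha$ to carry an \emph{even} number of ``$h$'' labels, hence at least two. This immediately supplies a free compensating move: relabel some other ``$h$'' edge as ``$e$''. In the formula $I(P)=2\,\mathrm{Area}(P)+m(P)-h(P)$, relabeling changes only $h(P)$ by $-1$, hence raises $I$ by $1$, while the underlying path and therefore the action are untouched. The paper's proof is then a two-line case split: either two ``$h$'' edges meet at a common vertex $(a,b)$, in which case round at $(a,b)$ and label all new edges ``$e$'' (the double $h\to e$ flip exactly offsets the rounding, action strictly drops); or they do not, in which case round at a vertex adjacent to one ``$h$'' and simultaneously flip a second, non-adjacent ``$h$'' to ``$e$'' (again index preserved, action strictly drops from the rounding alone). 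Pick's formula certifies the index is preserved in both cases.

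By contrast, your route through extremal edges, slope trichotomies, and the $C$/$D$ operations is unnecessary and does not resolve the core difficulty. A couple of specific missteps: your horizontal case $P_\alpha=h_{(1,0)}e_{(1,0)}^n$ has odd index $n$, so it never arises under the hypothesis $I=2k$; and your proposed replacement $e_{(1,0)}^{n+1}$ has the \emph{same} action, not strictly smaller. Once you use the parity $\Rightarrow$ two ``$h$'' labels observation, the entire slope case analysis disappears.
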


\begin{proof}
    If $\alpha$ have even grading and also some positive hyperbolic orbit, by the parity property of the ECH index, $\alpha$ must have at least two positive hyperbolic orbit. Consider $P_{\alpha}$ its associated $K$-lattice path. Consider the new $K$-lattice path $Q$ construct as follows: Let $(a,b)$ a vertex of $P_{\alpha}$ which has some edge labeled by ``h" with start or end point in $(a,b)$. There is two cases, the first is when both edges adjacents to $(a,b)$ are labeled by ``h", in this case we doing rounding in this vertex but all new edges are labeled by ``e", of course this new generator has strictly less action, moreover, of course, its combinatorial grading is the same as of $P_{\alpha}$ by Pick's formula.

    Suppose now that not the two adjacent edges to $(a,b)$ are labeled by ``h", in this case we make rounding in the vertice $(a,b)$, while we change the label of some other edge labeled by ``h" in $p_{\alpha}$ which is not adjacent to $(a,b)$, as in the last case this strictly decrease the action, but conserves the ECH grading.

    Because the ECH grading is conserved, in each step of this procedure we have a even number of positive hyperbolic orbits. Therefore, we can apply until have a generator with only elliptic orbits or negative hyperbolic.
\end{proof}

\begin{proof} of Theorem \ref{combcapacity}

Follows from the two previous lemmas that, to compute the $k$-ECH spectrum we just need consider generators without positive hyperbolic orbits. Therefore, the (LHS) is greater or equal to (RHS) of the equation \ref{formulaspecECH}.

In other hand, given a $K$-lattice path $P$ which attain the minimum in the (RHS) of the equation \ref{formulaspecECH}, consider $\alpha$ the ECH generator such that $P_{\alpha}=P$, then $\alpha$ has ECH grading equal to $2k$. Since there are no positive hyperbolic orbits, follows from the combinatorial description of the ECH differential that $\partial \alpha=\emptyset$. By Lemma \ref{lemma: eECHhomologous}, $\alpha$ induces a non-trivial homology class in $ECH_{2k}(U^*K,\xi_{std},0)$. By definition of ECH capacity $c_k^{ECH}(U^*K,\lambda_{std})\leq A(P)$, but we take a arbitrary $K$-lattice path $P$ without edges labeled by ``h" and with combinatorial grading equal to $2k$, then the (LHS) is less than or equal to the (RHS) in the equation \ref{formulaspecECH}.
    
\end{proof}

\section{Combinatorial obstruction to symplectic embeddings and Gromov width}

\subsection{Results about symplectic cobordism}

Consider $(Y_+,\lambda_+)$ and $(Y_-,\lambda_-)$ closed three-manifolds with contact forms. A \textit{strong symplectic cobordism} from $(Y_+,\lambda_+)$ to $(Y_-,\lambda_-)$ is a compact symplectic four-manifold $(W,\omega)$ such that
$$
\partial W=Y_+ -Y_-\quad and \quad \omega|_{Y_{\pm}}=d\lambda_{\pm}.
$$

Let $(W,\omega)$ be a strong symplectic cobordism as above, one can choose a neighborhood $N_+$ of $Y_+$ in $W$ and an identification of it with $(-\epsilon,0]\times Y_+$ for some $\epsilon>0$, such that $\omega=e^s\lambda_+$, where $s$ is the real coordinate in $(-\epsilon,0]$. Likewise one can choose a neighborhood $N_-$ of $Y_-$ in $W$, identified with $[0,\epsilon)\times Y_-$ on which $\omega=e^s\lambda_-$. Fix this choices, we define the ``completion" of $(W,\omega)$ to be the four-manifold
$$
\overline{W}=((-\infty,0]\times Y_-)\cup_{Y_-}W\cup_{Y_{+}}([0,\infty)\times Y_+)
$$
glued using the neighborhood identifications above.

An almost complex structure $J$ on $\overline{W}$ is ``cobordism-admissible" if satisfies
\begin{itemize}
    \item $J$ is $\omega$-compatible on $W$.
    \item On the sets $(-\infty,0]\times Y_-$ and $[0,\infty)\times Y_+$, $J$ agrees with $\lambda_{\mp}$-compatible almost complex structure $J_{\mp}$.
\end{itemize}

Let $J$ be a cobordism-admissible almost complex structure. If $\alpha_{\pm}$ are orbit sets in $Y_{\pm}$, we consider the set $\mathcal{M}^J(\alpha_+,\alpha_-)$ of $J$-holomorphic currents in $\overline{W}$. We can defined in the same way as in the symplectization the Fredholm index, ECH index, $J_0$ index for such curves, for more details see \cite{Hutchings2}.

\begin{definition}
    Fix a cobordism-admissible almost complex structure $J$ on $\overline{W}$ which restricts to $\lambda_{\pm}$-compatible almost complex structures $J_{\pm}$ on the ends of the completion. Fix orbit sets $\alpha_{\pm}$ in $Y_{\pm}$. A broken $J$-holomorphic current from $\alpha_+$ to $\alpha_-$ is a tuple
    $$
    \mathcal{C}=(\mathcal{C}(N_-),\mathcal{C}(N_{-}+1),...,C(N_+)),
    $$
    with $N_-\leq 0\leq N_+$, such that there exist orbit sets $\alpha_-=\alpha_{-}(N_-),...,\alpha_-(0)$ in $Y_-$, and orbit sets $\alpha_+(0),...,\alpha_{+}(N_+)=\alpha_+$ in $Y_{+}$, satisfying:
    \begin{itemize}
        \item $\mathcal{C}(i)\in \mathcal{M}^{J_-}(\alpha_{-}(i+1),\alpha_{-}(i))/\mathbb{R}$ for $i=N_-,...,-1$.
        \item $\mathcal{C}_0 \in \mathcal{M}^J(\alpha_{+}(0),\alpha_{-}(0))$.
        \item $\mathcal{C}(i)\in \mathcal{M}^{J_-}(\alpha_{+}(i),\alpha_{+}(i-1))/\mathbb{R}$ for $i=1,...,N_+$.
        \item If $i\neq 0$, then there exist a component of $\mathcal{C}(i)$ which is not a trivial cylinder.
    \end{itemize}
    The holomorphic currents $\mathcal{C}(i)$ are called the ``levels" of the broken $J$-holomorphic current $\mathcal{C}$. The ECH index of the broken $J$-holomorphic current $\mathcal{C}$ is defined as
    $$
    I(\mathcal{C})=\sum_{i=N_-}^{N_{+}} I(\mathcal{C}(i)).
    $$
\end{definition}

\subsection{L-tame cobordisms}

Let $(Y,\lambda)$ be a nondegenerate contact three-manifold. Suppose that $\gamma$ is an elliptic Reeb orbit with rotation angle $\theta$.

\begin{definition}
    Let $L>0$ and let $\gamma$ be an embedded elliptic Reeb orbit with action $\mathcal{A}(\gamma)<L$.
    \begin{itemize}
        \item $\gamma$ is called $L$-positive if its rotation angle $\theta\in (0,\mathcal{A}(\gamma)/L) \mod 1$.
        \item We say that $\gamma$ is $L$-negative if its rotation angle $\theta\in (-\mathcal{A}(\gamma)/L,0) \mod 1$.
    \end{itemize}
\end{definition}

Let $(W,\omega)$ be a strong symplectic cobordism from $(Y_+,\lambda_+)$ to $(Y_-,\lambda_-)$, where the boundary are nondegenerate contact three-manifold.

\begin{definition}
    Suppose $\alpha_{\pm}$ are orbit sets for $\lambda_{\pm}$, define $e_{L}(\alpha_+,\alpha_-)$ to be the total multiplicity of all elliptic orbits in $\alpha_+$ that are $L$-negative, plus the total multiplicity of all elliptic Reeb orbits in $\alpha_-$ that are $L$-positive. If $\mathcal{C}\in \mathcal{M}^J(\alpha_+,\alpha_-)$, we define $e_{L}(\mathcal{C})=e_{L}(\alpha_+,\alpha_-)$.
\end{definition}

Fix $J$ a cobordism-admissible almost complex structure on $\overline{W}$, and let $L\in \mathbb{R}$ positive real number. If $C$ is an irreducible $J$-holomorphic curve from $\alpha_+$ to $\alpha_-$, let $g(C)$ denote the genus of $C$, and $h(C)$ denote the number of ends of $C$ at hyperbolic Reeb orbits.

\begin{definition}
    The strong symplectic cobordism $(W,\omega,J)$ is $L$-tame if for every $C$ embedded irreducible $J$-holomorphic curve in $\mathcal{M}^J(\alpha_+,\alpha_-)$, such that there exists a positive integer $d$ such that $\mathcal{A}(\alpha_{\pm})\leq L/d$ and $I(dC)\leq C$, then
    \begin{equation}\label{eq: L-tame}
        2g(C)-2+ind(C)+h(C)+2e_{L}(C)\geq 0.
    \end{equation}
\end{definition}

In general, we can not to guarantee that multiply covered curves in cobordism have nonnegative ECH index, the significance of $L$-tameness is the following proposition.

\begin{proposition}\label{prop: cobLtame}{(See \cite{Hutchings0}, Proposition 4.6.)} Suppose that $J$ is generic and $(W,\omega,J)$ is $L$-tame. Let
    $$
    \mathcal{C}=\sum_{k} d_k\mathcal{C}_k\in \mathcal{M}^J(\alpha_+,\alpha_-).
    $$
    be a $J$-holomorphic current in $\overline{W}$ with $\mathcal{A}(\alpha_{\pm})<L$. Then:
    \begin{enumerate}
        \item $I(\mathcal{C})\geq 0$.
        \item If $I(\mathcal{C})=0$, then:
            \begin{itemize}
                \item $I(C_k)=0$ for each $k$.
                \item If $i\neq j$, then $C_i$ and $C_j$ do not both have positive ends at covers of the same $L$-negative elliptic Reeb orbit, and $C_i$ and $C_j$ do not both have negative ends at covers of the same $L$-positive elliptic Reeb orbits.
                \item If $d_k'$ are integers with $0\leq d_k'\leq d_k$, then
                $$
                I(\sum_{k} d_k'C_k)=0.
                $$
            \end{itemize}
    \end{enumerate}
\end{proposition}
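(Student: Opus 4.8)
\textbf{Proof plan for Proposition \ref{prop: cobLtame}.}

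The plan is to follow the strategy of Hutchings in \cite{Hutchings0} (the cited Proposition 4.6), adapting the bookkeeping to our situation. The starting point is the index-inequality machinery: for an embedded irreducible $J$-holomorphic curve $C$ in a completed cobordism one has the relation $I(C) \geq \mathrm{ind}(C) + 2\delta(C)$, where $\delta(C)\geq 0$ is the count of singularities, which already gives that $I$ is nonnegative on embedded \emph{somewhere injective} components. The difficulty, and the whole point of $L$-tameness, is multiply-covered components: a priori $I(d C)$ can be negative for $d>1$. So the first step is to recall the writhe bound and the relative adjunction formula that express $I(dC)$ in terms of $\mathrm{ind}(C)$, $g(C)$, $h(C)$, the linking/writhe contributions, and to recall Hutchings' lemma (Lemma 4.?? in \cite{Hutchings0}) bounding these covering contributions below by a quantity controlled by $2g(C)-2+\mathrm{ind}(C)+h(C)+2e_L(C)$ once the action bound $\mathcal{A}(\alpha_\pm)\leq L/d$ is in force; this is exactly where the $L$-positive/$L$-negative dichotomy of elliptic orbits enters, since it controls the sign of the partition-induced writhe terms for covers. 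Granting the hypothesis \eqref{eq: L-tame}, one concludes $I(dC)\geq 0$ for all $d$ with $\mathcal{A}(\alpha_\pm)\leq L/d$.

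Next I would assemble the global current. Write $\mathcal{C}=\sum_k d_k\mathcal{C}_k$ with the $\mathcal{C}_k$ distinct irreducible somewhere-injective curves. By the additivity and the quadratic behavior of the ECH index under unions (Remark \ref{rem: quadratic} and Proposition \ref{prop: ECHPRO}, now in the cobordism setting as in \cite{Hutchings2}),
$$
I(\mathcal{C}) = \sum_k I(d_k\mathcal{C}_k) + 2\sum_{i<j} d_i d_j\, \big(\mathcal{C}_i\cdot \mathcal{C}_j\big)_{\mathrm{rel}},
$$
where the relative intersection pairing between distinct irreducible curves is nonnegative by positivity of intersections (here one must check that distinct $J$-holomorphic curves meet in finitely many points with positive local intersection number, and that the ``linking at infinity'' correction terms are also nonnegative under the action constraint — this is again packaged in \cite{Hutchings0}). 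Combining with $I(d_k\mathcal{C}_k)\geq 0$ from the previous step gives item (1), $I(\mathcal{C})\geq 0$.

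For item (2), suppose $I(\mathcal{C})=0$. Since every summand in the displayed identity is nonnegative, each must vanish: $I(d_k\mathcal{C}_k)=0$ for each $k$ and the pairwise relative intersection numbers vanish, $d_i d_j(\mathcal{C}_i\cdot\mathcal{C}_j)_{\mathrm{rel}}=0$ for $i\neq j$. From $I(d_k\mathcal{C}_k)=0$ and the inequality $I(d_k\mathcal{C}_k)\geq I(\mathcal{C}_k) + (\text{nonnegative covering terms})$ one deduces $I(\mathcal{C}_k)=0$ and that all the covering/writhe terms vanish; tracing through which terms these are shows that no two distinct components can have positive ends at covers of the same $L$-negative elliptic orbit nor negative ends at covers of the same $L$-positive elliptic orbit (these were precisely the terms forced to vanish). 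Finally, for $0\leq d_k'\leq d_k$, the same expansion applied to $\sum_k d_k' \mathcal{C}_k$ has every term nonnegative and each term is dominated by the corresponding term for $\mathcal{C}$ (monotonicity in $d_k'$ of both $I(d_k'\mathcal{C}_k)$ — which is $0$ whenever $\mathcal{A}(\alpha_\pm)\leq L/d_k'$, a consequence of $d_k'\leq d_k$ — and of the intersection terms), so it must also vanish, giving $I(\sum_k d_k'\mathcal{C}_k)=0$.

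The main obstacle I expect is the first step: establishing the lower bound for the covering contributions to $I(dC)$ in terms of \eqref{eq: L-tame}. This requires carefully importing the writhe estimates and the analysis of how partition conditions for multiple covers of elliptic orbits interact with the $L$-positive/$L$-negative condition — essentially reproving the relevant lemmas of \cite{Hutchings0} in the present cobordism. Everything after that is the formal additivity/positivity argument, which is routine once the per-component bound is in hand. Since the proposition is quoted verbatim from \cite{Hutchings0}, in the paper itself one would simply cite it; the plan above is the proof one would give if it had to be reproduced.
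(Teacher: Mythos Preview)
The paper does not give its own proof of this proposition: it is stated with the citation ``(See \cite{Hutchings0}, Proposition 4.6.)'' and used as a black box. Your proposal is a reasonable outline of the argument from \cite{Hutchings0}, and you yourself note in the last paragraph that in the paper one would simply cite it; so there is nothing to compare against here, and your plan is the correct one if the proof had to be reproduced.

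One small imprecision worth flagging: in your item (2) you write ``from $I(d_k\mathcal{C}_k)=0$ and the inequality $I(d_k\mathcal{C}_k)\geq I(\mathcal{C}_k) + (\text{nonnegative covering terms})$ one deduces $I(\mathcal{C}_k)=0$''. The actual mechanism in \cite{Hutchings0} is slightly different: the key lemma gives a lower bound for $I(dC)$ of the form $I(dC)\geq d\cdot\mathrm{ind}(C)+\tfrac{d(d-1)}{2}\bigl(2g(C)-2+\mathrm{ind}(C)+h(C)+2e_L(C)\bigr)$, and the $L$-tameness hypothesis is invoked \emph{contrapositively}: if $I(dC)\leq 0$ then the bracketed quantity is $\geq 0$, whence $I(dC)\geq d\cdot\mathrm{ind}(C)\geq 0$ by genericity of $J$. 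This yields $I(d_kC_k)\geq 0$ directly, and the vanishing of the bracketed term when $I(d_kC_k)=0$ is what forces $\mathrm{ind}(C_k)=0$ and the constraints on shared elliptic ends. Your sketch has the right shape but conflates this step with a generic ``$I(dC)\geq I(C)+\text{nonneg}$'' inequality that is not quite what is proved.
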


\begin{definition}
    We call the strong symplectic cobordism $(W,\omega)$ weakly exact if there exists a $1$-form $\lambda$ in $X$ such that $d\lambda=\omega$.
\end{definition}

\begin{theorem}\label{thm: cobordism map}{(See \cite{Hutchings0}, Theorem 3.5.)}
    Let $(Y_+,\lambda_+)$ and $(Y_-,\lambda_-)$ be closed oriented three-maniflds with contact forms, and let $(X,\omega)$ be a weakly exact symplectic cobordism from $(Y_+,\lambda_+)$ to $(Y_-,\lambda_-)$. Then there are canonical maps
    $$
    \Phi^L:ECH^L(Y_+,\lambda_+,0)\longrightarrow ECH^L(Y_-,\lambda_-,0)
    $$
    for each $L\in \mathbb{R}$ with the following properties:
    \begin{enumerate}
        \item If $L<L'$, then the diagram
        \[
        \begin{tikzcd}
        ECH^L(Y_+,\lambda_+,0) \arrow[r, "\Phi^{L}"] \arrow[d, ""'] & ECH^L(Y_-,\lambda_-,0) \arrow[d, ""] \\
        ECH^{L'}(Y_+,\lambda_+,0\arrow[r, "\Phi^{L'}"]                  & ECH^{L'}(Y_-,\lambda_-,0)
        \end{tikzcd}
        \]
        commutes. In particular,
        $$
        \Phi:\lim_{L\rightarrow +\infty} \Phi^L:ECH(Y_+,\lambda_+,0)\rightarrow ECH(Y_-,\lambda_-,0)
        $$
        is well-defined.

        \item $\Phi^L$ commutes with the $U$-map.
        
        \item If $J$ is a cobordism-admissible almost complex structure on $\overline{W}$, restricting to generic $\lambda_{\pm}$-compatible almost complex structures $J_{\pm}$ on the ends of the completion, then $\Phi^L$ is induced by a chain map 
        $$
        \phi:ECC^L(Y_+,\lambda_+,0,J_+)\longrightarrow ECC^L(Y_-,\lambda_-,0,J_-),
        $$
        such that:
            \begin{itemize}
                \item If $\alpha_{\pm}$ are admissible orbit sets for $\lambda_{\pm}$, with $[\alpha_{\pm}]=0$ and $\mathcal{A}(\alpha_{\pm})<L$, and if the coefficient $\langle \phi(\alpha_+),\alpha_-\rangle\neq 0$, then there exists a broken $J$-holomorphic current $\mathcal{C}$ from $\alpha_+$ to $\alpha_-$.
                \item The broken $J$-holomorphic current $\mathcal{C}$ in the first bullet satisfies $I(\mathcal{C})=0$.
            \end{itemize}
    \end{enumerate}
\end{theorem}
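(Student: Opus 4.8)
The statement is Theorem~3.5 of \cite{Hutchings0}, so the plan is to follow the Hutchings--Taubes construction of filtered ECH cobordism maps. First I would transport the problem to the Seiberg--Witten side via the Taubes isomorphism (Theorem~\ref{Taubesiso}), which identifies $ECH(Y_\pm,\lambda_\pm,0)$ with $\widehat{HM}^{-*}(Y_\pm,\mathfrak{s}_{\xi_\pm})$. Seiberg--Witten Floer cohomology is functorial under cobordisms by Kronheimer--Mrowka: the cobordism $(X,\omega)$, after completing to $\overline{W}$ and choosing a metric and perturbation, induces a map on $\widehat{HM}$ by counting solutions of the Seiberg--Witten equations on $\overline{W}$. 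Defining $\Phi$ as the conjugate of this map by the two Taubes isomorphisms gives canonical maps $ECH(Y_+,\lambda_+,0)\to ECH(Y_-,\lambda_-,0)$, and since the $U$-map corresponds to the natural $\widehat{HM}$-module structure, $\Phi$ automatically commutes with $U$, which is property~(2).

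The real work is the filtered refinement and the holomorphic curve axiom. For the filtration I would perturb the Seiberg--Witten equations on $\overline{W}$ using $\omega$ together with a large parameter $r$, as in \cite{Hutchings0}; Taubes's a priori estimates then bound the energy, and hence the ``spectral flow'', of any solution contributing to the cobordism map in terms of the symplectic actions of its asymptotic orbit sets. Weak exactness enters here: a primitive $\lambda$ of $\omega$ controls $\int_C\omega$ for curves $C$ in $\overline{W}$ by the actions of the ends, which is exactly what is needed for the energy bound. It follows that for $r$ large the induced chain map preserves the action filtration, descending to $\Phi^L:ECH^L(Y_+,\lambda_+,0)\to ECH^L(Y_-,\lambda_-,0)$; naturality under enlarging $L$ gives the commuting square in~(1) and, passing to the limit, the map $\Phi$.

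For property~(3) I would fix a cobordism-admissible $J$ restricting to generic $J_\pm$ on the ends. If the chain-level coefficient $\langle\phi(\alpha_+),\alpha_-\rangle$ is nonzero, then for each large $r$ there is a Seiberg--Witten solution on $\overline{W}$ asymptotic to the configurations associated with $\alpha_+$ and $\alpha_-$. Letting $r\to\infty$ and applying Taubes's convergence theorem together with Gromov/SFT-type compactness in the completed cobordism produces a broken $J$-holomorphic current $\mathcal{C}$ from $\alpha_+$ to $\alpha_-$ of the required shape. To see $I(\mathcal{C})=0$: the map $\phi$ is induced by a degree-zero map on the Seiberg--Witten side, hence preserves the relative ECH grading; the ECH index is additive over the levels of $\mathcal{C}$; and in a weakly exact cobordism each level has nonnegative ECH index (this is the weakly exact case of Proposition~\ref{prop: cobLtame}, and can also be extracted from the index inequality for somewhere-injective curves together with weak exactness). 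Therefore every level has $I=0$, and so does their sum.

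The main obstacle is the analytic input, which is really Taubes's theorem: proving compactness of the Seiberg--Witten moduli spaces over the noncompact completion with no energy escaping into the cylindrical ends beyond the prescribed asymptotics, and identifying the $r\to\infty$ limit with an honest broken holomorphic current. A secondary technical difficulty is the quantitative estimate matching Seiberg--Witten ``energy'' with symplectic action up to errors negligible relative to $r$, which is what makes the action filtration manifest on the ECH side; and one must check independence of the construction from the auxiliary choices (metric, perturbation, almost complex structure) up to chain homotopy so that $\Phi^L$ is genuinely canonical.
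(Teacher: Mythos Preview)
The paper does not give its own proof of this theorem: it is stated with the attribution ``(See \cite{Hutchings0}, Theorem 3.5.)'' and is used as a black box in the proof of Theorem~\ref{thm: combbeyond}. So there is no in-paper argument to compare against. Your outline is a fair summary of how the result is actually established in \cite{Hutchings0} (via Taubes's isomorphism, the Seiberg--Witten cobordism maps of Kronheimer--Mrowka, the large-$r$ perturbation to get the action filtration, and the $r\to\infty$ limit to extract broken holomorphic currents), and that is the correct provenance to cite.

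One small overreach in your sketch: to conclude $I(\mathcal{C})=0$ you do not need, and should not invoke, nonnegativity of the ECH index on the cobordism level. Proposition~\ref{prop: cobLtame} requires $L$-tameness, which is an extra hypothesis beyond weak exactness and is \emph{not} part of the statement of Theorem~\ref{thm: cobordism map}; indeed in the paper $L$-tameness is verified separately for the specific cobordism $D^*K\setminus\varphi(X_\Omega)$ inside the proof of Theorem~\ref{thm: combbeyond}. The conclusion $I(\mathcal{C})=0$ in property~(3) follows directly from the fact that the Seiberg--Witten cobordism map has degree zero, so $\phi$ preserves the absolute grading and hence any broken current produced between $\alpha_+$ and $\alpha_-$ has total ECH index $I(\alpha_+)-I(\alpha_-)=0$. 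The finer statement that each level has $I=0$ is what one gets \emph{after} establishing $L$-tameness, and is used later in the paper, not here.
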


\subsection{Toric domains in $\mathbb{C}^2$}

In this section, we describe a well know class of symplectic manifolds called toric domains in $\mathbb{C}^2$.

\begin{definition}
    Let $\Omega$ be a region in the first quadrant of the plane. The toric domain $X_{\Omega}$ associated to it is the set
    $$
    X_{\Omega}=\{(z_1,z_2)\in\mathbb{C}^2\mid \pi(|z_1|^2,|z_2|^2)\in \Omega\},
    $$
    with the restriction of the standard symplectic form on $\mathbb{C}^2$.
    $$
    \omega=\sum_{i=1}^4dx_i\wedge dy_i.
    $$
\end{definition}

\begin{example}
    The ellipsoid $E(a,b)$ is the toric domain $X_{\Omega}$, where $\Omega$ is the triangle with vertices in $(0,0)$, $(a,0)$ and $(0,b)$. In particula, the ball $B(a)$ is defined as the ellipsoid $E(a,a)$.
    
\begin{figure}[h]
    \centering
    \begin{tikzpicture}[scale=0.6]
    \draw[->] (0,0) -- (6,0) node[right] {\small $x$};
    \draw[->] (0,0) -- (0,4) node[above] {\small $y$};
    
    \coordinate (O) at (0,0);
    \coordinate (A) at (5,0); 
    \coordinate (B) at (0,3); 
    
    \fill[blue!20,opacity=0.6] (O) -- (A) -- (B) -- cycle;
    
    \draw[thick,blue] (O) -- (A) -- (B) -- cycle;
    
    \filldraw[black] (O) circle (1.5pt) node[anchor=north east] {\small $(0,0)$};
    \filldraw[black] (A) circle (1.5pt) node[anchor=north west] {\small $(a,0)$};
    \filldraw[black] (B) circle (1.5pt) node[anchor=south east] {\small $(0,b)$};
\end{tikzpicture}
\caption{Ellipsoid $E(a,b)$}
\end{figure}
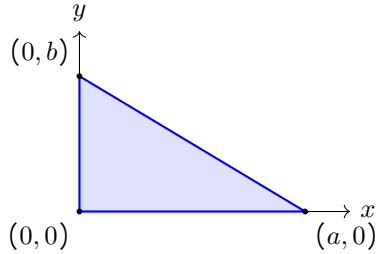
\end{example}

\begin{definition}
    A toric domain $X_{\Omega}$ is convex if
    $$
    \Omega=\{(x,y)\in \mathbb{R}^2\mid 0\leq x\leq A, 0\leq y\leq f(x)\},
    $$
    where $f:[0,A]\rightarrow \mathbb{R}_{\geq 0}$ is a nonincreasing concave function with $f(0)>0$ and $f(A)=0$.
\end{definition}

We now describe the combinatoric generators of the boundary of a convex toric domain.

\begin{definition}
    A convex integral path $\Lambda$ is a path in the plane such that:
    \begin{itemize}
        \item The initial point is $(0,y(\Lambda))$ and the end point is $(x(\Lambda,0))$, where $x(\Lambda)$ and $y(\Lambda)$ are non-negative integers.
        \item The path $\Lambda$ is the graph of a piecewise linear concave function $f:[0,x(\Lambda)]\rightarrow [0,y(\Lambda)]$, with $f'(0)\leq 0$, possibly with a vertical line segment at the right.
        \item The non-smooth points of $f$ are lattice points.
    \end{itemize}
\end{definition}

\begin{definition}
    A convex generator is a convex integral path $\Lambda$ such that:
    \begin{itemize}
        \item Each edge of $\Lambda$ is labeled by ``e" or ``h".
        \item horizontal or vertical is always labeled by ``e".
    \end{itemize}
\end{definition}

Following \cite{Hutchings0} we adopt a notation for a convex generator as follows.

Consider $a$ and $b$ nonnegative integers with $\gcd(a,b)=1$. For $m\geq 1$ an integer, $e_{(a,b)}^m$ denotes the edge whose displacement vector is $(ma,-mb)$, labeled by ``e". Moreover, $h_{(a,b)}$ denotes the edge whose displacement vector is $(a,b)$. A convex generator is a commutative formal product of the symbols $e_{(a,b)}$ and $h_{(a,b)}$, where the factor $h_{(a,b)}$ can not repeat.

\begin{definition}
    Let $\Lambda$ be a convex generator, its ECH index is given by
    $$
    I(\Lambda)=2(L(\Lambda)-1)-h(\Lambda),
    $$
    where $L(\Lambda)$ is the number of lattice point in the region enclosed by $\Lambda$ an the axes, and $h(\Lambda)$ is the number of edges labeled by ``h".
\end{definition}

\begin{example}
    We now list some small ECH index convex generators:
    \begin{itemize}
        \item There is only one $I=0$ convex generator, the integral path constant $(0,0)$;
        \item There are no convex generator with $I=1$;
        \item $I=2$; $e_{1,0}$ and $e_{0,1}$;
        \item $I=3$; $h_{1,1}$;
        \item $I=4$; $e_{1,0}^2$, $e_{1,1}$ and $e_{0,1}^2$;
        \item $I=5$; $h_{2,1}$ and $h_{1,2}$;
        \item $I=6$; $e_{1,0}^3$, $e_{0,1}^3$, $e_{2,1}$, $e_{1,2}$ and $e_{1,0}e_{0,1}$.
    \end{itemize}
\end{example}

\begin{definition}
    If $\Lambda$ is a convex generator and $X_{\Omega}$ is a convex toric domain, define the symplectic action of $\Lambda$ with respect of $X_{\Omega}$ by
    $$
    A_{X_{\Omega}}(\Lambda)=A_{\Omega}(\Lambda)=\sum_{\nu \in \textrm{Edge}(\Lambda)}\vec{\nu} \times p_{\Omega,\nu}.
    $$
    Here for a edge $\nu$ the vector $\vec{\nu}$ denotes the lower right point of $\nu$ minus the left upper left point. The point $p_{\Omega,\nu}$ denotes a point in the tangent line to $\partial \Omega$ parallel to $\nu$. The operation $\times$ is the determinant of a pair of vectors in the plane.
\end{definition}

\subsubsection{Combinatorial ECH of boundary of convex toric domains}

Let $X_{\Omega}$ be a smoth convex toric domain with boundary $Y=\partial X_{\Omega}$, this is a smooth star-shaped hypersurface in $\mathbb{R}^4$ diffeomorphic to $S^3$, therefore the $1$-form $\lambda_{std}$ restricts to a contact form on $Y$. This contact form in general is degenerate, we can perturb the contact form as in the case of the unit cotangent bundle of the Klein bottle, in this case there exist a bijection of a convex generator and a ECH generator. For a convex generator $\Lambda$, denote $i(\Lambda)$ the corresponding ECH generator.

\begin{lemma}\label{lemma: pertubtoricdomain}(See \cite{Hutchings0}, Lemma 5.4.)
    Let $X_{\Omega}$ be a smooth convex toric domain with boundary $Y$. Then for every $\epsilon,L>0$, there is a contact form $\lambda$ in $Y$ with the following properties:
    \begin{enumerate}
        \item $\lambda$ is nondegenerate.
        \item $\lambda=f(\lambda_{std}|_{Y})$, where $f:Y\rightarrow \mathbb{R}_{>0}$ is a smooth function with $||f-1||_{C^0}<\epsilon$.
        \item All hyperbolic orbits with action less than $L$ are positive hyperbolic, and all embedded elliptic Reeb orbits with action less than $L$ are $L$-positive.
        \item There is a bijection $i$, from the set of extended convex generators $\Lambda$ with action less than $L$ to the set of orbit sets $\alpha$ for $\lambda$ with action less than $L$, such that if $\alpha=i(\Lambda)$, then:
            \begin{itemize}
                \item $\alpha$ is admissible (in the ECH sense) if and ony if $\Lambda$ is a convex generator
                \item $|\mathcal{A}(\alpha)-\mathcal{A}_{\Omega}(\Lambda)|<\epsilon$.
                \item $I(\alpha)=I(\Lambda)$.
                \item $J_0(\alpha)=I(\Lambda)-2(x(\Lambda)+y(\Lambda))-e(\Lambda)$.
            \end{itemize}
    \end{enumerate}
\end{lemma}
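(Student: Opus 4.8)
The plan is to reproduce, in the present toric model, the Morse--Bott construction of Bourgeois \cite{Bourgeois} exactly as it was used for $T^3$ in \cite{Hutchings4} and for $U^*K$ in Section \ref{pertubation}; this is precisely Lemma 5.4 of \cite{Hutchings0}, so the argument is available in the literature and I only sketch it. First I would describe the Reeb dynamics of $(\lambda_{std}|_Y,Y)$ directly. Writing $\Omega=\{0\le x\le A,\ 0\le y\le f(x)\}$ with $f$ concave nonincreasing, $Y=\partial X_\Omega$ is foliated away from the two circles $C_1=Y\cap\{z_2=0\}$ and $C_2=Y\cap\{z_1=0\}$ by the Lagrangian tori $T_{(x,y)}=\{\pi(|z_1|^2,|z_2|^2)=(x,y)\}$ with $(x,y)$ in the open arc of $\partial\Omega$ in the interior of the first quadrant; on each such torus the Reeb vector field is a constant vector field directed along the outward conormal to $\partial\Omega$ at $(x,y)$, so $T_{(x,y)}$ carries a circle of closed Reeb orbits precisely when that direction is rational, the embedded orbits lying in the primitive homology class attached to the corresponding edge in the sense of the preceding subsection, while $C_1,C_2$ are themselves embedded elliptic orbits corresponding to $e_{(1,0)}$ and $e_{(0,1)}$.

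Next, fix $L>0$. Only finitely many tori $T_{(x,y)}$ carry Reeb orbits of action $<L$; on each I choose a Morse function $S^1\to\mathbb{R}$ with exactly two critical points, extend it to a tubular neighborhood as in \cite{Bourgeois}, sum all these local data into a function $\overline f$, and set $\lambda=(1+\epsilon\overline f)\,\lambda_{std}|_Y$. By the analogue of Lemma \ref{actionless}, for $\epsilon=\epsilon(L)$ small enough each perturbed torus contributes exactly two nondegenerate orbits, one elliptic $e_{(a,b)}$ and one positive hyperbolic $h_{(a,b)}$, no new short orbits are created, and $C_1,C_2$ survive as nondegenerate elliptic orbits. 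Shrinking $\epsilon$ further, the rotation angle of every elliptic orbit of action $<L$ can be made to lie in $(0,\mathcal{A}/L)\bmod 1$ and the eigenspace of every hyperbolic orbit of action $<L$ rotates by an even multiple of $\pi$ with respect to the natural trivialization, giving the $L$-positivity and positive hyperbolicity required in (3). This also produces the bijection of (4): to an (extended) convex generator $\Lambda=\prod e_{(a_k,b_k)}^{m_k}\prod h_{(a_l,b_l)}$ one assigns $i(\Lambda)=\prod(e_{(a_k,b_k)},m_k)\prod(h_{(a_l,b_l)},1)$, which is ECH-admissible exactly when no $h$-edge repeats, i.e. when $\Lambda$ is a convex generator. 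For the action estimate, the action of the unperturbed Morse--Bott orbit on $T_{(x,y)}$ in the direction of an edge $\nu$ equals $\vec\nu\times p_{\Omega,\nu}$, which is exactly the edge contribution to $\mathcal{A}_\Omega(\Lambda)$; hence $\mathcal{A}(i(\Lambda))$ differs from $\mathcal{A}_\Omega(\Lambda)$ only by the $O(\epsilon)$ change coming from the conformal factor $1+\epsilon\overline f$, and $|\mathcal{A}(\alpha)-\mathcal{A}_\Omega(\Lambda)|<\epsilon$ after shrinking $\epsilon$.

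Finally, the index identities $I(\alpha)=I(\Lambda)=2(L(\Lambda)-1)-h(\Lambda)$ and $J_0(\alpha)=I(\Lambda)-2(x(\Lambda)+y(\Lambda))-e(\Lambda)$ are pure ECH-index computations in the toric model, carried out just as for $T^3$. I would fix a trivialization $\tau$ over each orbit coming from a $\partial_\theta$-type vector field (and the evident one over $C_1,C_2$), build an admissible surface $S$ for $(\alpha,\emptyset)$ as a movie of lattice circles with the $\partial\Omega$-parameter decreasing down to the two axis orbits (the analogue of Operation IV together with Operation III), compute $c_\tau(S)=0$ since there are no negative hyperbolic orbits, compute $Q_\tau(S)$ as the usual alternating sum of $2\times2$ determinants of the edge vectors of $\Lambda$, which by Pick's theorem equals twice the area enclosed by $\Lambda$ and the axes, and compute $CZ^I_\tau$ from $CZ_\tau(e^k)=1$ and $CZ_\tau(h)=0$; assembling these and applying Pick's formula gives $I(\alpha)=2(L(\Lambda)-1)-h(\Lambda)$, and the same surface plus the $J_0$-index formula yields the stated expression for $J_0(\alpha)$. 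The main obstacle I expect is bookkeeping rather than conceptual: making the Morse--Bott perturbation globally consistent over all action-$<L$ tori while controlling the behavior near the degeneration of the torus family at the axis circles $C_1,C_2$, and pinning down signs and trivialization-dependence in the $Q_\tau$ computation so that the lattice-point count comes out exactly as $I(\Lambda)$ and $J_0(\Lambda)$. Since all of this is Lemma 5.4 of \cite{Hutchings0}, I would cite that for the details.
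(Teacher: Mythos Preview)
Your proposal is correct and, in fact, goes beyond what the paper does: the paper gives no proof of this lemma at all, simply stating it with the citation ``(See \cite{Hutchings0}, Lemma 5.4.)'' and using it as a black box. Your sketch of the Morse--Bott perturbation argument is precisely the content of that reference, so your approach and the paper's agree in the sense that both defer to \cite{Hutchings0} for the actual work.
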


We have moreover a similar lemma for the cotangent disk bundle of the flat Klein bottle with boundary $U^*(K)$. In fact, we use our combinatorial description, we see that all embedded elliptic Reeb orbit in our pertubations of $(U^*(K),\lambda_{std}|_{U^*(K)})$ can be taken to be $L$-positive.

The next lemma is important to ensure that in the decomposition of a generator in the Theorem \ref{thm: combbeyond}, each factor is a $K$-lattice path.,

\begin{lemma}\label{lemma: 0homology}
    Let $\alpha$ be an admissible orbit set in $U^*(K)$ which has no negative hyperbolic orbits comming from different Klein bottles $K_-$ or $K_+$. If $\alpha$ is nullhomologous in $D^*(K)$, then it is nullhomologous in $U^*(K)$.
\end{lemma}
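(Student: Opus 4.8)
The strategy is to compute the map on first homology induced by the inclusion $j:U^*(K)\hookrightarrow D^*(K)$ and to check that, under the hypothesis on $\alpha$, the class $[\alpha]$ can only die in $D^*(K)$ if it was already zero in $U^*(K)$. Recall from the discussion in Section \ref{ECHtheory} that $H_1(U^*(K),\mathbb{Z})\cong \mathbb{Z}\oplus\mathbb{Z}_2\oplus\mathbb{Z}_2$, and from the table of homology classes of Reeb orbits the three coordinates record, respectively: a ``toric degree'' (the $\mathbb{Z}$ factor), the $\mathbb{Z}_2$ factor detecting negative hyperbolic orbits from $K_-$, and a $\mathbb{Z}_2$ factor distinguishing the two strands $h^1$ and $h^2$ in each Klein bottle. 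The disk bundle $D^*(K)$ deformation retracts onto its zero section $K$, so $H_1(D^*(K),\mathbb{Z})\cong H_1(K,\mathbb{Z})\cong \mathbb{Z}\oplus\mathbb{Z}_2$. First I would identify the kernel of $j_*:H_1(U^*(K))\to H_1(D^*(K))$ using the Gysin sequence (or Mayer--Vietoris, exactly as in the computation of $H_1(U^*(K))$ already carried out in the excerpt): the kernel is generated by the class of the fiber $S^1$ of the circle bundle, which is precisely one of the two $\mathbb{Z}_2$ summands — the ``$h^1$ versus $h^2$'' summand — since the fiber bounds a disk in $D^*(K)$.

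Next I would translate the hypothesis into this language. The assumption that $\alpha$ has no negative hyperbolic orbits coming from \emph{different} Klein bottles means that all the $h^-$-orbits of $\alpha$ lie in a single $K_\pm$; in the combinatorial picture this says $P_\alpha$ is of Type I, II, or III but not a ``mixed'' configuration, and in particular the two strands $h^1,h^2$ of that one Klein bottle always appear \emph{together} (this is the homological parity constraint already noted in the ``Types of orbit sets'' subsection: $\alpha$ can never contain exactly one negative hyperbolic orbit). Because $h^1$ and $h^2$ from the same Klein bottle occur in pairs, their contributions to the ``$h^1$ versus $h^2$'' $\mathbb{Z}_2$-coordinate cancel: $[h^1]+[h^2]$ is zero in that coordinate (reading off the table, $h^1_{(0,1)}\mapsto(1,\bar0,\bar0)$ and $h^2_{(0,1)}\mapsto(1,\bar0,\bar1)$, whose sum has third coordinate $\bar0$, and similarly for $K_-$). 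Hence for such an $\alpha$ the component of $[\alpha]$ in $\ker j_*$ is automatically zero, so $[\alpha]=0$ in $D^*(K)$ forces $[\alpha]=0$ in $U^*(K)$.

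To assemble this: write $[\alpha]=(d,a,b)\in\mathbb{Z}\oplus\mathbb{Z}_2\oplus\mathbb{Z}_2$ with the $b$-coordinate being the one killed by $j_*$. The paragraph above shows $b=0$ whenever the $h^-$-orbits of $\alpha$ all come from one Klein bottle. Then $j_*[\alpha]=(d,a)\in H_1(D^*(K))$, and $j_*[\alpha]=0$ gives $d=0$ and $a=0$; combined with $b=0$ this yields $[\alpha]=0$ in $H_1(U^*(K))$, which is the claim. The main obstacle — really the only non-bookkeeping point — is verifying cleanly that the kernel of $j_*$ is \emph{exactly} the $\mathbb{Z}_2$ generated by $[h^1]+[h^2]$ (equivalently, by the circle fiber), and that this kernel meets the lattice of classes realized by orbit sets of the restricted type only in $0$; this is where one must be careful that the $\mathbb{Z}_2$ summand corresponding to ``which Klein bottle'' (the $a$-coordinate) is \emph{not} in the kernel, so that negative hyperbolic orbits from $K_-$, if present with the right parity, still obstruct nullhomology in $D^*(K)$ — consistent with the hypothesis only forbidding orbits from \emph{both} Klein bottles simultaneously. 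Once the Gysin-sequence computation pinning down $\ker j_*$ is in hand, the rest is the short coordinate chase above.
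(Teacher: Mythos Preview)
Your overall strategy is sound and is essentially a repackaging of the paper's argument, but you have misidentified the kernel of $j_*:H_1(U^*K)\to H_1(D^*K)$, and this causes the coordinate chase to collapse.

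Using the table of homology classes and the paper's computation of $\pi_*$ (which equals $j_*$ after the retraction $D^*K\simeq K$), one finds $j_*(d,a,b)=(b,d)\in\mathbb Z_2\oplus\mathbb Z$. Thus $\ker j_*=\{(0,a,\bar0):a\in\mathbb Z_2\}$, the \emph{second} $\mathbb Z_2$ summand --- the ``which Klein bottle'' coordinate --- and \emph{not} the third ``$h^1$ versus $h^2$'' coordinate as you assert. Concretely, $[h^1_{(0,1)}]+[h^1_{(0,-1)}]=(0,\bar1,\bar0)$ lies in the kernel (this is the fiber class), whereas $[h^1_{(0,1)}]+[h^2_{(0,1)}]=(2,\bar0,\bar1)$ maps to $(\bar1,2)\neq 0$. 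Your sentence ``whose sum has third coordinate $\bar0$'' is also an arithmetic slip: $\bar0+\bar1=\bar1$. And your caveat that the $a$-coordinate is ``\emph{not} in the kernel'' is exactly backwards.

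With the correct kernel the argument does go through, and it is then essentially the paper's proof. One must show the $a$-coordinate of $[\alpha]$ vanishes. If $\alpha$ has no orbits from $K_-$ this is immediate from the table (every such orbit has $a=\bar0$). If $\alpha$ has orbits only from $K_-$, each $h^i_{(0,-1)}$ contributes $a=\bar1$; here you cannot invoke the ``Types of orbit sets'' classification, since that already assumes $[\alpha]=0$ in $U^*K$. Instead one uses the hypothesis $j_*[\alpha]=0$: the $\mathbb Z$-coordinate $d$ receives only even contributions from toric and elliptic orbits and $\pm1$ from each negative hyperbolic, so $d=0$ forces an even number of them, hence both $h^1_{(0,-1)}$ and $h^2_{(0,-1)}$ appear (by admissibility the count is $0$ or $2$), and their $a$-contributions cancel. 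This is precisely the paper's Case~2/Case~3 reasoning.
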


\begin{proof}
    Let $\pi:T^*K\rightarrow K$ be the natural projection. The induced map 
    $$
    \pi_{*}:H_1(T^*K,\mathbb{Z})\rightarrow H_1(K,\mathbb{Z}),
    $$
    satisfies the following:
    \begin{itemize}
        \item $\pi_{*}([\gamma_{p/q}])=(\overline{q},2p)$;
        \item $\pi_{*}([h_{1/0}^1])=(\overline{0},1)$;
        \item $\pi_{*}([h_{1/0}^2])=(\overline{1},1)$;
        \item $\pi_{*}([h_{-1/0}^1])=(\overline{0},-1)$;
        \item $\pi_{*}([h_{-1/0}^2])=(\overline{1},-1)$.
    \end{itemize}
    Therefore, if the orbit set $\alpha$ has zero homology in $D^*K$ we consider three cases:

    \textbf{Case 1}. $\alpha$ has only toric orbits.

    In this case, write $\alpha=\prod_{i}\gamma_{p_i/q_i}$. Since $[\alpha]=0\in H_1(D^*K,\mathbb{Z})$, we have $\pi_*([\alpha])=0$. Therefore
    $$
    2\sum_i p_i=0\quad \textrm{and}\quad \sum_i q_i\equiv 0\pmod{2}.
    $$
    Therefore, $[\alpha]=0\in H_1(U^*K,\mathbb{Z})$.

    \textbf{Case 2}. $\alpha$ has at least one negative hyperbolic orbit from $K_+$.
    
    In this case $\alpha$ must have both negative hyperbolic orbit $h_{1/0}^1$ and $h_{1/0}^2$, since the second coordinate of $\pi_{*}([h_{1/0}^1]$ and $\pi_{*}([h_{1/0}^2]$ are odd number, while the second coordinate of $\pi_{*}([\gamma_{p/q}])$ is an even number. Therefore, $\pi_*([\alpha])=0$, implies that
    $$
    2\sum_i p_i+2=0\quad \textrm{and}\quad \sum_i q_i\equiv 1\pmod{2},
    $$
    and this is exactly the condition $[\alpha]=0\in H_1(U^*K,\mathbb{Z})$.

    \textbf{Case 3.} $\alpha$ has at least one negative hyperbolic orbit from $K_-$.

    This case is analogous to the last case, and this finish the proof.
\end{proof}

\subsection{Combinatorial obstruction theorem}

In this section we show a combinatorial obstruction theorem for a symplectic embedding of a convex toric domain $X_{\Omega}$ into $D^*K$. 

\begin{definition}
    Let $X_{\Omega}$ be a convex toric domain. Consider $\Lambda$ and $\Lambda'$ a convex generator for $X_{\Omega}$ and a $K$-convex lattice path for $D^*K$. We write $\Lambda\leq_{X_{\Omega,D^*K}} \Lambda'$, if the following conditions hold:

    \begin{itemize}
        \item $I(\Lambda)= I(\Lambda')$.
        \item $A_{\Omega}(\Lambda)\leq A_{D^*k}(\Lambda')$.
        \item $x(\Lambda)+y(\Lambda)-h(\Lambda)/2\geq n_{\Lambda'}/2+m(\Lambda')-1$.
    \end{itemize}
\end{definition}

\begin{theorem}\label{thm: combbeyond}
    Suppose that $X_{\Omega}\hookrightarrow D^*K$ is a symplectic embedding, where $X_{\Omega}$ is a convex toric domain. Let $\Lambda'$ be a $K$-latice path (not of type IV) with no edges labeled by $``h"$. Then there exist a convex generator $\Lambda$ and a nonnegative integer $n$, and a product decomposition $\Lambda=\Lambda_1\cdots \Lambda_n$ and $\Lambda'=\Lambda_1'\cdots \Lambda_n'$ such that:
    \begin{itemize}
        \item Each $\Lambda_i$ is a $K$-convex lattice path and $\Lambda_i$ is a convex generator.
        \item $\Lambda\leq_{X_{\Omega,D^*K}} \Lambda'$.
        \item Given $i,j\in \{1,...,n\}$, if $\Lambda_i\neq \Lambda_j$, or $\Lambda_i'\neq \Lambda_j'$, then $\Lambda_i$ and $\Lambda_j$ have no elliptic orbit in common.
        \item If $S$ is any subset of $\{1,..,n\}$, then $I(\prod_{i\in S}\Lambda_i)=I(\prod_{i\in S}\Lambda_i')$.
    \end{itemize}
\end{theorem}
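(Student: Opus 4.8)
The plan is to mimic the proof of Theorem 1.19 in \cite{Hutchings0}, adapting it to the ``almost toric'' setting of $D^*K$. First I would use Proposition~\ref{semidisk} and the hypothesis to produce, for a given $L>0$, a weakly exact symplectic cobordism $(W,\omega)$ from (a perturbation of) $\partial X_{\Omega}$ to (a perturbation of) $U^*(K)$: indeed a symplectic embedding $X_{\Omega}\hookrightarrow D^*K$ gives $W = D^*K \setminus \textrm{int}(X_{\Omega})$, which is weakly exact since both pieces are exact fillings. Perturb both contact forms by Lemma~\ref{lemma: pertubtoricdomain} (on the $X_\Omega$ side) and by the Morse--Bott scheme of Section~\ref{pertubation} (on the $U^*(K)$ side), so that all elliptic orbits of action $<L$ are $L$-positive and all hyperbolic orbits of action $<L$ are positive hyperbolic except the four negative hyperbolic orbits near the Klein bottles. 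The key structural input is that this cobordism can be arranged to be $L$-tame; here the $L$-tameness inequality \eqref{eq: L-tame} holds because of the Fredholm-index formula \eqref{eq: fredind}, which bounds $\textrm{ind}(C)$ from below in terms of genus, number of hyperbolic ends and the elliptic-end data.

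Next I would apply Theorem~\ref{thm: cobordism map} to the chain map $\phi: ECC^L(\partial X_\Omega, \lambda_+, 0, J_+) \to ECC^L(U^*(K), \lambda_-, 0, J_-)$. Given the $K$-lattice path $\Lambda'$ with no ``h'' edges and not of Type IV, the associated ECH generator $\alpha_- = i(\Lambda')$ is a cycle (by the combinatorial description of the differential, since it has only elliptic and half-$h^-$ edges and the $C$, $D$ operations do not act on it), and by Lemma~\ref{lemma: eECHhomologous} its homology class is the nonzero class in $ECH^L$ in its grading, so $[\alpha_-]\neq 0$ in $ECH^L(U^*(K),0)$ once $L>A(\Lambda')$. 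Since $\Phi^L$ is an isomorphism onto its image compatible with the $U$-map and sends $[\emptyset]\mapsto[\emptyset]$, there is some convex generator $\Lambda$ for $X_\Omega$ with $\langle\phi(i(\Lambda)),i(\Lambda')\rangle\neq0$. By Theorem~\ref{thm: cobordism map}(3) this produces a broken $J$-holomorphic current $\mathcal{C}$ from $i(\Lambda)$ to $i(\Lambda')$ with $I(\mathcal{C})=0$, and by $L$-tameness (Proposition~\ref{prop: cobLtame}) every component has ECH index $0$ and no two components share a positive end at an $L$-negative elliptic orbit nor a negative end at an $L$-positive elliptic orbit.

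Then I would extract the product decomposition. The current $\mathcal{C}$ has a distinguished symplectization-level component structure; I would group the irreducible components of the cobordism level $\mathcal{C}_0$ (together with the symplectization pieces above and below attached to them) into connected ``towers,'' and use the no-shared-$L$-positive-elliptic-end condition to conclude that the partition of positive ends into towers induces a genuine factorization $\Lambda = \Lambda_1\cdots\Lambda_n$ into convex generators and $\Lambda' = \Lambda_1'\cdots\Lambda_n'$ into $K$-lattice paths; Lemma~\ref{lemma: 0homology} guarantees each factor $\Lambda_i'$ really is nullhomologous in $U^*(K)$ and hence a bona fide $K$-lattice path. The inequalities $I(\Lambda)=I(\Lambda')$, $A_\Omega(\Lambda)\le A_{D^*K}(\Lambda')$, and $x(\Lambda)+y(\Lambda)-h(\Lambda)/2 \ge n_{\Lambda'}/2 + m(\Lambda')-1$ come respectively from $I(\mathcal{C})=0$, from the fact that a cobordism current exists only if actions decrease, and from the $J_0$-index inequality $J_0(\mathcal{C})\ge 0$ combined with the formula $J_0(\alpha)=I(\Lambda)-2(x(\Lambda)+y(\Lambda))-e(\Lambda)$ of Lemma~\ref{lemma: pertubtoricdomain} and its analogue (stated after that lemma) for $U^*(K)$ generators, where $J_0$ of a $K$-lattice path is $I(P)-n_{\Lambda'}-2m(\Lambda')$ up to the explicit correction. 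The additivity $I(\prod_{i\in S}\Lambda_i)=I(\prod_{i\in S}\Lambda_i')$ for every subset $S$ follows from applying Proposition~\ref{prop: cobLtame}(2) to the subcurrent $\sum_{i\in S}\mathcal{C}_i$, which again has ECH index zero.

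\textbf{Main obstacle.} The hardest step is establishing $L$-tameness of the cobordism and, relatedly, controlling multiply covered components of $\mathcal{C}$ — exactly the point where \cite{Hutchings0} needs the most care. In the present setting the extra difficulty is the presence of the negative hyperbolic orbits near $K_\pm$: one must check that the Fredholm-index bound \eqref{eq: fredind} still yields \eqref{eq: L-tame} when components have ends at even covers of these negative hyperbolic orbits (which count toward $h_+$), and that the partition conditions for negative hyperbolic orbits do not destroy the tower-decomposition argument. I expect this to require a case analysis on the number $n_\alpha$ of negative hyperbolic ends, parallel to the Type I--IV analysis of the ECH index already carried out in Section~\ref{sec: lowind}.
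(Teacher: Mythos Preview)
Your overall strategy follows the paper's (and Hutchings' \cite{Hutchings0}) template, but you have the cobordism oriented the wrong way, and this breaks the argument. If $X_{\Omega}\hookrightarrow D^*K$, then $W=D^*K\setminus \varphi(\mathrm{int}(X_{\Omega}))$ is a cobordism \emph{from} $(U^*(K),\lambda_{std})$ \emph{to} $(\partial X_{\Omega},\lambda_{std})$: the Liouville field points outward along $U^*(K)=Y_+$ and inward along $\partial X_{\Omega}=Y_-$. Hence the chain map goes $\phi:ECC^L(U^*(K))\to ECC^L(\partial X_{\Omega})$, the given $K$-lattice path $\Lambda'$ lives on the \emph{positive} end, and the holomorphic current runs from $i(\Lambda')$ to the convex generator $i(\Lambda)$. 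With your orientation the action inequality you claim, $A_{\Omega}(\Lambda)\le A_{D^*K}(\Lambda')$, is the opposite of what action-decrease along a current from $i(\Lambda)$ to $i(\Lambda')$ would yield; this inconsistency is already visible in your third paragraph. The correct flow is: show $[\Lambda']\neq 0$ in $ECH(U^*(K))$, use $U^k\Phi[\Lambda']=\Phi U^k[\Lambda']=\Phi[\emptyset]=[\emptyset]\neq 0$ to conclude $\Phi[\Lambda']\neq 0$ in $ECH(\partial X_{\Omega})$, and then extract $\Lambda$ on the $\partial X_{\Omega}$ side via Theorem~\ref{thm: cobordism map}(3). (Your claim that ``$\Phi^L$ is an isomorphism onto its image'' is unjustified and unnecessary.)

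A second gap is the $L$-tameness argument. Formula~\eqref{eq: fredind} is for curves in the symplectization of $U^*(K)$, not in the completed cobordism, so it cannot directly give \eqref{eq: L-tame}. The paper's argument is different and depends essentially on the correct orientation: assuming \eqref{eq: L-tame} fails forces $g(C)=\mathrm{ind}(C)=h^+(C)=e_L(C)=0$ and $h^-(C)\le 1$; since every elliptic orbit on the \emph{negative} end $\partial X_{\Omega}$ is $L$-positive (Lemma~\ref{lemma: pertubtoricdomain}(3)), $e_L(C)=0$ forces $\alpha_-=\emptyset$, whence $\alpha_+$ is nullhomologous in $D^*K$; Lemma~\ref{lemma: 0homology} then rules out $h^-(C)=1$, and in the remaining case $I(dC)=I(\alpha_+^d)>0$ contradicts the hypothesis. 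This is exactly the place where the negative hyperbolic orbits enter, and the resolution is homological (via Lemma~\ref{lemma: 0homology}), not a Type~I--IV case analysis as you anticipated.
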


\begin{proof}

    We can assume by shringking $X_{\Omega}$ that there is a symplectic embedding $\varphi:X_{\Omega}\rightarrow int(D^*K)$. (The theorem for the original toric domain follows by a limiting argument).

    We have that $W=D^*K\setminus \varphi(\textrm{int}(X_{\Omega})$ is a weakly exact symplectic cobordism from $(U^*(K),\lambda_{std}|_{U^*(K)})$ to $(\partial X_{\Omega},\lambda_{std}|_{\partial X_{\Omega}})$.

    Let $L>\mathcal{A}_{U^*(K)}(\Lambda)$ and $\epsilon>0$. By the Lemma \ref{lemma: pertubtoricdomain}, we have $\lambda'$ a contact form in $\partial X_{\Omega}$ and we have $\lambda$ the contact form in $U^*(K)$ such that the filtred ECH complex is generated by $K$-convex lattice path.

    Let $J_+$ be a generic $\lambda$-compatible almost complex structure on $\mathbb{R}\times U^*(K)$, let $J_-$ be a generic $\lambda'$-compatible almost complex structure in $\mathbb{R}\times \partial X_{\Omega}$, and let $J$ be a generic cobordism-admissible almost complex structure on $\overline{W}$ restricting to $J_+$ and $J_-$ on the ends of the completion.

    We now prove that the cobordism $(W,\omega,J)$ is $L$-tame. Consider $d$ a positive integer, let $\alpha_+$ be an orbit set for $\lambda$ with $\mathcal{A}_{U^*(K)}(\alpha_+)<L/d$, and let $C$ be an irreducible embedded curve in $\mathcal{M}^J(\alpha_+,\alpha_-)$ with $I(dC)\leq 0$. To prove the inequality \ref{eq: L-tame}, we suppose by contradiction that it does not hold, then we have
    $$
    2g(C)+ind(C)+h(C)+2e_{L}(C)\leq 1.
    $$
    Since $J$ is generic, $ind(C)\geq 0$. The parity of $ind(C)$ is equal to the number of ends of $C$ at positive hyperbolic orbits. Write $h(C)=h^+(C)+h^-(C)$, where $h^+(C)$ is the number of ends of $C$ at positive hyperbolic orbits, and $h^-(C)$ the number of ends of $C$ at negative hyperbolic orbits. This implies that
    $$
    g(C)=ind(C)=h^+(C)=e_{L}(C)=0,
    $$
    and $h^-(C)\leq 1$.

    By Lemma \ref{lemma: pertubtoricdomain}$(3)$, every simple elliptic Reeb orbit in $\alpha_-$ is $L$-positive, therefore $\alpha_-$ must be the empty set. Suppose first that $h^-(C)=1$, this implies that $\alpha_+$ is an orbit set with exactly one negative hyperbolic orbit and all other are elliptic orbits with action less than $L/d$, but in this case because $\alpha_-=\emptyset$, this implies that $\alpha_+$ is nullhomologous in $D^*K$, this is impossible by the Lemma \ref{lemma: 0homology}. Therefore, we can suppose that $h^-(C)=0$ and $\alpha_+$ is a toric orbit set with only elliptic orbits with $[\alpha_+]=0$. In this case, $d[C]$ represents a relative homology class in $H_2(D^*K,d \alpha_+,\emptyset)$, and because $H_2(D^*(K),\mathbb{Z})=0$, we have $I(\alpha_{+}^d)=I(dC)$. Let $\Lambda_+$ be the $K$-lattice path associated to $\alpha_+^d$, we have $I(dC)=I(\alpha_+^d)=I(\Lambda_+)\geq 0$, with equality only if $\Lambda_+$ is the empty set or the orbit set formed by the four negative hyperbolic orbits, because $\alpha_+$ has no negative hyperbolic orbits we must have $\alpha_+=\emptyset$, this is a contradiction since $C$ is nonempty. Thus $I(dC)>0$, contradicting our hypothesis. 

    By the combinatorial description of the ECH differential, we have that $\Lambda$ is a cycle which is non-exact, that is, $[\Lambda]_{ECH}\neq 0$. Let $\Phi$ the ECH cobordism map, it is well know that $\Phi$ commutes with the $U$-map in homology. Follows from Corollary \ref{U-map-Isomorphism} that $U^k[\Lambda]_{ECH}=[\emptyset]_{ECH}$. Therefore, we have
    $$
    U^k\Phi[\Lambda]_{ECH}=\Phi U^k[\Lambda]_{ECH}=\Phi[\emptyset]_{ECH}=[\emptyset]_{ECH}\in ECH(\partial X_{\Omega},\xi_{std},0).
    $$
    Since the $U$-map in the ECH of $\partial X_{\Omega}$ is an isomorphism, we must have $\Phi[\Lambda]_{ECH}\neq 0$.

    Consider the chain map which induce the cobordism map given by the Theorem \ref{thm: cobordism map}
$$
\begin{array}{cccc}
\phi \ : & \! ECC(U^*K,\lambda_{\epsilon},0,J_+) & \! \longrightarrow
& \! ECC(\partial B(r),\lambda_{\epsilon},0,J_-) \\
\end{array}
$$

By Theorem \ref{thm: cobordism map}$(3)(i)$, there is an admissible orbit set $\alpha'$ for $\lambda'$ in $\partial X_{\Omega}$ with $\mathcal{A}_{\Omega}(\alpha)<L$ and a broken $J$-holomorphic current
$$
\mathcal{C}=(\mathcal{C}(N_-),\mathcal{C}(N_{-}+1),...,C(N_+))
$$
from $\Lambda$ to $\alpha'$. By Theorem \ref{thm: cobordism map}$(3)$ we have $I(\mathcal{C})=0$. Moreover, because $J$ is generic, $I(\mathcal{C}(i))>0$ if $i\neq 0$. Follows from Proposition \ref{prop: cobLtame}$(1)$ that $I(\mathcal{C}(0))\geq 0$. Since 
$$
0=I(\mathcal{C})=\sum_{i}I(\mathcal{C}(i))=0,
$$
it follows that $N_-+N_+=0$.

Write $\mathcal{C}=\mathcal{C}(0)=\sum_k d_k\mathcal{C}_k$. Let $\Lambda_k$ and $\Lambda_k'$ denote the $K$-lattice path and the convex lattice path respectively for which $C_k\in \mathcal{M}^J(\Lambda_k,\Lambda_k')$. We have
$$
I(\Lambda_k)=I(\Lambda_k')\quad and\quad A(\Lambda_k')\leq A(\Lambda_k)+2\epsilon,
$$
because $I(C_k)=0$ by the Proposition \ref{prop: cobLtame}$(2)$. We now prove the inequality 
$$
x(\Lambda_k')+y(\Lambda_k')-h(\lambda_k)/2\geq n_{\Lambda_k}/2+m(\Lambda_k)-1.
$$
where $n_{\Lambda_k}$ is the number of negative hyperbolic orbits in $\Lambda_k$ and $m(\Lambda_k)$ is the total multiplicity of the elliptic Reeb orbits in $\Lambda_k$.

To prove the claim we use the $J_0$ index. Observe that, because $H_2(D^*(K)\setminus \varphi(X_{\Omega}),\mathbb{Z})=0$, we can compute
$$
J_0(C_k)=J_0(\Lambda_k)-J_0(\Lambda_k').
$$
The first term is computed as follows, 
\begin{eqnarray*}
J_0(\Lambda_k)-I(\Lambda_k)&=&-2c_{\tau}(Z)+\mu_{\tau}'(\Lambda_k)-\mu_{\tau}(\Lambda_k)\\
&=& -n_{\Lambda_k}-\sum_i CZ_{\tau}(\gamma_i^{m_i})\\
&=& -n_{\Lambda_k}+n_{\Lambda_k}-e(\Lambda_k).
\end{eqnarray*}
Therefore, $J_0(\Lambda_k)=-e(\Lambda_k)+I(\Lambda_k)$. The term $J_0(\Lambda_k')$ was computed by Hutchings and we obtain
$$
J_0(\Lambda_k')=I(\Lambda_k')-2x(\Lambda_k')-2y(\Lambda_k')-e(\Lambda_k').
$$
Thus, we obtain
\begin{eqnarray*}
    J_0(C_k)&=&-e(\Lambda_k)+I(\Lambda_k)-I(\Lambda_k')+2x(\Lambda_k')+2y(\Lambda_k')+e(\Lambda_k')\\
    &=& 2x(\Lambda_k')+2y(\Lambda_k')+e(\Lambda_k')-e(\Lambda_k)\\
    &\geq& 2g(C_k)-2+\sum_{i}2n_i^{+}-1+\sum_j 2n_j^{-}-1\\
    &\geq& -2+n_{\Lambda_k}+2m(\Lambda_k)-e(\Lambda_k)+e(\Lambda_k')+h(\Lambda_k').
\end{eqnarray*}
This implies the claim. Now taking $\epsilon\rightarrow 0$, we concluded the prove of the theorem.
\end{proof}

\subsection{Computation on Gromov width}

We prove now the Theorem \ref{thm: GROMOVWDITH}.

\begin{proof} of Theorem \ref{thm: GROMOVWDITH}. Consider the $K$-lattice path $\Lambda'=h_{(0,-1)}^1 h_{(0,-1)}^2 e_{(0,-1)}^{k}e_{(1,0)}e_{(0,1)}^{k+1}$ to apply the Theorem \ref{thm: combbeyond}, the unique possible factorization of $\Lambda'$ into generators with zero homology repeat the elliptic orbit $e_{(0,1)}$. Therefore, by the the third bullet of Theorem \ref{thm: combbeyond}, there is no factorization and we obtain a lattice path $\Lambda$ with the following properties.
$$
A(\Lambda)\leq A(\Lambda'),\quad I(\Lambda)=4(k+1)\quad and\quad x(\Lambda)+y(\Lambda)\geq 2(k+1).
$$
\textbf{Claim.} The constrains above implies that $\Lambda$ is one of the following generators
$$
e_{1,0}^{2(k+1)},\quad e_{0,1}^{2(k+1)},\quad e_{2(k+1),1}\quad \textrm{or}\quad e_{1,2(k+1)}.
$$

Indeed, we have $I(\Lambda)=2(L(\Lambda)-1)-h(\Lambda)$. We can write this as follows. Observe that
$$
L(\Lambda)=x(\Lambda)+y(\Lambda)+h(\Lambda)+m_{e}^{+}(\Lambda)+i(\Lambda),
$$
where $m_{e}^+(\Lambda)$ is the total multiplicity of elliptic orbits which are not in the $x$-axis and $i(\Lambda)$ is the number of interior lattice points in the region defined by $\Lambda$. Therefore, we have
\begin{eqnarray*}
    4(k+1)&=& -2+2i(\Lambda)+2x(\Lambda)+2y(\Lambda)+h(\Lambda)+2m_{e}^+(\Lambda)\\
    &\geq& 4(k+1)+2i(\Lambda)+2m_{e}^+(\Lambda)+h(\Lambda)-2.
\end{eqnarray*}
Thus we obtain that 
$$
2(i(\Lambda)+m_{e}^+(\Lambda))+h(\Lambda)\leq 2.
$$
However, if $i(\Lambda)\geq 1$, then $m_{e}^+(\Lambda)\geq 1$ or $h(\Lambda)\geq 1$. Therefore, we can suppose $i(\Lambda)=0$. In this case $m_{e}^+(\Lambda)\in \{0,1\}$. If $m_{e}^+(\Lambda)=0$, we must have the generators
$$
h_{2(k+1),1},  \quad h_{1,2(k+1)},\quad e_{1,0}^{2(k+1)}\quad and\quad e_{0,1}^{2(k+1)}
$$
If $m_{e}^+(\Lambda)=1$, we obtain the following generators
$$
e_{2(k+1),1}\quad \textrm{and}\quad e_{1,2(k+1)}.
$$
The generator must have a even number of positive hyperbolic Reeb orbits, since its ECH index is even. Therefore, the unique possibilities are the elliptic ones. This proves the claim.

Thus we obtain the inequality
$$
2(k+1)r=A_{X_{\Omega}}(\Lambda)\leq A_{D^*K}(\Lambda')=2(k+1)+1
$$
for any $k$, so that
$$
r\leq \frac{2(k+1)+1}{2(k+1)},
$$
taking the limit $k\rightarrow +\infty$ we obtain $r\leq 1$. Therefore, $c_{Gr}(D^*K,\omega_{std})\leq 1$. From Remark \ref{remark; Gromovwidth} we also have $c_{Gr}(D^*K,\omega_{std})\geq 1$.
\end{proof}

\newpage

\end{document}